\newcounter{notes}%
\definecolor{darkgreen}{rgb}{0.0, 0.5, 0.0}
\newtheorem{theorem}{Theorem}[section]
\newtheorem{lemma}[theorem]{Lemma}
\newtheorem{corollary}[theorem]{Corollary} 
\newtheorem{definition}[theorem]{Definition} 
\newtheorem{proposition}[theorem]{Proposition}
\newtheorem{remark}[theorem]{Remark}
\def\gap{\vspace{.3cm}\noindent}
\def\smallskip{\vspace{.15cm}}
\def\medskip{\vspace{.3cm}}
\def\text{\mbox}
\def\RR{{\mathbb R}}
\def\CC{{\mathbb C}}
\def\EE{{\mathbb E}}
\def\ZZ{{\mathbb Z}}
\def\PP{{\mathbb P}}
\def\HH{{\mathbb H}}
\def\AA{{\mathbb A}}
\def\R{{\mathbb R}}
\def\RPn{\operatorname{\mathbb{R}P}^n}
\def\RP2{\operatorname{\mathbb{R}P}^2}
\def\RP3{\operatorname{\mathbb{R}P}^3}
\def\RP{\operatorname{\mathbb{R}P}}
\def\Jcal{{\mathcal J}}
\def\Im{\operatorname{Im}}
\def\interior{\operatorname{int}}
\def\SL{\operatorname{SL}}
\def\SO{\operatorname{SO}}
\def\PO{\operatorname{PO}}
\def\PGL{\operatorname{PGL}}
\def\GL{\operatorname{GL}}
\def\Aff{\operatorname{Aff}}
\def\Hom{\operatorname{Hom}}
\def\Mono{\operatorname{Mono}}
\def\Rep{\operatorname{Rep}}
\def\cl{\operatorname{cl}}
\def\dev{\operatorname{dev}}
\def\Aff{\operatorname{Aff}}
\def\hol{\operatorname{hol}}
\def\Isom{\operatorname{Isom}}
\def\Diag{\operatorname{Diag}}
\def\C2{\operatorname{C^2}}
\def\halfgap{\vspace{.05in}}
\def\Modsp{\mathcal C}
\def\Bcal{\mathcal B}
\def\Rcal{\mathcal R}
\def\Pcal{\mathcal P}
\def\Hcal{\mathcal H}
\def\Ccal{\mathcal C}
\def\Wcal{\mathcal W}
\def\Tcal{\mathcal T}
\def\Sym{\operatorname{S}}
\def\Tr{\operatorname{Tr}}
\def\Image{\operatorname{Im}}
\def\Id{\operatorname{I}}
\def\harm{\mathscr H}
\def\rad{\mathscr R}
\def\editA{\color{black}}
\def\editB{\color{black}}
\def\editC{\color{black}}
\def\editD{\color{black}}
\def\editE{\color{black}}
\def\aff{\operatorname{\mathfrak{aff}}}
\def\cinvt{\eta}
\def\SLpm{\SL^{\pm}}
\def\weight{\xi}
  \newcommand{ \kap}{\kappa}
  \def\Kappa{\mathcal{K}}
\def\Pos{\operatorname{\mathcal P}}
\def\O{\operatorname{O}}
\def\bdy{\partial}
\def\Modsp{{\mathcal T}}
\def\dxi{\weight}
\def\trace{\operatorname{trace}}
\def\ppsi{\psi}
\def\ur{{\bf u}}
\def\rank{{\bf r}}
\def\type{{\bf t}}
  \def\liealgteich{\mathfrak{c}}
  \def\Aut{\operatorname{Aut}}
  \def\Sim{\operatorname{Sim}}
\newcommand{\Abs}[1]{\left|\left|#1\right|\right|}
\newcommand{\abs}[1]{\left|#1\right|}
\definecolor{back}{RGB}{255,255,255}
\definecolor{fore}{RGB}{0,0,0}
\definecolor{title}{RGB}{255,0,90}
\definecolor{green}{rgb}{0.0, 0.5, 0.0}
\definecolor{purple}{rgb}{0.5, 0.0, 0.5}
\definecolor{bluegreen}{rgb}{0.0,0.5, 0.5}
\definecolor{orange}{rgb}{1,0.5, 0.1}
\definecolor{redgreen}{rgb}{0.5, 0.5, 0.0}
\def\green{\color{green}}
\def\green{\color{green}}
\def\g2{{\green 2}}
  \def\SP{\operatorname{SP}}
\newcommand{\bv}{\left[\begin{array}{c}}
\newcommand{\ev}{\end{array}\right]}
\newcommand{\bbmat}{\begin{bmatrix}} 
\newcommand{\ebmat}{\end{bmatrix}}
\newcommand{\bmat}{\begin{matrix}} 
\newcommand{\emat}{\end{matrix}}
\newcommand{\bpmat}{\begin{pmatrix}} 
\newcommand{\epmat}{\end{pmatrix}}
\begin{document}
\title[Generalized Cusps]{The Moduli Space of Marked Generalized Cusps in Real Projective Manifolds}
\date{\today}
\author{Samuel A. Ballas, Daryl Cooper,  and Arielle Leitner}
\begin{abstract}
ln this paper, a generalized cusp is a properly convex manifold with strictly convex boundary that
is diffeomorphic to $M\times [0,\infty)$ where $M$ is a closed Euclidean manifold. 
These are classified in \cite{BCL}. The marked
 moduli space  is {\editA homeomorphic to} a subspace of the space of conjugacy classes of representations {\editA of $\pi_1M$}.
 {\editA It has one description as a generalization of a {\editD trace-variety}, and
 another description involving weight data that is similar to that used to describe semi-simple Lie groups.}
It is also a bundle over the space
of Euclidean similarity (conformally flat) structures on $M$, and the fiber is a closed cone in the space  of cubic differentials.
For $3$-dimensional orientable generalized  cusps, the fiber is homeomorphic to a cone on a solid torus.\end{abstract}

\maketitle

A \emph{generalized cusp} is a properly-convex real-projective manifold, $C$, 
such that $C$ is diffeomorphic to $[0, 1) \times \partial C$,
and $\pi_1C$ is virtually-nilpotent, and
 $\partial C$ contains no line segment. 
 
From now on, in this paper, we use
 the term {\em generalized cusp} in the narrow sense that $\bdy C$ is also compact.
It was shown in \cite[(0.7)]{BCL} this implies that $\pi_1C$ is virtually abelian,
and  that $C$ has a natural affine structure that is a stiffening of the projective structure.

Let $\AA^n$ denote affine space, and $\Aff(n)$ the affine group.
Then  $C=\Omega/\rho(\pi_1C)$ where
$\Omega\subset\AA^n $ is a non-compact, convex,  closed set, bounded by a
strictly-convex hypersurface that covers $\bdy C$, and $\rho:\pi_1 C\to\Aff(n)$ is the holonomy.

{\editD The moduli space of marked generalized cusps turns out to be a beautiful object with interesting structure,
that admits several different descriptions.
We concentrate on the case that $\bdy C\cong\RR^{n-1}/\ZZ^{n-1}$, then the holonomy $\rho$ extends over $V\cong\RR^{n-1}$. In this case the moduli space $\Tcal_n$,  consists  of  all conjugacy classes of monomorphisms of $\RR^{n-1}$
into $\Aff(n)$ such that the orbit of a generic point is
a properly-embedded,
strictly-convex hypersurface.

\if0{\editE Let $\Pcal$ be
the space of unimodular, positive-definite quadratic forms on $V$. Every marked generalized cusp determines a unique unimodular positive definite quadratic form, $\beta$ on $V$ (technically, it only gives a projective class of quadratic form, but for ease of exposition we temporarily blur the distinction between a unimodular form and its projective class). This construction gives a map $\Tcal_n\to \Pcal$. A rough description of the goal of this work is to understand the fibers of this map. More precisely, we give three different geometric descriptions of the fibers, and each description gives rise to a different homeomorphism of the space $\Tcal_n$ (see Theorems \ref{completeinvtvar}, \ref{weightspace},  and \ref{polythm}).  

The first description of the fiber is given in terms of the character of the holonomy representation of the marked generalized cusp. More precisely, a generalized cusp gives rise to a character function $\xi:V\to \R$ (i.e. the function maps $v\in V$ to the trace of its holonomy), and the pair $(\xi,[\beta])$ uniquely determines the marked generalized cusp and we get a homeomorphism between $\Tcal_n$ and a space $X_n\subset C(V,\R)\times \Pcal$ of such pairs (see Theorem \ref{completeinvtvar}).

The next description of the fiber is somewhat similar, where instead of using traces one keeps track of eigenvalues, or more appropriately weights of the holonomy representation. More precisely, the weights give an unordered collection of $n$ elements of the dual space $V^\ast$ (here we allow the collection to contain repetitions). The benefit of this approach is that the vectors corresponding to the weights are forced to satisfy a geometric constraint (see \eqref{weightseqtn}). Furthermore, the constraint \eqref{weightseqtn} completely characterizes which collections of dual vectors arise from weights of generalized cusps. This description give rise to a homeomorphism $\Tcal_n\to \Rcal_n\subset \SP^nV^\ast\times\Pcal$, where $\SP^nV^\ast$ is the symmetric product of $n$ copies of $V^\ast$

The final description of the fiber involves a certain cubic polynomial on $V$ (i.e.\ an element of $S^3V$). More precisely, given a marked generalized cusp, there is a ``height function'' $h:V\to \R$ that describes the orbit of a generic point under the image of the holonomy representation. The 3-jet of $h$ has trivial terms of degree 0 and degree 1, the degree 2 term gives the projective class of $\beta$, and we refer to the cubic term as $c$. The pair $[(\beta,c)]\in \mathbb{P}(S^2V\oplus S^3V)$ completely determines the generalized cusp and we get a homeomorphism $\Tcal_n\to \Jcal_n\subset \mathbb{P}(S^2V\oplus S^3V)$ (see Theorem \ref{polythm}). Furthermore, we get a description of $\Jcal_n$ as a product of $\Pcal$ and an explicit closed cone in $S^3V$.  

}\fi 

  Then $\Tcal_n\cong \Pcal\times F$ where $\Pcal$ is
the space of unimodular, positive-definite quadratic forms on $V$,
 and $F$ is the space  consisting of all  unordered $n$-tuples of pairwise-orthogonal vectors (allowing $0$) in $V\times\RR$,
that all have the same, non-negative, $\RR$-coordinate.

 It follows that one may view a generalized cusp as a Euclidean manifold  with extra structure
obtained by a deformation of a {\em standard} cusp i.e. equivalent to one in a hyperbolic manifold. The bundle structure on the moduli
space admits several descriptions.

A generalized cusp is determined up to equivalence by the {\em complete invariant} $(\chi,[\beta])$ comprising the character
 $\chi:V\to\RR$ of $\rho$, together with the projective class of a positive definite quadratic form $\beta$ on $V$.

A generalized cusp is also determined by $[\beta]$ together with
the Lie algebra weights $\xi_i:V\to\RR$ of $\rho$ that are arbitrary subject to a simple {\em geometric} constraint (\ref{weightseqtn}).
The weights may be regarded as harmonic $1$-forms representing elements of $H^1(C)$.
These $1$-forms determine transversally measured foliations on $\bdy C$ which, together with the similarity structure, determine $C$.
For non-diagonalizable holonomy, the cohomology classes are arbitrary subject to being pairwise orthogonal with respect to the dual of $\beta$.

The next description is differential-geometric: as the projective class of the sum of a quadratic and a cubic
differential both defined on $\bdy C$. This exhibits  $\Tcal_n$ as the product of the space of
flat conformal structures on $\bdy C$ times a cone in the space of cubic polynomials on $V$. The second
factor is a closed cone in $\Sym^3V$ that is not a manifold. Points in the interior of this cone correspond to diagonalizable holonomy.
The cone point corresponds to a {\em standard cusp}.
The cubic is a weighted sum of the cubes of the weights,
and it is {\em harmonic} if and only if  $\bdy \Omega$ 
is an affine sphere.

For three-manifolds this data is encoded by $(w,r,h)\in\CC^3$
subject to $\Im w>0$ and $|r|\le3|h|$. Here $w$ determines the conformal structure on $\bdy C$, and $r,h$
are respectively the {\em radial} and {\em harmonic} components of the cubic polynomial. The generalized
cusp is standard, with cusp shape $w$, if and only if $r=h=0$. }

\section{Summary of results}
Given $\ppsi \in \Hom (\RR ^n, \RR)$ with $\ppsi (e_1) \geq \ppsi (e_2) \geq \cdots \geq \ppsi(e_n) \geq 0$
a {\em generalized cusp Lie group} $G(\ppsi ) \subset \Aff(n)$ was defined in
 \cite{BCL}  and  generalized cusps correspond to 
 lattices in  $G(\ppsi)$. Two generalized cusps are {\em equivalent} if they deformation-retract to affinely isomorphic cusps. 

 Every generalized cusp is equivalent to a {\em homogeneous} one for which $G(\psi)$ acts transitively on $\bdy\Omega$.
For these, there is a natural  underlying Euclidean metric on $\bdy C$. 
 This metric is covered by one on $\bdy\widetilde C=\bdy\Omega\subset\AA^n$
 that is conformally equivalent to the 
 second fundamental form, and is scaled so that $\operatorname{volume}(\bdy C)=1$. It follows from the Bieberbach theorems that $C$ has a finite cover by a generalized cusp
 with boundary a torus  $T^{n-1}=\RR^{n-1}/\ZZ^{n-1}$.  These are called {\em torus cusps} and
 we concentrate on them.
The general case 
 reduces to this by (\ref{nottorus}).
 
 Set $V=\RR^{n-1}$. It is shown in \cite{CLT1}
  that $G(\psi)$ contains a unique subgroup $\editC \Tr(\ppsi)\cong V$ called the {\em translation subgroup} that
  acts simply transitively on $\bdy\Omega$.
  Moreover the image of the holonomy  $\rho:\ZZ^{n-1}\to\editC \Tr(\ppsi)$ is a lattice.
  Thus $\rho$  extends to an isomorphism $\rho:V\to \editC \Tr(\ppsi)$ called the {\em extended holonomy}.
 
   The moduli space of equivalence classes of 
  {\em marked} generalized cusps diffeomorphic to 
  $C$ is denoted $\Modsp(C)$ and $\Modsp_{n}:=\Modsp(T^{n-1}\times[0,\infty))$. {\editD It consists
  of equivalence classes of developing maps.}
The map, that sends a point in $\Modsp(C)$ to the conjugacy class of the extended holonomy,
  identifies $\Modsp(C)$ with the subspace {\editD $\Rep(C)$} of
    the quotient space 
  $\Hom(V,\Aff(n))/\Aff(n)$ consisting of conjugacy classes
  of isomorphisms onto translation subgroups, see (\ref{holishomeo}). 
  Fenchel-Nielsen coordinates provide a lift of Teichmuller space into the representation variety. However,
   we do not know if it is possible to lift  $\Modsp(C)$ into $\Hom(\pi_1C,\Aff(n))$.    
  
   {\editE   Let $A_n$ be the closed Weyl chamber of $\SL(n+1,\RR)$. There is a family
   of representations parameterized by $A_n\times \SL V$.
  Theorem (\ref{modspquotient}) says the holonomy map identifies $\Tcal_n$ with  the quotient of $A_n\times \SL V$ where 
  $(\lambda,A)$ is identified to $(\lambda,A')$ whenever $A^{-1}A'$ lies
  in a certain orthogonal group that depends on $\lambda$.}

  The Euclidean structure on $\bdy C$  pulls back to 
 give a unimodular positive definite quadratic form $\beta_{\rho}$ on $V$. The {\em character} $\chi_{\rho}:V\to\RR$
 is given by $\chi_{\rho}(v)=\trace(\rho v)$. The  {\em complete invariant} of $\rho$ is
 $\cinvt(\rho)=(\chi_{\rho},[\beta_{\rho}])$.  It plays the role in our theory that the character plays
 in the theory of semi-simple representations, namely two representations have the same complete invariant if and only if they are conjugate.
 {\editD The {\em {\editD trace-variety}} $\chi(V)$ is the set of all characters.}  Let $X_n$ be the set of all $\cinvt(\rho)$ {\editD topologized as a subspace of $\chi(V)\times\PP\Sym^2V$.} 
 \begin{theorem}\label{completeinvtvar} The complete invariant  $\eta:\Modsp_n\to X_n$ is a homeomorphism.
  \end{theorem}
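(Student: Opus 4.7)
The plan is to break the theorem into three components: (i) continuity of $\eta$, (ii) injectivity, and (iii) continuity of the inverse.

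For (i), both $\chi_\rho(v) = \trace \rho(v)$ and the conformal class $[\beta_\rho]$ (arising from the second fundamental form of $\bdy\Omega$ at a basepoint, rescaled to be unimodular) depend continuously on $\rho$, and both are manifestly conjugation-invariant, so $\eta$ descends to a continuous map $\Modsp_n \to X_n$. Surjectivity onto $X_n$ is tautological from the definition of $X_n$ as $\eta(\Modsp_n)$.

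The heart of the proof is (ii). Since $V$ is abelian and $\rho(V)\subset \editC\Tr(\ppsi)$, all the matrices $\rho(v)$ commute and admit a simultaneous Jordan normal form. The trace function can then be written as a sum
\[
\chi_\rho(v) \;=\; \sum_i p_i(v)\, e^{\xi_i(v)}
\]
of exponential polynomials, where the $\xi_i\in V^*$ are the Lie algebra weights of $\rho$ and the $p_i$ are polynomials of degree one less than the corresponding Jordan block size. Such a decomposition is unique, so $\chi_\rho$ recovers the unordered multiset of weights together with the Jordan structure, and thus the type vector $\ppsi$ from the BCL classification; equivalently, it pins down $G(\ppsi)$ and the image $\rho(V)\subset\Tr(\ppsi)$ up to conjugation in $\Aff(n)$. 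What remains indeterminate is the marking, i.e.\ the identification $V\to\Tr(\ppsi)$, which is ambiguous by the group of automorphisms of $G(\ppsi)$ permuting equal weights and rotating Jordan blocks. This residual ambiguity acts on $V$ by isometries of $\beta_\rho$, so specifying $[\beta_\rho]$ cuts it down to a single conjugacy class and gives injectivity.

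For (iii), I would use the parametrization of $\Modsp_n$ by $A_n\times \SL V$ modulo the orthogonal action from Theorem~\ref{modspquotient}. In these coordinates, the inverse of $\eta$ can be written explicitly: the exponents and polynomial prefactors in $\chi$ produce the weight data and hence a point in $A_n$, while $[\beta]$ identifies $V$ with $\RR^{n-1}$ up to the relevant orthogonal subgroup, and continuity of the inverse is visible by inspection. Alternatively, one checks that $\eta$ is proper -- degenerations in $\Modsp_n$ force either a weight to escape to infinity (detected in $\chi_\rho$) or the Euclidean structure to degenerate (detected in $[\beta_\rho]$) -- so a proper continuous bijection into a Hausdorff space is automatically a homeomorphism onto its image.

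The main obstacle is the exponential-polynomial uniqueness step in (ii), especially in the non-diagonalizable case where weights coincide and genuine Jordan blocks appear: one has to verify that a scalar trace function on $V$, as opposed to richer trace pairings on polynomial functions of $V$, genuinely separates the Jordan-weight data, and that the remaining ambiguity is exactly an orthogonal action on $V$ so that $[\beta]$ pins it down.
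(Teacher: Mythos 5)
Your scaffolding (continuity, injectivity, properness for the inverse) matches the paper's strategy, and your properness alternative in (iii) is indeed the paper's route via Lemma~\ref{propercompleteinvt} and Lemma~\ref{prop_homeo}. Using Theorem~\ref{modspquotient} for the inverse would be circular, since the paper derives that theorem from the result you are proving, so the properness alternative is the one you must take; note also that Lemma~\ref{prop_homeo} requires local compactness of the domain, which has to be established separately.

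The substantive gap is in (ii), and it is the one you flag yourself at the end. You write $\chi_\rho(v)=\sum_i p_i(v)e^{\xi_i(v)}$ with polynomial prefactors $p_i$ of degree one less than the Jordan block size, but that is not what a character looks like: the diagonal entries of $\exp$ of a Jordan block are all $e^{\xi(v)}$, so the trace collapses to a pure exponential sum $\chi_\rho(v)=\sum_i m_i\,e^{\xi_i(v)}$ with only constant multiplicities $m_i$. A $2\times 2$ Jordan block with eigenvalue function $e^{t}$ and $\diag(e^t,e^t)$ have identical traces for all $t$, so the trace is completely blind to Jordan block sizes, and uniqueness of exponential-polynomial decompositions cannot separate the Jordan data for you. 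What actually rescues the argument is the rigidity of translation groups: Lemma~\ref{charpoly} shows $\chi_\theta$ determines the weights with multiplicity (via power sums $\Rightarrow$ characteristic polynomial), hence the type $\type$, and the classification of \cite{BCL} then forces a unique Jordan structure for each type. But even then the character alone does not determine $\ppsi$ when $0<\type<n$, because the weights you read off are marking-dependent $\xi_i\circ B$. The paper closes this in Corollary~\ref{psiformula} by feeding the quantities $\beta^*(\xi_i)$ --- which require $[\beta_\rho]$, not just $\chi_\rho$ --- into a linear system $y=(I+G)x$ for $\log\psi_i$ and proving $I+G$ is invertible when $n\ge 3$. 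Finally, your statement that the residual ambiguity acts by $\beta$-isometries also needs an argument: the paper's Lemma~\ref{Opsi} must first rule out non-trivial similarities by showing $\O(\eta(\theta))\subset\O(\beta)$, and then match it to the cusp orthogonal group $\O(\Omega,b)$. So the skeleton is right, but the polynomial-prefactor step is wrong, the derivation of $\ppsi$ from the complete invariant is where most of the real work lives, and both the $\beta$-input and the $\O(\eta)=\O(\Omega,b)$ identification need proofs that your outline assumes for free.
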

  
In  \cite{Dold} Dold    studies the symmetric product $\SP^n X =\left(\prod_{1}^{n}X\right)/S_n$ of a topological space $X$,  where the symmetric group $S_{n}$ permutes factors. 
When $X=V$ and $n>1$, this is distinct from the vector space, $\Sym^n V$, of  symmetric tensors of degree $n$.
The {\editA linear part of the} holonomy $\rho$ has $n$ weights $\exp\dxi_i$ (counted with multiplicity) where $\dxi_i\in V^*$, 
and these give a point  $\xi_{\rho}=[\xi_1,\cdots,\xi_n]\in \SP^n V^* $. 
The following description of the moduli space is reminiscent of the classification of semi-simple Lie groups via {\em roots}.
Let    $\Pos\subset \Sym^2V$ be the space of unimodular positive definite quadratic forms on $V$. 
 Define $\Rcal_n$ to be the subspace of all $([\xi_1,\cdots,\xi_n],\beta)$ in $\SP^n(V^*)\times\Pos$ satisfying the {\em weights equation}
\begin{align}\label{weightseqtn}\exists\ \varpi\ge 0\qquad \forall\ i\ne j\quad \langle \xi_i,\xi_j\rangle_{_{\beta^*}}=-\varpi\end{align}
where $\langle \cdot,\cdot\rangle_{_{\beta^*}}$ is the inner product on $V^*$ dual to $\beta$.
{\editD A geometrical interpretation of this condition is given in (\ref{sigmaeqtn}).}

 \begin{theorem}\label{weightspace}
The {\em weight data} is $\nu:\Modsp_n\longrightarrow \Rcal_n$ given by $\nu(\rho)=\editC (\xi_{\rho},\beta_{\rho})$ and is a
homeomorphism, {\editB and $\Rcal_n$ is a semi-algebraic set.}
Moreover generalized cusps with non-diagonalizable holonomy form the subspace of $\Rcal_n$ where $ \varpi=0$. 

{\editD
Let $F_n=\{[v_1,\cdots,v_n]\in\SP^n V\ :\ \exists\ \varpi\ge 0\ \ \forall\ i\ne j\ \  \langle v_i,v_i\rangle=-\varpi\}$ and $U_n\subset\SL V$ be the group of upper triangular unipotent matrices.
There is a bundle isomorphism $$\theta:U_n\times F_n\to\Rcal_n\qquad\text{given by}\quad\theta(A,[v_1,\cdots,v_n])=(\editC [\xi_1,\cdots,\xi_n],A^tA)$$
where $\xi_i(v)=\langle v_i,Av\rangle$.}
   \end{theorem}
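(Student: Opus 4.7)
The strategy is to leverage the homeomorphism $\eta: \Modsp_n \to X_n$ of Theorem \ref{completeinvtvar} and translate the trace data $(\chi_\rho, [\beta_\rho])$ into the weight data $(\xi_\rho, \beta_\rho)$. First I would show that $\nu$ is well-defined: the Lie algebra weights $\xi_i \in V^*$ of the linear part of $\rho$ form a conjugation-invariant multiset, and $\beta_\rho$ is conjugation-invariant. The substantive content is verifying the weights equation (\ref{weightseqtn}). Since every cusp is equivalent to a homogeneous one with $\rho: V \to \Tr(\psi)$ an isomorphism onto the translation subgroup, one uses the explicit description of $G(\psi)$ from \cite{BCL} to compute the $\xi_i$ directly from $\psi$ and verifies that the pairwise dual inner products $\langle \xi_i, \xi_j \rangle_{\beta_\rho^*}$ share a common non-positive value $-\varpi$, with $\varpi$ tied to the non-diagonalizable part of the $\psi$-data.

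For bijectivity of $\nu$, injectivity reduces to the observation that the character $\chi_\rho(v) = \sum_{i=1}^n \exp(\xi_i(v))$ determines the multiset $[\xi_i]$ uniquely, since distinct exponentials are linearly independent as functions on $V$; applying Theorem \ref{completeinvtvar} then recovers $\rho$ up to conjugation from $\nu(\rho)$. For surjectivity, given any $([\xi_i], \beta) \in \Rcal_n$, one builds a suitable $\psi$ and a lattice embedding $V \hookrightarrow G(\psi)$ realizing this data; the condition (\ref{weightseqtn}) is precisely the geometric constraint that makes such a $\psi$ exist. Continuity of $\nu$ follows from continuity of eigenvalues, and continuity of $\nu^{-1}$ follows since the passage from weights to character is smooth. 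Semi-algebraicity of $\Rcal_n$ is immediate, as (\ref{weightseqtn}), positive definiteness, and unimodularity are polynomial conditions on representatives. The non-diagonalizable case corresponds to $\varpi = 0$ because repeated weights compatible with (\ref{weightseqtn}) force the pairwise inner products to vanish, and conversely the Jordan structure at the parabolic directions of $G(\psi)$ produces exactly such coincident weights.

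For the bundle isomorphism $\theta$, a direct computation verifies that for $(A, [v_i]) \in U_n \times F_n$ with $\xi_i(v) = \langle v_i, A v\rangle$ and $\beta = A^t A$, one has $\langle \xi_i, \xi_j \rangle_{\beta^*} = v_i^t A (A^t A)^{-1} A^t v_j = \langle v_i, v_j \rangle$, so $\theta$ preserves the pairwise-orthogonality condition with common value $-\varpi$ and lands in $\Rcal_n$. Bijectivity then follows by recovering $A$ and $[v_i]$ from $([\xi_i], \beta)$ via the appropriately normalized decomposition of $\beta$ as $A^t A$ and the dual identification $\xi_i \leftrightarrow A^t v_i$. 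The main obstacle in the whole argument is the careful derivation of (\ref{weightseqtn}) from the affine structure of $\bdy\Omega$, which requires tracking how the weights and the quadratic form on $V$ interact under conjugation within $\Aff(n)$; a secondary difficulty is ensuring continuity of $\nu$ and $\theta$ along the strata where weights coincide or vanish, where diagonalizable and non-diagonalizable representations meet.
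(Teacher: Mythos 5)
Your proposal follows essentially the same route as the paper: injectivity of $\nu$ via the complete invariant $\eta$ of Theorem~\ref{completeinvtvar} (the weights and $[\beta]$ determine $\chi$ and hence $[\rho]$), verification of~(\ref{weightseqtn}) from the explicit matrix form of the translation groups (as in Lemma~\ref{rootslambda}), and surjectivity by constructing suitable $(\lambda,\kappa)$ and $A\in\SLpm V$ realizing a given $([\xi_i],\beta)\in\Rcal_n$. Two remarks. First, a small directional slip in your injectivity paragraph: you invoke that $\chi$ determines $[\xi_i]$ (linear independence of distinct exponentials), but for injectivity of $\nu$ the direction used is the trivial one, $[\xi_i]\mapsto\chi=\sum\exp\xi_i$; the reverse is the content of Lemma~\ref{charpoly} and is not needed here. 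Second, the one genuine divergence is the point you flag yourself as a difficulty — continuity near strata where weights coincide or vanish. The paper does not argue continuity of $\nu$ or $\nu^{-1}$ directly: it shows $\Upsilon=\eta\circ\nu^{-1}:\Rcal_n\to X_n$ is a continuous, \emph{proper} bijection between locally compact Hausdorff spaces and applies Lemma~\ref{prop_homeo}, which sidesteps the strata question entirely. Your alternative (continuity of $\nu$ from continuity of roots, plus $\nu^{-1}=\eta^{-1}\circ(\eta\circ\nu^{-1})$ since $\eta$ is a homeomorphism) can be made to work, but as written ``the passage from weights to character is smooth'' only gives continuity of $\eta\circ\nu^{-1}$; you must compose with $\eta^{-1}$ to finish. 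Finally, your direct check that $\langle\xi_i,\xi_j\rangle_{\beta^*}=v_i^tA(A^tA)^{-1}A^tv_j=\langle v_i,v_j\rangle$ is correct and worthwhile: the paper's written proof of this theorem does not actually verify the bundle isomorphism $\theta$, so your computation fills that gap.
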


   The {\em type} of $\rho$ is the number of non-trivial distinct {\em weights} of $\rho$,  and can be
any integer $0\le \type\le n$. It equals the number of non-zero coordinates of $\ppsi$ and also of $\xi_\rho$. 
   There is an affine projection $\pi:\Omega\to(0,\infty)^{\type}$.
Each fiber has the geometry of 
horoball in $\HH^{n-\type}$. The  geometry transverse to the fibers 
is {\em Hex geometry}: the projective geometry
of an open simplex, see \cite{BCL} Section 1.5.
 
 The similarity structure is part of a certain kind of geometric structure on $\bdy C$,
 called a {\em cusp geometry},
 that uniquely determines the cusp up to equivalence. The extra structure
  consists of $\type$  transversally measured
 codimension-1 foliations with flat leaves.   The foliations are the preimages of foliations of $(0,\infty)^{\type}$ by coordinate hyperplanes.
  When $\type<n$ then these
 foliations are {\editA arbitrary, subject to being} pairwise orthogonal. 
 The transverse measures are harmonic 1-forms representing the cohomology classes $\dxi_i$ given by the weights.

 The cusp geometry is also encoded by a polynomial, $J$, {\editC called the {\em shape invariant}}, defined up to scaling, that 
 is the sum of the quadratic, $\beta_\rho$, and a cubic. This gives an embedding of the marked moduli space into
 the vector space of such polynomials. 
 Projection onto the quadratic term exhibits the moduli space 
 as a bundle over $\Pos$. The fiber
 is a cone in the space of cubic differentials. 
 {\editA The cubic is a  linear combination of the cubes of the weights (\ref{ccformula}).}

 This is reminiscent of the result of Hitchin \cite{Hit}, Labourie \cite {Lab1}, and Loftin \cite{Lof1}, that the
 moduli space of properly convex structures on a closed surface is a vector bundle over
 the space of conformal structures, with fiber the space of \emph{holomorphic} cubic differentials. 
 {\editA However, in general the cubic differentials for generalized cusps are not holomorphic.}

The polynomial $J$ is defined as follows. Choose a basepoint $b\in\bdy\Omega\subset\RR^n$ and
an affine map $\tau:\RR^n\to\RR$ so that $\tau(b)=0$ and $\tau(\interior\Omega)>0$.
The hyperplane $H=\tau^{-1}(0)$ is then tangent to $\Omega$ at $b$. The hypersurface 
$\bdy\Omega$ is parameterized by the function $\mu:V\to\bdy\Omega$
given by the orbit, $\mu(v)=\rho(v)(b)$ of $b$.
The function $h=\tau\circ\mu$ can be thought of as the height of points
 in $\bdy \Omega$ above $H$. {\em \editC However $\bdy\Omega$ is {\bf not} the graph of $h$, see (\ref{htlemma})}.  Then $J:V\to\RR$ is the 3-Jet of $h$,  normalized 
 so the quadratic term is unimodular. The cubic 
 is zero if and only if  $C$ is equivalent to a cusp in a hyperbolic manifold. This is similar to  \cite[Thm 4.5]{Nom}, that an affine hypersurface is quadratic if and only if a certain cubic differential form vanishes identically.
  There is a subspace $\Jcal_{n}\subset\PP(\Sym^2 V\oplus\Sym^3V)$ defined
  in (\ref{cuspspacestructures}) and

 \begin{theorem}\label{polythm} If $n\ge 3$ then {\editC the shape invariant}
 $J:\Modsp_n\to \Jcal_{n}$ is a homeomorphism. Moreover, the projection  $\pi:\Jcal_n\to\Pos$ is a
 trivial bundle
 with fiber homeomorphic to a closed cone in  $\Sym^3V$.
 \end{theorem}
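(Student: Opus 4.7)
The plan is to reduce Theorem \ref{polythm} to Theorem \ref{weightspace} by computing the 3-jet $J$ explicitly in weight coordinates, and matching the cone fiber over $\Pos$ with the factor $F_n$ from the weight data parameterization. In particular, since Theorem \ref{weightspace} already gives $\Modsp_n \cong \Rcal_n \cong U_n \times F_n$, it suffices to show that $J$ intertwines this product structure with a corresponding product $\Pos \times K$ inside $\PP(\Sym^2V \oplus \Sym^3V)$.

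First I would compute the 3-jet of $h = \tau\circ\mu$ via Taylor expansion of the orbit map $\mu(v) = \rho(v)(b)$. Since the linear part of $\rho$ has weights $\exp\xi_i(v)$, in coordinates adapted to the weight decomposition of $V$ one obtains
$$h(v) = Q(v) + C(v) + O(\|v\|^4),$$
with no linear term because $H=\tau^{-1}(0)$ is tangent to $\bdy\Omega$ at $b$. After normalizing so that $Q$ is unimodular, I would verify that $Q = \beta_\rho$, which is essentially the statement that the induced Euclidean metric on $\bdy C$ is conformally equivalent to the second fundamental form of $\bdy\Omega$, and that $C$ is a weighted sum of cubes of the weights as asserted by (\ref{ccformula}). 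The independence of $J$ on the choices of $b$ and $\tau$, up to the scaling that defines $\PP(\Sym^2V\oplus\Sym^3V)$, would be checked directly: moving $b$ along $\bdy\Omega$ corresponds to conjugation by $\rho(v_0)$, which simultaneously rescales $\tau$ and preserves the normalized 3-jet.

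With the explicit formula in hand, Theorem \ref{weightspace} supplies $\Modsp_n \cong U_n \times F_n$, and the Cholesky diffeomorphism $U_n \to \Pos$, $A\mapsto A^tA$, converts this to $\Modsp_n \cong \Pos \times F_n$ fibered over $\Pos$. I would define the closed cone $K \subset \Sym^3 V$ as the image of $F_n$ under the map $[v_1,\ldots,v_n]\mapsto \sum v_i^3$ (using a reference quadratic to pass between vectors and covectors). Then the composition $\Modsp_n \to \Pos\times F_n \to \Pos\times K \hookrightarrow \PP(\Sym^2V\oplus\Sym^3V)$ is precisely $J$ after normalization, and its image is by construction $\Jcal_n$. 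Continuity in both directions follows because every step is algebraic, and since the projection $\pi:\Jcal_n\to\Pos$ agrees with the first factor of the product, triviality of the bundle is automatic.

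The main obstacle is injectivity of $J$: one must verify that the cubic $C$, together with $\beta_\rho$, recovers the unordered tuple of weights $[\xi_1,\ldots,\xi_n]$ uniquely. The key is that the weights are constrained by \eqref{weightseqtn} to lie in a very restricted configuration, pairwise orthogonal with respect to $\beta_\rho^*$ up to a common correction $\varpi$, so the decomposition $C = \sum a_i \xi_i^3$ is far more rigid than a general Waring decomposition of a cubic. The hypothesis $n\ge 3$ ensures $\dim V = n-1 \ge 2$, so that the orthogonality pattern is strong enough to lock the decomposition; for $n=2$, $V$ is one-dimensional and no such rigidity is available. Once this uniqueness is in hand, surjectivity onto $\Jcal_n$, bicontinuity, and the closed-cone description of the fiber follow routinely from the structure of $F_n$ provided by Theorem \ref{weightspace}.
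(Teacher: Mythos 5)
Your overall strategy is the same as the paper's: factor $J$ as $\Kappa\circ\nu$ where $\nu$ is the weight-data homeomorphism of Theorem \ref{weightspace}, so that bicontinuity and the bundle structure reduce to understanding $\Kappa$, and then prove $\Kappa$ is a bijection. The Cholesky splitting $\Pos\cong U_n$ to trivialize the bundle, and the propriety argument via the inverse-function lemma (\ref{prop_homeo}), also match the paper's treatment. However, you leave the critical step — injectivity of $J$, equivalently that $[J]$ uniquely recovers the weights $[\xi_1,\ldots,\xi_n]$ — as an unverified assertion: ``the orthogonality pattern is strong enough to lock the decomposition.'' That claim is exactly what the paper spends Lemmas \ref{localmaxlemmadiag}, \ref{localmaxlemmanotdiag}, \ref{Jdeterminesppsi}, and \ref{Olambdalemma} proving, by an entirely different mechanism: one analyzes the local maxima of the cubic $c$ restricted to the $q$-unit sphere, shows these maxima are precisely the $q$-orthogonal projections of the coordinate axes of $\RR^n$ into $V$, and then reads $\psi$ (hence the weights) off the pairwise inner products $\alpha_{ij}$ of those maxima. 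You propose instead a ``constrained Waring uniqueness'' argument, but this needs to address that the coefficients in $c=\tfrac{1}{3}\sum_i\lambda_i^{-2}\xi_i^3$ (cf.\ \eqref{ccformula}) are themselves functions of the $\xi_i$, that one of the $\xi_i$ may be a (signed) linear combination of the others when $\type=n$, and that repeated and zero weights occur; none of the standard Waring-rank uniqueness results applies directly under the constraint \eqref{weightseqtn}, and you would also need to handle the $|K^+|\le 1$ degenerate cases just as the paper does. Without that argument, the proposal is a correct outline with the hardest $80\%$ of the proof missing.

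A secondary, fixable inaccuracy: your identification of the fiber cone as the image of $F_n$ under $[v_1,\ldots,v_n]\mapsto\sum v_i^3$ drops the weight-dependent coefficients $\bigl(\langle\xi_i,\xi_i\rangle_{\beta^*}+\varpi\bigr)^{-1}$ in \eqref{ccformula}, and conflates vectors in $V$ (elements of $F_n$) with covectors $\xi_i\in V^*$. The map from weight data to the cubic is a rational, not polynomial, function of the weights, and continuity plus properness of $\Kappa$ must be verified with care (the paper isolates the $n=3$ case separately in the proof of \ref{cubicrootdata} precisely because the general coefficient estimate degenerates there). These are exactly the places where a plausible-looking ``by construction'' step can go wrong, so they should be spelled out rather than deferred.
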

 
 {\editC The cubic is harmonic if and only if $\bdy\Omega$ is an affine sphere.} 
 The moduli space $\Modsp_n$ is stratified by  type. The stratum for each type is a manifold whose dimension increases
 with type, see Proposition (\ref{strata}). The frontier of the stratum of type $\type$ consists of the union of strata of smaller type. 
 The largest type corresponds to diagonalizable holonomy.
 In particular:
  \begin{corollary}\label{dense} Every generalized cusp is a geometric limit of diagonalizable cusps.
  \end{corollary}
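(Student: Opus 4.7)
The plan is to deduce \ref{dense} directly from the stratification of $\Modsp_n$ recorded in Proposition \ref{strata}. That proposition partitions $\Modsp_n$ into strata indexed by type $\type\in\{0,1,\ldots,n\}$ and asserts that the frontier of the stratum of type $\type$ is the union of strata of strictly smaller type; equivalently, the closure of each stratum contains every stratum of strictly smaller type. Applying the statement to $\type=n$, i.e.\ the diagonalizable stratum, shows that every point of $\Modsp_n$ lies in its closure, which is exactly the corollary.

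To make the required approximating sequences explicit I would pass through the weight-data model of Theorem \ref{weightspace}. Under the bundle isomorphism $\theta:U_n\times F_n\to\Rcal_n$, a cusp corresponds to a pair $(A,[v_1,\ldots,v_n])$, its type equals the number of nonzero, pairwise distinct $v_i\in V\times\RR$, and the quadratic form $\beta=A^tA$ depends only on the factor $A\in U_n$. Hence the approximation problem factors over the fibers of $\theta$ above $U_n$, and it suffices to show that, for each fixed $A$, the configurations $[v_1,\ldots,v_n]\in F_n$ whose entries are pairwise distinct and nonzero are dense in $F_n$.

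The main obstacle is the standard-cusp case $\varpi=0$, $v_1=\cdots=v_n=0$, where one must produce pairwise orthogonal $n$-tuples in $V\times\RR$ that are nonzero and pairwise distinct yet collapse to the origin. I would fix once and for all $w_1,\ldots,w_n\in V=\RR^{n-1}$ satisfying $\langle w_i,w_j\rangle=-1$ for $i\ne j$---for instance the vectors from the centroid to the vertices of a regular simplex inscribed in $V$, suitably scaled---and set $v_i^{(\epsilon)}=(\epsilon w_i,\epsilon)\in V\times\RR$ for small $\epsilon>0$. These lie in $F_n$ with common $\RR$-coordinate $\epsilon>0$ and $\varpi=\epsilon^2$, are pairwise distinct and nonzero, and tend to the zero tuple as $\epsilon\to 0$. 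For an arbitrary point of $F_n$ of intermediate type $0<\type<n$, an analogous local perturbation---splitting each zero or repeated $v_i$ along a direction compatible with the orthogonality constraint defining $F_n$---produces the required type-$n$ approximation, giving the explicit form of the closure statement above.
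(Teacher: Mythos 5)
Your first paragraph is exactly the paper's argument: the corollary is stated as an immediate consequence (``In particular:'') of the closure relation $\cl(\Modsp_n(\type))=\bigcup_{i\le\type}\Modsp_n(i)$ from Proposition \ref{strata}, applied with $\type=n$. The explicit approximating families in your remaining paragraphs are a useful complement not given in the paper, though the perturbation for intermediate types $0<\type<n$ is left schematic; the paper instead derives the closure relation via the Weyl chamber parameterization of Theorem \ref{quotienthomeo}, where approximating a boundary $\lambda\in A_n$ by $\lambda$'s with all coordinates positive is immediate.
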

  It seems hard to show this directly. Another consequence is:
  \begin{theorem}\label{notmanifold} {\editC $\Modsp_n$ is 
  contractible, of dimension $k=n^2-n$, and is manifold if and only if $n=2.$}
\end{theorem}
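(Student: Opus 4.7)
The approach is to combine the three structural descriptions of $\Modsp_n$ from Theorems \ref{completeinvtvar}, \ref{weightspace}, and \ref{polythm}, treating each of the three assertions in turn.

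\emph{Contractibility.} By Theorem \ref{weightspace}, $\Modsp_n\cong\Rcal_n$, which fibers over $\Pos$ with fiber $F_n$. The base $\Pos$ is contractible, being an open convex subset of the unimodular slice of $\Sym^2V$. The fiber $F_n$ is itself a cone with vertex the zero tuple: scaling $(v_1,\ldots,v_n)\mapsto(tv_1,\ldots,tv_n)$ for $t\in[0,1]$ preserves \eqref{weightseqtn} (it sends $\varpi\mapsto t^2\varpi$) and deformation-retracts $F_n$ onto $[0,\ldots,0]$. Hence $\Modsp_n$ is contractible.

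\emph{Dimension.} I would compute $\dim\Rcal_n$ directly as a subspace of $\SP^n V^*\times\Pos$. On the generic (type $n$, $\varpi>0$) stratum, $\dim\SP^n V^*=n(n-1)$ and $\dim\Pos=n(n-1)/2-1$, while the $\binom{n}{2}$ equations of \eqref{weightseqtn} impose $\binom{n}{2}-1$ independent constraints (the free parameter $\varpi$ absorbing one). Summing gives
$$\dim\Modsp_n=n(n-1)+\left(\tfrac{n(n-1)}{2}-1\right)-\left(\tfrac{n(n-1)}{2}-1\right)=n^2-n.$$

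\emph{Manifold iff $n=2$.} When $n=2$, $V=\RR$, $\Pos$ is a single point, and $F_2\subset\SP^2\RR$ consists of unordered pairs $\{v_1,v_2\}$ with $v_1v_2\le 0$. Using the identification $\SP^2\RR\cong\{(x,y):x\le y\}$, this is the closed quadrant $\{x\le 0\le y\}$, a topological 2-manifold (with boundary), so $\Modsp_2$ is a manifold. For $n\ge 3$, Theorem \ref{polythm} expresses $\Modsp_n\cong\Jcal_n\cong\Pos\times K_n$ with $K_n\subset\Sym^3V$ a closed cone. Since $\Pos$ is a smooth manifold, $\Modsp_n$ is locally Euclidean at a standard cusp $p=(\beta,0)$ iff $K_n$ is locally Euclidean at its vertex, and this in turn requires the link $L_n:=K_n\cap S_\varepsilon$ to be a topological sphere of dimension $\dim K_n-1$. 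For $n=3$, the introduction records that $L_3$ is a solid torus, which has $\pi_1\cong\ZZ\ne 0$ and so is not a sphere. For $n\ge 4$ I would argue analogously via the local homology formula $H_i(K_n,K_n\setminus 0)\cong\tilde H_{i-1}(L_n)$: the nondiagonalizable locus $\{\varpi=0\}$ appears as a proper closed substratum of $L_n$, and a stratified Mayer-Vietoris computation exhibits nontrivial homology of $L_n$ in non-top degree, preventing $L_n$ from being a homology sphere.

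The principal obstacle is, for $n\ge 4$, the explicit verification that the link $L_n$ fails to be a (homotopy) sphere. The $n=3$ case is clean via the solid-torus description in the introduction; for higher $n$ one needs a careful stratified analysis of the cubic cone $K_n$, invoking the stratification of $\Modsp_n$ by type (Proposition \ref{strata}) and tracking how the various strata contribute topology to $L_n$.
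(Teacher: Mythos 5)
Your contractibility argument and dimension count are both sound (the paper proves contractibility more directly by scaling $\lambda$ to retract $\Modsp_n$ onto $\Modsp_n(0)\cong\Pcal$, and reads off the dimension from the stratum computation in Proposition~\ref{strata}, but those differences are cosmetic). The substantive gap is in the non-manifold claim for $n\ge 4$. Your link argument is clean for $n=3$, where the solid-torus link is handed to you by Theorem~\ref{3manifold}, but for $n\ge 4$ you outline a stratified Mayer--Vietoris computation showing the link $L_n$ is not a homology sphere without actually carrying it out, and you candidly flag this as the principal obstacle. That obstacle is the whole content of the claim: without it the proposal does not establish the result for $n\ge 4$.

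The paper sidesteps the link analysis entirely with a fundamental-group argument that works uniformly for all $n\ge 3$. Suppose $\Modsp_n=M$ is a manifold with boundary. By Proposition~\ref{strata} the diagonalizable stratum $Y=\Modsp_n(n)$ is a manifold without boundary of full dimension, so $\bdy M\cap Y=\emptyset$; a density argument (the type-$(n-1)$ points with distinct $\lambda$-coordinates lie in $\bdy M$ and are dense in the non-diagonalizable locus $\mathcal N$) then forces $\bdy M=\mathcal N$, hence $\interior M=Y$. Since a manifold with boundary is homotopy equivalent to its interior, $\pi_1 M\cong\pi_1 Y$, and contractibility gives $\pi_1 M=0$. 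But Proposition~\ref{strata} identifies $Y\cong\left((0,\infty)^n\times\SL V\right)/\operatorname{Alt}$, so $\pi_1 Y$ surjects onto the alternating group, which is nontrivial for $n\ge 3$ --- a contradiction. This global $\pi_1$ computation of the top stratum is exactly what your proposal is missing; it supplies the obstruction for all $n\ge 3$ at once, with no analysis of the cone point required.
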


 Suppose $M=\EE^n/G$ is 
 a closed Euclidean manifold with holonomy $\rho:\pi_1M\to\Isom(\EE^n)$.
 Using the decomposition $\Isom(\EE^n)=O(n)\ltimes\RR^n$ gives a surjection
 $R:\Isom(\EE^n)\to O(n)$ called the {\em rotational part}. 
 By the Bieberbach theorems \cite{Bieb}, \cite{Cha} $M$
 has a finite cover by a torus $T^n=\EE^n/H$ where $H$ is a lattice in $\RR^n$. Thus
 $R\circ\rho(\pi_1M)$ is a finite subgroup $F\subset O(n)$ and we may choose $H=\ker R\circ\rho$.
 Applying this to the generalized cusp $C\cong M\times[0,\infty)$ shows that there is a finite cover
 $p:\widetilde C\to C$ corresponding to $H$, and $\widetilde C\cong T^n\times[0,\infty)$.   
  
 This cover induces a map $p^*:\Modsp(C)\to\Modsp(\widetilde C)$ {\editD that sends an affine structure on $C$
 to the structure on $\widetilde C$ that covers it. This structure on $\widetilde C$
 is preserved by the action of $F$ by covering transformations. Using the identification of a structure
 with its holonomy gives an algebraic formulation.}
 Since $H$ is an abelian  normal subgroup of $G$, the action of $G$ on $H$ by conjugation
 determines a homomorphism $\theta:F\to\Aut(H)$.
Define
$${\editD \Rep(\widetilde C;\theta)}=\{[\rho]\in{\editD \Rep}(\widetilde C) : \forall\ f\in F
\ \  \ \rho\sim \rho\circ\left(\theta f\right) \}$$
 where $\sim$ denotes conjugate representations.

 \begin{theorem}\label{nottorus} The map $\hol{\editE \circ\; p^*}:\Modsp(C)\to\editD \Rep(\widetilde C;\theta)$ is a homeomorphism. 
  \end{theorem}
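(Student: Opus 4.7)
The plan is to show that $\hol\circ p^{*}$ is well-defined with image in $\Rep(\widetilde{C};\theta)$, and then to establish bijectivity and bicontinuity using the vanishing of $H^{1}(F,V)$ and $H^{2}(F,V)$ for the finite group $F$ acting on the real vector space $V$. First I will verify that the image lies in $\Rep(\widetilde{C};\theta)$ by naturality: if $\rho_{C}\colon\pi_{1}C\to\Aff(n)$ is a holonomy on $C$ and $g\in\pi_{1}C$ projects to $f\in F$, then conjugation by $\rho_{C}(g)$ carries $\rho_{C}|_{H}$ to $\rho_{C}|_{H}\circ\theta(f)$, and this passes by continuity to the extended holonomy on $V$. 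The central computational tool throughout will be that the centralizer of $\rho(V)$ inside the group of $\bdy\Omega$-preserving affine transformations is exactly $\rho(V)$: any such element may be left-multiplied by a unique $\rho(v_{0})$ to fix the basepoint $b\in\bdy\Omega$, and since it then fixes $\rho(V)\cdot b=\bdy\Omega$ pointwise and a strictly convex hypersurface affinely spans $\AA^{n}$, it must be the identity.

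For injectivity, I will take two holonomies $\rho_{C,1},\rho_{C,2}$ on $C$ with conjugate extended restrictions, conjugate so they agree on $V$, and set $\alpha(g):=\rho_{C,1}(g)\rho_{C,2}(g)^{-1}$; by the centralizer computation $\alpha$ takes values in $\rho(V)\cong V$, and is a crossed homomorphism $\pi_{1}C\to V$ for the $\theta$-action vanishing on $H$, so descends to $\bar\alpha\in Z^{1}(F,V)$. Since $F$ is finite and $V$ is a real vector space, the averaging $v=|F|^{-1}\sum_{g\in F}\bar\alpha(g)$ gives an explicit $v\in V$ with $\bar\alpha(f)=v-\theta(f)v$, and conjugation by $\rho(v)$ identifies the two representations. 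For surjectivity, given $[\rho]\in\Rep(\widetilde{C};\theta)$ I will choose for each $f\in F$ a $\bdy\Omega$-preserving $g_{f}\in\Aff(n)$ satisfying $g_{f}\rho(v)g_{f}^{-1}=\rho(\theta(f)v)$, pick a set-theoretic section $s\colon F\to\pi_{1}C$ with associated 2-cocycle $c\in Z^{2}(F,H)\subset Z^{2}(F,V)$, and define $\rho_{C}(h\cdot s(f)):=\rho(h)g_{f}$. The failure of multiplicativity $\phi(f,f'):=g_{f}g_{f'}g_{ff'}^{-1}$ is boundary-preserving and centralizes $\rho(V)$, hence lies in $\rho(V)$; vanishing of $H^{2}(F,V)$ provides $\mu\colon F\to V$ with $\phi-\rho\circ c=\delta\mu$, and replacing $g_{f}$ by $\rho(\mu(f))g_{f}$ converts $\rho_{C}$ into a genuine homomorphism extending $\rho$.

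Continuity of $\hol\circ p^{*}$ is immediate from functoriality; continuity of the inverse will follow because the explicit averaging formulas witnessing $H^{i}(F,V)=0$ depend continuously on $\rho$, and the lifts $g_{f}$ themselves can be chosen continuously by normalizing their action on a fixed transversal to $\bdy\Omega$ through $b$. The main obstacle will be the geometric step of arranging each $g_{f}$ to preserve $\bdy\Omega$: the conjugation condition by itself only guarantees that $g_{f}$ normalizes $\Tr(\psi)$ and hence permutes its orbits in $\AA^{n}$, so correcting $g_{f}\cdot\bdy\Omega$ back to $\bdy\Omega$ requires the detailed structure of the cusp Lie group from \cite{BCL, CLT1}; once that is in hand, the remainder is a standard cohomological extension argument made trivial by the divisibility of $V$.
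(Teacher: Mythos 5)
The paper dispatches this theorem in a few lines by a somewhat different route: it reduces to the torus case via (\ref{holishomeo}) and then argues that the extended holonomy $\sigma$ on $\widetilde C$, together with the conjugation action of $\pi_1 C$ on $\pi_1\widetilde C$ (which is intrinsic data, not dependent on the representation), pins down the rotational part of $\rho(g)$ for each $g$, and hence $\rho$ up to conjugacy. Your proposal replaces this with an explicit non-abelian extension argument via $H^1(F,V)=H^2(F,V)=0$. The two are ultimately cognate — the paper's "uniquely determines" is, unwound, exactly your $H^1$-vanishing statement — but your version makes surjectivity and the cocycle bookkeeping explicit, which the paper's proof does not address at all. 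In that sense your route is arguably more complete, at the price of several more moving parts. Your centralizer computation (an affine map preserving $\bdy\Omega$ and fixing it pointwise is the identity, because a strictly convex hypersurface affinely spans $\AA^n$), the descent of $\alpha$ to a crossed homomorphism on $F$, and the averaging formulas witnessing $H^i(F,V)=0$ for $F$ finite and $V$ a real vector space are all correct as stated.

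The gap you flag yourself — arranging each $g_f$ to preserve $\bdy\Omega$ rather than merely normalizing $T(\Omega)$ — is real but fillable, and you should close it rather than leave it as an announced obstacle. The resolution is the radial flow: $g_f$ normalizes $T(\Omega)$, so $g_f(\bdy\Omega)$ is a $T(\Omega)$-orbit which is a strictly convex hypersurface bounding a convex set, hence equal to $\Phi_t(\bdy\Omega)$ for some flow time $t$; since $\Phi$ centralizes $G(\Omega)\supset T(\Omega)$ by \cite[(1.11)]{BCL}, replacing $g_f$ by $\Phi_{-t}g_f$ preserves $\bdy\Omega$ without disturbing the conjugation action on $\rho(V)$. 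A second point that needs more than a gesture is continuity of the inverse. Your claim that "the lifts $g_f$ can be chosen continuously by normalizing their action on a fixed transversal" is plausible but not obviously true as $[\rho]$ ranges over the non-Hausdorff quotient $\Rep(\widetilde C;\theta)$; the cleanest way to finish is probably not pointwise normalization but the pattern the paper uses throughout (e.g.\ in (\ref{quotienthomeo}) and (\ref{weightspace})): first show $\hol\circ p^*$ is a continuous bijection, then verify properness and local compactness of both sides and invoke Lemma~(\ref{prop_homeo}). If you substitute that finishing move, your cohomological argument gives a complete proof.
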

  
 {\editC  A generalized cusp $C$ in a 3-manifold is determined by three complex numbers $(w,h,r)$ subject to $\Im w>0$
 and $|c|\le 3|h|$.
The conformal structure on $\bdy C$ is $\CC/(\ZZ\oplus\ZZ w)$. The parameter $w$
was used by Thurston to describe cusps in hyperbolic 3-manifolds.
There is a unique upper-triangular matrix $A=A_w\in\SL(2,\RR)$ with positive eigenvalues
 such that the Mobius transformation $\alpha$ corresponding to $A$ satisfies  $\alpha(w)=i$.
 Then the quadratic term in $J$ is $q_w=A^tA\in\Sym^2\R^2$.

After identifying $\RR^2\equiv \CC$ a cubic $p\in\Sym^3\RR^2$ is uniquely expressible as $p=\operatorname{Re}(h z^3)+\operatorname{Re}(r z  |z|^2)$
for some $h,r\in\CC$. The first term is harmonic and the second is called radial.    
\begin{theorem}\label{3manifold} {\editC There is a homeomorphism 
  $$\Theta:\Modsp_3\longrightarrow \{(w,h,r)\in\CC^3: \Im(w)>0,\ \ |r|\le 3|h|\}$$ 
 If $\theta(x)=(w,h,r)$ then
$J(x)=[q_w+c]$ with  $q_w,A_w$ as above,
and $c=\operatorname{Re}(h z^3+r z  |z|^2)\circ A_w$.}
   \end{theorem}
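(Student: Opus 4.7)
The plan is to apply Theorem \ref{polythm} for $n=3$, which gives a homeomorphism $J:\Modsp_3\to \Jcal_3$, where $\Jcal_3$ is a trivial bundle over $\Pos$ with fiber a closed cone in $\Sym^3 V$ and $V=\RR^2$. The claimed $\Theta$ will then factor through $J$ followed by two further identifications: (a) the base $\Pos$ with the upper half-plane $\HH=\{w:\Im w>0\}$ via $w\mapsto q_w$, and (b) the fiber cone in $\Sym^3 V$ with $\{(h,r)\in\CC^2:|r|\le 3|h|\}$ via a harmonic-radial decomposition pulled back by $A_w$. The formula $J(x)=[q_w+c]$ with $c=\operatorname{Re}(hz^3+rz|z|^2)\circ A_w$ in the theorem statement is precisely the composition of these identifications.

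For (a), the group $\SL(2,\RR)$ acts transitively on $\Pos$ by $\beta\mapsto A^t\beta A$ with $\SO(2)$ stabilizing $q_i=I$, so $\Pos\cong \SL(2,\RR)/\SO(2)\cong \HH$. The Iwasawa decomposition provides a unique Cholesky factorization $\beta=A^tA$ with $A\in\SL(2,\RR)$ upper triangular with positive diagonal; under the standard identification $\SL(2,\RR)/\SO(2)\cong\HH$ via the Mobius action on $i$, this $A$ is precisely $A_w$, so $w\mapsto q_w=A_w^tA_w$ is a homeomorphism $\HH\to\Pos$.

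For (b), identify $V\equiv \CC$ via $q_i$. The real 4-dimensional space $\Sym^3\RR^2$ decomposes uniquely as $p(z)=\operatorname{Re}(hz^3)+\operatorname{Re}(rz|z|^2)$ with $h,r\in\CC$: the first summand parameterizes the 2-dimensional space of harmonic cubics and the second its 2-dimensional complement $|z|^2\cdot V^*$. For general $\beta=q_w$, define $(h,r)$ by pulling back through $A_w$, matching the formula of the theorem. By Theorem \ref{weightspace} combined with the explicit formula (\ref{ccformula}) for the cubic as a weighted sum of cubes of weights, every $c$ in the fiber has the form $\sum c_i\xi_i^3$ with $\xi_i\in V^*$ satisfying the weights equation (\ref{weightseqtn}). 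A direct expansion of $(ax+by)^3$ against the harmonic-radial basis shows that a single cube $\xi^3$ with $\xi=a+ib\in\CC$ yields $h=\bar\xi^3/4$ and $r=3|\xi|^2\bar\xi/4$, so $|r|=3|h|$. Hence type-1 cusps land on the boundary $|r|=3|h|$, the standard cusp ($c=0$) maps to the origin, and the harmonic cubics (affine-sphere boundaries, as noted after Theorem \ref{polythm}) sit on the axis $r=0$.

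The main obstacle is identifying the fiber cone as \emph{exactly} $\{|r|\le 3|h|\}$ for type-2 and type-3 cusps, and establishing surjectivity of the weight-data parameterization onto the closed cone. The naive triangle inequality is insufficient here because distinct single-weight contributions with $|r_i|=3|h_i|$ can cancel in $h$ while adding constructively in $r$; this must be ruled out by exploiting the pairwise orthogonality $\langle\xi_i,\xi_j\rangle_{\beta^*}=-\varpi$ from (\ref{weightseqtn}). By the $\SO(2)$-equivariance of the decomposition (rotation of $V\equiv\CC$ acts by $h\mapsto e^{-3i\theta}h$ and $r\mapsto e^{-i\theta}r$), one may normalize to $h\in\RR_{\ge 0}$ and analyze angular configurations. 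Parameterizing the three-weight data explicitly by angles and radii of the $\xi_i$ subject to the pairwise constraint, substituting into (\ref{ccformula}), and computing $(h,r)$, a dimension count (the fiber $F_3$ from Theorem \ref{weightspace} and the target cone both have real dimension 4) together with matching of the strata (type 0 at the origin, type 1 on $|r|=3|h|$, and types 2, 3 filling the interior) then confirms that the image is precisely the closed cone $\{|r|\le 3|h|\}$.
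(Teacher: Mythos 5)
Your outline of the base identification $\Pos\cong\HH$ via the Cholesky/Iwasawa map is essentially the paper's; likewise the $\SO(2)$-equivariance of the harmonic--radial splitting and the single-cube computation $|r|=3|h|$. But the proposal has a concrete error and a genuine gap in the crucial step, which is showing the image is exactly the closed cone.

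The error: you assert that ``types 2, 3'' fill the interior $|r|<3|h|$. In fact \emph{every} non-diagonalizable cusp (types $0,1,2$) lands on the boundary $|r|=3|h|$, and only type $3$ fills the interior; this is stated explicitly in Theorem \ref{3mfdspace} and again in the stratification at the end of Section 7, where $T_2 = \bdy F\setminus(T_0\cup T_1)$. The geometric reason is the one you already observed for a single cube, applied more fully: for $\lambda_0=0$ one may take $\kappa=0$ by Corollary \ref{Tconj}, so $\beta$ becomes the standard form and the weights are $\xi_i=\lambda_i e_i^*$, genuinely orthogonal in the standard metric. The cubic is then $3c = \lambda_1 x^3 + \lambda_2 y^3$, a sum of cubes of \emph{orthogonal} linear forms, and a direct expansion (Claim 1 of the proof of Theorem \ref{3mfdspace}) gives $r = 3\bar h$ exactly --- so $|r|=3|h|$ with no cancellation analysis needed, even for type 2. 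Your proposed cancellation argument via the pairwise-orthogonality constraint is thus aimed at the wrong stratum; the case where $\varpi>0$ (type 3, non-orthogonal weights) is where something has to be proved about the strict inequality.

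The gap: you propose to finish by ``a dimension count (\ldots both have real dimension 4) together with matching of the strata.'' Dimension equality does not give surjectivity, and you have not produced the needed estimate $|r|<3|h|$ for type-3 weight data. The paper avoids the computation entirely with a topological argument: the type-3 stratum $P=\Modsp_3(3)$ is connected (Proposition \ref{strata}) and $J$ is injective, so $f\circ J$ where $f(\beta,h,r)=3|h|-|r|$ has constant sign on $P$; a single check at $\lambda=(1,1,1)$, where $c$ is harmonic ($r=0$, $h\neq 0$) by Proposition \ref{affsphharmonic}, forces $f>0$ on all of $P$. Properness of $J$ makes the image closed, and since $P$ and $\Pcal\times\CC^2$ are both $6$-manifolds, invariance of domain makes $J(P)$ open; combined with the boundary computation this forces $J(P)=f^{-1}(0,\infty)$. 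If you want a purely computational route you would need an actual proof that the type-3 configurations satisfy $|r|<3|h|$, which your outline does not supply and which is likely to be messier than the paper's connectedness argument.
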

  
  This result determines exactly which cubic differentials appear.
 One may regard the generalized cusp for $(w,h,r)$ as a deformation of the hyperbolic cusp corresponding to $(w,0,0)$.
 The generalized cusps with a fixed conformal structure, $w$, on the boundary are parameterized by a point
 in $\{(h,r)\in\CC^2:\ |r|\le3|h|\}$. This is a cone on a solid torus. The cubic is harmonic if and only if $r=0$, in which case either the cusp 
  holonomy is  conjugate in $\GL(4,\RR)$ into a unipotent subgroup of $O(3,1)$, or else into the diagonal subgroup of $\Aff(\RR^3)$
 where the determinant is one. }
 
  We assume the reader is familiar with  the  main results and definitions up to the end of Section 1
   from \cite{BCL}. {\editC Each facet of the closed Weyl chamber $A_n\subset\RR^n$  parameterizes those translation groups 
   $\Tr(\psi)$ of a fixed type. The main new ingredient, (\ref{lambdakappa}), is a connected set $\widetilde A_n$ of representations that give conjugates of 
   generalized cusps of all types. 
   
   \editC The set $\widetilde A_n$ is obtained by a kind of iterated {\em blowup} of $A_n$ in the sense of algebraic geometry, and each fiber of each blowup consists of
   pairwise conjugate representations. There seems to be no obvious way to replace $\widetilde A_n$ by a continuous family containing only one representative of
   each conjugacy class. The subspace of $\widetilde A_n$ consisting  of {\em guys} of type $\type$ 
   is the interior of a compact manifold, $M$, with boundary. The {\em direction} that a sequence 
   $\rho_n\in\interior(M)\subset\widetilde A_n$ 
    converges to a point  $p\in\bdy M $ determines a point in $\widetilde A_n$ that is some conjugate 
   of some representation corresponding to $p$. 
   
   The paper is organized as follows. In  Section 2 we review the translation
    groups $\Tr(\psi)$ and show that a marked translation group
   is uniquely determined by the complete invariant. In Section 3 we introduce a connected space $\widetilde A_n$ that
   continuously parameterizes
   translation groups   of all types. In Section 4 we 
      prove the complete invariant provides an embedding of
       the marked moduli space $\Modsp_n$. In Section 5 we obtain the characterization (\ref{weightseqtn}) 
       of the weights of marked translation groups.
       In Section 6 we show that a marked translation
   group is determined by the sum of a quadratic and a cubic differential. In Section 7 we compute $\Modsp_3$, the marked moduli space for 3-manifolds.  {\editD Various routine computational proofs were moved into an appendix to avoid disrupting the flow of ideas.}}

    The proof that the {\editC shape invariant} determines a marked 
    translation group that is unique up to conjugacy  is a rather long  and technical computation in Lemma \ref{localmaxlemmadiag}
    that is an ad-hoc algebraic argument. Perhaps there is a better way to establish this with some differential
geometry. The various descriptions of the moduli space only gradually emerged as we stumbled upon various clues.
In particular, the new
parameters in Section 2 were discovered by a very circuitous route.
We thank {\em Kent Vashaw} for assistance with some representation theory
{\editC and {\em Daniel Fox} for providing references concerning the affine normal, and a proof of (\ref{affinenormalencode}) based on them. }  The first author was partially supported by the NSF grant DMS-1709097. The second author
 thanks the University of Sydney Mathematical Research
Institute (SMRI) for partial support and hospitality while working on this paper. The third author was partially supported by ISF grant 704/08. 

\section{The Complete Invariant}

Throughout  $V\equiv\RR^{n-1}$ will denote the {\em extended domain} of the holonomy of a marked generalized cusp,
and $\{e_1,\cdots,e_{k}\}$ is the standard basis of $\RR^k$, {\editB and $\{e_1^*,\cdots,e_k^*\}$ is the dual basis of the dual vector space.}  If $X\subset\RR^n$ then
$\GL(X)\subset\GL(n,\RR)$ is the subgroup that preserves $X$.
Affine space is $\AA^n:=\RR^n\times 1\subset\RR^{n+1}$ and
the affine group is $\Aff(n):=\GL(\AA^n)\subset\GL(n+1,\RR)$.
If $X\subset\AA^n$ then $\Aff(X)\subset\Aff(n)$ is the subgroup that preserves $X$. 
What follows, up to Theorem \ref{classification}, is from \cite{BCL}.

\begin{definition} Suppose $\Omega\subset\AA^n$ is a  closed, convex, subset bounded 
by a non-compact, properly
embedded, strictly convex hypersurface $\bdy\Omega$. Also
suppose $\Aff(\Omega)$ contains a subgroup $T=T(\Omega)\cong(V,+)$ 
 that
acts simply-transitively on $\bdy\Omega$. Then
$T$ is called a {\em translation group} and the group $G(\Omega)\subset\Aff(\Omega)$ 
that preserves each $T$-orbit is called a {\em cusp Lie group}.\end{definition}

 The subgroup $T$ is unique.
The $T$-orbit of a point in $\Omega$ is called a {\em horosphere}. Horospheres are smooth, strictly-convex
hypersurfaces that foliate $\Omega$. In particular $\bdy\Omega$ is a horosphere. 
Moreover $G(\Omega)=\Aff(\Omega)$ unless $\Omega\cong \HH^n$, in which case {\editC $G(\Omega)$ is conjugate into 
a subgroup $\PO(n,1)$.}
A {\em generalized cusp} is an affine manifold $\Omega/\Gamma$ where $\Gamma\subset G(\Omega)$
 is a torsion-free lattice.
Choose a {\em basepoint} $b\in\bdy\Omega$. The subgroup $\O(\Omega,b)\subset G(\Omega)$ that fixes
 $b$   is called a {\em cusp orthogonal group}, and is compact, and
 $G(\Omega)=\O(\Omega,b)\ltimes T$. {\editA Different notation was used for this in \cite[Definition 1.45]{BCL}.}
  We focus on torus cusps. Then  the holonomy is an isomorphism
  $\theta':\ZZ^{n-1}\to\Gamma\subset T$. The {\em extended holonomy}  is the extension of this homomorphism to
an isomorphism $\theta:V\to T$. 

\begin{definition}\label{markedtransgrpdef}
A {\em marked translation group} is an isomorphism $\theta:V\to T$ where $T\subset\Aff(n)$
is a translation group. 
\end{definition}

Given a marked translation group $\theta$, there is a direct sum decomposition  
\begin{equation}\label{DplusU}V=D\oplus U\end{equation}  where
  $\theta(U)$ is the subgroup 
of unipotent elements, and $\theta(D)$ is the subgroup  
of elements for which the largest Jordan block has size $2$. Thus $\theta(D)$
 contains the diagonalizable subgroup.  In the notation of {\editD \cite[(1.41)]{BCL} $U=P(\psi)$ and $D=T_2$.}

\begin{definition}\label{unipotentrankdef} The {\em type} $\type:\RR^n\to \ZZ$, the {\em unipotent rank} $\ur:\RR^n\to \ZZ$ and the 
{\em rank} 
$\rank:\RR^n\to \ZZ$ are defined for $x=(x_1,\cdots,x_n)$ by 
$$\type(x)=|\{i:x_i\ne 0\}| \qquad \rank(x)=\min(\type(x),n-1)\qquad \ur(x)+\rank(x)=n-1$$
\end{definition}

These functions are used in the context of two families of marked translation groups that involve a 
parameter $x\in\RR^n$ and for these, 
 $\rank(x)=\dim D$, and $\ur(x)=\dim U$, and $\type(x)$ is the number of non-constant weights of $\theta$. 
 If  $\psi:\RR^n\to \RR$ is a homomorphism  we will often identify
 $\psi$ with $(\psi_1,\cdots,\psi_n)\in \RR^n$  where $\psi_i=\psi(e_i)$.

\begin{definition}\label{psigroup}  {\editC The group $\Tr(\psi)=\zeta_{\psi}(V)$ is defined as follows.}
$$\begin{array}{rcl}
A_n(\Psi)&:=&\{(\psi_1,\cdots,\psi_n):\psi_1\ge\psi_2\ge\cdots\ge\psi_n\ge 0\}\\
A_n^u(\Psi)&:=&\{(\psi_1,\cdots,\psi_n):\ \forall\ i\ \psi_i\ge 0\ \&\ \exists\ t\ (\psi_i>0\Leftrightarrow i\le t)\ \}\} \qquad (\textrm{unordered})\\
\end{array}
$$
If  $\psi\in A_n^u(\Psi)$
set $\type=\type(\psi)$ and $\ur=\ur(\psi)$ and $\rank=\rank(\psi)$.
If $\type=0$ set $E=\emptyset$ and $\psi^-=0$, otherwise {\editC define $\psi^-\in V^*$ and $E$} by $$\psi^-(v_1,\cdots,v_{n-1})=-\psi(v_1,\cdots,v_{n-1},0),\qquad E={\editC\psi_{\type}\cdot}\Diag(v_1,\cdots,v_{\rank})$$
Define  $\zeta_{\psi}:V\to\Aff(n)$ by $\zeta_{\psi}(v)=\exp f_{\psi}(v)$
 where
 $f_{\psi}(v)=$
$$
\begin{array}{ccccc} \type<n-1 & &\type=n-1 &  & \type=n \\
\\
\begin{pmatrix}
{\editB E} & 0\\
0 &\begin{pmatrix}
0 & v_{\rank+1}&\cdots& v_{\rank+\ur} & \psi^-(v)\\
0 &\cdots &  & 0& v_{\rank+1}\\
\vdots & &  & & \vdots\\
0 &\cdots  &  & 0& v_{\rank+\ur}\\
0 & \cdots &  &   0& 0
\end{pmatrix} \\
\end{pmatrix} 
 & &  \begin{pmatrix}
{\editB E} &  0 & 0\\
0 & 0 & \psi^-(v) \\
0 & 0 & 0
\end{pmatrix} & &
 \begin{pmatrix}
E &  0 & 0\\
0 & \psi^-(v) & 0 \\
0 & 0 & 0
\end{pmatrix}
\end{array}
$$
Observe that $\rank+\ur=n-1$.
\end{definition}
Since all the eigenvalues are positive, $\zeta_{\psi}:V\to \Tr(\psi)$  is an isomorphism, so $\Tr(\psi)\cong\RR^{n-1}$. {\editC
It follows from \cite[Theorem 0.2]{BCL}, and we show below, that  $\zeta_{\psi}$ is conjugate to $\zeta_{\psi'}$ if and only if $\psi=\psi'$. However
$\Tr(\psi)$ and $\Tr(\psi')$ are conjugate subgroups if and only if $\psi=s\psi'$ for some $s>0$.} 
 
 \begin{theorem}\label{classification}
  {\editC (a) $\Tr(\psi)$ is a translation group.\\
 (b) If $s>0$ then $\zeta_{s\psi}=\zeta_{\psi}\circ ((s \Id_{\rank})\oplus \Id_{\ur})$.\\
Suppose $\theta:V\to\Aff(n)$ is a marked translation group then  \\
(c) $\exists !\ \psi\in A_n(\Psi)$ and $\exists\ B\in \SLpm(V)$ such that $\theta$ is conjugate to $\zeta_{\psi}\circ B$.\\
(d) $\exists\ \psi'\in A_n^u(\Psi)$ and $\exists\ B'\in \SL(V)$ such that $\theta$ is conjugate to $\zeta_{\psi'}\circ B'$.}
 \end{theorem}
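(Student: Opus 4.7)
The plan is to bootstrap from the classification of translation groups in \cite[Theorem 0.2]{BCL} together with the explicit matrix formulas of Definition~\ref{psigroup}. Claim~(a) is a direct restatement: $\Tr(\psi)=\zeta_\psi(V)$ is precisely the subgroup shown in \cite{BCL} to act simply transitively on the strictly convex boundary of an explicit properly convex domain in $\AA^n$. Claim~(b) is a block-by-block verification. In each of the three cases $\type<n-1$, $\type=n-1$, $\type=n$, rescaling $\psi\mapsto s\psi$ multiplies the diagonal block $E=\psi_\type\Diag(v_1,\ldots,v_\rank)$ and the scalar $\psi^-(v)$ each by $s$, while the unipotent entries $v_{\rank+1},\ldots,v_{\rank+\ur}$ are independent of $\psi$. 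The linear map $(s\Id_\rank)\oplus \Id_\ur$ produces the same effect on $f_\psi(v)$, using that $\psi_i=0$ for $i>\type$.

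For claim~(c), \cite[Theorem 0.2]{BCL} tells us that the subgroup $T=\theta(V)$ is conjugate in $\Aff(n)$ to $\Tr(\psi)$ for some $\psi\in A_n(\Psi)$, with $\Tr(\psi)$ and $\Tr(\psi')$ conjugate precisely when $\psi'=s\psi$ for some $s>0$. After a preliminary conjugation I may assume $\theta$ is a marked isomorphism $V\to \Tr(\psi)$, and then comparison with $\zeta_\psi$ gives a unique $B_0\in\GL(V)$ with $\theta=\zeta_\psi\circ B_0$. The scale of $\psi$ along its positive ray is pinned down using (b): the identity
\[\theta=\zeta_{s\psi}\circ\bigl((s^{-1}\Id_\rank)\oplus\Id_\ur\bigr)\circ B_0\]
gives a new pair $(s\psi,B)$ with $\det B=s^{-\rank}\det B_0$, so when $\rank\ge 1$ the equation $|\det B|=1$ has a unique solution in $s>0$, producing a unique $(\psi,B)\in A_n(\Psi)\times\SLpm(V)$. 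The standard-cusp case $\type=0$ requires a separate argument: $\zeta_0$ is purely unipotent and the scaling in (b) is trivial, so one uses instead a distinct one-parameter group of dilations in $\Aff(n)$ that normalizes $\Tr(0)$ and acts on the parameterization $V$ by scaling, absorbing $\det B_0$ by the same procedure; uniqueness of $\psi=0$ is automatic.

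Claim~(d) follows from (c) by enlarging the allowed $\psi$'s to the unordered set $A_n^u(\Psi)$. If the pair produced by (c) has $\det B=-1$, conjugating by a suitable coordinate-permutation matrix in $\Aff(n)$ reorders $\psi$ to a (generally different) element of $A_n^u(\Psi)$ and composes $B$ with the corresponding permutation of $V$, of determinant $-1$, yielding $B'\in\SL(V)$. The same device works when $\psi$ has repeated entries (in particular when $\psi=0$), where the permutation leaves $\psi$ formally fixed but still flips $\det B$.

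The main obstacle I anticipate is the unified treatment of the degenerate case $\type=0$ and, more generally, of $\psi$'s with many equal entries: in these situations the normalizer of $\zeta_\psi$ in $\Aff(n)$ is strictly larger than in the generic case, and one must verify carefully that the determinant-normalization still produces the claimed unique $\psi\in A_n(\Psi)$ and corresponding $B\in\SLpm(V)$. Everything else reduces to the structural calculation in (b) and the classification input from \cite{BCL}.
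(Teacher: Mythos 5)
Your existence arguments and the proof of (d) follow essentially the same route as the paper, and (b) is the same block-by-block verification. The genuine gap is in the uniqueness claim of (c). You normalize the scale of $\psi$ by requiring $|\det B|=1$, but this normalization is computed \emph{after fixing a particular conjugacy} $P$ taking $\theta(V)$ onto $\Tr(\psi)$. Different choices of $P$ (they form a coset of the normalizer of $\Tr(\psi)$ in $\Aff(n)$) yield different $B_0$, and a priori different normalizing factors $s$ and hence different $\psi\in A_n(\Psi)$. Put concretely: if $\zeta_\psi\circ B$ and $\zeta_{s\psi}\circ B'$ are conjugate in $\Aff(n)$ with both $B,B'\in\SLpm(V)$, you must still show $s=1$, and the determinant count alone does not give this without knowing that the map $C=((s\Id_\rank)\oplus\Id_\ur)B'B^{-1}$, which is the automorphism of $V$ induced by the normalizing element, has $|\det C|=1$. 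That is exactly what the paper supplies: by \cite[Theorem 0.2]{BCL} this $C$ lies in $\O(\EE^{n-1},\psi)$, which by \cite[(1.44)]{BCL} is a subgroup of the orthogonal group, forcing $s^\rank=1$ and hence $s=1$ when $\rank\ge 1$.

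You do flag ``the normalizer of $\zeta_\psi$ in $\Aff(n)$'' as an obstacle, but you misdiagnose it as specific to the degenerate cases $\type=0$ or to $\psi$ with repeated entries. In fact the issue is present for every $\psi$; the degenerate cases only make the normalizer larger, not qualitatively different. Also a minor point on (a): it is not quite a restatement, since the paper's $\Tr(\psi)=\zeta_\psi(V)$ agrees with the $T(\psi)$ of \cite{BCL} only after rescaling to $\psi_\type=1$; one needs (b), i.e.\ $\Tr(s\psi)=\Tr(\psi)$ as subgroups, to transfer the translation-group property to all $\psi$.
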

 \begin{proof} {\editC (b) 
{\editD  The definition shows
 $f_{s\psi}(v_1,\cdots,v_{n-1})=f(sv_1,\cdots,sv_{\rank},v_{\rank+1},\cdots, v_{\rank+\ur})$. }
 
(a) Given a marked translation group $\rho:V\to\Aff(n)$ then,
by \cite[Theorem 0.1]{BCL}, there is $\psi\in A_n(\Psi)$  such that $\rho(V)$ is conjugate into
 the group $T(\psi)$ defined in \cite[Definition 1.32]{BCL}. Moreover if $\psi\ne0$ we may choose $\psi_{\type}=1$ and then $T(\ppsi)=\Tr(\ppsi)$ as in (\ref{psigroup}). {\editD This proves (a)}. 
 
It follows that
 $\rho=\zeta_{\psi}\circ A$ for some $A\in\GL(V)$.  If $\rank>0$ then there is $s>0$ so that $A= ((s \Id_{\rank})\oplus \Id_{\ur})B$
 with $B\in\SLpm(V)$. Then
 $\rho=\zeta_{s\psi}\circ B$ by (b).
 If $\rank=0$ then $\psi=0$ and $\zeta_0\circ(s \Id)$ is conjugate to $\zeta_0$. Thus in this
 case we may also choose  $B\in\SLpm(V)$.
 
To show $\psi$ is unique, by \cite[Theorem 0.2]{BCL} $\psi$ is unique up to multiplication by some $s>0$. Suppose $\zeta_{\psi}\circ B$ is conjugate
to $\zeta_{s\psi}\circ B'$. Then $\zeta_{\psi}$ is conjugate to $\zeta_{s\psi}\circ (B' B^{-1})$, and thus to 
$\zeta_{\psi}\circ ((s \Id_{\rank})\oplus \Id_{\ur})B' B^{-1})$. By \cite[Theorem 0.2]{BCL} 
$((s \Id_{\rank})\oplus \Id_{\ur})B' B^{-1}\in \O(\EE^{n-1},\psi)$. By \cite[(1.44)]{BCL} this is a subgroup of the orthogonal group, thus $s=1$. This proves (c).
}
 
 For (d), when $n=2$ the result is easy,
 so assume $n\ge 3$ and $\det B=-1$.
 There are two coordinates $\psi_i,\psi_{i+1}$
 of $\psi$ that are either both zero or both non-zero. Swapping columns $i$ and $i+1$ of $B$
 gives $B'\in\SL (V)$ and swapping $\psi_i$ and $\psi_{i+1}$ gives $\psi'\in A_n^u(\Psi)$. Then 
$\zeta_{\psi}\circ B$ is conjugate to $\zeta_{\psi'}\circ B'$ by swapping the $i$ and $i+1$ coordinates
in $\RR^{n+1}$.
  \end{proof}

{\editA   We regard the second symmetric power, $\Sym^2 V$, as
the
vector space of homogeneous polynomials $\beta:V\to\RR$ of degree two. 
The subspace $\widetilde{\Pos}(V)\subset\Sym^2 V$
consists of positive definite forms and $\Pos(V)\subset\widetilde{\Pos}(V)$ is the subspace of unimodular forms.
Let $\pi_{_{\Pcal}}:\widetilde\Pcal(V)\to\Pcal(V)$ be the projection
$$\pi_{_{\Pcal}}(\beta)=(\det\beta)^{-1/(n-1)}\beta$$
 {\editB The notation $\beta\sim\beta'$ means there is $\lambda>0$ with $\beta'=\lambda\beta$.}
}
Given a marked translation group $\theta:V\to\Aff(n)$ 
  the {\em orbit map} $\mu_{\theta,b}:V\to\bdy\Omega$ is the homeomorphism defined by 
 \begin{equation}\label{orbitmap}\mu_{\theta,b}(v)=\left(\theta v\right)b\end{equation}
 where $b\in\bdy\Omega$ is some choice of basepoint. 
Since $\bdy\Omega$ is smooth and strictly convex, there is a unique
affine hyperplane $H_b\subset\AA^n$ with $H_b\cap\Omega=b$.  There is an affine map $\tau:\AA^n\to\RR$ 
with $\tau(H_b)=0$ and
 $\tau(\interior \Omega)>0$. 
The {\em height function}   \begin{equation}\label{heightfn}h_{\theta}=\tau\circ\mu_{\theta,b}:V\to\RR\end{equation} 
is only defined up to multiplication by a positive real.  We remind the reader that $\bdy\Omega$ is {\bf not} the graph of $h_{\theta}$, see (\ref{htlemma}). Note that if $b'$ is a different choice of basepoint, then there is unique element $A\in T(\Omega)$ such that $Ab=b'$. In this case $\tau'=\tau\circ A^{-1}$ is an affine map such that $\tau'(H_{b'})=0$ and $\tau'(\interior \Omega)>0$. Furthermore, $\mu_{\theta,b'}=A\circ \mu_{\theta,b}$, and so $\tau'\circ \mu_{\theta,b'}=\tau\circ \mu_{\theta,b}$. It follows that the height function is independent of the choice of basepoint.

{\editA Since $\bdy\Omega$ is strictly convex one obtains  positive definite quadratic forms }
\begin{equation}\label{betaeq}\widetilde\beta(\theta)=\operatorname{D}^2h_{\theta},\qquad \beta(\theta)=\pi_{\Pcal}(\widetilde\beta(\theta))\in\Pcal(V)\end{equation} 
{\editD After rescaling, the orbit map is an isometry from $(V,\beta)$ to $\bdy\Omega$ with the horosphere metric \cite[(2.14)]{BCL}.}
{\editE The form $\widetilde\beta$ is only defined up to scaling. To emphasize this we usually work with $[\beta]\in\PP\Pcal$.
However it is sometimes convenient to use the natural identification $\Pcal\equiv\PP\Pcal$. Then one
must remember that preserving $\beta$ only means $\beta$ is preserved up to rescaling. }

Writing $v=\sum_{i=1}^{n-1} v_ie_i$  and $u_i=(\partial\mu_{\theta,b}/\partial v_i)_{v=0}\in\RR^{n}$ then $(u_1,\cdots,u_{n-1})$ is a basis of 
the tangent space $\operatorname{T}_b\bdy\Omega\cong H_b$. We may use $\tau(x)=\pm\det(u_1,\cdots,u_{n-1},x)$ and
 a height function is then given by
\begin{equation}\label{geqtn1}
h_{\theta}(v)=\pm\det(u_1,\cdots,u_{n-1},\mu_{\theta,b}(v)-b)\end{equation}
where the sign is chosen so that $\tau(\Omega)\ge 0$.

{\editC  The space  $\Hom(V,\Aff(n))$ is  given the weak topology. This coincides with the Euclidean
topology when it is realized as an algebraic subset of Euclidean space.
 The space  $\Hom(V,\Aff(n))/\Aff(n)$ is the quotient space under the action of conjugacy. It is not Hausdorff.
 In Section 4 we define $\Modsp(V)$ as  equivalence classes of developing maps and show it is homeomorphic to $\Rep(V)$.
 Various functions defined on $\Rep(V)$ in this section can then be re-interpreted as functions on $\Modsp(V)$
 }

\begin{definition} ${\editD \widetilde\Rep(V)}\subset\Hom(V,\Aff(n))$
 is the subspace
of  marked translation groups, and
${\editD \Rep(V)}=\widetilde{\Rep}(V)/\Aff(n)$ is the space of conjugacy classes with the quotient topology.
\end{definition}

\begin{lemma}\label{betacts} $\widetilde\beta:\widetilde{\editD \Rep(}V)\to\Pos(V)$ is smooth and covers
a continuous map $\beta:{\editD \Rep}(V)\to\Pos(V)$.
\end{lemma}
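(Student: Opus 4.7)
The plan has three steps: establish smoothness of $\widetilde\beta$ from the explicit formulas in (\ref{orbitmap})--(\ref{geqtn1}), verify that conjugation rescales $\widetilde\beta$ by a positive constant (so that composing with $\pi_{_{\Pcal}}$ yields a strictly conjugation-invariant map), and deduce continuity of $\beta$ on $\Rep(V)$ from the universal property of the quotient topology.

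For smoothness, I will work locally and parametrize $\theta$ by its values on a basis of $V$, so that $\widetilde\Rep(V)$ sits inside Euclidean space via the corresponding algebraic conditions on $\Aff(n)^{n-1}$. The orbit map $\mu_{\theta, b}(v) = \theta(v) b$ is smooth in $(\theta, v)$, since $\theta(v)$ is a smooth matrix function of the parameters (either by the exponential formulas of Definition \ref{psigroup} composed with the conjugating element from Theorem \ref{classification}, or because $\theta(v) = \prod_i \theta(e_i)^{v_i}$ for commuting generators). Formula (\ref{geqtn1}) then expresses $h_\theta$ as a polynomial in smoothly varying quantities (with the sign locally constant in $\theta$), so $\widetilde\beta(\theta) = D^2 h_\theta|_{v = 0}$ varies smoothly with $\theta$.

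For equivariance, let $g \in \Aff(n)$ have linear part $L(g)$ and set $\theta' = g \theta g^{-1}$. Choosing the transported basepoint $b' = gb$ yields $\mu_{\theta', b'} = g \circ \mu_{\theta, b}$ and $u_i(\theta') = L(g) u_i(\theta)$. Substituting into (\ref{geqtn1}) and factoring $\det L(g)$ out of the determinant by multilinearity gives $h_{\theta'} = c \cdot h_\theta$ for some $c > 0$, where the sign in (\ref{geqtn1}) is adjusted to maintain $\tau'(\interior \Omega') \ge 0$. Hence $\widetilde\beta(\theta') = c \cdot \widetilde\beta(\theta)$, and applying $\pi_{_{\Pcal}}$ eliminates the scaling, giving $\beta(\theta') = \beta(\theta)$. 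Since $\beta = \pi_{_{\Pcal}} \circ \widetilde\beta$ is then continuous on $\widetilde\Rep(V)$ and constant on $\Aff(n)$-orbits, the universal property of the quotient topology produces a continuous descent $\beta : \Rep(V) \to \Pos(V)$.

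The main obstacle is the bookkeeping in the equivariance step: one must carefully track how the implicit choices (basepoint, affine form $\tau$, and sign in (\ref{geqtn1})) transform under conjugation, in order to confirm that the height function is rescaled only by a positive factor and not, for instance, by a negative one arising from an orientation change of the chosen frame $(u_1, \ldots, u_{n-1})$.
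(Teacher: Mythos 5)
Your plan follows the same route as the paper: use the explicit formula (\ref{geqtn1}) to get smoothness of $\widetilde\beta$, check behaviour under conjugation, and descend to $\Rep(V)$ by the quotient topology. In fact you are \emph{more} careful than the paper on the equivariance step. The paper simply asserts that $\widetilde\beta(\theta)=\operatorname{D}^2 h_\theta$ is conjugation-invariant, whereas you correctly note that conjugating by $g\in\Aff(n)$ rescales $h_\theta$ (and hence $\widetilde\beta$) by $|\det L(g)|$, and that it is only after composing with $\pi_{_{\Pcal}}$ that one gets a strictly $\Aff(n)$-invariant map. This matches what the formula actually gives, and is the right way to read the lemma.

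Where you have a gap is in the smoothness step. You fix a basepoint $b$ and observe that $(\theta,v)\mapsto \theta(v)b$ is smooth; this is true, and (\ref{geqtn1}) always produces a smooth symmetric form in the parameters. But the lemma requires that the output be the \emph{horosphere} quadratic form of the cusp, i.e.\ that it comes from a genuine height function for the convex domain associated to $\theta$. For that, the $\theta$-orbit of $b$ must be a strictly convex hypersurface, which fails exactly when $b$ lies in a projective subspace preserved by $\theta$ — a condition that varies with $\theta$. One therefore needs to argue that a basepoint $b$ valid for $\theta_0$ remains valid for all $\theta$ in a neighborhood of $\theta_0$ in $\widetilde\Rep(V)$; the paper supplies exactly this openness statement (citing [BCL, (1.52)]) and you should do the same. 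Without it, your argument shows $(\theta,v)\mapsto h_\theta(v)$ is smooth as a formula, but not that the resulting form is the $\widetilde\beta$ the lemma is about (in particular, not that it remains positive definite).
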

\begin{proof} By {\editD the discussion above}    $\widetilde\beta$  does not depend on the choice of basepoint $b$ or height function used above.
{\editD Given a marked translation group  $\theta$} every choice of basepoint $b$ has orbit a convex hypersurface unless $b$ lies
is a projective subspace preserved by $\theta$. 
Thus
in a neighborhood of $\theta$ in $\widetilde{\editD \Rep}(V)$ a fixed choice of basepoint $b$ can be used for the orbit map, \cite[(1.52)]{BCL}. Then 
the function $\mu:\widetilde{\editD \Rep}(V)\times V\to\RR$ given by $\mu(\theta,v)=\mu_{\theta,b}(v)$
is smooth near $(\theta,\editD v)$. Equations (\ref{orbitmap}) and (\ref{geqtn1}) then imply $h_{\theta}$ is smooth near $\theta$, so $\widetilde\beta$ is smooth.
{\editD It is clear that} $\widetilde\beta(\theta)$ is invariant under conjugation of $\theta$. Therefore $\widetilde\beta$
covers a map $\beta:{\editD \Rep}(V)\to\Pos(V)$ which is continuous by properties of the quotient topology.
\end{proof}

The {\em character} of a homomorphism $\rho:V\to\GL(n+1,\RR)$ is $\chi(\rho):V\to\RR$ given by 
$\chi(\rho)=\trace\circ\rho$. The {\em {\editD trace-variety}}, $\chi(V)$, is the set of characters of all such homomorphisms.
{\editD $\Hom(V,\Aff_n)$ is a real algebraic variety, and $\chi(V)$  is its image under a polynomial map.
  Thus $\chi(V)$ is a semi-algebraic set, and  in particular is homeomorphic to a subset of Euclidean space}. 

{\editD By (\ref{classification})} a marked translation group is conjugate to an upper triangular group. The character is not changed by conjugation.
The character of an upper-triangular representation is a function on $V$ that is the sum of $(n+1)$ functions,
each of which is the
 exponential of an element of $V^*$. Thus the subspace of $\chi(V)$ consisting of characters of marked translation
 groups is homeomorphic to a subspace of $\editD \SP^{n+1}V^*$.

\begin{definition}\label{metricdef} 
Given 
a marked translation group  $\theta:V\to \Aff(n)$ then
\begin{itemize}
\item The {\em horosphere metric}   is 
the unimodular quadratic form
$\beta(\theta)\in \Pos(V)$
\item
The {\em complete invariant} is $\cinvt(\theta)=(\chi(\theta),{\editD [\beta(\theta)]})$.
\end{itemize}
Also $\O(\eta(\theta))\subset\GL(V)$ is the subgroup that preserves both $\chi(\theta)$ and $[\beta(\theta)]$.
\end{definition}
{\editD Lemma (\ref{Opsi}) implies $\O(\eta(\theta))$ is a subgroup of the orthogonal group of $\beta$ unless $\type(\theta)=0$,
in which case it is the group of Euclidean similarites fixing $0$.}

\begin{proposition}\label{iotacts} The complete invariant $\cinvt:{\editC\Rep(V)}\to \chi(V)\times \editD\PP(\Sym^2 V)$
is continuous.
\end{proposition}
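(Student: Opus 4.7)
The plan is to check continuity of each coordinate of $\eta=(\chi,[\beta])$ separately by lifting to $\widetilde\Rep(V)$ and then applying the universal property of the quotient topology, using that both trace and $\widetilde\beta$ are conjugation-invariant. The second coordinate is essentially already in hand: Lemma \ref{betacts} gives continuity of $\beta:\Rep(V)\to\Pos(V)$, and postcomposing with the restriction $\Pos(V)\subset\Sym^2V\setminus\{0\}\to\PP\Sym^2V$ of the standard continuous quotient onto projective space yields the continuous map $[\beta]$.

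For the first coordinate, start with the lift $\widetilde\chi(\theta)(v)=\trace(\theta(v))$ defined on $\widetilde\Rep(V)\subset\Hom(V,\Aff(n))$. For each fixed $v\in V$, evaluation $\theta\mapsto\theta(v)\in\GL(n+1,\RR)$ is continuous by the definition of the weak topology on $\Hom(V,\Aff(n))$, and $\trace$ is polynomial in matrix entries, so $\theta\mapsto\chi(\theta)(v)$ is continuous. To see that $\widetilde\chi$ lands continuously in $\chi(V)$ with its subspace topology from $\SP^{n+1}V^*$ (equivalently, from its Euclidean realization as a semi-algebraic set, as discussed just before Definition \ref{metricdef}), invoke Theorem \ref{classification}: every $\theta$ is conjugate to an explicit upper-triangular form $\zeta_\psi\circ B$, whose diagonal entries are exponentials of linear forms in $V^*$ depending continuously on the parameters. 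Alternatively, the $n+1$ weights are logarithms of eigenvalues of the $\theta(v)$'s, which depend continuously on $\theta$ as unordered multisets. This exhibits $\widetilde\chi$ as a continuous map to $\chi(V)$, and conjugation-invariance of $\trace$ lets it descend via the universal property to a continuous $\chi:\Rep(V)\to\chi(V)$.

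The main subtlety, and the step requiring the most care, is simply verifying that the natural candidate topologies on $\chi(V)$ — the subspace topology from $\SP^{n+1}V^*$, from its Euclidean embedding as a semi-algebraic set, and from pointwise convergence on $V$ — all agree on the relevant subset. This follows because weights provide a continuous bijective parameterization of characters of marked translation groups, and $\SP^{n+1}V^*$ is itself Euclidean via elementary symmetric polynomials, so the various descriptions induce the same topology. Combining the two continuous coordinates, $\eta:\Rep(V)\to\chi(V)\times\PP(\Sym^2V)$ is continuous.
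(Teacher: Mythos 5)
Your proposal is correct and takes essentially the same approach as the paper, which disposes of the claim in one line by citing Lemma \ref{betacts} for $\beta$ and invoking the well-known continuity of trace characters for $\chi$. What you add is a careful unpacking of the second point — the descent from $\widetilde\Rep(V)$ via conjugation-invariance, and the observation that the ambient topology on $\chi(V)$ (semi-algebraic/Euclidean, $\SP^{n+1}V^*$, or pointwise) is unambiguous on characters of marked translation groups — which is a useful elaboration of what the paper treats as folklore, but not a different argument.
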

\begin{proof} It is well known that $\chi$ is continuous, and
 $\beta$ is continuous by (\ref{betacts}). \end{proof}

A dual vector $\weight\in V^*$
is a {\em Lie-algebra weight} of $\theta:V\to\GL(n+1,\RR)$, and  $\exp\circ\weight$ is a {\em weight}, if the {\em weight space}
$$V(\theta,\weight):=\bigcap_{v\in V}\ker(\theta(v)-\exp\circ\weight(v))\ne0.$$
Let $\langle\cdot,\cdot\rangle_{\beta}$ be the inner product on $V$ given by $\beta$.
 Let $\beta^*\in\Sym^2V^*$ denote the dual quadratic form
{\editD defined by $\beta^*(\phi)=\beta(v)$ if $\phi(x)=\langle v,x\rangle$.}
Let $\langle\cdot,\cdot\rangle_{\beta^*}$ be inner product on $V^*$ given by $\beta^*$. {\editC The proof
of the following is routine and in the appendix.}

\begin{proposition}\label{completepsi}  Given $\ppsi\in  A_n^{\editC u}(\Psi)$ the decomposition $V=D\oplus U$ for $\zeta_{\psi}$ is orthogonal with respect to 
$\beta(\zeta_{\ppsi})$. Set  $\ur=\ur(\psi)$ and $\type=\type(\psi)$, then $\beta(\zeta_{\ppsi}) \sim \beta'$ where for $v\in V$
$$\editA\begin{array}{cc}
\type <n &  \type =n\\
\beta'(v) =
    \sum_{i=1}^{\type} \psi_iv_i^2+{\editC \psi_{\type}^{-1}}\sum_{i=\type+1}^{n-1}v_{i}^2
    &  \beta'(v) = \sum_{i=1}^{n-1} \psi_iv_i^2+ \psi_n^{-1}\left(\sum_{i=1}^{n-1}\psi_iv_{i}\right)^2\\
     \chi(\zeta_{\ppsi}) (v) =
 2+\ur+  \sum_{i=1}^{\type} \exp({\editC \psi_{\type}}v_i)
&     \chi(\zeta_{\ppsi}) (v) =
 1 + \sum_{i=1}^{n-1} \exp({\editC \psi_n}v_i)+\exp\left(-\sum_{i=1}^{n-1}\psi_iv_i\right)
\end{array}
 $$
 Moreover, when $\type<n$ then $\det\beta'= \psi_1\cdots\editC \psi_{\type-1}\psi_{\type}^{\type+2-n}$ and
 the  non-zero Lie
 algebra weights of $\zeta_{\ppsi}$ are $\{\dxi_i={\editD\psi_{\type}}e_i^*:1\le i\le\type\}$,
 and their duals are an orthogonal basis of $D$, and
 $\beta^*(\xi_i)={\editD\psi_{\type}^2}\left(\det\beta'\right)^{-1/(n-1)}\psi_i^{-1}$.
 {\editC Also when $\type=n$ then $\det \beta'=\psi_1\cdots\psi_{n-1}\psi_n^{-1}\sum_{i=1}^n\psi_i$}.
  \end{proposition}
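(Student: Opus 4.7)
The strategy is a direct matrix computation, exploiting the fact that $\zeta_{\psi}$ is defined as the exponential of a block-structured matrix $f_{\psi}(v)$, so the geometric data ($\chi$, $\beta$, weights) can be read off by differentiating or by inspecting eigenvalues.

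First I would compute the character $\chi(\zeta_{\psi})$. Since $\chi(\zeta_{\psi})(v)=\trace\exp f_{\psi}(v)$, and the trace of $\exp M$ is the sum of the exponentials of the eigenvalues of $M$, one just reads off the eigenvalues from the block form. In Definition \ref{psigroup} the upper-left block $E$ is diagonal with entries $\psi_{\type}v_1,\ldots,\psi_{\type}v_{\rank}$; in the case $\type<n$ the lower-right block is strictly upper-triangular (hence nilpotent) of size $\ur+2$, contributing $\ur+2$ copies of the eigenvalue $0$; in the case $\type=n$ there is one extra diagonal entry $\psi^-(v)=-\sum_{i=1}^{n-1}\psi_iv_i$ together with a single $0$. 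The stated character formulas then follow immediately.

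Next I would compute $\beta$. Pick a basepoint $b\in\bdy\Omega$ adapted to the block structure (in the hyperbolic-type block, a point analogous to the standard horosphere basepoint in $\HH^{n-\type}$; in the diagonal block, any point with positive coordinates), and write $\mu(v)=\zeta_{\psi}(v)b$ using the explicit exponential of $f_{\psi}(v)$. The tangent vectors $u_i=(\partial\mu/\partial v_i)_{v=0}$ are easily computed; then the height function from (\ref{geqtn1}) is a determinant that, through second order in $v$, splits along the block structure. A direct two-jet computation of $h_{\theta}$ at $0$ yields the Hessian $\widetilde\beta$, which is block-diagonal with the diagonal $D$-block contributing $\sum_{i=1}^{\type}\psi_iv_i^2$ (up to rescaling) and the unipotent $U$-block contributing the remaining quadratic terms, with the cross term $\psi_n^{-1}(\sum\psi_iv_i)^2$ appearing only in the $\type=n$ case as the second-order correction coming from $\psi^-$. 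This block structure is exactly the asserted orthogonality of $V=D\oplus U$ with respect to $\beta$, and projectivizing via $\pi_{\Pcal}$ gives $\beta\sim\beta'$. A routine determinant computation on the explicit diagonal form (plus a rank-one correction when $\type=n$) produces the claimed $\det\beta'$.

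Finally, the Lie-algebra weights are read from $f_{\psi}(v)$: the diagonal entries $\psi_{\type}v_i$ of $E$ give $\xi_i=\psi_{\type}e_i^*$ for $1\le i\le \type$, and their duals (with respect to any of the proportional forms $\beta\sim\beta'$) are spanned by $e_1,\ldots,e_{\type}$, i.e.\ an orthogonal basis of $D$. The formula $\beta^*(\xi_i)=\psi_{\type}^2(\det\beta')^{-1/(n-1)}\psi_i^{-1}$ is then obtained by inverting the diagonal matrix of the rescaled $\beta$ and evaluating on $\xi_i$. The main obstacle is bookkeeping in the $\type=n$ case: because $\psi^-(v)$ introduces a genuine off-diagonal contribution to the height function, the Hessian acquires the rank-one correction $\psi_n^{-1}(\sum\psi_iv_i)^2$, and one must check carefully that this does not disturb the orthogonal decomposition or alter the weight computation. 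Everything else is standard differential calculus applied to the explicit matrix formulas, which is why the authors relegate this verification to the appendix.
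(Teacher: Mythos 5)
Your proposal is correct and follows essentially the same route as the paper's appendix proof: read the character and weights off the eigenvalues of $f_{\psi}(v)$, choose a basepoint adapted to the block structure $D\oplus U$, compute the height function via the determinant formula \eqref{geqtn1}, and extract the Hessian by a two-jet calculation (with the rank-one correction in the $\type=n$ case coming from the $\psi^{-}$ entry, handled in the paper by the matrix determinant lemma exactly as you suggest). The one place you gloss over is the final step: ``inverting the diagonal matrix of the rescaled $\beta$'' must track the normalization $\beta=\gamma\beta'$ with $\gamma=(\det\beta')^{-1/(n-1)}$ carefully, since the dual form scales by $\gamma^{-1}$; carrying this through is a short but not entirely trivial bookkeeping step, and is where all of the determinant factors land.
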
 
  
  {\editC Theorem (\ref{completeinvt}) shows that the complete invariant determines a marked translation group up to conjugacy. 
  Theorem (\ref{Jcomplete}) shows the same for the shape invariant. {\em  The strategy is the same in both cases.
  One argument shows the invariant determines the translation group up to conjugacy. The second part
 is to  show that the invariant determines the stabilizer of a point 
  $\O(\bdy\Omega,b)\subset G(\Omega)$.}}
  
   {\editA
\begin{corollary}\label{psiformula} Suppose {\editD$n\ge 3$} and  $\theta:V\to\Aff(n)$ is a marked translation group.  
Then $\theta$ is conjugate to $\zeta_{\psi}\circ B$ for some $\psi=(\psi_1,\cdots,\psi_n)\in A_n$  and $B\in\SLpm V$, and
the complete invariant $\eta(\theta)$ {\editC uniquely} determines $\psi$.
\end{corollary}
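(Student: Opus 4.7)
\medskip
\noindent\textbf{Proof plan.} The existence of $\psi\in A_n$ and $B\in\SLpm V$ with $\theta$ conjugate to $\zeta_\psi\circ B$ is immediate from Theorem \ref{classification}(c); the task is to show that the complete invariant $\eta(\theta)$ determines $\psi$. Since $\eta$ is conjugation-invariant by \ref{iotacts}, I may work with the representative $\zeta_\psi\circ B$, and the plan is to extract $\psi$ explicitly by inverting the formulas of Proposition \ref{completepsi}.

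From the character $\chi(\theta)$ I would first read off the type $t$ and the non-zero weights $\xi_1,\dots,\xi_t\in V^*$. By Proposition \ref{completepsi} combined with the identity $\chi(\zeta_\psi\circ B)(v)=\chi(\zeta_\psi)(Bv)$, the character is an integer constant plus a sum of distinct exponentials $\exp(\xi_i(v))$; since distinct exponentials are linearly independent as functions on $V$, both the constant (which equals $n+1-t$, the multiplicity of the zero weight) and the unordered set of weights $\{\xi_i\}$ are canonically recovered from $\chi(\theta)$, so $t$ is determined. Next, since the projective class $[\beta(\theta)]$ contains a unique positive-definite unimodular representative $\beta$, the dual form $\beta^*$ on $V^*$ is canonically defined, and I can compute the pairings $a_{ij}:=\beta^*(\xi_i,\xi_j)$.

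Finally I would invert Proposition \ref{completepsi}. When $0<t<n$: since $B\in\SLpm V$ is volume-preserving, the $B$'s cancel in $\xi_i^t(B^tMB)^{-1}\xi_i$ and a short change-of-variables computation yields $a_{ii}=\psi_t^{2}(\det\beta')^{1/(n-1)}\psi_i^{-1}$ with $\det\beta'=\psi_1\cdots\psi_{t-1}\psi_t^{t+2-n}$. After labelling the weights so that $a_{11}\le\cdots\le a_{tt}$ (corresponding to $\psi_1\ge\cdots\ge\psi_t$), eliminating $\det\beta'$ gives the closed form
\[
\psi_t^{2t}\;=\;a_{tt}^{\,n-t}\prod_{i=1}^{t-1}a_{ii},
\]
which uniquely determines $\psi_t>0$, after which $\psi_i=\psi_t\,a_{tt}/a_{ii}$ recovers the remaining coordinates; the case $t=0$ is trivial. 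When $t=n$: the $n$ non-zero weights span $V^*$ and satisfy a unique (up to scalar) linear relation $\sum c_i\xi_i=0$ whose coefficients are proportional to $(\psi_1,\dots,\psi_n)$, so $\chi(\theta)$ already determines $\psi$ up to overall scale, and $\beta^*$ pins down the scale.

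The main obstacle I expect is the $t=n$ case: the cross term $\psi_n^{-1}(\sum\psi_i v_i)^2$ in $\beta'$ makes $\beta^*$ non-diagonal in any natural basis of weights, and one must distinguish the ``special'' weight $-\sum\psi_i e_i^*\circ B$ from the $n-1$ ``normal'' weights $\psi_n e_i^*\circ B$. I would handle this via the ordering convention on $A_n$: the unique positive normalization of the linear dependence among the weights exhibits the coefficients as $(\psi_1,\dots,\psi_n)$ with $\psi_1\ge\cdots\ge\psi_n$, which in the generic case identifies the special weight and in degenerate cases (when some $\psi_i$ coincide) merely reflects the intrinsic symmetry of the situation.
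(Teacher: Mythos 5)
Your proof is correct and follows the same underlying strategy as the paper's: reduce to $\theta=\zeta_\psi\circ B$ via Theorem \ref{classification}(c), observe that the $\SLpm V$-factor cancels in $\beta^*(\xi_i)$, and invert the resulting relation $a_{ii}=\beta^*(\xi_i)=\psi_t^2(\det\beta')^{1/(n-1)}\psi_i^{-1}$ to recover $\psi$. Where you differ from the paper is in the execution. For $0<t<n$, the paper passes to logarithmic coordinates and proves abstractly that a linear operator $I+G$ (with $G$ a rank-one perturbation determined by $\det\beta'$) is invertible, concluding from the impossibility of $t=2n-2$ when $n\ge 3$; you instead produce the explicit closed-form inversion $\psi_t^{2t}=a_{tt}^{\,n-t}\prod_{i<t}a_{ii}$, $\psi_i=\psi_t a_{tt}/a_{ii}$, which I have verified is correct and is arguably cleaner since it makes the well-posedness manifest. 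For $t=n$, the paper simply cites the theorem that the character of a semisimple representation determines it up to conjugacy; your alternative argument, using the unique (up to scale) linear relation $\sum\psi_i\xi_i=0$ among the weights and then $\beta^*$ to fix the overall scale, is a nice self-contained substitute that avoids that citation, and your anticipated "obstacle" of identifying the special weight $-\sum\psi_ie_i^*$ is indeed handled, as you say, by the sorting convention on $A_n$.

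One point worth flagging: the formula you use, $a_{ii}=\psi_t^2(\det\beta')^{1/(n-1)}\psi_i^{-1}$, has the exponent $+1/(n-1)$, whereas the displayed statement of Proposition \ref{completepsi} reads $(\det\beta')^{-1/(n-1)}$. Your sign is the correct one: it agrees with the appendix computation culminating in \eqref{dualwteqtn}, namely $\beta^*(\xi_i)=\gamma^{-1}\psi_t^2\psi_i^{-1}$ with $\gamma=(\det\beta')^{-1/(n-1)}$, and it is easily checked on small examples (e.g.\ $n=3$, $t=2$, $\psi=(4,1,0)$ gives $\beta^*(\xi_1)=1/2$, matching $(\det\beta')^{1/2}\psi_1^{-1}$ and not $(\det\beta')^{-1/2}\psi_1^{-1}$). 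Your explicit inversion formula is therefore correct precisely because you used the right version; had you inserted the sign as literally printed in the proposition, the closed form you wrote would fail. So treat this as a small confirmation that your derivation is sound, and a sign to be wary of the displayed formula in \ref{completepsi}.
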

\begin{proof} {\editC By  (\ref{classification}b) $\theta$ is conjugate to some $\zeta_{\psi}\circ B$ with $B\in\SLpm V$, and $\psi$ is
uniquely determined by the conjugacy class of $\theta$.
If $\type=n$ then $\zeta_{\psi}$ is diagonal, so $\chi(\theta)$ determines $\theta$ up to conjugacy, and hence determines $\psi$
by (\ref{classification}b).}
So suppose $\type<n$.
It follows immediately from  the definitions that $\beta(\zeta_{\psi}\circ B)=\beta(\zeta_{\psi})\circ B$, and
$\xi_i(\zeta_{\psi}\circ B) = \xi_i(\zeta_{\psi})\circ B$. Hence $\beta^*(\xi_i\circ B)=\beta^*(\xi_i)$.
By (\ref{completepsi}) it follows that $\eta(\theta)$ determines 
$$\editC (\beta^*\xi_1,\cdots,\beta^*\xi_{\type})=\psi_{\type}^2\left(\psi_1\cdots \psi_{\type-1}\psi_{\type}^{\type+2-n}\right)^{-1/(n-1)}(\psi_1^{-1},\cdots,\psi_{\type}^{-1})$$
up to permutations. {\editC  Let $x_i=\log \psi_i$ and $y_i=\log\beta^*\xi_i$ and $x=(x_1,\cdots,x_{\type})$
and $y=(y_1,\cdots,y_{\type})$. Define $v:\RR^{\type}\to\RR$ by
\begin{align*}v(x)=\log\left[\psi_{\type}^2\left(\psi_1\cdots \psi_{\type-1}\psi_{\type}^{\type+2-n}\right)^{-1/(n-1)}\right]=-(n-1)^{-1}(x_1+\cdots +x_{\type-1}+(\type+4-3n)x_{\type})
\end{align*}
\if0
 Regard $\{x_i:1\le i\le \type\}$ as independent variables. We wish to solve the equations $y_i=x_i+v$  for $x_i$. These
 are linear equations in the real vector space with basis $\{x_i:1\le i\le \type\}$.  
 A solution exists unless the $y_i$ are linearly dependent, i.e. unless there are $\alpha_i\in\RR$, not all $0$, with
$$\sum \alpha_i(x_i+v)=0$$ 
where in this proof summation is from $1$ to $\type$. Since the $x_i$ are linearly independent, $\sum\alpha_i\ne 0$. Then we may scale
so that $\sum\alpha_i=1$. This implies
$$v=-\sum\alpha_ix_i$$
Multiplying this by $-(n-1)$ and using  (\ref{beq}) gives
$$(n-1)(\alpha_1x_1+\cdots+\alpha_{\type}x_{\type})=-(n-1)v=(x_1+\cdots+x_{\type-1}+(\type+4-3n)x_{\type})$$
Equating coefficients gives $(n-1)\alpha_i=1$ for $i<\type$ and $(n-1)\alpha_{\type}=\type+4-3n$. Using $\sum \alpha_i=1$ gives
$$(n-1)=\sum(n-1)\alpha_i=(\type-1)+(\type+4-3n)=2\type-3n+3$$
This implies $\type=2n-2$. But $n\ge 3$ and $\type\le n$ so there are no such $\alpha_i$, hence the $x_i$ are uniquely 
determined by the $y_i=\log\beta^*\xi_i$, and thus by $\eta(\theta)$.}
\end{proof}
}
\fi
Let $e=(1,\cdots,1)$  then $y=x+(v(x))e=(I+G)x$ where $G=e\otimes v$.
Then $\eta(\theta)$ determines $y$, and 
recovering the $\psi_i$ amounts to finding $x$ that solves the linear equation $y=(I+G)x$.

 We claim that $I+G$ is invertible. For the sake of contradiction assume that $0\neq w\in \ker(I+G)$, then $w+v(w)e=0$. This implies that $w=\alpha e$ for some $\alpha\neq 0$. Since all non-zero multiples of $w$ are also in the kernel there is no loss of generality in assuming that $\alpha=1$. This implies that $e+v(e)e=0$ and so $v(e)=-1$. From the definition of $v$ the equation $v(e)=-1$ becomes $-(n-1)^{-1}(2 \type+3-3n)=-1$, or equivalently that $\type=2n-2$. However, since $n\geq 3$ this implies that $\type>n$, which is a contradiction.  It follows that the $x_i$ can be recovered from the $y_i$, and by exponentiating we recover the $\psi_i$. }
\end{proof}

{\editA The characteristic polynomial of a square matrix $A$  is $c(A)=\det(x\Id-A)$.
An affine automorphism of $\RR^n$ is given by $f(x)=Ax+b$ with {\em linear part $A\in\GL(n,\RR)$} and also given by 
$B\in\Aff(n)\subset\GL(n+1,\RR)$. Then $c(B)=(x-1)c(A)$. This means that a translation group has one more zero Lie-algebra weight
than the   linear part.
The character of a marked translation group determines the weights:}
\begin{lemma}\label{charpoly} {\editA Suppose $\theta:V\to\Aff(n)$ is a marked translation group.
 Let ${\editC \xi_{\theta}}=[\xi_1,\cdots,\xi_n]\in \SP^{n} V^*$ be the Lie-algebra weights of the linear part of $\theta$.
Then the characteristic polynomial $c_{\theta}:V\to\RR[x]$ given by
$$c_{\theta}=\det(x I-\theta)=(x-1)\prod_{i=1}^{n}(x-\exp\circ \weight_i)$$
 is uniquely determined by $\chi(\theta)$.  Moreover  there is $f:X_n\to\Rcal_n$ with $\nu=f\circ\eta$,} where $\nu:\Modsp_n\longrightarrow \Rcal_n$ is the weight data $\nu(\rho) = (\xi_\rho, [\beta_\rho])$.   \end{lemma}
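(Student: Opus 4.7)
The plan is to recover the characteristic polynomial $c_\theta(v,x)$ pointwise from $\chi(\theta)$ via Newton's identities, and then read off the weights $\xi_i$ as the logarithms of its nontrivial roots. Because $V$ is abelian and $\theta$ is a homomorphism, $\theta(v)^k = \theta(kv)$ for every positive integer $k$, so the power sums of the eigenvalues of $\theta(v)$ are
\[ p_k(v) := \trace(\theta(v)^k) = \chi(\theta)(kv). \]
Since $\theta(v) \in \Aff(n) \subset \GL(n+1,\RR)$ is an $(n+1)\times(n+1)$ matrix, Newton's identities express each coefficient of $c_\theta(v,x) = \det(xI - \theta(v))$ as a universal polynomial in $p_1(v),\ldots,p_{n+1}(v)$. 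Hence $c_\theta$ is uniquely determined by $\chi(\theta)$.

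To justify the claimed factorization, I would invoke Theorem~\ref{classification}: up to conjugation $\theta = \zeta_\psi \circ B$, and the explicit block form in Definition~\ref{psigroup} shows that $\theta(v)$ has $n+1$ positive real eigenvalues --- the eigenvalue $1$ coming from the fixed affine hyperplane together with $\exp\xi_i(v)$ for $i=1,\ldots,n$, where the $\xi_i \in V^*$ are the Lie-algebra weights of the linear part (some possibly zero). Therefore $c_\theta(v,x) = (x-1)\prod_{i=1}^n (x - \exp\circ\xi_i(v))$, so $\chi(\theta)$ determines the multiset $\{\exp\xi_i(v)\}$ for each $v \in V$.

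For the second assertion, define $f : X_n \to \Rcal_n$ by sending $(\chi,[\beta])$ to $([\xi_1,\ldots,\xi_n],[\beta])$, with the $\xi_i$ extracted from the roots of $c_\theta(v,x)/(x-1)$. The step I expect to be the main obstacle is checking that this pointwise multiset-valued assignment $v \mapsto [\exp\xi_i(v)]$ really does arise from a single point of $\SP^n V^*$, rather than merely defining a continuous map $V \to \SP^n \RR$. This is automatic, however, because the $\xi_i$ exist a priori as linear functionals on $V$ (Lie-algebra weights of the abelian representation $\theta$), and any two extractions from $c_\theta$ must agree as multisets in $V^*$; conjugation preserves eigenvalues, so $f$ is well-defined on conjugacy classes. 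The identity $\nu = f \circ \eta$ is then immediate by construction.
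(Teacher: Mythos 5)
Your proposal is correct and follows essentially the same route as the paper: recover the power sums $p_k(v)=\trace(\theta(v)^k)=\chi(\theta)(kv)$ from $\chi$ using $\theta(v)^k=\theta(kv)$, apply Newton's identities (the paper phrases this as ``every symmetric polynomial is a polynomial in the $p_k$'') to reconstruct $c_\theta$, and then factor $c_\theta$ into linear terms $(x-\exp\xi_i)$ using positivity of the eigenvalues. Your worry about whether the pointwise multiset assignment lifts to a single element of $\SP^n V^*$ is the same point the paper dispatches by noting the factorization into linear factors is unique up to order, together with the a priori existence of the $\xi_i$ as linear functionals.
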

\begin{proof} {\editD Suppose $A=\theta(v)$. Then $\theta(kv)=A^k$ so $\chi(\theta)(kv)=\trace A^k$.}
If $A$ has eigenvalues $\mu_0,\cdots,\mu_n$
counted with multiplicity  then $p_k:=\trace(A^k)=\sum\mu_i^k$ is a symmetric polynomial function of the eigenvalues.
Every symmetric polynomial is a polynomial in the  $p_k$, and in particular the coefficients of $c(A)$ have this property.
Hence  $\chi(\theta)$ determines the characteristic polynomial of $\theta(v)$ for every $v\in V$.
Thus  $\chi(\theta)$  determines the function 
$c_{\theta}=c\circ\theta:V\to\RR[x]$
which sends  $v\in V$ to the characteristic polynomial of $\theta(v)$.
Since all the eigenvalues of $\theta(v)$ are positive,
there are $\weight_i\in V^*$ with
$c_{\theta}=\prod_{i=0}^{n}(x-\exp\circ \weight_i)$. Hence $\chi(\theta)$ determines the Lie algebra weights $\xi_i$.
The factorization of a polynomial into linear factors is unique up to order and scaling. It follows that ${\editC \xi_{\theta}}$ is also uniquely determined,
and thus $f$ exists.
\end{proof}

Given a translation group $T(\Omega)$ together with  a basepoint $b\in\bdy\Omega$,  then $\O(\Omega,b)\subset G(\Omega)$ is the subgroup that fixes $b$, and
 acts on $\RR^n$, preserving $\bdy\Omega$. The orbit map $\mu_{\theta,b}$ identities $V\cong\R^{n-1}$ with 
$\bdy\Omega$,
therefore $O(\Omega,b)$ also acts on $V$. 
 Under this identification $\O(\Omega,b)\subset\Aff(\RR^n)$
is conjugate  to $\O(\cinvt(\theta))\subset\GL(V)$ when $\type>0$. {\editC The group $\Sim(\beta)\subset\GL V$
is the group of similarities that preserve $[\beta]$.
\begin{lemma}\label{Opsi} {\editD Suppose $\theta$ is a marked 
translation group. If $\type(\theta)>0$ then there is an isomorphism $f:\O(\Omega,b)\rightarrow \O(\cinvt(\theta))$ given
by  $f(A)=\mu^{-1}A\mu$ where $\mu=\mu_{\theta,b}:V\to\bdy\Omega$ is the orbit map. 
If $\type(\theta)=0$ then $\O(\cinvt)=\Sim(\beta)$.} 
\end{lemma}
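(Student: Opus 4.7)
The plan is to split on the value of $\type(\theta)$, handle the easy degenerate case first, and then in the main case verify well-definedness, the homomorphism property, injectivity, and surjectivity in sequence.

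\textbf{Degenerate case $\type(\theta)=0$.} In this case $\zeta_\psi$ is unipotent, so every $\theta(v)$ is unipotent and $\chi(\theta)\equiv n+1$. Preserving a constant character is vacuous, so the condition defining $\O(\cinvt(\theta))\subset\GL V$ collapses to ``preserves $[\beta(\theta)]$'', which is the definition of $\Sim(\beta(\theta))$.

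\textbf{Main case $\type(\theta)>0$: well-definedness and homomorphism.} For $A\in\O(\Omega,b)\subset G(\Omega)$, conjugation by $A$ normalizes $T=\theta(V)$, so there is a unique $\alpha(A)\in\GL V$ with $A\theta(v)A^{-1}=\theta(\alpha(A)v)$. Using $Ab=b$ I compute
\[
A\mu(v)=A\theta(v)b=A\theta(v)A^{-1}b=\theta(\alpha(A)v)b=\mu(\alpha(A)v),
\]
so $f(A)=\mu^{-1}A\mu=\alpha(A)$ is linear. That $f$ is a homomorphism follows from $A_1A_2\mu=A_1\mu\circ\alpha(A_2)=\mu\circ\alpha(A_1)\alpha(A_2)$. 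Preservation of $\chi(\theta)$ is immediate from trace invariance under conjugation. Preservation of $[\beta(\theta)]$ uses two identities: $\beta(h\theta h^{-1})\sim\beta(\theta)$ (scaling invariance under conjugation, since the height function and the orbit map transform equivariantly) and $\beta(\theta\circ g)=\beta(\theta)\circ g$ (direct from (\ref{betaeq})). Applying both to $A\theta A^{-1}=\theta\circ f(A)$ gives $\beta(\theta)\circ f(A)\sim\beta(\theta)$. Injectivity is trivial: if $f(A)=\Id_V$ then $A$ fixes every point of $\bdy\Omega$, and since $\bdy\Omega$ is a strictly convex hypersurface it contains an affine frame of $\mathbb{A}^n$, forcing $A=\Id$.

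\textbf{Surjectivity.} Given $B\in\O(\cinvt(\theta))$, set $\theta':=\theta\circ B$. Then $\theta'$ is again a marked translation group with image $T$, and
\[
\chi(\theta')=\chi(\theta)\circ B=\chi(\theta),\qquad \beta(\theta')=\beta(\theta)\circ B\sim\beta(\theta),
\]
so $\eta(\theta')=\eta(\theta)$. By Corollary \ref{psiformula} the two representations determine the same $\psi$, and by Theorem \ref{classification}(d) both are conjugate to $\zeta_\psi\circ C$ and $\zeta_\psi\circ C'$ respectively for some $C,C'\in\SL V$. Invoking \cite[Theorem 0.2]{BCL} (which says that any two conjugators for $\zeta_\psi$-type representations differ by an element of $\O(\mathbb{E}^{n-1},\psi)$) yields $g\in\Aff(n)$ with $g\theta g^{-1}=\theta'$. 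Post-composing $g$ with the unique element of $T$ sending $g(b)$ to $b$ leaves the conjugation unchanged (since $T$ is abelian and normal in $\langle g,T\rangle$), so I may assume $g(b)=b$. Then $g$ normalizes $T$ and fixes the point $b$ of the $T$-orbit $\bdy\Omega$, hence $g(\bdy\Omega)=\bdy\Omega$; since $\bdy\Omega$ is a strictly convex hypersurface bounding $\Omega$ on one side, $g$ preserves $\Omega$. Because $\type>0$ excludes the $\Omega\cong\HH^n$ exception, $G(\Omega)=\Aff(\Omega)$, so $g\in\O(\Omega,b)$ with $f(g)=B$.

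The main obstacle is this last step: converting the purely invariant-theoretic equality $\eta(\theta\circ B)=\eta(\theta)$ into a concrete geometric conjugator that both fixes the basepoint and preserves $\Omega$. This rests on the classification of \cite[Theorem 0.2]{BCL} and on the interplay of the simply-transitive $T$-action on $\bdy\Omega$ with strict convexity.
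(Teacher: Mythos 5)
Your degenerate case, well-definedness, homomorphism property, and injectivity all match the paper's reasoning (the paper computes $f(A)$ as conjugation, and your observation about affine frames settles injectivity). The gap is in surjectivity, and it is a real one.

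You correctly reduce to: given $B\in\O(\cinvt(\theta))$, set $\theta'=\theta\circ B$ and observe $\eta(\theta')=\eta(\theta)$. You then conjugate $\theta$ to $\zeta_\psi\circ C$ and $\theta'$ to $\zeta_\psi\circ C'$ (indeed, by the same conjugator, one may take $C'=CB$), and you want a $g$ with $g\theta g^{-1}=\theta'$. Unwinding, such a $g$ exists if and only if $\zeta_\psi$ is conjugate to $\zeta_\psi\circ (CBC^{-1})$, and by the direction of \cite[Theorem~0.2]{BCL} actually used in this paper, that happens if and only if $CBC^{-1}\in\O(\EE^{n-1},\psi)$. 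You never establish this membership, and nothing in \cite[Theorem~0.2]{BCL} gives it to you: that result characterizes when two representations \emph{of the form} $\zeta_\psi\circ(\cdot)$ are conjugate, it does not assert that agreement of the complete invariant $\eta$ forces conjugacy. Passing from ``$\eta$ equal'' to ``conjugate'' is precisely the hard direction of Theorem~\ref{completeinvt}, which the paper proves \emph{using} Lemma~\ref{Opsi} — so your surjectivity argument, as written, is circular.

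What the paper does instead is computational. After establishing the claim that $f$ lands in $\O(\eta)$ (and the preliminary fact that for $\type>0$ one actually has $\O(\eta)\subset\O(\beta)$, not merely $\Sim(\beta)$ — a point you omit but which is needed below), it reduces to $\theta=\zeta_\psi$ by conjugacy-and-precomposition equivariance, then uses the explicit formulas of Proposition~\ref{completepsi} and Lemma~\ref{charpoly} to identify $\O(\eta(\zeta_\psi))$ concretely: for $0<\type<n$ it is the subgroup of $\O(\beta)$ permuting the duals of the weights while preserving $\beta$, namely $S(\psi)\oplus\O(\ur)$, and similarly for $\type=n$. Finally, \cite[Proposition~1.44]{BCL} identifies $f(\O(\Omega,b))$ with the same group. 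That explicit matching is the content you would need to justify $CBC^{-1}\in\O(\EE^{n-1},\psi)$, so without it your argument does not close. A secondary, more minor point: your normalization of $g$ by an element of $T$ to arrange $g(b)=b$ tacitly assumes $g(b)\in\bdy\Omega$, i.e. $g(\bdy\Omega)=\bdy\Omega$; a priori $g\in N_{\Aff(n)}(T)$ could carry $\bdy\Omega$ to a different $T$-orbit, and this would need an argument even if the existence of $g$ were granted.
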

\begin{proof} {\editD Let $\cinvt=\cinvt(\theta)=(\chi,[\beta])$ and $\type=\type(\theta)$.
By  definition $\O(\cinvt)$ is the subgroup of $\Sim(\beta)$ that preserves $\chi(\theta)$. If $\type=0$ then  $\theta$ is unipotent so $\chi$ is constant and the result follows. Now
assume $\type>0$, thus $\chi$ is not constant.} 

 {\editD  
 We claim that $\O(\cinvt)$ is a subgroup of $\O(\beta)$.
 The character $\chi:V\to\RR$ is preserved by the action of $O(\eta)$. Now $\O(\eta)\subset\Sim(\beta)$, so
 if the claim is false there is $A\in\O(\eta)$ that moves all points in $V$ closer to $0$. {\editC It follows
 that $\chi(v)=\lim_{n\to\infty} \chi(A^n v)=\chi(0)$ so $\chi$ is constant,  which is a contradiction.}

 We claim that $f$ maps into $\O(\eta)$.
  The orbit map $\mu=\mu_{\theta,b}$ defined in (\ref{orbitmap}) is given by $\mu(v)=\theta(v)b$.
  Recall that $\bdy\Omega$ is the orbit of $b$ under $\Image\theta$. 
  Given $A\in\O(\Omega,b)$ {\editE and $v\in V$} then $A(\mu(v))=\mu(u)$ for some $u\in V$, and $(f A)(v)=u$.
Since $A$ fixes $b$ it follows that
$$(\theta u)b=\mu(u)=A(\mu v)=(A\theta v)b=(A(\theta v)A^{-1})Ab=(A(\theta v)A^{-1})b$$
Now $A(\theta v)A^{-1}\in T(\Omega)$, and
the action of $V$ on $\bdy \Omega$ is free, thus $\theta u = A(\theta v)A^{-1}$, so $$(f A)(v)=u=\theta^{-1}\left(A(\theta v)A^{-1}\right)$$
Now $\theta$, and conjugation by $A$, are both group isomorphisms, thus $f A$ is a group automorphism of $(V,+)$, and it is continuous
thus $f A\in \GL V$. Now
$$\trace \theta((fA)(v))=\trace \theta u=\trace A(\theta v)A^{-1}=\trace  \theta v$$
Thus $\chi\circ (fA)=\chi$. It is clear that $fA$ preserves $\beta$ hence $fA\in\O(\eta)$, which proves the claim.

\if0
Otherwise we may assume $r>1$.
Because $\O(\beta)$ is compact, there is a large integer $n$ with $B^n$ close to $\Id$. 
Since $\chi$ is not constant, there is $v\in V$ such that the largest eigenvalue of $\theta(v)$ is $\lambda>1$ with an eigenvector
 $x\in \RR^n$ with $\theta(v)x=\lambda x$.
Now $(r B)^nv\approx r^n v$ so $\theta((r B)^nv)\approx  (\theta v)^{r^n}$ and $ (\theta v)^{r^n}x=\lambda^{r^n}x$.
It follows that $\trace\theta((r B)^nv)\ne\tr\theta(v)$  as $n\to\infty$. But then $rB$ does not preserve $\chi$ so $rB\notin\O(\cinvt)$.
\fi
}

The lemma is true  for $\theta$ if and only if it is true for a conjugate of $\theta\circ B$ for some $B\in \GL(V)$.
By Theorem (\ref{classification}) it suffices to prove the result when $\theta=\zeta_{\psi}$. Set $\type=\type(\psi)$.
First consider the case $0<\type<n$ and
define $$B=\Diag(\psi_1^{-1/2},\cdots,\psi_{\type}^{-1/2},\editC\psi_{\type}^{1/2},\cdots,\psi_{\type}^{1/2})\in\GL(V)$$
It suffices to assume $\theta=\zeta_{\psi}^{\perp}:=\zeta_{\psi}\circ B$.
 By (\ref{completepsi}) $$\beta(\zeta_{\psi}^{\perp})(v)=\langle v,v\rangle\qquad \chi(\zeta_{\ppsi}^{\perp}) (v) =
 2+\ur+  \sum_{i=1}^{\type} \exp(\psi_i^{-1/2}{\editC \psi_{\type}}v_i),$$
 where $\langle \cdot,\cdot\rangle$ is the standard inner product on $\R^{n-1}$. 
 
{\editD By (\ref{charpoly}) $\chi(\theta)$ determines and is determined by the Lie algebra weights of $\theta$, thus
$O(\eta)$ is the subgroup of $\O(\beta)$ that preserves the Lie-algebra weights. Hence it is the subgroup that preserves
the set consisting of the duals  $\{v_i:1\le i\le n\}\subset V$ with respect to $\beta$ of these weights. By (\ref{dualwteqtn}) the non-zero duals are
$\{v_i=(\gamma\psi_i)^{-1}\psi_{\type}e_i:\ 1\le i\le\type\}$.  The action of $\O(\eta)$
permutes this set, but preserves the lengths of vectors.
Thus  $\O(\cinvt)$ is the subgroup of $\O(\beta)$
 that permutes  $\{e_i:\ 1\le i\le\type\}$ and preserves the vector 
 $${\editC \gamma^{-1}\psi_{\type}}(\psi_1^{-1},\cdots,\psi_{\rank}^{-1},0,\cdots,0)\in V$$
 where the last $\ur$ coordinates are $0$.
 Clearly this is the same as preserving
  $$(\psi_1,\cdots,\psi_{\rank},0,\cdots,0)\in V$$
 Let $S(\psi)$ be the group of coordinate permutations of $\RR^{\rank}$ that preserve
 {\editC $(\psi_1,\cdots,\psi_{\rank})$, then
 $\O(\cinvt(\zeta_{\psi}))=S(\psi)\oplus O(\ur)$.}
 When $\type<n$ it follows from \cite[Proposition 1.44]{BCL}  that 
 $f(\O(\Omega,b))=S(\psi)\oplus O(\ur)$ which gives the result.}

The remaining case is that $\type=n$, and then $\zeta_{\psi}$
 has $n$ non-zero Lie-algebra weights $\weight_i\in V^*$ and $\sum\psi_i\weight_i=0$.
 Observe that $\psi$ is determined up to scaling by this equation.
If $B\in\O(\cinvt(\zeta_{\psi}))$ then it preserves $\chi(\zeta_{\psi})$, and therefore, by  (\ref{charpoly}), permutes these weights,
so that $\weight_i\circ B=\weight_{\sigma i}$ for some permutation $\sigma$ of $\{1,\cdots,n\}$. However 
$\sum\psi_i\weight_{\sigma i}=0$ so $\psi_i=\psi_{\sigma i}$.
Thus $\mu\cdot B\cdot\mu^{-1}=A\in \Aff(n)$ permutes  the coordinate axes of $\RR^n$
and preserves $\psi$.
Again by  \cite[Proposition 1.44]{BCL}  $A\in \O(\Omega,b)$.
It follows that $\O(\cinvt(\zeta_{\psi}))\subset \mu^{-1}\cdot \O(\Omega,b)\cdot\mu$. 
It is clear that $\O(\cinvt(\zeta_{\psi}))\supset\mu^{-1}\cdot \O(\Omega,b)\cdot \mu$.
\end{proof}

Suppose $\theta:V\to\Aff(n)$ is a marked translation group.
If we consider a generalized cusp as a projective manifold, instead of as an affine one, 
 then the holonomy  might be
given as  $\theta_*:V\to\SL(n+1,\RR)$ where
\begin{equation}\label{wteq}\theta_*(v)=\alpha(v)\cdot \theta(v)\qquad\rm{and}\quad \alpha(v)=\left(\det \theta(v)\right)^{-1/n+1}\end{equation}
It was shown in \cite[Prop.\ 1.29]{BCL} that if two marked translation groups are conjugate in $\GL(n+1,\RR)$
then they are conjugate in $\Aff(n)$, and therefore have the same complete invariant. 
In (\ref{characterlemma})
below we
show if $\theta_*:V\to\SL(n+1,\RR)$ is the corresponding projective
 translation group then $\chi(\theta_*)$ 
determines $\chi(\theta)$. However computations
are simpler using $\chi(\theta)$.

 We now explain how to recover $\theta$ from $\theta_*$. The idea is that to recover the affine action amounts
 to determining the weight of $\theta_*$ that corresponds to the hyperplane at infinity for affine space. 
Suppose $\theta:V\to\GL(n+1,\RR)$ and every weight is {\editD real and} positive.
Let $\Wcal(\theta)=(\weight_0,\weight_1,\cdots,\weight_n)$ be the Lie algebra weights of $\theta$ counted with multiplicity.
The Lie algebra weight $\weight_i$ is called {\em a middle weight}  if
$$\forall\ v\in V\quad \weight_i(v)\le \max\{\weight_j(v)\ :\ j\ne i\ \}$$
Applied to diagonalizable representations, this is the {\em middle eigenvalue condition} of Choi, \cite{choi1}.
It follows that a Lie algebra weight with multiplicity larger than $1$ is a middle weight.

 If  $\theta:V\to \Aff(n)$ then
$\weight_i$ is a middle weight of $\theta$ if and only if $\weight_i=0$. 
From (\ref{wteq}) it follows  that if $\Wcal(\theta)=(\weight_0,\weight_1,\cdots,\weight_n)$ then 
$\Wcal(\theta_*)=(\weight_0-\mu,\cdots,\weight_n-\mu)$
where $\mu=(n+1)^{-1}\sum \weight_i$. 
The characterization
above implies that $\weight$ is a middle weight for $\theta$ if and only if $\weight-\mu$ is a middle weight for $\theta_{\editD *}$.
Since the middle weight of $\theta_*$  only depends on $\theta_*$, this shows $\theta_*$ determines $\theta$.

 \begin{proposition}\label{characterlemma} Let $\theta:V\to\Aff(n)$ be a marked translation group
 and $\theta_*:V\to\SL(n+1,\RR)$ as above. Then
 $\chi(\theta_*)$ determines $\chi(\theta)$ and vice versa.
 \end{proposition}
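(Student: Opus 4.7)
The plan is to exploit the middle-weight characterization given just above the statement: a Lie-algebra weight $\weight_i$ of $\theta$ is a middle weight if and only if $\weight_i=0$, and $\weight$ is a middle weight for $\theta$ if and only if $\weight-\mu$ is a middle weight for $\theta_*$, where $\mu=(n+1)^{-1}\sum_{j=0}^{n}\weight_j$.

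First I would show that $\chi(\theta)$ determines $\chi(\theta_*)$. By Lemma \ref{charpoly}, $\chi(\theta)$ determines the multiset $\{\weight_0,\dots,\weight_n\}\subset V^*$ of Lie-algebra weights of $\theta$, so $\mu$ is determined, and equation (\ref{wteq}) then gives
\[
\chi(\theta_*)(v) \;=\; \alpha(v)\,\chi(\theta)(v) \;=\; e^{-\mu(v)}\chi(\theta)(v),
\]
so $\chi(\theta_*)$ is recovered.

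For the converse, I would argue that the proof of Lemma \ref{charpoly} applies verbatim to $\theta_*$: every eigenvalue of $\theta_*(v)=\alpha(v)\theta(v)$ is a positive real because $\alpha(v)>0$ and all eigenvalues of $\theta(v)$ are positive, so the same symmetric-polynomial argument recovers the multiset $\{\eta_0,\dots,\eta_n\}\subset V^*$ of Lie-algebra weights of $\theta_*$ from $\chi(\theta_*)$. By the middle-weight characterization, the common value of the middle weights of $\theta_*$ equals $-\mu$, because the middle weights of $\theta$ are precisely those equal to $0$ (and such a weight always exists in the affine setting, namely the one coming from the fixed affine hyperplane). Hence $-\mu$ is identified as the unique value $\eta\in\{\eta_0,\dots,\eta_n\}$ such that $\eta(v)\le\max_{\eta_j\ne\eta}\eta_j(v)$ holds for all $v\in V$. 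With $\mu$ in hand, the weights $\weight_i=\eta_i+\mu$ of $\theta$ are recovered, and therefore $\chi(\theta)(v)=\sum_{i=0}^{n}e^{\weight_i(v)}$ is determined.

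The main subtlety, which I would need to check carefully, is that the middle-weight value $-\mu$ really can be singled out from the abstract multiset $\{\eta_i\}$ using only the middle-weight inequality: this relies on the fact that in our affine setting the middle weights of $\theta$ are precisely the zero weights, all agreeing in value, so $\mu$ is the unique element of $V^*$ obtainable this way. Once that point is granted, both directions of the equivalence are essentially immediate and no further computation is required.
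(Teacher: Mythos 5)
Your proof is correct and follows essentially the same route as the paper: both directions go through Lemma \ref{charpoly} to recover the Lie-algebra weights from the character, and the converse identifies $-\mu$ as the middle weight of $\theta_*$ and shifts back. The only cosmetic difference is that in the forward direction the paper reads off $\det\theta$ from the constant term of the characteristic polynomial and sets $\alpha=(\det\theta)^{-1/(n+1)}$, whereas you compute the full weight multiset and take the average $\mu$; these are the same thing, since $\det\theta(v)=\exp\bigl(\sum_i\weight_i(v)\bigr)$ and hence $\alpha(v)=e^{-\mu(v)}$.
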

 \begin{proof} The characteristic polynomial $c_{\theta}$ is determined by
 $\chi(\theta)$  using (\ref{charpoly}).
The constant term of $c_{\theta}$ determines $\det{\theta}:V\to\RR$, and therefore
$\chi(\theta_*)=\chi(\theta)\left(\det\right)^{-1/n+1}$ is determined. Conversely, given $\chi(\theta_*)$
the  characteristic polynomial $c_{\theta_*}$ is determined by (\ref{charpoly}), and
so the Lie-algebra weights $\{\weight_i:1\le i\le n\}$ of $\theta_*$ are determined. Thus the middle weight
 $\weight$ of $\theta_*$ is  determined by $\chi(\theta_*)$, and $\theta=\exp(-\weight)\theta_*$ has middle weight $0$.
  \end{proof}

\begin{theorem}\label{completeinvt}
If $\theta,\theta':V\to\Aff(n)$ are marked translation groups, then $\cinvt(\theta)=\cinvt(\theta')$ if
and only if $\theta$ and $\theta'$ are conjugate in $\Aff(n)$.
\end{theorem}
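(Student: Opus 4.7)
The forward implication is immediate: trace is conjugacy-invariant so $\chi$ is preserved, and $\widetilde\beta$ is a conjugacy invariant by Lemma \ref{betacts}; the content is in the converse.

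For the converse, the plan is to reduce both representations to canonical form and then exploit Lemma \ref{Opsi}. Assuming $\eta(\theta) = \eta(\theta')$, I would first apply Theorem \ref{classification}(c) to write $\theta$ as conjugate to $\zeta_{\psi} \circ B$ and $\theta'$ as conjugate to $\zeta_{\psi'} \circ B'$ for unique $\psi, \psi' \in A_n(\Psi)$ and some $B, B' \in \SLpm V$. The next step is to deduce $\psi = \psi'$: Corollary \ref{psiformula} does this for $n \ge 3$, and the $n=2$ case follows directly from the explicit character and metric formulas in Proposition \ref{completepsi}. After this reduction it suffices to show that $\zeta_{\psi} \circ B$ and $\zeta_{\psi} \circ B'$ are $\Aff(n)$-conjugate whenever they share a complete invariant.

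The key computation will be that both $\chi$ and $\widetilde\beta$ are covariant in the $V$-coordinate: from the formulas (\ref{orbitmap}), (\ref{heightfn}), and (\ref{betaeq}) one checks $\chi(\zeta_{\psi} \circ B) = \chi(\zeta_{\psi}) \circ B$ and $\widetilde\beta(\zeta_{\psi} \circ B) = \widetilde\beta(\zeta_{\psi}) \circ B$. Consequently $C := B'B^{-1}$ lies in $\O(\eta(\zeta_{\psi}))$. Lemma \ref{Opsi} then produces $A \in \O(\Omega, b)$ with $\mu^{-1} A \mu = C$, where $\mu$ is the orbit map of $\zeta_{\psi}$ at $b$. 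Since $A$ fixes $b$ and preserves the simply transitive $\zeta_{\psi}(V)$-action on $\bdy \Omega$, freeness of that action forces $A \zeta_{\psi}(v) A^{-1} = \zeta_{\psi}(Cv)$ for every $v \in V$. Conjugation by $A$ therefore sends $\zeta_{\psi} \circ B$ to $\zeta_{\psi} \circ (CB) = \zeta_{\psi} \circ B'$, finishing the argument.

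The main obstacle is essentially absorbed into the prior lemmas: Corollary \ref{psiformula} guarantees the discrete parameter $\psi$ is recoverable from $\eta$, and Lemma \ref{Opsi} guarantees that every linear self-map of $V$ preserving $\eta(\zeta_{\psi})$ comes from a point-stabilizer of the cusp at $b$. The most subtle piece to keep in mind is the unipotent case $\type(\psi) = 0$, where $\O(\eta) = \Sim(\beta)$ is strictly larger than $\O(\beta)$; there the required affine conjugator still exists because dilations about $b$ belong to $\O(\Omega, b) \subset \Aff(n)$, which is exactly what the statement of Lemma \ref{Opsi} supplies in this case.
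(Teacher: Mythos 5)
The forward direction, the reduction via Theorem~\ref{classification}(c), the use of Corollary~\ref{psiformula} to pin down $\psi$, and the Lemma~\ref{Opsi}--plus--freeness step for the case $0<\type<n$ all match the paper's proof. But there is a real gap in how you dispose of the unipotent case $\type=0$.

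You assert that for $\type=0$ ``dilations about $b$ belong to $\O(\Omega,b)$,'' and that this is ``exactly what the statement of Lemma~\ref{Opsi} supplies.'' Neither claim is right. When $\type=0$ the domain $\Omega$ is equivalent to a horoball, and as noted right after the definition of cusp Lie group, $G(\Omega)$ is conjugate into $\PO(n,1)$; hence $\O(\Omega,b)\subset G(\Omega)$ is a \emph{compact} group. Its image $f(\O(\Omega,b))\subset\GL(V)$ is therefore compact and cannot equal $\Sim(\beta)$, which is non-compact. That is precisely why Lemma~\ref{Opsi} is stated with a dichotomy: for $\type>0$ it produces the isomorphism $f:\O(\Omega,b)\to\O(\eta)$, but for $\type=0$ it only records $\O(\eta)=\Sim(\beta)$ and explicitly does \emph{not} claim $f$ is onto. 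Moreover, an anisotropic dilation fixing $b$ and preserving the paraboloid $\partial\Omega$ does exist, but it rescales the radial parameter and so does \emph{not} preserve the individual $T$-orbits; it lies in $\Aff(\Omega)$ but not in $G(\Omega)$, hence not in $\O(\Omega,b)$. So your argument that $C=B'B^{-1}\in\Sim(\beta)$ can always be realized by $f$ fails here.

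The hole is fixable but not by citing Lemma~\ref{Opsi}. Either invoke, as the paper does, the classical fact that a horoball cusp is determined by the similarity class of the Euclidean structure on its boundary, which settles $\type=0$ directly from $[\beta]$; or decompose $C=s\cdot D$ with $s>0$ and $D\in\O(\beta)$, handle $D$ via the compact stabilizer, and realize $s\cdot\Id_V$ by the affine (but not $G(\Omega)$) dilation $\diag(s^2,s,\dots,s,1)$, which does conjugate $\zeta_0$ to $\zeta_0\circ(s\cdot\Id_V)$. Either route works; the step as you wrote it does not. A small additional remark: the paper also treats $\type=n$ by the semisimple-character argument rather than funneling everything through Corollary~\ref{psiformula}; your uniform treatment is fine for $n\ge 3$, but the fall-back to Proposition~\ref{completepsi} you propose for $n=2$ needs to be made explicit, since (\ref{psiformula}) does not cover that dimension.
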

\begin{proof} It is clear that the complete invariant is a conjugacy invariant.
We show that if  $\cinvt(\theta)=\cinvt(\theta')$ then $\theta$ and $\theta'$ are conjugate.
By (\ref{charpoly}) $\chi(\theta)$ determines the characteristic polynomial and weights of $\theta$, counted
with multiplicity.
The type of $\theta$ is the maximum over $v\in V$ of the number of eigenvalues of $\theta(v)$
that are not equal to $1$. {\editD This is determined by $\chi(\theta)v$, so}  $\chi(\theta)$ determines $\type(\theta)$. In particular
$\type(\theta)=\type(\theta')$.

The first case is that $\type(\theta)=n$ so $\theta$ is diagonalizable.  Since $\type(\theta')=n$ then
$\theta'$ is also diagonalizable, and therefore semi-simple. The character of a 
semisimple representation determines the representation up to conjugacy, see for example \cite[pp. 650]{Lang}. Hence $\theta$ and $\theta'$ are conjugate in $\GL(n+1,\RR)$.
 This implies they are conjugate in $\Aff(n)$. {\editD If $\type=0$ then the generalized cusps
 are equivalent to cusps in hyperbolic manifolds. It is well known that these are determined by the 
 Euclidean similarity structure
 on the boundary, and hence by $[\beta]$.}

 Now assume $0<\type(\theta)<n$. By (\ref{completeinvt}) every marked translation group
  is conjugate in $\Aff(n)$ to some $\zeta_{\psi}\circ B$
where 
$B\in \SLpm V$ and $\psi\in A_n(\Psi)$. 
 After conjugacies in $\Aff(n)$ we may assume $\theta=\zeta_{\psi}\circ B$ and 
 $\theta'=\zeta_{\psi'}\circ B'$ are both of this form.  
Observe that $\theta$ and $\theta'$
 are conjugate if and only if $\theta\circ (B^{-1})$ and $\theta'\circ (B)^{-1}$ are conjugate. Thus it suffices to assume that  $\theta=\zeta_{\psi}$  and $\theta'=\zeta_{\psi'}\circ B'$. 
 
 {\editA By (\ref{psiformula}) $\psi$ is determined by
 the complete invariant},  hence $\psi=\psi'$, {\editC so $\theta'=\theta\circ B$.
Thus $\eta(\theta')=\eta(\theta)\circ B'$.  We are given that $\eta(\theta)=\eta(\theta')$,} so it follows that $B'\in \O(\cinvt(\theta)).$ Then by Lemma (\ref{Opsi})
 $B'= \mu_{\theta,b}^{-1} P \mu_{\theta,b}$ for some $P\in \O(\Omega,b)$.

 {\bf Claim:} $\theta'=P\theta P^{-1}$.
  Since $\theta'=\theta\circ (\mu_{\theta,b}^{-1} P \mu_{\theta,b})$,
given $v\in V$, and recalling $b\in\bdy\Omega$ is the basepoint, 
and using $\mu_{\theta,b}(v)=(\theta v)(b)$ gives
  $$\theta'(v)=\theta(u),\qquad{\rm where}\quad u=\mu^{-1}_{\theta,b}\left( P((\theta v) b)\right)\in V$$
  Now $P\in\O(\Omega,b)$ fixes the basepoint $b$ so
  $$P((\theta v) b)=P((\theta v) P^{-1} b)= (P(\theta v)P^{-1})(b)$$
  Thus
  $$(\theta u)b=\mu_{\theta,b}(u)=P(\theta(v) b)=(P(\theta v)P^{-1})(b)$$
  Now $\theta(u)$ and $P\theta(v)P^{-1}$ are both in $T(\Omega)$  which acts freely on $\bdy\Omega$.
  Thus $\theta'(v)=\theta(u)=P\theta(v)P^{-1}$, so $\theta'=P\theta P^{-1}$ as claimed.
 \end{proof}

There is an interpretation of the complete invariant  as a geometric structure on the boundary
of a generalized cusp.

\begin{definition}   A {\em cusp geometry}  on a torus $T\cong\RR^{n-1}/\ZZ^{n-1}$ is $(\beta,\Ccal)$
where $\beta$ is a Euclidean metric on $T$ {\editB with volume $1$}, and $\Ccal\subset H^1(T;\RR)\setminus 0$.
The {\em type} of the geometry is $\type=|\Ccal|$.
 \end{definition}

{\editA If $\theta:V\to\Aff(n)$ is a marked translation group then there is a properly convex set $\Omega\subset\RR^n$
that is preserved by $\theta V$ and $C=\Omega/\theta(\ZZ^{n-1})$ is a generalized cusp. Given $b\in\bdy\Omega$ the orbit
map $\mu_{\theta,b}:V\to\bdy\Omega$ is a homeomorphism.
Let $\pi:\Omega\to C$ be projection. 
Then  $\pi_C:=\pi\circ\mu_{\theta,b}:V\to \bdy C$ can be regarded as the universal cover of $\bdy C$.
A cusp geometry   $(\beta,\{\alpha_1,\cdots,\alpha_{\type}\})$ of type $\type=\type(\theta)$ on $\bdy C$ is
defined
as follows.   }

The metric $\beta$ on $\bdy C$ is as defined above. The character $\chi(\theta)$ determines Lie-algebra
weights of
the representation $\weight_i:V\to\RR$ for $1\le i\le \type(\psi)$, and $\alpha_i=[\omega_i]\in H^1(\bdy C;\RR)$ is determined by $\pi^*\omega_i=\weight_i$.

Thus $\omega_i$ is the harmonic representative of the de-Rham class $\alpha_i$. 
Generalized cusps with type $\type<n$ correspond to choices of  non-zero cohomology classes that are orthogonal  with respect to the dual of $\beta$,
and all such cusp geometries are realized by generalized cusps. Those of type $\type=n$ are determined by (\ref{weightseqtn}).
Observe that one can recover the complete invariant from
the cusp geometry.

\begin{proposition} Suppose $\theta_1,\theta_2:V\to\Aff(n)$  are marked translation groups and $C_i=\Omega_i/\theta_i(V)$ 
are corresponding generalized cusps. Then $\theta_1$ and $\theta_2$
are conjugate if and only if there is a map ${\editC f:}\bdy C_1\to\bdy C_2$ 
 that preserves the cusp geometries defined above, and {\editD $f$ is in the correct homotopy class.}
\end{proposition}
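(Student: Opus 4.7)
The plan is to reduce both directions to Theorem \ref{completeinvt}, which says that two marked translation groups are conjugate in $\Aff(n)$ if and only if their complete invariants $\cinvt=(\chi,[\beta])$ agree. As the paragraph preceding the proposition notes, the cusp geometry on $\bdy C_i$ encodes exactly the horosphere metric $\beta(\theta_i)$ and, via $\pi_{C_i}^*\omega_j=\xi_j$, the non-zero Lie-algebra weights $\xi_j$ of $\theta_i$; together with the type $\type=|\Ccal|$ these recover the full character $\chi(\theta_i)$.

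For the forward direction, assume $\theta_2=A\theta_1 A^{-1}$ for some $A\in\Aff(n)$. Then $A$ sends $\Omega_1$ to a properly convex domain preserved by $\theta_2 V$, which is necessarily a $T(\Omega_2)$-translate of $\Omega_2$; absorbing that translate into the basepoint, $A$ descends to a diffeomorphism $C_1\to C_2$ respecting markings, whose boundary restriction is $f$. Both $\beta$ (defined via the height function) and the Lie-algebra weights $\xi_j$ depend only on the conjugacy class of $\theta_i$, so $f$ preserves the cusp geometry; compatibility of basepoints and markings puts $f$ in the correct homotopy class.

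For the reverse direction, lift $f$ through the orbit maps $\mu_i:V\to\bdy\Omega_i$ to $\tilde f:V\to V$. Under the identification $\bdy C_i = V/\ZZ^{n-1}$, the deck action is by translations (since $\theta_i(u)\mu_i(v)=\mu_i(u+v)$), and ``correct homotopy class'' means $f_*$ is the identity on $\pi_1=\ZZ^{n-1}$, so $\tilde f(v+k)=\tilde f(v)+k$ for all $k\in\ZZ^{n-1}$. Preservation of $[\beta]$ makes $\tilde f$ a similarity from $(V,\beta_1)$ to $(V,\beta_2)$, hence an affine map $\tilde f(v)=\lambda Bv+c$ with $B\in\O(n-1)$ and $\lambda>0$. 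Equivariance forces $\lambda Bk=k$ for every $k\in\ZZ^{n-1}$, so $\lambda=1$ and $B=\Id$; thus $\tilde f$ is a translation, and consequently $\beta_1=\beta_2$ in $\Pos(V)$.

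Finally, preservation of $\Ccal$ lifts to $\tilde f^*\xi_j^{(2)}=\xi_{\sigma(j)}^{(1)}$ for some permutation $\sigma$. Since $\tilde f$ is a translation and the $\xi_j$ are linear, $\tilde f^*\xi_j^{(2)}=\xi_j^{(2)}$, so the two unordered sets of non-zero weights coincide. Combined with the common type, this determines the characteristic polynomial via Lemma \ref{charpoly}, and hence the character, so $\cinvt(\theta_1)=\cinvt(\theta_2)$ and Theorem \ref{completeinvt} concludes. The main obstacle is the rigidity step for $\tilde f$: it is essential that the marking makes the deck action on $V$ pure translation, so that combining it with the similarity structure from $\beta$ forces $\tilde f$ to be a translation; everything downstream is then formal.
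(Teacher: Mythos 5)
Your proof is correct and follows the same strategy as the paper's, namely reducing the nontrivial implication to Theorem \ref{completeinvt}. The paper simply asserts that the existence of $f$ forces $\cinvt(\theta_1)=\cinvt(\theta_2)$; you supply the missing rigidity argument — equivariance of the lift $\tilde f$ with respect to the $\ZZ^{n-1}$ deck action, combined with the similarity constraint from the metric, forces the linear part of $\tilde f$ to be trivial, so that the cusp geometry data (unimodular Euclidean metric plus the unordered set of weight classes) literally recovers $(\chi,[\beta])$ — which is a useful expansion of the paper's terse proof but not a different route.
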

 \begin{proof} The existence of  $\editC f$
  implies the two generalized cusps have the same complete invariant. {\editC Then $\theta_1$ and $\theta_2$ are conjugate by 
  (\ref{completeinvt})}.
  {\editC  Conversely, if $\theta_1$ and $\theta_2$ are conjugate, then $C_1$ and $C_2$ are equivalent cusps
  and so have the same cusp geometry.} \end{proof}

\section{New parameters}

In this section we define another family of translation groups in (\ref{lambdakappa}). First we motivate the definition
in dimension $n=4$. The reader may choose to replace $4$ by $n$ in what follows, and introduce
$\cdots$ in the formulae. 

The goal is to construct a connected algebraic family of Lie groups that give conjugates of
all the translation groups $\Tr(\psi)$, and such that
 the diagonalizable ones are dense. Recall that $\type=n$ is diagonalizable, and $\type<n$ is non-diagonalizable.
 
Refer to  (\ref{completepsi}) for the following discussion. {\editC If we reparameterize 
  $\zeta_{\psi}$  in the diagonal case using $t_i=\sqrt{\psi_i}v_i$ 
  then  $\beta(\zeta_{\ppsi})(t)=\|t\|^2+\delta^2$ where $\delta=\psi_n^{-1/2}\sum_{i=1}^{n-1}\sqrt{\psi_i}t_i$.
When $\psi_n=\max_i\psi_i$ then $|\delta|\le n\|t\|$, so $\beta$ varies in a compact subset of $\PP\Sym^2V$. Hence, if the character remains bounded along a sequence in this subspace, there is a subsequence for which
 the complete invariants converge. Then, after a suitable conjugacy, the limit  should be a marked generalized cusp of smaller type.}
 To obtain an algebraic family
 set $\psi_i=1/\lambda_i^2$, then $v_i=\lambda_it_i$. 
The  diagonal group $\Tr(\psi)$  consists of the matrices
 $\exp(M)$, for those $M$ shown below, satisfying (\ref{lambdaeqtn}).
 \begin{equation}\label{Pconj}
M=\begin{pmatrix} \lambda_1t_1 & 0 & 0 & 0 & 0\\
0 & \lambda_2t_2 & 0 & 0 & 0\\
0 & 0 & \lambda_3t_3 & 0 & 0\\
0 & 0 & 0 & \lambda_4t_4 & 0\\
0 & 0 & 0 & 0 & 0\\
\end{pmatrix},\qquad P=\begin{pmatrix} 1 & -\lambda_2^{-1} & -\lambda_3^{-1} & -\lambda_4^{-1} & \lambda_1^{-2}\\
0 & 1 & 0 & 0 & \lambda_2^{-1}\\\
0 & 0 &1 & 0 & \lambda_3^{-1}\\
0 & 0 & 0 & 1 & \lambda_4^{-1}\\
0 & 0 & 0 & 0 & 1\\
\end{pmatrix}
\end{equation}
 \begin{equation}\label{lambdaeqtn}
0=\sum\psi_iv_i=\sum (1/\lambda_i^2)(\lambda_i t_i)=\sum\lambda_i^{-1}t_i
\end{equation}

The orbits flatten in the directions for which  $\lambda_i\to 0$. To prevent this, conjugate $M$
 by the matrix $P$ in (\ref{Pconj}) to get:
\begin{equation}\label{newmatrix}R:=P^{-1}MP=
\begin{pmatrix} 0 & t_2 & t_3 & t_4 & 0\\
0 & \lambda_2t_2 & 0 & 0 & t_2\\
0 & 0 &\lambda_3 t_3 & 0 & t_3\\
0 & 0 & 0 & \lambda_4t_4 & t_4\\
0 & 0 & 0 & 0 & 0\\
\end{pmatrix}
+
\lambda_1t_1\begin{pmatrix}
1 & -\lambda_2^{-1} & -\lambda_3^{-1} &-\lambda_4^{-1} & 0\\
0 & 0 & 0 & 0 & 0\\
0 & 0 & 0 & 0 & 0\\
0 & 0 & 0 & 0 & 0\\
\end{pmatrix}
\end{equation}
Since $\psi_i$ decreases with $i$, it follows that $\lambda_i$ increases with $i$. We want 
this new family to contain only polynomials (rather than rational functions) in the parameters, so that they are defined whenever 
\begin{equation}\label{weylch}
0\le \lambda_1\le\lambda_2\le\lambda_3\le\lambda_4
\end{equation}
To do this we
introduce extra parameters  $\kappa_i$ for $2\le i\le 4$, and require 
\begin{equation}\label{lambdakappaeqtn}\lambda_i\kappa_i=\lambda_1\end{equation} 
then
$$R=
\begin{pmatrix} 0 & t_2 & t_3 & t_4 & 0\\
0 & \lambda_2t_2 & 0 & 0 & t_2\\
0 & 0 &\lambda_3 t_3 & 0 & t_3\\
0 & 0 & 0 & \lambda_4t_4 & t_4\\
0 & 0 & 0 & 0 & 0\\
\end{pmatrix}
+
t_1\begin{pmatrix}
\lambda_1 & -\kappa_2 & -\kappa_3 &-\kappa_4 & 0\\
0 & 0 & 0 & 0 & 0\\
0 & 0 & 0 & 0 & 0\\
0 & 0 & 0 & 0 & 0\\
\end{pmatrix}
$$
 Using (\ref{lambdaeqtn}) we replace $t_1$ by $$t_1=-\lambda_1\left(\lambda_2^{-1}t_2+\lambda_3^{-1}t_3+\lambda_4^{-1}t_4\right)=-\left(\kappa_2 t_2+\kappa_3t_3+\kappa_4t_4\right)$$
and this gives a family of representations $$\Phi_{\lambda,\kappa}:\RR^3\to\Aff(4),\qquad \Phi_{\lambda,\kappa}(t_2,t_3,t_4)=\exp R$$ parameterized by those $(\lambda,\kappa)$ satisfying (\ref{weylch})
and (\ref{lambdakappaeqtn}). When $\lambda_1>0$ then $\kappa_i=\lambda_1/\lambda_i\in[0,1]$ so
$\lambda$ determines $\kappa\in[0,1]^3$. We will see that the conjugacy class of the image group
only depends on $\lambda$. Thus the same collection of conjugacy
classes of groups is obtained by restricting to $\kappa_i\in[0,1]$. Restricting $\kappa$ to a compact set
 helps later with  the point-set topology,
when we quotient out by this compact set.
Finally, since $t_1$ is expressed in terms of the other $t_i$, the
terms for $i=1$ are different to the other terms. 
Thus we replace the index set $1\le i\le 4$ by $0\le i\le 3$, to emphasize the special role of $\lambda_0$.
This leads to the following definitions.

\gap
Given $\lambda\in\Hom(\RR^{n},\RR)$ define $\lambda_{i-1}=\lambda (e_i)$.  The subspace
\begin{equation}\label{weylchamber}
A_n = \{(\lambda_0, ..., \lambda_{n-1})\  |\  0 \leq \lambda _0 \leq  \lambda_1 \leq \lambda_2 \leq \cdots \leq \lambda_{n-1}\}\subset\RR^{n}
\end{equation}
is called the {\em (closed) Weyl chamber}. It is a fundamental domain for the action by signed coordinate permutations on $\RR^n$. Observe that $\lambda_i=0$ if only if $\type<n$ and $i\le \ur(\lambda)$. 

The {\em blown up Weyl chamber} is
\begin{equation}\label{blownupweyl} \widetilde A_n =  \{ (\lambda, \kap) \in A_n \times [0,1]^{n-1}\ : \ \lambda_0 = \lambda _i \kappa_i \}\end{equation}
The projections $p_1 : \widetilde A_n \to A_n$ and $p_2:\widetilde A_n\to[0,1]^{n-1}$ are defined by $p_1 (\lambda, \kappa) = \lambda$ 
and $p_2(\lambda,\kappa)=\kappa$. Since $\lambda_i\ge\lambda_0$ it follows that $p_1$ is surjective.
When $\lambda_i\ne 0$ then $\kappa_i=\lambda_0/\lambda_i$ is determined 
by $\lambda_i$. 
However when $\lambda_i=0$ then $\lambda_0=0$ also, thus
 $\kappa_i\in[0,1]$ is arbitrary. One may regard  $\widetilde A_n$ as obtained from $A_n$ by a kind of {\em blowup}
 of the subset of $A_n$ where $\lambda_0=0$, and the $\kappa$ coordinates record certain tangent directions when some of the coordinates of $\lambda$ are zero.
 
 We make frequent use of the following {\em inverse function theorem}.
  
  \begin{lemma}\cite[Corollary 10.1.6]{Geoghegan}\label{prop_homeo} Let $f: X \to Y$ be a continuous bijection between locally compact spaces.  If $Y$ is Hausdorff and $f$ is a proper map, then $f$ is a homeomorphism.  
\end{lemma}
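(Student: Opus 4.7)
The plan is to show that $f^{-1}:Y\to X$ is continuous by proving that $f$ is a closed map; equivalently, for any closed $C\subset X$ we show $f(C)$ is closed in $Y$. So let $y\in \overline{f(C)}$; the goal is to produce $c\in C$ with $f(c)=y$.

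First I would use local compactness of $Y$ to choose a compact neighborhood $K$ of $y$ (i.e. a compact set $K$ containing an open neighborhood $V$ of $y$). Since $f$ is proper, $f^{-1}(K)$ is compact in $X$. The intersection $C\cap f^{-1}(K)$ is closed in the compact space $f^{-1}(K)$, hence itself compact. Its continuous image $f\bigl(C\cap f^{-1}(K)\bigr)$ is therefore compact in $Y$, and because $Y$ is Hausdorff, compact subsets are closed, so $f\bigl(C\cap f^{-1}(K)\bigr)$ is closed in $Y$.

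Next I would argue that $y$ lies in this closed set. Given any open neighborhood $U$ of $y$, the intersection $U\cap V$ is an open neighborhood of $y$ contained in $K$; since $y\in\overline{f(C)}$, the set $U\cap V$ meets $f(C)$, say at $z=f(c)$ with $c\in C$. Then $z\in K$ forces $c\in f^{-1}(K)$, so $z\in f\bigl(C\cap f^{-1}(K)\bigr)\cap U$. Hence $y$ lies in the closure of $f\bigl(C\cap f^{-1}(K)\bigr)$, which coincides with the set itself. In particular $y\in f(C)$, as desired.

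This shows $f$ is closed, hence the bijection $f^{-1}$ is continuous, so $f$ is a homeomorphism. There is no real obstacle beyond routine point-set topology; the only thing to be careful about is to avoid assuming first countability (using neighborhoods rather than sequences), and to remember that a \emph{compact neighborhood} is a compact set containing an open neighborhood, which is what both local compactness and the properness hypothesis interact with cleanly.
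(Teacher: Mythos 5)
Your argument is correct: the paper cites this as \cite[Corollary 10.1.6]{Geoghegan} without giving a proof, and your argument is the standard one (showing that a proper continuous map into a locally compact Hausdorff space is a closed map, then concluding that a closed continuous bijection is a homeomorphism). Each step checks out; you need only local compactness of $Y$, which is harmless since the lemma assumes more than necessary, and you correctly avoid any unwarranted appeal to sequences or first countability.
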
 

{\editC Let  $D_n=\{ (\lambda, \kappa) \in (0,\infty)^n \times [0,1]^{n-1}\ : \ \lambda_0 = \lambda _i \kappa_i \}$. A point in $D_n$
determines a {\em diagonalizable} marked translation group via (\ref{lambdakappa}), however the coordinates of $\lambda$
are in  arbitrary order subject only to $\lambda_0=\min\lambda_i$, rather than non-increasing.}

  \begin{lemma}\label{fibers} Given $(\lambda,\kappa)\in\widetilde A_n$ set $\type=\type(\lambda)$ and
  $\ur=\ur(\lambda)$. If $\type=n$ 
   then $p_2(p_1^{-1}\lambda)=\kappa$. If $\type(\lambda)<n$ then
  $p_2(p_1^{-1}\lambda)=[0,1]^{\ur} \times {\bf 0}$ where 
  ${\bf 0}=  (0,\cdots,0)\in[0,1]^{n-1- \ur}$. Moreover
\begin{itemize}
\item[(a)] $\widetilde A_n\subset \cl D_n$ 
\item[(b)] $p_1$  has compact fibers
\item[(c)] $p_1:\widetilde A_n\to A_n$ is a quotient map.
\end{itemize}
   \end{lemma}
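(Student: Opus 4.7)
The plan is to establish the fiber formulas directly from the defining relation $\lambda_0=\lambda_i\kappa_i$ of $\widetilde A_n$, and then to handle (a), (b), (c) in turn.

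For the fibers, fix $\lambda\in A_n$ and consider $\kappa'\in[0,1]^{n-1}$ with $(\lambda,\kappa')\in\widetilde A_n$. If $\type(\lambda)=n$, then every $\lambda_i>0$, so $\lambda_0=\lambda_i\kappa'_i$ forces $\kappa'_i=\lambda_0/\lambda_i$ uniquely; hence $p_1^{-1}(\lambda)$ is the single point $(\lambda,\kappa)$. If $\type(\lambda)<n$, the ordering $\lambda_0\le\lambda_1\le\cdots\le\lambda_{n-1}$ together with at least one vanishing coordinate forces $\lambda_0=0$; more precisely, $\lambda_i=0$ for $0\le i\le \ur$ and $\lambda_i>0$ for $\ur<i\le n-1$. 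The equation $\lambda_i\kappa'_i=0$ is then automatic for $1\le i\le \ur$ (so $\kappa'_i\in[0,1]$ is free) and forces $\kappa'_i=0$ for $\ur<i\le n-1$, giving $p_2(p_1^{-1}(\lambda))=[0,1]^{\ur}\times\mathbf{0}$.

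For (a), when $\type(\lambda)=n$ the point $(\lambda,\kappa)$ already lies in $D_n$, so there is nothing to prove. When $\type(\lambda)<n$, I would produce an explicit approximation: for small $\eps>0$, set $\lambda_0(\eps)=\eps$, and for each $1\le i\le n-1$ take $(\lambda_i(\eps),\kappa_i(\eps))$ to be $(\lambda_i,\eps/\lambda_i)$ when $\lambda_i>0$, $(\eps/\kappa_i,\kappa_i)$ when $\lambda_i=0<\kappa_i$, and $(\sqrt\eps,\sqrt\eps)$ when $\lambda_i=\kappa_i=0$. One checks in each case that $\lambda_0(\eps)=\lambda_i(\eps)\kappa_i(\eps)$, that the perturbed point lies in $D_n$ for small $\eps$, and that it converges to $(\lambda,\kappa)$ as $\eps\to 0^+$. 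The delicate case is $\lambda_i=\kappa_i=0$, where neither coordinate is forced by the other, so the $\eps$ factor must be split between them via square roots.

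For (b), $p_1^{-1}(\lambda)$ is a closed subset of the compact set $\{\lambda\}\times[0,1]^{n-1}$, hence compact. For (c), I would show $p_1$ is proper: given compact $K\subset A_n$, the preimage $p_1^{-1}(K)=\widetilde A_n\cap(K\times[0,1]^{n-1})$ is closed in the compact set $K\times[0,1]^{n-1}$. Since $\widetilde A_n$ and $A_n$ are locally compact Hausdorff, a proper continuous map is closed, and a closed continuous surjection is a quotient map; surjectivity of $p_1$ was already observed after the definition. This yields (c). Beyond the explicit perturbation in (a), every step is a routine consequence of the product structure and the closed defining relations.
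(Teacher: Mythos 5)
Your proposal is correct and follows essentially the same line of reasoning as the paper for the fiber formula, for~(a), and for~(b). In~(a) your explicit three-case perturbation (with the $\sqrt\eps$ split when $\lambda_i=\kappa_i=0$) is a more careful version of the paper's argument, which sets $\lambda_0(m)=m^{-1}$, picks $\kappa(m)\in(0,1]^{n-1}\to\kappa$, and defines $\lambda_j(m)=\lambda_0(m)/\kappa_j(m)$ — the paper leaves the calibration of $\kappa_j(m)$ implicit, whereas you spell it out, and both rely on the same key observation that $D_n$ imposes no ordering on the $\lambda_i$. In~(c) you take a mildly different but standard route: you show $p_1$ is proper, use that a proper continuous map between locally compact Hausdorff spaces is closed, and conclude via ``closed continuous surjection is a quotient map.'' The paper instead forms the quotient $B=\widetilde A_n/\!\sim$ by fibers and applies its inverse-function-type Lemma~(\ref{prop_homeo}) to the induced bijection $B\to A_n$. (Incidentally, the paper's phrase ``$A_n$ is compact'' should read ``locally compact'' — $A_n$ is an unbounded cone — and your argument sidesteps this slip.) The two routes are equivalent in substance; yours is slightly more direct in that it avoids constructing the intermediate quotient space.
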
 
  \begin{proof}  If $\type=n$ then all $\lambda_i>0$
  and $\lambda$ determines $\kappa$. Otherwise $\type<n$ and
   $\lambda_i=0$ if and only if $i\le \ur$.
For $i\ge 1$ then $\kappa_i$ is the set of solutions in $[0,1]$ of
  $0=\lambda_0=\kappa_i\lambda_i$. For $1\le i\le\ur$ then $\lambda_i=0$ and $\kappa_i\in[0,1]$ is arbitrary.
  For $\ur<i\le n-1$ then   $\lambda_i>0$, so
  $\kappa_i=0$. This gives the formula for $p_2(p_1^{-1}\lambda)$, and (b) is an immediate consequence.
  
   For (a), we prove there is a sequence $(\lambda(m),\kappa(m))\in  D_n$ that converges to $(\lambda,\kappa)\in\widetilde A_n$.
   If $\type=n$ then  $(\lambda,\kappa)\in D_n$ so a constant sequence suffices.
   Otherwise $\lambda_0=0$. Since $\kappa\in[0,1]^{n-1}$  there is  a sequence $\kappa(m)\in(0,1]^{n-1}$ that converges to $\kappa$. Now define
   $\lambda_0(m)=m^{-1}$ and $\lambda_j(m)$ by 
  $\lambda_0(m)=\lambda_j(m)\kappa_j(m)$ for $j>0$. {\editD  Then
  $(\lambda(m),\kappa(m))\in D_n$, and converges to $(\lambda,\kappa)$.} When $\lambda_i=0$ for $i\le \ur$ then the coordinates of  $\kappa$ need not be monotonic. {\editC This is where 
  we exploit that there is no ordering requirement for the $\lambda$ coordinates in $D_n$.}
 
  For (c), let $B=\widetilde A_n/\sim$ be the space of fibers of $p_1$ equipped with quotient topology. The map
  $f:B\to A_n$ induced by $p_1$ is a proper continuous bijection. Moreover $A_n$ is compact and Hausdorff.
 Also $B$ is  locally compact because $p_1^{-1}(K)$ is compact whenever $K$ is compact.
 Hence $f$ is a homeomorphism by
  Lemma \ref{prop_homeo}. 
  \end{proof}
  \begin{remark}
  	(c) is where $[0,1]^{n-1}$ is compact is needed. The reader might like to consider what
 $B$ becomes if $[0,1]^n$ is  replaced by $[0,\infty)^n$ in the definition of $\widetilde A_n$.
  \end{remark}

 We now define another family of Lie groups  $T(\lambda,\kappa)$ that varies continuously with $(\lambda,\kappa)\in\widetilde A_n$.
Theorem \ref{psilambda} show that the families of Lie groups  $T(\lambda,\kappa)$ and  $\Tr(\psi)$ are {\em conjugate}.

\begin{definition}\label{lambdakappa}\label{completelambdakappa} For each $(\lambda,\kappa)\in\widetilde A_n\cup D_n$ define 
$\Phi_{\lambda,\kappa}:=\exp\circ\phi_{\lambda,\kappa}:V\to\Aff(n)$ where
$\phi_{\lambda,\kappa}: V \to \mathfrak{aff}(n)$  is given by 
$$\phi_{\lambda,\kappa}(v) = 
\begin{pmatrix}
0 & v_1 & v_2 &  \cdots & v_{n-1} &0 \\
0 & \lambda_1 v_1&0  & \cdots & 0 & v_1\\
\vdots && \ddots &&& \vdots\\
&& && \lambda_{n-1} v_{n-1} & v_{n-1} \\
0 &&&& \cdots &0
\end{pmatrix}
+ \langle v, \kappa\rangle
\begin{pmatrix}
- \lambda_0 & \kap_1 & \cdots & \kap_{n-1} & 0 \\
0 & \cdots &&& 0 \\
\vdots &&&& \vdots \\
&&&&&\\
0 & \cdots &&& 0 \\
\end{pmatrix}
$$ 
and   $v=(v_1,\cdots,v_{n-1})\in V$,  and $\lambda=(\lambda_0,\cdots,\lambda_{n-1})$, and $\kappa=(\kappa_1,\cdots,\kappa_{n-1})$.
Also $\mathfrak{t}({\lambda, \kappa}):=\Image(\phi_{\lambda,\kappa})$ and 
$T(\lambda,\kappa):=\Image(\Phi_{\lambda,\kappa})$. 
  \end{definition} 
{\editD  If $(\lambda,\kappa)\in D_n$ then $\Phi_{\lambda,\kappa}$ is diagonalizable. It follows if 
$(\lambda,\kappa)\in\widetilde{A}_n$ then
 $\Phi_{\lambda,\kappa}$ is the limit of these diagonalizable representations  by (\ref{fibers})a. This fact is exploited to prove that $T(\lambda,\kappa)$ is a translation group. {\editC The proof of the following is routine and in the appendix.}
   
\begin{proposition}\label{phidiag} (a) Given $(\lambda,\kappa)\in D_n$ 
let $\psi_{i}=\lambda_i^{-2}$ for $1\le i\le n-1$ and $\psi_n=\lambda_0^{-2}$. Then there is $Q\in \SL(n+1,\RR)$
and  ${\editC{\mathfrak f}}\in\GL(V)$ given by ${\editC{\mathfrak f}}(v_1,\cdots,v_{n-1})=\lambda_0^2(\lambda_1v_1,\cdots,\lambda_{n-1}v_{n-1})$  such that 
 $Q\Phi_{\lambda,\kappa}Q^{-1}=\zeta_{\psi}\circ {\editC{\mathfrak f}}$, and
$Q T(\lambda,\kappa)Q^{-1}=\Tr(\psi)$ .

(b) $T(\lambda,\kappa)$ is a translation group, that preserves a convex set $\Omega(\lambda,\kappa)\subset\RR^n$
and $\bdy\Omega(\lambda,\kappa)=T(\lambda,\kappa)\cdot 0$.
Also $\eta(\Phi_{{\lambda,\kappa}})=(\chi_{_{\lambda,\kappa}},[\beta'_{\kappa}])$ where 
\begin{align*}
\beta'_{\kappa}= \Id+\kappa\otimes\kappa&=\left[\begin{matrix} 
\begin{matrix}1+\kappa_1^2 & \kappa_1\kappa_2 &\cdots & \kappa_1\kappa_\ur\\
\kappa_2\kappa_1 & 1+\kappa_2^2 & \cdots &\kappa_2\kappa_\ur\\
\vdots & &  & \vdots\\
\kappa_\ur\kappa_1 &\kappa_\ur\kappa_2 &\hdots & 1+\kappa_\ur^2
\end{matrix} & 0\\
0   & \Id_{\rank}
\end{matrix} \right]
\\
\chi_{_{\lambda,\kappa}}(v_1,\cdots,v_{n-1}) & = 
  1  +\exp\left(-\lambda_0\langle\kappa,v\rangle\right)+ \sum_{i=1}^{n-1}\exp(\lambda_iv_i)
 \end{align*}
{\editC Define $\varkappa=(1+\|\kappa\|^2)^{1/(n-1)}$ then $\det\beta'_{\kappa}=\varkappa^{n-1}$ and 
$\beta_{\kappa}=\varkappa^{-1}\beta'_{\kappa}$} is unimodular.
\end{proposition}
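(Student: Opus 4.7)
The plan for (a) is a direct matrix computation that reverses the derivation of $\Phi_{\lambda,\kappa}$ given in equations (\ref{Pconj})--(\ref{newmatrix}). I take $Q$ to be the matrix $P$ from (\ref{Pconj}) (written out for general $n$), whose entries involve $\lambda_i^{-1}$ and $\lambda_0^{-2}$ and are defined precisely when $(\lambda,\kappa)\in D_n$. Since the new family was obtained by setting $\phi_{\lambda,\kappa}(v) = P^{-1}MP$ where $M$ is the diagonal Lie algebra element of $\Tr(\psi)$, conjugation by $Q=P$ returns $\phi_{\lambda,\kappa}(v)$ to diagonal form. Matching the resulting diagonal entries $(\lambda_1 v_1,\ldots,\lambda_{n-1}v_{n-1},-\lambda_0\langle\kappa,v\rangle,0)$ against the diagonal of $f_\psi(u)$ from Definition (\ref{psigroup}) with $\psi_i = \lambda_i^{-2}$ forces $u = \mathfrak{f}(v)$. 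Exponentiating yields $Q\Phi_{\lambda,\kappa}Q^{-1} = \zeta_\psi\circ\mathfrak{f}$, and hence $QT(\lambda,\kappa)Q^{-1} = \Tr(\psi)$.

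For (b), the diagonalizable case $(\lambda,\kappa)\in D_n$ is immediate from (a), because $\Tr(\psi)$ is a translation group by Theorem (\ref{classification}). For general $(\lambda,\kappa)\in\widetilde A_n$, I first verify by direct computation that $\phi_{\lambda,\kappa}(v)$ and $\phi_{\lambda,\kappa}(w)$ commute, so $\Phi_{\lambda,\kappa}$ is an abelian homomorphism. The matrix $\phi_{\lambda,\kappa}(v)$ is upper triangular with real diagonal entries $(-\lambda_0\langle\kappa,v\rangle,\lambda_1v_1,\ldots,\lambda_{n-1}v_{n-1},0)$, so $\Phi_{\lambda,\kappa}(v)$ has positive real eigenvalues, and injectivity of $\Phi_{\lambda,\kappa}$ follows from injectivity of $\phi_{\lambda,\kappa}$ (which uses that $(v_1,\ldots,v_{n-1})\mapsto(v_1,\ldots,v_{n-1},\langle\kappa,v\rangle)$ is injective). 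Hence $\Phi_{\lambda,\kappa}$ is an isomorphism onto $T(\lambda,\kappa)$.

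The main obstacle is showing that $T(\lambda,\kappa)\cdot 0$ is a strictly convex hypersurface in the non-diagonalizable case, where one cannot rely on (a). Although Lemma (\ref{fibers})(a) permits approximation by $(\lambda(m),\kappa(m))\in D_n$, pointwise limits of strictly convex hypersurfaces need not be strictly convex. My approach is to compute $D^2 h_{\Phi_{\lambda,\kappa}}$ at the origin directly: expanding $\Phi_{\lambda,\kappa}(v)\cdot 0 = \exp(\phi_{\lambda,\kappa}(v))\cdot 0$ to second order in $v$ and projecting by an appropriate $\tau$, the quadratic term is $\Id + \kappa\otimes\kappa$, which is positive definite for every $\kappa\in[0,1]^{n-1}$. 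Translation symmetry of the orbit then propagates positive-definiteness of the Hessian to every point, so that $\bdy\Omega(\lambda,\kappa):=T(\lambda,\kappa)\cdot 0$ is strictly convex and $T(\lambda,\kappa)$ acts on it simply transitively.

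Finally, $\chi_{\lambda,\kappa}(v) = \trace\Phi_{\lambda,\kappa}(v)$ is the sum of exponentials of the eigenvalues listed above, yielding the stated formula. The form $\beta'_\kappa = \Id+\kappa\otimes\kappa$ is the Hessian just computed; its determinant $1+\|\kappa\|^2$ is the matrix-determinant lemma for a rank-one update of the identity, and the unimodular normalization $\beta_\kappa = \varkappa^{-1}\beta'_\kappa$ holds by definition. As noted in the proposition's header, these are routine algebraic verifications appropriate for the appendix.
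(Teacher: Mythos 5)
Your part (a) is close to the paper's computation but has a small slip: taking $Q = P$ does not work as stated, because $P^{-1}\phi_{\lambda,\kappa}(v)P$ has diagonal $\Diag(\lambda_0 v_0, \lambda_1 v_1, \ldots, \lambda_{n-1}v_{n-1}, 0)$ with the ``extra'' weight $\lambda_0 v_0$ in position $1$, whereas $f_\psi$ in Definition (\ref{psigroup}) places $\psi^-(\cdot)$ in position $n$ when $\type = n$. The paper therefore takes $Q = MP$ where $M$ is a cyclic coordinate permutation. The diagonal you wrote, $(\lambda_1 v_1, \ldots, \lambda_{n-1}v_{n-1}, -\lambda_0\langle\kappa,v\rangle, 0)$, is what appears only after applying $M$; with $Q = P$ alone the entries are out of order and cannot be matched position-by-position against $\zeta_\psi\circ\mathfrak{f}$.

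Part (b) has the more serious gap. Your Hessian calculation at the origin and the observation that the $T(\lambda,\kappa)$-action (by affine maps) propagates local strict convexity to every orbit point are both correct. However, local strict convexity of the orbit at every point does not by itself show that the orbit is a properly embedded hypersurface bounding a convex body $\Omega(\lambda,\kappa)$, nor does it establish that the orbit map $v\mapsto\Phi_{\lambda,\kappa}(v)\cdot 0$ is injective. Injectivity of $\Phi_{\lambda,\kappa}: V\to T(\lambda,\kappa)$, which you do establish, is weaker: you still need that no nonidentity element of $T(\lambda,\kappa)$ fixes $0$, which is exactly simple transitivity on the orbit. The paper closes all of these issues simultaneously by writing the orbit explicitly as the graph $y = F(\lambda,\kappa,x)$ of a function $F$ over the domain $\prod_i(-\lambda_i^{-1},\infty)$, where $F$ is built from the analytic functions $f_k(s,t)=\sum_{j\geq k}s^{j-k}t^j/j!$; the same formula applies for $(\lambda,\kappa)\in D_n$ and extends analytically to $\widetilde A_n$. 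The graph representation gives embeddedness and injectivity of the orbit map, properness of $F$ gives proper embedding and that the supergraph $\Omega(\lambda,\kappa)$ is a closed convex set, and the Taylor expansion of $F$ gives strict convexity. You would need either this explicit parameterization or some global argument (e.g.\ a van Heijenoort-type theorem for complete locally convex hypersurfaces, together with a separate injectivity argument) to turn your pointwise Hessian computation into the stated conclusion.
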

 
\if0 
 For each $(\lambda,\kappa)\in  D_n$ the group $T(\lambda,\kappa)$
 is conjugate to $T(\lambda',\kappa')$ where $(\lambda',\kappa')\in \widetilde A_n$, and the coordinates of $\lambda'$ are
 obtained from reordering those of $\lambda$, and $\kappa'$ is determined by $\lambda'$.
By (\ref{fibers})  $\widetilde A_n\subset\cl D_n$ and it follows by continuity that
 for all $(\lambda,\kappa)\in \widetilde A_n$ that $T(\lambda,\kappa)$ is
  an abelian group that is a limit of diagonalizable groups.
   Moreover $\mathfrak{t}({\lambda,\kappa})$ is the Lie algebra of $T(\lambda,\kappa)$.
 Since the eigenvalues are all real, $\Phi_{\lambda,\kappa}:V\to T(\lambda,\kappa)$ is an isomorphism.
 These statements can also be easily verified by computation. In the sequel we will assume $(\lambda,\kappa)\in \widetilde A_n$
 unless otherwise stated.

 {\editC When $\lambda_0>0$ the group $T(\lambda,\kappa)$ is  a translation group by (\ref{phidiag}).}
When $\lambda_0=0$ we show  $T(\lambda,\kappa)$ is a translation group that preserves a properly convex domain  $\Omega(\lambda,\kappa)$
that we now define.
 Given $s>0$ define $q_s:(-1/s,\infty)\to\RR$  by 
 $$q_s(t)=s^{-2}\left(t s-\log(1 + t s)\right)$$
 Define $q_0(t)=t^2/2=\lim_{s\to 0} q_s(t)$. Clearly $q_s$ is strictly convex and proper, and $q_s(0)=0$.
When $\lambda_0=0$ define $$\Omega(\lambda,\kappa)=\left\{(x_0,\cdots,x_{n-1}):\ \ x_0\ge \sum_{i=1}^{n-1}(1+\kappa_i^2)q_{\lambda_i}(x_i)\right\}$$

\begin{proposition}\label{completelambdakappa}  $T(\lambda,\kappa)$ is a translation group,  and $\eta(\Phi_{{\lambda,\kappa}})=(\chi_{_{\lambda,\kappa}},\beta_{\kappa})$ where 
\begin{align*}
\beta'_{\kappa}= \Id+\kappa\otimes\kappa&=\left[\begin{matrix} 
\begin{matrix}1+\kappa_1^2 & \kappa_1\kappa_2 &\cdots & \kappa_1\kappa_\ur\\
\kappa_2\kappa_1 & 1+\kappa_2^2 & \cdots &\kappa_2\kappa_\ur\\
\vdots & &  & \vdots\\
\kappa_\ur\kappa_1 &\kappa_\ur\kappa_2 &\hdots & 1+\kappa_\ur^2
\end{matrix} & 0\\
0   & \Id_{\rank}
\end{matrix} \right]
\\
\chi_{_{\lambda,\kappa}}(v_1,\cdots,v_{n-1}) & = 
  1  +\exp\left(-\lambda_0\langle\kappa,v\rangle\right)+ \sum_{i=1}^{n-1}\exp(\lambda_iv_i)
 \end{align*}
and $\det\beta'_{\kappa}=1+\|\kappa\|^2$ and $\beta_{\kappa}=(1+\|\kappa\|^2)^{-1/(n-1)}\beta'_{\kappa}$.

Moreover, if $\lambda_0=0$, then $T(\lambda,\kappa)$ preserves $\Omega(\lambda,\kappa)$,
and $\bdy\Omega(\lambda,\kappa)$ is a  strictly convex hypersurface,

\end{proposition}
\begin{proof}   {\editC If $\lambda_0\ne 0$ then $T(\lambda,\kappa)$ is a translation group  (\ref{phidiag}). When $\lambda_0=0$}
we show below that the orbit, $\Hcal$, of $b=0\in\RR^n$ under $T=T(\lambda,\kappa)$ is a properly embedded, strictly convex hypersurface
that is the boundary of a properly convex domain $\Omega\subset\RR^n$. 

Using the power series expansion for $\Phi_{\lambda,\kappa}=\exp\circ\;\phi_{\lambda,\kappa}$ to second order shows that $\Hcal$ is strictly convex at $b$. Since $T$ acts 
transitively on $\Hcal$ it follows that $\Hcal$ is strictly convex everywhere. We show that $\Hcal$ is a graph in $\RR^n$ over $0\times\RR^{n-1}$
and the  projection of $\Hcal$ to $0\times\RR^n$ is  an embedding onto an open convex set, thus $\Hcal$ is embedded. 

To compute  $\Phi_{\lambda,\kappa}$ we use (\ref{lambdakappa})  and the series expansion for $\exp$. The last column  is the orbit of $b$
$$\exp(\phi_{\lambda,\kappa}(v))e_{n+1}= 
e_{n+1}+  \frac{1}{2}(\langle v,v \rangle + \langle v, \kappa\rangle ^2) e_1 +\sum_{i=1}^{n-1} \left(v_i+\frac{1}{2}\lambda_{i}v_i^2\right)e_{i+1}  + O(\|v\|^3).$$
It follows that the orbit map $\mu:\RR^{n-1}\to\RR^{n}$  is given by 
$$\mu(v)= \left(\frac{1}{2}(\langle v,v \rangle + \langle v, \kappa\rangle ^2)e_1+ \sum_{i=1}^{n-1} v_ie_{i+1}\right) +\frac{1}{2}\sum_{i=1}^{n-1} \lambda_{i}v_i^2e_{i+1}   + O(\|v\|^3)$$
From this one sees that $\mu(0)=0$ and  $\frac{\partial\mu}{\partial v}\vert_{v=0}(w)=(0,w)$, so $\Hcal$ is tangent to 
$0\times V$ at $0$. 
Then the height function at $0$ is
$$h(v)=\frac{1}{2}(\langle v,v \rangle + \langle v, \kappa\rangle ^2)  + O(\|v\|^3)$$
{\editC Neglecting the cubic term gives $\beta$. The formula for $\chi$ follows from immediatley
from (\ref{lambdakappa}).}

Below we show that $\mu$ is an embedding. It then follows that  $\Hcal$ is strictly convex at $0$, and the horosphere metric is as asserted.
Since $V$ acts transitively by affine maps on $\Hcal$ it follows that $\Hcal$ is strictly convex at every point.

This leaves the case $\lambda_0=0$.
For $s>0$ define  $f_s,g_{s}:\RR\to\RR$   by 
$$f_{s}(t)=s^{-1}(\exp(s t)-1), \qquad g_s(t)=s^{-2}(\exp(s t)-1-st).$$
Define $f_0,g_0:\RR\to\RR$ by
$$f_0(t)=\lim_{s\to0}f_s(t)=t,\qquad g_0(t)=\lim_{s\to0}g_s(t)=t^2/2.$$
For $s\ge 0$ the map  $f_s:\RR\to(-s^{-1},\infty)$ is a diffeomorphism when we interpret $-0^{-1}=-\infty$, and $g_s:\RR\to\RR$ is convex and proper.
Since $T(\lambda,\kappa)$ is abelian
$$\Phi_{\lambda,\kappa}\left(\sum_{i=1}^{n-1} v_ie_i\right)=\prod_{i=1}^{n-1} \Phi_{\lambda,\kappa}(v_ie_i)$$
Set $v=v_ie_i$ in   (\ref{lambdakappa}), then only three matrix entries of $\phi_{\lambda,\kappa}(v)$ are not $0$. Since $\lambda_i\kappa_i=\lambda_0=0$ either $\kappa_i=0$ or $\lambda_i=0$. The relevant entries in $\Phi_{\lambda,\kappa}(v_ie_i)$ are in the {\editA $3\times 3$ minor}
\begin{align*}
  \lambda_i\ne0,\quad  &\exp\begin{pmatrix}
0 & v_i &0\\
0 &\lambda_iv_i & v_i\\
 0 & 0 & 0
\end{pmatrix}  =  \begin{pmatrix}
1 &   f_{\lambda_i}(v_i) &  g_{\lambda_i}(v_i) \\
 0 &\exp(\lambda_iv_i) &  f_{\lambda_i}(v_i)\\
 0 & 0 & 1
\end{pmatrix}\\
\lambda_i= 0,\quad & \exp 
\begin{pmatrix}
0 & (1+\kappa_i^2)v_i &0\\
0 & 0 & v_i\\
 0 & 0 & 0
\end{pmatrix} = \begin{pmatrix}
1 &   (1+\kappa_i^2)v_i &  (1+\kappa_i^2)v_i^2/2 \\
 0 &1 & v_i\\
 0 & 0 & 1
\end{pmatrix}
\end{align*}
Using $\lambda_i\kappa_i=0$, and the expressions for $f_0$ and $g_0$ above, shows that   both cases give
\begin{equation}\label{feqtn}
  \begin{pmatrix}
1 &   (1+\kappa_i^2)f_{\lambda_i}(v_i) &  (1+\kappa_i^2)g_{\lambda_i}(v_i) \\
 0 &\exp(\lambda_iv_i) &  f_{\lambda_i}(v_i)\\
 0 & 0 & 1
\end{pmatrix}\end{equation}
Let $f_i\in\Aff(\RR^{n-1})$ be the affine map given by $\Phi_{\lambda,\kappa}(v_ie_i)$. Write $x=y+x_0e_1+x_ie_{i+1}$ with $y\in\langle e_j:2\le j\le n,\ \ j\ne i+1\rangle$
then by (\ref{feqtn})
\begin{equation}\label{41}f_i(x)=y+x_0e_1+(1+\kappa_i^2)[f_{\lambda_i}(v_i)x_i+g_{\lambda_i}(v_i)]e_1+[\exp(\lambda_iv_i)x_i+f_{\lambda_i}(v_i)]e_{i+1}\end{equation}
Thus $f_i$ only changes the coordinates for $e_1$ and $e_{i+1}$. 
Hence $T(\lambda,\kappa)=\oplus_{i=1}^{n-1} T_i$, where $T_i$ is the one-parameter subgroup containing $f_i$, and
 $T_i$ only changes $x_1$ and $x_{i+1}$.
Moreover
\begin{equation}\label{42}f_i(x_0e_1)=x_0e_1+(1+\kappa_i^2)g_{\lambda_i}(v_i)e_1+f_{\lambda_i}(v_i)e_{i+1}\end{equation}
Now $\mu(v)=\Phi_{\lambda,\kappa}(v)(b)=\left(f_1\circ\cdots\circ f_{i-1}\right)(b)$ and $b=0$ so (\ref{41}) and (\ref{42}) gives
\begin{equation}\label{surfaceeq}
\mu(v)=\sum_{i=1}^{n-1}\left[(1+\kappa_i^2)g_{\lambda_i}(v_i)e_1+f_{\lambda_i}(v_i)e_{i+1}\right]\end{equation}
Let $\pi:\RR^n\to\RR^{n-1}$ be the linear projection onto the last $(n-1)$ coordinates. Since $f_s$ is a diffeomeorphism it follows that
 $\pi\circ \mu:V\to \prod_{i=1}^{n-1}(-\lambda_i^{-1},\infty)$ is a diffeomorphism.
Thus $\mu:V\to\RR^n$ is a smooth embedding. Since $\Hcal$ is tangent to the plane $x_1=0$ at $0$, it follows that the height $h:V\to \RR$ is given by
$$h(v)=\sum_{i=1}^{n-1}(1+\kappa_i^2)g_{\lambda_i}(v_i)$$
and  is proper because $g_s$ is proper.
 Thus $\Hcal$ is a properly embedded hypersurface. 
The supergraph of $\mu$ is
$$\Omega=\{se_1+\mu(v):\ v\in V,\ \ s\ge 0\ \}\subset\RR^n.$$
 Since  $\Hcal$ is a properly embedded hypersurface
that is convex upwards, it follows that $\Hcal=\bdy\Omega$ and $\Omega$
is convex. Thus  $\Omega$
is preserved by $T$. Since $\pi\circ \mu$ is a diffeomorphism from $V$ to $R$, it follows that the stabilizer of $e_{n+1}$ in $T$ is trivial. Since $T$ acts transitively on $\Hcal$, it follows that $T$ acts simply transitively on $\Hcal$. 
Hence $T$ is a translation group. The computation of the character 
 follows
immediately from (\ref{lambdakappa}).

{\editA It is easy to check that $q_s=g_s\circ f_s^{-1}$. Write a point in $\RR^n$ as $\sum_{i=0}^{n-1} x_ie_{i+1}$. Then from (\ref{surfaceeq}) the coordinates
of $\Hcal$ are
$x_i=f_{\lambda_i}(v_i)$
for $i\ge 0$ and $x_0=\sum_{i=1}^{n-1}(1+\kappa_i^2)q_{\lambda_i}(x_i)$ as asserted.
}
\end{proof} 
\fi

\if0
\begin{proof}   We show below that the orbit, $\Hcal$, of $b=0\in\RR^n$ under $T=T(\lambda,\kappa)$ is a properly embedded, strictly convex hypersurface
that is the boundary of a properly convex domain $\Omega\subset\RR^n$. 

Using the power series expansion (\ref{lambdakappaorbit})
for $\Phi_{\lambda,\kappa}=\exp\circ\phi_{\lambda,\kappa}$ to second order shows that $\Hcal$ is strictly convex at $b$. Since $T$ acts 
transitively on $\Hcal$ it follows that $\Hcal$ is strictly convex everywhere. We show that $\Hcal$ is a graph in $\RR^n$ over $0\times\RR^{n-1}$
and the  projection of $\Hcal$ to $0\times\RR^n$ is  an embedding onto an open convex set, thus $\Hcal$ is embedded. 

By (\ref{lambdakappa}) the general element in the Lie algebra
$\mathfrak t(\lambda,\kappa)$ is $A=\phi_{\lambda,\kappa}(v)=A_1+A_2$ where  $v\in V$ is arbitrary and $$A_1 = 
\begin{pmatrix}
0 & v_1 & v_2 &  \cdots & v_{n-1} &0 \\
0 & \lambda_1 v_1&0  & \cdots & 0 & v_1\\
\vdots && \ddots &&& \vdots\\
&& && \lambda_{n-1} v_{n-1} & v_{n-1} \\
0 &&&& \cdots &0
\end{pmatrix}
\quad A_2= \langle v, \kappa\rangle
\begin{pmatrix}
\lambda_0 & \kap_1 & \cdots & \kap_{n-1} & 0 \\
0 & \cdots &&& 0 \\
\vdots &&&& \vdots \\
&&&&&\\
0 & \cdots &&& 0 \\
\end{pmatrix}
$$ 
Thus
\begin{equation}\label{lambdakappaorbit}\exp(A)= I+C_1+C_2+\ldots,
\end{equation}
where $C_i$ is a homogeneous polynomial of degree $i$ in the noncommuting variables $A_1$ and $A_2$. For example, $C_1=(A_1+A_2)$ and $C_2=(1/2)(A_1^2+A_2A_1+A_1A_2+A_2^2)$.
Using the identification of affine space with $\RR^n\times 1\subset\RR^{n+1}$,
the basepoint $b$ corresponds to $e_{n+1}\in\RR^{n+1}$. The orbit $T\cdot e_{n+1}$ is given by the last column of the matrix for $\exp(A)$. 
We use coordinates $(y,v,t)\in(\RR\times V)\times\RR=\RR^{n+1}$  and observe that
\begin{align}\label{ps} 
A_2e_{n+1}=0&, \qquad
A_1e_{n+1}=\sum_{i=1}^{n-1}v_ie_{i+1}=(0,v,0),\\\nonumber A_1e_1=0&,\qquad 
A_2(0,v,0)=\langle v, \kappa\rangle^2e_1,\\ \nonumber 
A_1(0,v,0)&=\langle v,v \rangle e_1+\sum_{i=1}^{n-1} \lambda_{i}v_i^2e_{i+1}.
\end{align}
Thus
$$\exp(A)e_{n+1}= 
e_{n+1}+  \frac{1}{2}(\langle v,v \rangle + \langle v, \kappa\rangle ^2) e_1 +\sum_{i=1}^{n-1} \left(v_i+\frac{1}{2}\lambda_{i}v_i^2\right)e_{i+1}  + O(\|v\|^3).$$
It follows that the orbit map $\mu:\RR^{n-1}\to\RR^{n}$  is given by 
$$\mu(v)=\left( \frac{1}{2}(\langle v,v \rangle + \langle v, \kappa\rangle ^2), v\right) +\frac{1}{2}\sum_{i=1}^{n-1} \lambda_{i}v_i^2e_{i+1}   + O(\|v\|^3)$$
From this one sees that $\mu(0)=0$ and  $\frac{\partial\mu}{\partial v}\vert_{v=0}(w)=(0,w)$, so $\Hcal$ is tangent to 
$0\times V$ at $0$. 
Moreover the second fundamental
form is as asserted, thus $\Hcal$ is strictly convex at $0$.  
Since $V$ acts transitively on $\Hcal$ it follows that $\Hcal$ is strictly convex at every point.

Next we show that the  projection of $\Hcal$ onto $V$ is the convex set  $R:=\prod_i(-\lambda_i^{-1},\infty)$, where $-0^{-1}$
 is interpreted as $-\infty$.
For $s> 0$ define $g_{s}:\RR\to (-s^{-1},\infty)$ by $g_{s}(t):=s^{-1}(\exp(s t)-1)=\sum_{i=1}^\infty s^{i-1}t^i$. Then $g_{s}$ is a diffeomorphism and the limit as $s\to0$ of
$g_{s}$ is $g_0:\RR^1\to\RR^1$ given by $g_0(t)=t$.
Observe that $g_{s}(0)=0$ and $g'_{s}(0)=1.$
Let $\pi:\RR^n=\RR\times V\to V$ be projection onto the second factor.

Using Equations \eqref{ps} we can compute the power series 
\begin{align*}\pi\circ \mu(v)=\sum_{j=1}^\infty\sum_{i=1}^{n-1}\lambda_i^{j-1}v_{i}^je_{i+1}\\
=\sum_{i=1}^{n-1}\sum_{j=1}^{\infty}\lambda_i^{j-1}v_{i}^je_{i+1}=(g_{\lambda_1}(v_1),\cdots,g_{\lambda_{n-1}}(v_{n-1}))
\end{align*}
 Thus $\pi\circ \mu$ is a diffeomorphism onto $R$.  It follows that $\mu$ is an
 embedding, hence $\Hcal$ is an embedded hypersurface. Moreover this embedding is proper because $g_{s}$ is proper.
The supergraph of $\mu$ is
$$\Omega=\{se_1+\mu(v):\ v\in V,\ \ s\ge 0\ \}\subset\RR^n.$$
 Since  $\Hcal$ is a properly embedded hypersurface
that is convex upwards, it follows that $\Hcal=\bdy\Omega$ and $\Omega$
is convex. Thus  $\Omega$
is preserved by $T$. Since $\pi\circ \mu$ is a diffeomorphism from $V$ to $R$, it follows that the stabilizer of $e_{n+1}$ in $T$ is trivial. Since $T$ acts transitively on $\Hcal$, it follows that $T$ acts simply transitively on $\Hcal$. 
Hence $T$ is a translation group. The computation of the character 
 follows
immediately from (\ref{lambdakappa}).
\end{proof} 
\fi

 \if0
  \begin{lemma}\label{lambdakappaQF}  Suppose $(\lambda,\kappa)\in \widetilde A_n$ and  $\type(\lambda)<n$. 
Let $b=0\in\RR^n$ and
let  $\mu:V\to\RR^{n}$  be the orbit map for $b$ given by
 $\mu(v)=(\Phi_{\lambda,\kappa}(v))(b)$.
Using coordinates $(y,v)\in\RR\times V\equiv\RR^n$ then
 $$\mu(v)=  (1/2)(\langle v,v \rangle + \langle v, \kappa\rangle ^2),v)  + O(\|v\|^3)$$
The image of $\mu$ is a convex
 hypersurface $\Hcal$ tangent to $0\times V$ at $b$, and the second fundamental form $\beta_{\kappa}:V\to\RR$ of $\Hcal$ at $0$
 is $$\beta_{\kappa}(v) =  \langle v,v \rangle + \langle v, \kappa\rangle ^2$$
Set $\ur=\ur(\lambda)$ and $\rank=\rank(\lambda)$
 then the matrix of $\beta_\kappa$ with respect to the standard basis is
$$Q=\begin{pmatrix} 
\begin{matrix}1+\kappa_1^2 & \kappa_1\kappa_2 &\cdots & \kappa_1\kappa_\ur\\
\kappa_2\kappa_1 & 1+\kappa_2^2 & \cdots &\kappa_2\kappa_\ur\\
\vdots & &  & \vdots\\
\kappa_u\kappa_1 &\kappa_u\kappa_2 &\hdots & 1+\kappa_\ur^2
\end{matrix} & 0\\
0   & \Id_{\rank}
\end{pmatrix} =\Id+\kappa\otimes\kappa
$$
Finally, $\Hcal$ is the boundary of   a closed, convex set $\Omega\subset\RR^n$ and $T(\lambda,\kappa)$
is a translation group.
\end{lemma} 

\begin{proof}   
By (\ref{lambdakappa}) the general element in the Lie algebra
$\mathfrak t(\lambda,\kappa)$ is $A=\phi_{\lambda,\kappa}(v)=A_1+A_2$ where  $v\in V$ is arbitrary and $$A_1 = 
\begin{pmatrix}
0 & v_1 & v_2 &  \cdots & v_{n-1} &0 \\
0 & \lambda_1 v_1&0  & \cdots & 0 & v_1\\
\vdots && \ddots &&& \vdots\\
&& && \lambda_{n-1} v_{n-1} & v_{n-1} \\
0 &&&& \cdots &0
\end{pmatrix}
\quad A_2= \langle v, \kappa\rangle
\begin{pmatrix}
\lambda_0 & \kap_1 & \cdots & \kap_{n-1} & 0 \\
0 & \cdots &&& 0 \\
\vdots &&&& \vdots \\
&&&&&\\
0 & \cdots &&& 0 \\
\end{pmatrix}
$$ 
Thus
$$\exp(A)= I+(A_1+A_2)+(1/2)(A_1^2+A_2A_1+A_1A_2+A_2^2) + O(\|v\|^3)$$
Using the identification of affine space with $\RR^n\times 1\subset\RR^{n+1}$,
the basepoint $b$ corresponds to $e_{n+1}\in\RR^{n+1}$. The image of $e_{n+1}$ under $T(\lambda,\kappa)$ is given by the last column of this matrix. 
We use coordinates $(y,v,t)\in(\RR\times V)\times\RR=\RR^{n+1}$.  Then $A_2e_{n+1}=0$ and $A_1e_{n+1}=(0,v,0)$ so $A_2A_1e_{n+1}=\langle v, \kappa\rangle^2e_1$
and $A_1^2e_{n+1}=\langle v,v \rangle e_1+\sum_{i=1}^{n-1} \lambda_{i}v_i^2e_{i+1}$ thus
$$\exp(A)e_{n+1}= (0,v,1)+  (1/2)(\langle v,v \rangle + \langle v, \kappa\rangle ^2) e_1 +(1/2)\sum_{i=1}^{n-1} \lambda_{i}v_i^2e_{i+1}  + O(\|v\|^3)$$
It follows that the orbit map $\mu:\RR^{n-1}\to\RR^{n}$  is given by 
$$\mu(v)=(\; (1/2)(\langle v,v \rangle + \langle v, \kappa\rangle ^2)\; ,\; v\; ) +(1/2)\sum_{i=1}^{n-1} \lambda_{i}v_i^2e_{i+1}   + O(\|v\|^3)$$
From this one sees that $\mu(0)=0$ and  $\frac{\partial\mu}{\partial v}\vert_{v=0}(w)=(0,w)$, so $\Hcal$ is tangent to 
$0\times V$ at $0$. 
Moreover the second fundamental
form is as asserted, thus $\Hcal$ is strictly convex at $0$.  
Since $V$ acts transitively on $\Hcal$ it follows that $\Hcal$ is strictly convex at every point.

Next we show that the  projection of $\Hcal$ onto $V$ is the convex set  $R:=\prod_i(-\lambda_i^{-1},\infty)$, where $-0^{-1}$
 is interpreted as $-\infty$.
For $s> 0$ define $g_{s}:(-s^{-1},\infty)\to\RR$ by $g_{s}(t)=s^{-1}(\exp(s t)-1)$. Then $g_{s}$ is a diffeomorphism and the limit as $s\to0$ of
$g_{s}$ is $g_0:\RR^1\to\RR^1$ given by $g_0(t)=t$.
Observe that $g_{s}(0)=0$ and $g'_{s}(0)=1.$
Let $\pi:\RR^n=\RR\times V\to V$ be projection onto the second factor. Then a computation shows that
$$\pi\circ \mu(v)=(g_{\lambda_1}(v_1),\cdots,g_{\lambda_{n-1}}(v_{n-1}))$$
 Thus $\pi\circ \mu$ is an embedding with image $R$.  
 
 It follows that $\mu$ is also an
 embedding, hence $\Hcal$ is an embedded hypersurface. Moreover this embedding is proper because $g_{s}$ is proper.
The supergraph of $\mu$ is
$$\Omega=\{se_1+\mu(v):\ v\in V,\ \ s\ge 0\ \}\subset\RR^n.$$
 Since  $\Hcal$ is a properly embedded hypersurface
that is convex upwards, it follows that $\Hcal=\bdy\Omega$ and $\Omega$
is convex. Thus  $\Omega$
is preserved by $T(\lambda,\kappa)$. Also $T(\lambda,\kappa)(V)$ acts simply transitively on $\Hcal$ because $V$
acts simply transitively on $R$ via $\pi\circ\mu.$
Hence $T(\lambda,\kappa)$ is a translation group.
\end{proof}

\begin{corollary}\label{completelambdakappa} $T(\lambda,\kappa)$ is a translation group and in the standard basis $\beta(\Phi_{\lambda,\kappa})=I+\kappa\otimes\kappa$ and
   $$\chi(\Phi_{\lambda,\kappa})(v_1,\cdots,v_{n-1})=
   \left\{ \begin{array}{lcl}
  2+\ur+\sum_{i=\ur+1}^{n-1}\exp(\lambda_iv_i) & \type(\lambda)<n\\
  1  +\exp\left(-\lambda_0\langle\kappa,v\rangle\right)+ \sum_{i=1}^{n-1}\exp(\lambda_iv_i)& \type(\lambda)=n
  \end{array}\right.$$
 \end{corollary}
\begin{proof} Observe that $\type(\lambda)<n$ if and only if $\lambda_0=0$. The property of being a translation group
is preserved by affine conjugacy. 
If $\lambda_0>0$ the initial discussion shows $T(\lambda,\kappa)$
is conjugate to a diagonal translation group $\Tr(\psi)$. The case that $\lambda_0=0$
follows from (\ref{lambdakappaQF}). 

The computation of the character 
 follows
immediately from (\ref{lambdakappa}).  When $\lambda_0=0$, the horosphere metric is given by (\ref{lambdakappaQF}).
When $\lambda_0>0$ then $T(\lambda,\kappa)$ is conjugate to $\Tr(\psi)$, so the horospehere
metric is obtained {\editA check this} from (\ref{completepsi}) using the change of coordinates $v_i=\lambda_it_i$ discussed above.
\end{proof}

  \fi

 \begin{definition}\label{squareroot} If $Q=\Id+M\in \GL(k,\RR)$ and $M^2=\alpha M$ then the {\em preferred square root of $Q$} is
  $$\widetilde{S}(Q)=\Id+\alpha^{-1}(\sqrt{1+\alpha}-1)M$$
  \end{definition}

This {\em is} a square root  since $(\Id+x M)^2=\Id +(2x+\alpha x^2)M=Q$ when $2x+\alpha x^2=1$.
 If $v\in\RR^k$ then $M=v\otimes v$ has rank $1$ and the condition holds with $\alpha=\|v\|^2$. Moreover, 
 if $Q$ is symmetric and positive definite, then so is $\widetilde{S}$.

 \begin{lemma}\label{Scts} If $\widetilde{S}=\widetilde{S}(\Id+\kappa\otimes\kappa)$ then $\widetilde{S}^{-1}:(V,\beta_0)\to(V,\beta'_{\kappa})$ is an isometry,
 where  $\beta'_\kappa$ is defined in (\ref{phidiag}). Moreover $\widetilde{S}^{-1}$ varies continuously with $\kappa$.
 \end{lemma}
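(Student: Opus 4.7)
The plan is to exploit the very simple algebraic structure of $M=\kappa\otimes\kappa$, namely that $M^2=\|\kappa\|^2\, M$, so that $M$ satisfies the hypothesis of (\ref{squareroot}) with $\alpha=\|\kappa\|^2$. Substituting into the formula in (\ref{squareroot}) gives the closed form
\[
\widetilde S \;=\; \Id+\frac{\sqrt{1+\|\kappa\|^2}-1}{\|\kappa\|^2}\,\kappa\otimes\kappa.
\]
Rationalizing the scalar coefficient (multiply numerator and denominator by $\sqrt{1+\|\kappa\|^2}+1$) rewrites this as
\[
\widetilde S \;=\; \Id+\frac{1}{1+\sqrt{1+\|\kappa\|^2}}\,\kappa\otimes\kappa,
\]
which is manifestly continuous in $\kappa$ on all of $\RR^{n-1}$ (the apparent singularity at $\kappa=0$ is removable and $\widetilde S|_{\kappa=0}=\Id$).

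Next I will verify that $\widetilde S^{-1}$ is an isometry $(V,\beta_0)\to(V,\beta'_\kappa)$. Because the coefficient in $\widetilde S$ is a scalar times the symmetric rank-one matrix $\kappa\otimes\kappa$, the matrix $\widetilde S$ is symmetric, so $\widetilde S^{T}=\widetilde S$. By the calculation already indicated just after (\ref{squareroot}), $\widetilde S$ actually is a square root: $\widetilde S^{2}=\Id+\kappa\otimes\kappa=\beta'_\kappa$. Combining these two facts gives $\widetilde S^{T}\widetilde S=\beta'_\kappa$, or equivalently
\[
(\widetilde S^{-1})^{T}\,\beta'_\kappa\,\widetilde S^{-1} \;=\; \Id \;=\; \beta_0,
\]
which is exactly the condition that $\widetilde S^{-1}$ pull $\beta'_\kappa$ back to $\beta_0$, i.e., that $\widetilde S^{-1}:(V,\beta_0)\to(V,\beta'_\kappa)$ be an isometry.

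Finally, continuity of $\widetilde S^{-1}$ in $\kappa$ follows either by inverting $\Id+tM$ with $M^2=\alpha M$ directly—one checks $(\Id+tM)^{-1}=\Id-\tfrac{t}{1+t\alpha}M$—giving the explicit formula
\[
\widetilde S^{-1} \;=\; \Id-\frac{1}{(1+\|\kappa\|^2)+\sqrt{1+\|\kappa\|^2}}\,\kappa\otimes\kappa,
\]
which is again a continuous function of $\kappa\in\RR^{n-1}$ (and reduces to $\Id$ at $\kappa=0$); or simply by noting that matrix inversion is continuous on $\GL(V)$ and that $\widetilde S$ is continuous in $\kappa$ with $\det\widetilde S=\sqrt{1+\|\kappa\|^2}>0$. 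There is no real obstacle here; the only small subtlety is recognizing that the $\|\kappa\|^{-2}$ appearing in the definition of the preferred square root is cosmetic and that the rationalized form removes any issue at $\kappa=0$.
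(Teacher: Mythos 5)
Your argument is correct and is essentially the computation the paper leaves implicit: the paper states Lemma~\ref{Scts} without proof, relying on the remarks after Definition~\ref{squareroot} that $\widetilde S$ is a square root and that it inherits symmetry and positive definiteness from $Q=\Id+\kappa\otimes\kappa$. You fill in exactly those steps: $M=\kappa\otimes\kappa$ satisfies $M^2=\|\kappa\|^2M$, so $\widetilde S$ is well defined, symmetric, and squares to $\beta'_\kappa$; hence $(\widetilde S^{-1})^{T}\,\beta'_\kappa\,\widetilde S^{-1}=\Id=\beta_0$, which is the isometry condition $\beta_0=\beta'_\kappa\circ\widetilde S^{-1}$ in the paper's convention. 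Your rationalization of the scalar coefficient to remove the cosmetic singularity at $\kappa=0$ is a nice touch that makes continuity of both $\widetilde S$ and its inverse on all of $[0,1]^{n-1}$ immediate, and your explicit formula for $\widetilde S^{-1}$ is verified by the identity $(\Id+tM)(\Id-\tfrac{t}{1+t\alpha}M)=\Id$. No gaps.
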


This gives a  re-parameterization of $\Phi_{\lambda,\kappa}$ that  make the horosphere metric standard. 
 \begin{definition}\label{stdrep} 
 $\Phi^{\perp}_{\lambda,\kappa}:V\to\Aff(n)$ is given by 
 $\Phi^{\perp}_{\lambda,\kappa}=\Phi_{\lambda,\kappa}\circ \widetilde{S}^{-1}$ where $\widetilde{S}=\widetilde{S}(I+\kappa\otimes\kappa)\in \GL(V)$.
  \end{definition}

If  $\type<n$  then $\kappa_i=0$ whenever $\lambda_i\ne0$, so this re-parameterization does not {\editA change} the character. However if $\type=n$
the character of $\Phi^{\perp}_{\lambda,\kappa}$ is more complicated. Fortunately we will not need an explicit formula for it in this case.
It follows from (\ref{Scts}) that
  \begin{corollary}\label{completestandardized} {\editD The map $\widetilde{A}_n\to\Hom(V,\Aff_n)$ given by 
  $(\lambda,\kappa)\mapsto \Phi^{\perp}_{\lambda,\kappa}$ is continuous.} The complete
invariant of $\Phi^{\perp}_{\lambda,\kappa}$ is given by $\beta(\Phi^{\perp}_{\lambda,\kappa})(v)=\langle v,v\rangle$, and
if $\type(\lambda)<n$ then
   $$\chi(\Phi^{\perp}_{\lambda,\kappa})(v_1,\cdots,v_{n-1})=2+\ur+\sum_{i=\ur+1}^{n-1}\exp(\lambda_iv_i)$$
 \end{corollary}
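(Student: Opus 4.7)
The plan is to prove the three assertions in turn, each of which reduces to a short computation using results already established.

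For continuity, I would first observe that $(\lambda,\kappa,v)\mapsto \phi_{\lambda,\kappa}(v)$ is polynomial in its arguments by Definition \ref{lambdakappa}, so $\Phi_{\lambda,\kappa}=\exp\circ\phi_{\lambda,\kappa}$ depends smoothly on $(\lambda,\kappa)$. By Lemma \ref{Scts} the map $\kappa\mapsto\widetilde S(\Id+\kappa\otimes\kappa)^{-1}$ is continuous on the relevant range of $\kappa$. Hence the composition $\Phi^{\perp}_{\lambda,\kappa}=\Phi_{\lambda,\kappa}\circ\widetilde S^{-1}$ is continuous into $\Hom(V,\Aff_n)$ equipped with the weak topology.

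For the horosphere metric, the key identity is that pre-composing an orbit map by $B\in\GL(V)$ pre-composes the height function: $h_{\theta\circ B}=h_\theta\circ B$, and therefore $\widetilde\beta(\theta\circ B)=B^{T}\widetilde\beta(\theta)B$ by (\ref{betaeq}). Applying this with $\theta=\Phi_{\lambda,\kappa}$ and $B=\widetilde S^{-1}$ and using Proposition \ref{phidiag}, which tells us $\widetilde\beta(\Phi_{\lambda,\kappa})\sim\beta'_{\kappa}=\Id+\kappa\otimes\kappa$, yields
\[
\widetilde\beta(\Phi^{\perp}_{\lambda,\kappa})\;\sim\;(\widetilde S^{-1})^{T}\beta'_{\kappa}\widetilde S^{-1}\;=\;\widetilde S^{-1}\widetilde S^{2}\widetilde S^{-1}\;=\;\Id,
\]
where we used that $\widetilde S$ is symmetric positive definite and that $\widetilde S^{2}=\beta'_\kappa$ by Definition \ref{squareroot}. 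Normalizing via $\pi_{\Pcal}$ to a unimodular form gives $\beta(\Phi^{\perp}_{\lambda,\kappa})(v)=\langle v,v\rangle$.

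For the character formula in the case $\type(\lambda)<n$, I would start with the observation that this case is exactly the case $\lambda_0=0$. Proposition \ref{phidiag} then gives
\[
\chi(\Phi_{\lambda,\kappa})(v)\;=\;1+\exp(-\lambda_0\langle\kappa,v\rangle)+\sum_{i=1}^{n-1}\exp(\lambda_iv_i)\;=\;2+\sum_{i=1}^{n-1}\exp(\lambda_iv_i).
\]
Since $\chi(\Phi^{\perp}_{\lambda,\kappa})(v)=\chi(\Phi_{\lambda,\kappa})(\widetilde S^{-1}v)$, I need to track the coordinates of $\widetilde S^{-1}v$. By Definition \ref{squareroot}, $\widetilde S^{-1}=\Id+c(\kappa)\,\kappa\otimes\kappa$ for some scalar $c(\kappa)$, so $(\widetilde S^{-1}v)_i=v_i+c(\kappa)\kappa_i\langle\kappa,v\rangle$. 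The decisive point (and the only content of the argument beyond Proposition \ref{phidiag}) is that for $i\le\ur$ we have $\lambda_i=0$ and hence $\exp(\lambda_i(\widetilde S^{-1}v)_i)=1$, while for $i>\ur$ the defining constraint $\lambda_i\kappa_i=\lambda_0=0$ of $\widetilde A_n$ forces $\kappa_i=0$, so $(\widetilde S^{-1}v)_i=v_i$ and the $i$-th exponential is unchanged. Gathering the $\ur$ constant terms with the initial $2$ yields the stated formula. There is no real obstacle here; the mildly subtle step is the last one, where the compatibility condition $\lambda_i\kappa_i=\lambda_0$ built into $\widetilde A_n$ is precisely what makes the $\widetilde S$-correction invisible on the coordinates carrying nontrivial weights.
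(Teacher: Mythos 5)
Your proof is correct and follows the same route the paper intends: continuity from polynomiality of $\phi_{\lambda,\kappa}$ together with Lemma \ref{Scts}, the metric computation from the fact that $\widetilde S$ is a symmetric positive-definite square root of $\beta'_\kappa$, and the character formula from the observation that when $\type(\lambda)<n$ one has $\kappa_i=0$ for $\lambda_i\neq 0$, so the correction by $\widetilde S^{-1}$ leaves every coordinate with a nontrivial weight untouched. The paper leaves these steps implicit (only saying it follows from \ref{Scts} and noting the $\kappa_i=0$ point in the preceding remark), and you have filled them in accurately.
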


  The next result shows that the conjugacy classes of the family of groups 
  $\Tr(\psi)$ coincides with the conjugacy classes
  of the groups $T(\lambda,\kappa)$, and that the conjugacy class of $T(\lambda,\kappa)$
  only depends on $\lambda$.  Changing $\kappa$ but keeping $\lambda$ fixed changes
  the conjugacy class of $\Phi_{\lambda,\kappa}$ (as detected by the horosphere metric) 
  without changing the conjugacy class of  $T(\lambda,\kappa)$.

 \begin{theorem}\label{psilambda} Given $(\lambda,\kappa)\in \widetilde A_n$ 
 then $T(\lambda,\kappa)$ is conjugate to $\Tr(\psi)$ in $\Aff(n)$ where $\psi$ is defined as follows.

 Set $\ur=\ur(\lambda)$ and $\type=\type(\lambda)$. 
 When $\type=0$ then $\lambda=0$ and define $\psi=0$. When $\type=n$ define $\psi$ as in (\ref{phidiag}). 
 When $\editC 0<\type<n$
then  $\type+\ur=n-1$ and 
\begin{align*}
{\rm given}\ \ \ \ \ \ &\lambda=(\lambda_0,\cdots,\lambda_{n-1})=(0,\cdots,0,{\editC \lambda_{\ur+1},\cdots\lambda_{\ur+\type}})\in\RR^n\\
{\rm define}\ \ \ \ \  &\psi=(\psi_1,\cdots,\psi_n)=(\lambda_{\ur+1}^{-2},\cdots,\lambda_{\ur+\type}^{-2},0,\cdots,0) \in\RR^n
\end{align*}
  \end{theorem}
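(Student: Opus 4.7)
The plan is a case analysis on $t := \type(\lambda)$. When $t = n$, the point $(\lambda,\kappa)$ lies in $D_n$ and the conclusion is exactly Proposition \ref{phidiag}(a), which supplies an explicit $Q \in \SL(n+1,\RR)$ conjugating $T(\lambda,\kappa)$ onto $\Tr(\psi)$ with $\psi_i = \lambda_i^{-2}$ for $1\le i\le n-1$ and $\psi_n = \lambda_0^{-2}$. When $t = 0$ we have $\lambda = 0$ and $\psi = 0$, and both $T(0,\kappa)$ and $\Tr(0)$ are abelian unipotent subgroups of $\Aff(n)$; I would verify the conjugacy by an explicit upper-triangular change of basis that absorbs the $\kappa$-dependent top-row contribution of $\phi_{0,\kappa}(v)$ into the standard nilpotent form of $f_0(v)$.

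The main case is $0 < t < n$. Here $\lambda_0 = 0$, so the defining constraint $\lambda_i\kappa_i = 0$ forces $\kappa_i = 0$ whenever $\lambda_i > 0$ (i.e.\ for $i > \ur$), while the $\kappa_i$ with $i \le \ur$ remain free. By Proposition \ref{phidiag}(b), $T(\lambda,\kappa)$ is a translation group, so Theorem \ref{classification}(c) produces a unique $\psi' \in A_n(\Psi)$ (up to positive scaling) together with some $B \in \SLpm V$ such that $\Phi_{\lambda,\kappa}$ is conjugate in $\Aff(n)$ to $\zeta_{\psi'} \circ B$. My plan is to identify $\psi'$ by matching the complete invariant, using Theorem \ref{completeinvt}. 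The character $\chi(\Phi_{\lambda,\kappa})(v) = 2 + \ur + \sum_{i>\ur} \exp(\lambda_i v_i)$ from Proposition \ref{phidiag}(b) must equal $\chi(\zeta_{\psi'} \circ B)(v) = 2 + \ur + \sum_{i=1}^{t} \exp(\psi'_{t}(Bv)_i)$ from Proposition \ref{completepsi}; matching the two multisets of exponents as elements of $V^*$ determines the zero pattern of $\psi'$ and relates $\psi'_{t}$ to $\lambda_{\ur+t}$. The remaining ratios $\psi'_i / \psi'_{t}$ for $i < t$ are then pinned down by comparing the horosphere metrics: $\beta(\Phi_{\lambda,\kappa}) \sim \Id + \kappa\otimes\kappa$ on one side, and $\beta(\zeta_{\psi'}) \sim \Diag(\psi'_1,\ldots,\psi'_t,\psi'_t^{-1},\ldots,\psi'_t^{-1})$ on the other, again modulo the change of marking by $B$. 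The prescribed $\psi = (\lambda_{\ur+1}^{-2},\ldots,\lambda_{\ur+t}^{-2},0,\ldots,0)$ is then checked to solve both conditions, and uniqueness from \ref{classification}(c) identifies $\psi' = \psi$ in the appropriate projective class.

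The main obstacle is the simultaneous bookkeeping of the weight data and the quadratic form under the change of marking $B \in \SLpm V$: since the weights of $\zeta_{\psi'}$ only see the single scalar $\psi'_t$ (all nonzero weights are $\psi'_t e_i^*$ by Proposition \ref{completepsi}), the other $\psi'_i$ enter only through the nilpotent entry $\psi^-(v)$ and through $\beta$, so one must match the quadratic form to recover the remaining ratios. A continuity argument that would approximate $(\lambda,\kappa)$ by diagonalizable points of $D_n$ via Lemma \ref{fibers}(a) is tempting but does not work directly: the parameters $\psi(m) = (\ldots,\lambda_0(m)^{-2})$ supplied by Proposition \ref{phidiag}(a) blow up as $\lambda_0(m)\to 0$, and the type of $T(\lambda(m),\kappa(m))$ does not agree with the type of the limit. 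This discontinuity in the $\psi$-parameterization is precisely what motivated introducing the $(\lambda,\kappa)$-parameterization in the first place, and it forces the identification of $\psi'$ to be carried out by the explicit invariant matching described above.
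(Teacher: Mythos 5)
Your proposal is correct and follows essentially the same route as the paper: the paper also resolves the case $0<\type<n$ by matching the complete invariant and invoking Theorem (\ref{completeinvt}), only it organizes the computation by writing down an explicit $C\in\GL V$ so that $\chi(\Phi^{\perp}_{\lambda,\kappa}\circ C)$ and $\beta(\Phi^{\perp}_{\lambda,\kappa}\circ C)$ agree literally with those of $\zeta_{\psi}$ from (\ref{completepsi}), rather than first citing (\ref{classification})(c) for an unknown $\psi'$ and then pinning it down. Your diagnosis that a naive limiting argument from the diagonalizable case fails because $\psi_n=\lambda_0^{-2}\to\infty$ is also exactly the obstruction the paper is navigating.
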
 
   
 \begin{proof}  When $\type=n$ this follows (\ref{phidiag}). When $\type=0$ then $\zeta_0=\Phi_{0,0}$ and the result follows.
 This leaves the case $1\le \type<n$.
  Define $F,C\in\GL(V)$ by $F(v_1,\cdots,v_{n-1})=(v_{\ur+1},\cdots, v_{\ur+\type},v_1,\cdots,v_{\ur} )$
and $C=F\cdot\Diag({c_1,\cdots,c_{n-1}})$, {\editC where the $c_i$ are determined below.}
 From (\ref{completestandardized}) 
  \begin{align}\label{eq44}
  \beta(\Phi^{\perp}_{\lambda,\kappa}\circ C)(v)\sim\sum_{i=1}^{n-1}{\editC c_i^2}v_i^2
   ,\qquad \chi(\Phi^{\perp}_{\lambda,\kappa}\circ C)(v)=2+\ur+\sum_{i=1}^{\type}\exp({\editC \lambda_{\ur+i} c_i }v_i)
   \end{align}
 By (\ref{completepsi}) 
  $$
\beta(\zeta_{\ppsi}) (v)\sim
\sum_{i=1}^{\type} \psi_iv_i^2+{\editC \psi_{\type}^{-1}}\sum_{i=\type+1}^{n-1}v_{i}^2,\qquad
     \chi(\zeta_{\ppsi}) (v) =
 2+\ur+  \sum_{i=1}^{\type} \exp({\editC \psi_{\type}}v_i)$$
We will now show how to choose $C$ so that $\zeta_{\psi}$ and $\Phi^{\perp}_{\lambda,\kappa}\circ C$ have the same complete
invariant,  then they are conjugate by (\ref{completeinvt}).
The characters are equal if $ \lambda_{\ur+i} c_i =\psi_{\type}$ for $i\le\type$. Now $\psi_i=\lambda_{\ur+i}^{-2}$ when $i\le\type$
thus $c_i=\psi_{\type}/\lambda_{\ur+i}$ for $i\le\type$, hence $c_i^2=\psi_{\type}^2\psi_i$. Then from (\ref{eq44})
$$\beta(\Phi^{\perp}_{\lambda,\kappa}\circ C)(v)\sim\psi_{\type}^2\sum_{i=1}^{\type}\psi_iv_i^2+\sum_{i=\type+1}^{n-1} c_i^2v_i^2$$
For $i>\type$ define $c_i=\sqrt{\psi_{\type}}$ then
$$\beta(\Phi^{\perp}_{\lambda,\kappa}\circ C)(v)\sim\psi_{\type}^2\sum_{i=1}^{\type}\psi_iv_i^2+\sum_{i=\type+1}^{n-1}\psi_{\type}v_i^2\sim\sum_{i=1}^{\type}\psi_iv_i^2+\psi_{\type}^{-1}\sum_{i=\type+1}^{n-1}v_i^2\sim\beta(\zeta_{\ppsi}) (v)$$
\end{proof}
 
It is messy to directly construct a conjugating matrix,  since it varies continuously only when the type 
does not change.
 In general
 the representations
 $\Phi_{\lambda,\kappa}$ and $\Phi_{\lambda,\kappa'}$ are not conjugate if $\kappa\ne\kappa'$ because they have different complete invariants. However:

{\editD \begin{corollary}\label{newrep} If $\theta:V\to\Aff(n) $ is a marked translation group then there are $B,C\in\SLpm V$ and $(\lambda,\kappa)\in\widetilde A_n$ such that $\theta$ is equivalent to $\Phi_{\lambda,\kappa}\circ B$ and to $\Phi_{\lambda,\kappa}^{\perp}\circ C$.
\end{corollary}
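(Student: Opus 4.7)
The plan is to funnel $\theta$ through Theorem \ref{classification}(c) into the canonical form $\zeta_{\psi}\circ B_0$, then use Theorem \ref{psilambda} to replace $\zeta_{\psi}$ by $\Phi^{\perp}_{\lambda,\kappa}$ (and in turn, via Definition \ref{stdrep}, by $\Phi_{\lambda,\kappa}$), and finally use the scaling freedom in Theorem \ref{classification}(b) to push the residual pre-composition matrices into $\SLpm V$.

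In detail, Theorem \ref{classification}(c) produces $\psi\in A_n(\Psi)$ and $B_0\in\SLpm V$ with $\theta$ conjugate in $\Aff(n)$ to $\zeta_{\psi}\circ B_0$. The recipe in Theorem \ref{psilambda} assigns a pair $(\lambda,\kappa)\in\widetilde A_n$ to $\psi$, and its proof exhibits an explicit $D=F\cdot\Diag(c_1,\ldots,c_{n-1})\in\GL V$ such that $\Phi^{\perp}_{\lambda,\kappa}\circ D$ has the same complete invariant as $\zeta_{\psi}$; by Theorem \ref{completeinvt} the two are conjugate. Composing, $\theta\sim\Phi^{\perp}_{\lambda,\kappa}\circ(DB_0)$, and by Definition \ref{stdrep} also $\theta\sim\Phi_{\lambda,\kappa}\circ(\widetilde S^{-1}DB_0)$.

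To conclude, I would absorb the determinants of $DB_0$ and $\widetilde S^{-1}DB_0$ using the gauge identity $\zeta_{s\psi}=\zeta_{\psi}\circ(s\Id_{\rank}\oplus\Id_{\ur})$ from Theorem \ref{classification}(b). Under the dictionary of Theorem \ref{psilambda}, scaling $\psi$ by $s>0$ corresponds to scaling the non-zero coordinates of $\lambda$ by $s^{-1/2}$, leaving $\kappa$ and $\widetilde A_n$-membership unchanged. Choosing $s$ so that $s^{\rank}=|\det(DB_0)|$ yields a revised pair $(\lambda',\kappa)\in\widetilde A_n$ and $C\in\SLpm V$ with $\theta\sim\Phi^{\perp}_{\lambda',\kappa}\circ C$; an analogous rescaling absorbing $|\det(\widetilde S^{-1}DB_0)|$ produces $(\lambda'',\kappa)\in\widetilde A_n$ and $B\in\SLpm V$ with $\theta\sim\Phi_{\lambda'',\kappa}\circ B$. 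The degenerate case $\rank=0$ forces $\theta$ to be unipotent, and then $\zeta_0=\Phi_{0,0}=\Phi^{\perp}_{0,0}$ handles the statement directly.

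The main obstacle is the careful bookkeeping through the dictionary $\psi\leftrightarrow(\lambda,\kappa)$, separating the cases $\type=n$ (diagonal), $0<\type<n$ (mixed), and $\type=0$ (unipotent), and verifying that the scaling operation keeps the resulting parameters in $\widetilde A_n$. A subtle point is that the two absorbing scalars differ by $|\det\widetilde S|^{1/\rank}=(1+\|\kappa\|^2)^{1/(2\rank)}$; one reads the statement as asserting that each of the two equivalences holds for some choice of parameters (not necessarily identical), and any further synchronization to a common $(\lambda,\kappa)$ is mediated by the orthogonal stabilizer from Lemma \ref{Opsi}.
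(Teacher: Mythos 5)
Your route is essentially the paper's route: the paper's one-line proof ("follows from (\ref{classification})(b) and (\ref{psilambda}), and the second claim from this and (\ref{stdrep})") is precisely the pipeline you describe — classify $\theta$ as $\zeta_\psi\circ B_0$, convert to the $\Phi^{\perp}_{\lambda,\kappa}$ picture via Theorem~\ref{psilambda}, and pass between $\Phi$ and $\Phi^{\perp}$ via Definition~\ref{stdrep}, with the scaling identity of Theorem~\ref{classification}(b) used to push the leftover matrix into $\SLpm V$. Two remarks on the bookkeeping.

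First, the exact scaling factor you propose is not quite right. If you replace $\psi$ by $s\psi$, the conjugating matrix $D$ that Theorem~\ref{psilambda} produces also changes (its entries $c_i=\psi_{\type}/\lambda_{\ur+i}$ and $\sqrt{\psi_\type}$ scale with $s$), so the condition is not $s^{\rank}=|\det(DB_0)|$ but rather $|\det D(s)|\,s^{-\rank}=1$; working this out for $0<\type<n$ gives $s=|\det D(1)|^{-2/(n-1)}$. A cleaner variant that avoids tracking $D(s)$ is to work directly at the $\Phi$ level: writing $E=rE'$ with $E'\in\SLpm V$ and $r=|\det E|^{1/(n-1)}$, one checks that conjugation by $\Diag(r^{-2},r^{-1},\dots,r^{-1},1)$ carries $\Phi_{\lambda,\kappa}\circ (r\,\Id)$ to $\Phi_{r\lambda,\kappa}$, with $\kappa$ unchanged since $\kappa_i=\lambda_0/\lambda_i$ is scale-invariant. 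This gives the $\SLpm V$ normalization in one step.

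Second, your "subtle point" at the end is not a mere reading preference — it is a genuine issue with the statement. When $\type(\lambda)=n$, one has $\kappa\neq 0$, $|\det \widetilde S|=\sqrt{1+\|\kappa\|^2}\neq 1$, and by Theorem~\ref{quotienthomeo} the $\lambda$-coordinate is uniquely determined by the conjugacy class together with a unimodular precomposition. Thus if $\theta\sim\Phi^{\perp}_{\lambda,\kappa}\circ C$ with $C\in\SLpm V$, then $\theta\sim\Phi_{\lambda,\kappa}\circ(\widetilde S^{-1}C)$ and the unique way to restore unimodularity changes $\lambda$ to $r\lambda$ with $r=(1+\|\kappa\|^2)^{1/(2(n-1))}\neq 1$. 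So the two normalizations cannot share the same $(\lambda,\kappa)$ unless $\kappa=0$. The corollary as worded should therefore be read as two separate existence claims with (possibly) different parameters; this costs nothing downstream since only the $\Phi^{\perp}$-form is used in the sequel (e.g.\ in Lemma~\ref{Psicts}).
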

\begin{proof} The first claim follows from (\ref{classification})(b) and (\ref{psilambda}) and the second claim from this and  (\ref{stdrep}).
\end{proof}
}

 \begin{corollary}\label{Tconj}  If $s>0$ and $(\lambda,\kappa),(s\cdot\lambda,\kappa')\in\widetilde A_n$ then $T(\lambda,\kappa)=T(s\cdot\lambda,\kappa')$ are conjugate 
 subgroups of $\Aff(n)$. In particular, if $\type(\lambda)<n$ then $T(\lambda,\kappa)$ is conjugate to $T(\lambda,0)$. 
 \end{corollary}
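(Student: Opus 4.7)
The plan is to reduce the statement entirely to Theorem \ref{psilambda}, together with the known behavior of the groups $\Tr(\psi)$ under rescaling of $\psi$. The principal observation is that the conjugating data produced by \ref{psilambda} depends only on $\lambda$, never on $\kappa$, so one only needs to track how the parameter $\psi$ assigned to $\lambda$ transforms under $\lambda\mapsto s\lambda$.

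First, I would apply Theorem \ref{psilambda} to both $(\lambda,\kappa)$ and $(s\lambda,\kappa')$, producing a $\psi=\psi(\lambda)$ and $\psi'=\psi(s\lambda)$ in $A_n^u(\Psi)$ such that $T(\lambda,\kappa)$ is conjugate in $\Aff(n)$ to $\Tr(\psi)$, and $T(s\lambda,\kappa')$ is conjugate to $\Tr(\psi')$. From the explicit formulas in \ref{psilambda}, namely $\psi_i=\lambda_{\ur+i}^{-2}$ in the case $0<\type(\lambda)<n$, and the analogous formula coming from \ref{phidiag} when $\type(\lambda)=n$ (while $\psi=0=\psi'$ when $\type(\lambda)=0$), one reads off directly that $\psi(s\lambda)=s^{-2}\psi(\lambda)$ in every case. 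Note that $\type$ and $\ur$ are unchanged by positive scaling of $\lambda$, so the formulas really do match term by term.

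Second, I would invoke the scaling behavior of $\Tr(\psi)$ itself. By \ref{classification}(b), $\zeta_{r\psi}=\zeta_\psi\circ((r\Id_{\rank})\oplus\Id_{\ur})$ for every $r>0$, so as \emph{subgroups} of $\Aff(n)$ one has $\Tr(r\psi)=\Tr(\psi)$; in particular $\Tr(\psi')=\Tr(s^{-2}\psi)=\Tr(\psi)$. Combining this with the first step gives $T(\lambda,\kappa)$ and $T(s\lambda,\kappa')$ are both conjugate to $\Tr(\psi)$, hence to each other. The second assertion follows by setting $s=1$ and $\kappa'=0$: when $\type(\lambda)<n$ one has $\lambda_0=0$, so the defining relation $\lambda_0=\lambda_i\kappa_i$ of $\widetilde A_n$ is satisfied by $\kappa'=0$, and thus $(\lambda,0)\in\widetilde A_n$ is a legitimate choice.

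There is no real obstacle here; the only thing to verify carefully is the case analysis on $\type(\lambda)$ when computing $\psi(s\lambda)$, and the fact that the conjugacy class of $\Tr(\psi)$ is invariant under positive rescaling of $\psi$. Both points are already recorded in the excerpt (in \ref{psilambda}, \ref{phidiag}, and the proof of \ref{classification}(b) respectively), so the argument is essentially bookkeeping.
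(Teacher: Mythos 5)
Your proof is correct, but it takes a genuinely different route from the paper's. Both you and the paper begin by observing that (\ref{psilambda}) makes $\psi$ depend only on $\lambda$, which handles the change from $\kappa$ to $\kappa'$ at fixed $\lambda$. Where you diverge is in handling the rescaling $\lambda\mapsto s\lambda$: you track the explicit formula from (\ref{psilambda}) and (\ref{phidiag}) to verify $\psi(s\lambda)=s^{-2}\psi(\lambda)$ in all three cases ($\type=0$, $0<\type<n$, $\type=n$), then finish by invoking (\ref{classification}b), which gives $\zeta_{s^{-2}\psi}=\zeta_\psi\circ((s^{-2}\Id_\rank)\oplus\Id_\ur)$ and hence equality of images $\Tr(s^{-2}\psi)=\Tr(\psi)$. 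The paper instead takes $f(v)=sv$ and shows via the formulas in (\ref{phidiag}/\ref{completelambdakappa}) that $\Phi_{s\lambda,\kappa}$ and $\Phi_{\lambda,\kappa}\circ f$ have the same complete invariant, then invokes Theorem \ref{completeinvt} to conclude they are conjugate. Your route is arguably tidier because it stays entirely inside the $\psi$--parameterization and never needs the complete-invariant machinery; it does, however, require reading off the precise constant ($s^{-2}$) from the $\psi$-formulas in each type case, which the paper sidesteps by working directly with characters and horosphere metrics. Both arguments are sound; yours happens to be more elementary, relying only on (\ref{psilambda}) and (\ref{classification}b) rather than (\ref{completeinvt}).
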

 \begin{proof} By (\ref{psilambda})  $T(\lambda,\kappa)$ and $T(\lambda,\kappa')$ are conjugate. Let $f:V\to V$ be $f(v)=sv$.
 By (\ref{completelambdakappa}) $\chi(\Phi_{s\lambda,\kappa})=\chi(\Phi_{\lambda,\kappa})\circ f$ and 
 $\beta(\Phi_{s\lambda,\kappa})=\beta(\Phi_{\lambda,\kappa})$. Now $\beta(\Phi_{\lambda,\kappa}\circ f)\sim s^2 \beta(\Phi_{\lambda,\kappa})\sim\beta(\Phi_{\lambda,\kappa})$. Thus $\Phi_{s\lambda,\kappa}$ and $\Phi_{\lambda,\kappa}\circ f$ are marked translation groups with the same complete invariant.
 Thus they are conjugate by (\ref{completeinvt}).
  The second statement follows because, if $\type(\lambda)<n$, then $\lambda_0=0$ so $(\lambda,0)\in \widetilde A_n$.
 \end{proof}
 It is interesting that  in the non-diagonalizable case we may choose $\kappa=0$,  and then $\phi_{\lambda,0}$ has a simple form as given in (\ref{lambdakappa}), {\editD however the diagonalizable ones are more complicated.}

\if0
{\editA maybe omit the following ?

The next result shows that
the moduli space has a nice algebraic structure.

\begin{corollary}  The subspace $\widetilde{\liealgteich}_n\subset\Hom(V,\aff(n))$
 consisting of all isomorphisms onto Lie algebras
of  translation groups  is a   connected, real semi-algebraic set.
\end{corollary}
 \begin{proof} Let 
 $$\begin{array}{rcl}
 A_n^u & = &\{(\lambda_0,\cdots,\lambda_{n-1}):\ \forall\ i\ \lambda_i\ge 0\ \&\ \exists\ u\ (\lambda_i>0\Leftrightarrow i> u)\ \}\\
\widetilde A_n^u & = &  \{ (\lambda, \kap) \in A_n^u \times [0,1]^{n-1}\ : \ \lambda_0 = \lambda _i \kappa_i \}
\end{array}$$
 The definition, (\ref{lambdakappa}), of $\phi_{\lambda,\kappa}$ can be extended to $(\lambda,\kappa)\in\widetilde A_n^u$. Theorem (\ref{classification})  implies that every marked generalized cusp is conjugate
 to $\phi_{\lambda,\kappa}\circ A$ for some $\phi_{\lambda,\kappa}\in \widetilde A_n^u$ and $A\in \SL V$.
 The set $W=\{(P,Q)\in\Aff(n):\ PQ=\Id,\ \ \det P=1\ \}$
is an affine algebraic variety. The map
$$\widetilde\Psi:\widetilde{A}_n^+\times\SL V\times W\to\widetilde{\liealgteich}(n)$$ 
    defined by
    $\widetilde\Psi((\lambda,\kappa),B,(P,Q))=P\left(\phi_{\lambda,\kappa}\circ  B\right)Q$ is  polynomial with image 
    $\widetilde\liealgteich_n$. Now $\widetilde{A}_n^+$, $W$ and $\SL V$ are all real, semi-algebraic and connected.
    The image of a connected, semi-algebraic set under a polynomial map
    is semi-algebraic and connected. \end{proof}
 
 {\em Remark} The holonomy of a generalized cusp is not in general an algebraic group: there might be
 matrix entries $e^t$ and $t$. Thus $\widetilde\Modsp_n$ is not an algebraic subset of $\Hom(V,\Aff(n))$,
 though it is homeomorphic to $\widetilde{\liealgteich}_n$.

 }
 
 \fi
 \section{Topology of the Moduli Space}  
 
 { Recall that $\Rep_n$ is the space of conjugacy classes of holonomy representations of marked generalized torus cusps. \editD First we establish that $\Rep_n$ is a quotient of $\widetilde{A}_n\times\SLpm V$, and
 that the complete invariant provides an embedding of $\Rep_n$. We use this to prove
 that the holonomy map is a homeomorphism $\hol:\Modsp_n\to\Rep_n$. Finally we compute the stratification of $\Modsp_n$
 and prove (\ref{notmanifold}).}

It follows from (\ref{classification}) and (\ref{psilambda}) that every marked translation group is conjugate to $\Phi_{\lambda,\kappa}^{\perp}\circ A$ for
some $(\lambda,\kappa)\in\widetilde A_n$ and $A\in \SLpm V$. 
Moreover if $\type(\lambda)<n$ then it suffices to use $\kappa=0$
so $\Phi_{\lambda,0}^{\perp}=\Phi_{\lambda,0}$. 

\begin{lemma}\label{Psicts} The map $\widetilde\Psi:\widetilde{A}_n\times \SLpm V\to\editD\Rep_n$
given by $\widetilde\Psi((\lambda,\kappa),B)=[\Phi_{\lambda,\kappa}^{\perp}\circ B]$ is continuous, and
covers a continuous surjection
\editC$\Psi:{A}_n\times \SLpm V\to\Rep_n$.
\end{lemma}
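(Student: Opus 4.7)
The plan is to verify four things in sequence: continuity of $\widetilde\Psi$, that $\widetilde\Psi$ descends through $p_1\times\id_{\SLpm V}$ to a well-defined $\Psi$, continuity of $\Psi$, and surjectivity of $\Psi$.

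For continuity of $\widetilde\Psi$, Corollary \ref{completestandardized} gives continuity of $(\lambda,\kappa)\mapsto\Phi_{\lambda,\kappa}^{\perp}$ from $\widetilde A_n$ into $\Hom(V,\Aff(n))$. Right-composition with $B\in\SLpm V$ is continuous, and the projection $\Hom(V,\Aff(n))\to\Hom(V,\Aff(n))/\Aff(n)$ is continuous; restricting to $\Rep_n$ gives continuity of $\widetilde\Psi$.

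The key step is descent. By Lemma \ref{fibers}, fibers of $p_1$ are singletons when $\type(\lambda)=n$, so there is nothing to check there. When $\type(\lambda)<n$, any two points $(\lambda,\kappa),(\lambda,\kappa')\in p_1^{-1}(\lambda)$ have $\lambda_0=0$, and the formulas in Corollary \ref{completestandardized} show
\[
\chi(\Phi_{\lambda,\kappa}^{\perp})(v)=2+\ur+\sum_{i=\ur+1}^{n-1}\exp(\lambda_iv_i),\qquad \beta(\Phi_{\lambda,\kappa}^{\perp})(v)=\langle v,v\rangle,
\]
both independent of $\kappa$. Hence $\eta(\Phi_{\lambda,\kappa}^{\perp})=\eta(\Phi_{\lambda,\kappa'}^{\perp})$ and Theorem \ref{completeinvt} gives $g\in\Aff(n)$ with $\Phi_{\lambda,\kappa'}^{\perp}=g\,\Phi_{\lambda,\kappa}^{\perp}\,g^{-1}$. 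Then $\Phi_{\lambda,\kappa'}^{\perp}\circ B=g(\Phi_{\lambda,\kappa}^{\perp}\circ B)g^{-1}$, so $\widetilde\Psi((\lambda,\kappa),B)=\widetilde\Psi((\lambda,\kappa'),B)$. Thus $\widetilde\Psi$ factors as $\widetilde\Psi=\Psi\circ(p_1\times\id_{\SLpm V})$ for a well-defined set-map $\Psi:A_n\times\SLpm V\to\Rep_n$.

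For continuity of $\Psi$, Lemma \ref{fibers}(c) says $p_1$ is a quotient map, and since $\SLpm V$ is locally compact Hausdorff, $p_1\times\id_{\SLpm V}$ is a quotient map by the standard product-of-quotient-maps result. Hence $\Psi$ is continuous by the universal property. Surjectivity of $\Psi$ is immediate from Corollary \ref{newrep}: every marked translation group is equivalent to some $\Phi_{\lambda,\kappa}^{\perp}\circ C$ with $(\lambda,\kappa)\in\widetilde A_n$ and $C\in\SLpm V$, so $\widetilde\Psi$ and thus $\Psi$ is onto.

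The only nontrivial step is the descent, which hinges on the $\kappa$-independence of the character formula in Corollary \ref{completestandardized} together with Theorem \ref{completeinvt}. Everything else is a formal manipulation of continuous functions and the quotient topology.
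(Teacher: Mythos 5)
Your proof is correct and follows essentially the same route as the paper: continuity of $\widetilde\Psi$ from Corollary \ref{completestandardized}, well-definedness via the $\kappa$-independence of the complete invariant together with Theorem \ref{completeinvt}, and continuity of $\Psi$ via the quotient map $p_1$ from Lemma \ref{fibers}(c). If anything, you are more careful than the paper on one point: the paper writes $\widetilde\Psi^{-1}(U)=p_1^{-1}(\Psi^{-1}(U))$ and then invokes ``$p_1$ is a quotient map,'' eliding both the fact that the relevant map is really $p_1\times\mathrm{id}_{\SLpm V}$ and the fact that a product of a quotient map with the identity on a locally compact Hausdorff space is again a quotient map (Whitehead). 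You state this explicitly, which is the correct way to close the step. You also spell out surjectivity (via Corollary \ref{newrep}), which the paper establishes only in the paragraph preceding the lemma rather than in the proof body. No gaps.
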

\begin{proof} Continuity of $\widetilde\Psi$ follows from (\ref{completestandardized}).
To prove $\Psi$ is well defined we must show that $\Phi^{\perp}_{\lambda,\kappa}\circ B$
is conjugate to $\Phi^{\perp}_{\lambda,\kappa'}\circ B$. To do this, it suffices
to show they have the same complete invariant. Clearly it suffices to do this when $B=\Id$.
{\editD This now follows from (\ref{completestandardized}). }
 
 Recall $p_1:\widetilde A_n\to A_n$ and we have $\Psi\circ p_1=\widetilde\Psi$.  If $U\subset\Rep_n$ is open
 then, since  $\widetilde\Psi$ is continuous,
  $\widetilde\Psi^{-1}(U)=p_1^{-1}(\Psi^{-1}(U))$ is open. Since $p_1$ is a quotient map by
  (\ref{fibers})(c), it follows that $\Psi^{-1}(U)$ is open, so $\Psi$ is continuous.
   \end{proof}
   
   \if0 Recall that  a sequence in a topological space is \emph{bounded} if it is contained in some compact subset. Let $C(X,Y)$
   be the set of continuous functions on $X$ with the compact-open topology. Given $x\in X$ the evaluation map
   $C(X,Y)\to Y$ that sends $f$ to $f(x)$ is continuous. Hence if $f_n$ is a bounded sequence
   in  $C(X,Y)$ then $f(x$) is bounded in $Y$.
   \fi

In what follows use $\beta\in\Pcal$ in place of $[\beta]\in\PP\Pcal$.
Recall the compete invariant $\eta: \Rep_n\to \chi(V)\times\Pcal$ and the codomain is {\editD homeomorphic to a subspace of Euclidean space.  In particular a closed subset of the codomain is locally compact.

\begin{lemma}\label{propercompleteinvt} $\cinvt\circ\Psi:A_n\times \SLpm V\to \chi(V)\times\Pcal$ is proper and continuous, {\editD and $X_n=\cinvt(\Rep_n)$ is a closed subset of $  \chi(V)\times\Pcal$, and is homeomorphic to a closed subset of Euclidean space.}
\end{lemma}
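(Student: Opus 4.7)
The plan is to handle continuity and properness of $\eta\circ\Psi$ first, then deduce closedness of $X_n$ and its Euclidean embedding as formal consequences. Continuity is immediate from Proposition \ref{iotacts} and Lemma \ref{Psicts}; all the content lies in establishing properness.

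For properness, I would fix a compact $K\subset\chi(V)\times\Pcal$, take a sequence $(\lambda^{(k)},B^{(k)})$ in $(\eta\circ\Psi)^{-1}(K)$, and extract a subsequence convergent in $A_n\times\SLpm V$ by bounding $B^{(k)}$ and $\lambda^{(k)}$ separately. Lift via $\kappa^{(k)}\in[0,1]^{n-1}$ with $(\lambda^{(k)},\kappa^{(k)})\in\widetilde{A}_n$, pass to a subsequence along which $\kappa^{(k)}\to\kappa^{\infty}$ by compactness of $[0,1]^{n-1}$, and set $\rho^{(k)}=\Phi^{\perp}_{\lambda^{(k)},\kappa^{(k)}}\circ B^{(k)}$. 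Corollary \ref{completestandardized} identifies $\beta(\rho^{(k)})$ with the form $(B^{(k)})^t B^{(k)}$, already unimodular since $\det B^{(k)}=\pm 1$; as this lies in a compact subset of $\Pcal$, the singular values of $B^{(k)}$ are bounded above, and below because they multiply to one, so a subsequence of $B^{(k)}$ converges in $\SLpm V$. To bound $\lambda^{(k)}$, in the regime $\type(\lambda^{(k)})<n$ Corollary \ref{completestandardized} gives $\chi(\rho^{(k)})(v)=2+\ur+\sum_{i>\ur}\exp(\lambda_i^{(k)}(B^{(k)}v)_i)$; evaluating at the test vector $v^{(k)}_i:=(B^{(k)})^{-1}e_i$, which by the previous bound stays in a compact subset of $V$, isolates a summand $\exp(\lambda_i^{(k)})$, and uniform boundedness of the characters on compact subsets of $V$ (which follows from compactness of the $\chi$-projection of $K$ in the compact-open topology) forces $\lambda_i^{(k)}$ to be bounded. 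The diagonalizable case $\type=n$ is analogous via $\Phi^{\perp}_{\lambda,\kappa}=\Phi_{\lambda,\kappa}\circ\widetilde{S}^{-1}$ of Definition \ref{stdrep}, the character formula of Proposition \ref{phidiag}, and the uniform bound on $\widetilde{S}^{-1}$ coming from bounded $\kappa$. Since $A_n$ is closed in $\RR^n$, a bounded subsequence of $\lambda^{(k)}$ converges in $A_n$, proving that $(\eta\circ\Psi)^{-1}(K)$ is sequentially compact, hence compact.

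The image of a proper continuous map into a Hausdorff space is closed, so $X_n=(\eta\circ\Psi)(A_n\times\SLpm V)$ is closed in $\chi(V)\times\Pcal$. The product $\chi(V)\times\Pcal$ embeds in a Euclidean space $\RR^M$, since $\chi(V)$ is semi-algebraic by the discussion preceding Definition \ref{metricdef} and $\Pcal$ is the closed subset of $\Sym^2V$ cut out by $\det=1$; running the same properness argument with $K$ a compact subset of $\RR^M$ shows $\eta\circ\Psi$ is also proper as a map into $\RR^M$, so its image $X_n$ is closed there, giving the claimed embedding. The main technical obstacle is the order of the bounds: the bound on $B^{(k)}$ must precede the bound on $\lambda^{(k)}$, since the test vectors $(B^{(k)})^{-1}e_i$ must lie in a compact region of $V$ before the character formula can extract a bound on $\lambda^{(k)}$. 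The diagonalizable case $\type=n$ requires some care because the explicit character formula must be pulled back through $\widetilde S^{-1}$, but the uniform bound on $\kappa$ makes this routine.
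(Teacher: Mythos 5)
Your proof is correct and follows essentially the same strategy as the paper: first bound $B^{(k)}$ via $\beta(\rho^{(k)})=(B^{(k)})^tB^{(k)}$, then use boundedness of the character on a compact family of test vectors to control $\lambda^{(k)}$, and finish by noting $A_n$ is closed. The paper's version is slightly more streamlined — it pulls the character back through the bounded maps $B_j^{-1}\widetilde{S}_j$ so that $\trace(\Phi_{a_j,\kappa_j})$ itself is bounded, and then evaluates at the single fixed vector $e_{n-1}$ (since $\lambda_{n-1}$ dominates in the ordered Weyl chamber), handling all types uniformly rather than splitting into $\type<n$ and $\type=n$ — but these are cosmetic differences and your argument is sound.
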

\begin{proof}Continuity of $\cinvt\circ\Psi$ follows from (\ref{iotacts}) and (\ref{Psicts}).
 Suppose  $((a_j,\kappa_j),B_j)\in \widetilde{A}_n\times\SLpm V$ 
and  
$$(\chi_j,\beta_j)=\cinvt(\Psi(a_j,B_j))=\cinvt(\Phi_{a_j,\kappa_j}^{\perp}\circ B_j)$$  
 is a bounded sequence in  $\chi(V)\times\Pcal$. 
Then $\beta_j=B_j^tB_j$
 is bounded.  The map  $\theta: \SLpm V\to \SL V$ given by $\theta(B)= B^t B$ is proper, thus $B_j$ is bounded.
 {\editD After passing to a subsequence we may assume $\lim B_j=B\in\SLpm V$.}
 By (\ref{stdrep}) $$\Phi_{a_j,\kappa_j}^{\perp}=\Phi_{a_j,\kappa_j}\circ \widetilde S_j^{-1}$$
where $\kappa_j\in[0,1]^{n-1}$
so $\widetilde S_j=\widetilde{S}(I+\kappa_j\otimes\kappa_j)$  is bounded.
Since the map that sends an element of $ \SLpm V$ to its inverse is proper, $B_j^{-1}$ is bounded.
 Thus $(B_j^{-1}\circ\widetilde S_j)$ is bounded. Also $\chi_j$ is bounded, so
 $$\chi_j\circ(B_j^{-1}\circ\widetilde S_j)=\trace((\Phi_{a_j,\kappa_j}\circ\widetilde S_j^{-1})\circ B_j)\circ (B_j^{-1}\circ\widetilde S_j)=\trace(\Phi_{a_j,\kappa_j})$$
 is bounded.  Let $\mu_j$ be the last component
of $a_j$, then $\mu_j$ is the largest component of $a_j$.
Referring to (\ref{lambdakappa}) we see that $\Phi_{a_j,\kappa_j}(e_{n-1})$ has
an eigenvalue of 
$\exp\mu_j$ in the $(n,n)$ entry and all other eigenvalues equal to 1. 
Since $\trace(\Phi_{a_j,\kappa_j})$ is bounded, and $\mu_j>0$, it follows that $\mu_j$ is bounded.
Thus $a_j$ is bounded. Hence $\cinvt\circ\Psi$ is proper. {\editD After taking a subsequence
$\lim a_j=a$ and $a\in A_n$ because  $A_n$ is a closed subset of $\RR^n$. Thus 
 $\lim (a_j,B_j)=(a,B)\in A_n\times\SLpm V$, and $\lim\eta\circ\Psi(a_j,B_j)= \eta\circ\Psi(a,B)\in\Im\eta$. 
 Thus $\Im\eta\circ\Psi$ is closed in $ \chi(V)\times\Pcal$.
 By (\ref{Psicts}) $\Im\Psi=\Rep_n$ thus
  $\Im\eta\circ\Psi=\eta(\Rep_n)$ is closed}.
\end{proof}

{\editA By (\ref{completeinvt}), if $B,B'\in\SLpm V$ 
then
  $\Phi^{\perp}_{\lambda,\kappa}\circ B$ and $\Phi^{\perp}_{\lambda,\kappa}\circ B'$ represent the same point in $\Rep_n$
 if and only if they have the same complete invariant.
 By definition (\ref{metricdef})  this is equivalent to
$B'\in B\cdot O(\Phi^{\perp}_{\lambda,\kappa})$. Let $\pi:A_n\times \SLpm V\ \to \left(A_n\times \SLpm V\right) /\sim$ be projection, where $(\lambda,B)\sim(\lambda',B')$ if and only if
$\lambda=\lambda'$ and $B'\in B\cdot O(\eta(\Phi^{\perp}_{\lambda,\kappa}))$ for some $\kappa$ with 
$(\lambda,\kappa)\in\widetilde A_n$. It follows there is an injective function  $$\Psi_*: \left(A_n\times \SLpm V\right) /\sim \ \ \longrightarrow  \editC\Rep_n$$  
such that $\Psi=\Psi_*\circ\pi$. Equip the {\editC domain} with the quotient topology, then $\Psi_*$ is continuous by (\ref{Psicts}). Surjectivity of $\Psi_*$  follows from (\ref{classification}) and (\ref{psilambda}). Theorem (\ref{completeinvtvar}) follows from {\editD (\ref{holishomeo}) and}:

\begin{theorem}\label{quotienthomeo}  
$\Psi_*$ is a homeomorphism and  {\editD $\cinvt:\Rep_n\to X_n$ is a  homeomorphism, 
and  $\Rep_n$  is homeomorphic to a closed subset of Euclidean space.}
\end{theorem}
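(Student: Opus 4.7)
The plan is to work with the induced map $\bar{\eta} := \eta\circ\Psi_*\colon Q\to X_n$, where $Q := (A_n\times\SLpm V)/\sim$, and prove it is a homeomorphism via Lemma \ref{prop_homeo}. Once this is done, in the factorization $\bar{\eta} = \eta\circ\Psi_*$ the two factors are already known to be continuous bijections, so the formulas
$$\Psi_*^{-1} = \bar{\eta}^{-1}\circ\eta \quad\text{and}\quad \eta^{-1} = \Psi_*\circ\bar{\eta}^{-1}$$
immediately give that $\Psi_*^{-1}$ and $\eta^{-1}$ are continuous, upgrading both $\Psi_*$ and $\eta$ to homeomorphisms individually. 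The final claim that $\Rep_n$ is homeomorphic to a closed subset of Euclidean space then follows from the homeomorphism $\eta\colon\Rep_n\to X_n$ together with the corresponding statement in Lemma \ref{propercompleteinvt}.

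To apply Lemma \ref{prop_homeo} to $\bar{\eta}$ I need that it is a continuous proper bijection and that both $Q$ and $X_n$ are locally compact with $X_n$ Hausdorff. Continuity of $\bar{\eta}$ is the universal property of the quotient: $\eta\circ\Psi = \bar{\eta}\circ\pi$ is continuous by Lemmas \ref{iotacts} and \ref{Psicts}, and $\pi$ is a quotient map by construction. Surjectivity of $\bar{\eta}$ is immediate since $\Psi$ is onto $\Rep_n$ and $X_n = \eta(\Rep_n)$; injectivity holds because if $\bar{\eta}([\lambda,B]) = \bar{\eta}([\lambda',B'])$ then Theorem \ref{completeinvt} forces $\Psi(\lambda,B)$ and $\Psi(\lambda',B')$ to be conjugate, and the very definition of $\sim$ then gives $[\lambda,B] = [\lambda',B']$. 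For properness, given compact $K\subset X_n$, Lemma \ref{propercompleteinvt} yields that $(\eta\circ\Psi)^{-1}(K) = \pi^{-1}(\bar{\eta}^{-1}(K))$ is compact, so its continuous image $\bar{\eta}^{-1}(K) = \pi\bigl((\eta\circ\Psi)^{-1}(K)\bigr)$ is compact.

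The space $X_n$ is a closed subset of Euclidean space by Lemma \ref{propercompleteinvt}, hence locally compact and Hausdorff. The Hausdorff property of $Q$ is then automatic from the existence of a continuous injection $\bar{\eta}$ into $X_n$: disjoint open neighborhoods of the images pull back to disjoint open neighborhoods in $Q$. Local compactness of $Q$ is the step that I expect to require the most care, but it is bought cheaply using properness of $\bar{\eta}$: given $q\in Q$, choose a compact neighborhood $N$ of $\bar{\eta}(q)$ in the locally compact $X_n$, with $\bar{\eta}(q)\in\interior N$; then $\bar{\eta}^{-1}(\interior N)$ is an open neighborhood of $q$ contained in the compact set $\bar{\eta}^{-1}(N)$. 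Applying Lemma \ref{prop_homeo} to $\bar{\eta}$ completes the argument. The only conceptual subtlety is checking that the equivalence relation $\sim$ matches exactly the fiber structure of $\eta\circ\Psi$, and this is precisely the content of Theorem \ref{completeinvt} combined with Definition \ref{metricdef}.
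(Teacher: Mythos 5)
Your proof is correct and follows the same overall strategy as the paper: reduce to showing $\bar\eta=\eta\circ\Psi_*$ is a homeomorphism via Lemma \ref{prop_homeo}, then unwind the factorization. Where you genuinely diverge is in establishing local compactness of the quotient $Q=(A_n\times\SLpm V)/\sim$. The paper does this directly: it takes compact neighborhoods $L\ni\lambda$ and $K\ni A$ and observes that $U=L\times(O(n-1)\cdot K)$ is compact with $\pi(U)$ a neighborhood of $\pi(\lambda,A)$, the key point being that every stabilizer $\O(\eta(\Phi^{\perp}_{\lambda,\kappa}))$ sits inside the fixed compact group $O(n-1)$. You instead pull local compactness back along the proper map $\bar\eta$: a compact neighborhood $N$ of $\bar\eta(q)$ in the locally compact space $X_n$ has $\bar\eta^{-1}(N)$ compact, and it contains the open set $\bar\eta^{-1}(\interior N)\ni q$. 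Your route is slicker in that it requires no analysis of the saturation of compact sets and costs nothing extra, since properness was already needed to invoke Lemma \ref{prop_homeo}. One small observation: the Hausdorffness of $Q$ that you establish is not actually required by Lemma \ref{prop_homeo}, which only asks the codomain to be Hausdorff, so that step can be omitted.
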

\begin{proof}   
 By (\ref{propercompleteinvt}) $\cinvt\circ\Psi_*: \left(A_n\times \SLpm V\right) /\sim\  \longrightarrow \editD X_n$ is  proper and continuous.
Since $\editD X_n$ is homeomorphic to a closed subspace of Euclidean space, it is Hausdorff and locally compact.
Given  $x=(\lambda,A)\in A_n\times \SLpm V$ there are compact neighborhoods $L\subset A_n$ of $\lambda$ and $K\subset \SLpm V$ of $A$. 
Then $U=L\times\left(O(n-1)\cdot K\right)\subset A_n\times \SLpm V$ is compact
because $O(n-1)$ is compact.
Since  $O(\eta(\Phi^{\perp}_{\lambda,\kappa}))\subset O(n-1)$ it follows that $\pi(U)$ is a compact neighborhood of $\pi x$, thus
$ \left(A_n\times \SLpm V\right) /\sim$ is  locally compact.
Hence $\cinvt\circ\Psi_*$ is an embedding by (\ref{prop_homeo}). It follows that $\cinvt$ is an embedding,
 and $\Psi_*$ is a homeomorphism.
{\editD The second conclusion follows from (\ref{propercompleteinvt}).}
\end{proof} 
}

 In (\ref{classification}) generalized cusps were classified and shown to be equivalent to ones with holonomy 
 in $\Tr(\psi)$  for some 
$\psi\in A_n(\Psi)$.
Recall that $\psi_i=1/\lambda_i^2$ {\editD when $\lambda_i>0$}. In \cite[Theorem 0.2(v)]{BCL}  gives a bijection $\Theta$ that is essentially the same as $\Psi_*$, but the topology on the domain is different.
It follows from the above that, if the {\em reciprocals} of the coordinates of $\psi$ converge suitably,
then the conjugacy class of $\Tr(\psi)$ has a limit that is another translation group.

{\editA
\if0
\begin{proof}[Proof of Theorem (\ref{completeinvtvar})] After (\ref{quotienthomeo}) it only remains to shown $X_n=\Im(\eta)$ is a semi-algebriac set.
By (\ref{propercompleteinvt})  $X_n=(\cinvt\circ\Psi)(A_n\times \SLpm V)$.
Moreover $f:\widetilde A_n\times\GL V\to  \chi(V)\times\Sym^2V$ given by $f((\lambda,\kappa)$,
 and $\cinvt\circ\Psi$ is a polynomial map. Moroever $A_n$
is defined by linear inequalities and is therefore semi-algebraic. Thus the domain is semi-algebraic. By the Tarski-Seidenberg theorem, \cite{MR1633348},
the image of a semi-algebraic
set under a polynomial map is semi-algebraic. 
\end{proof}
\fi
\editB
 {\editA Informally, two generalized cusps are {\em close} if, after shrinking them, they are nearly affine isomorphic.
 It turns out this is equivalent to their holonomies being close up to conjugacy.
  Our definition of {\em marked moduli space} is based on the notion of {\em developing maps}
 as is done in \cite[Sec 1]{CLT2}. Recall $C=(V/\ZZ^{n-1})\times[0,\infty)$ so $\widetilde C=V\times[0,\infty)$ {\editC is the universal cover}.
 
 Let $\widetilde\Modsp_n$ be the space of developing maps $\dev: \widetilde C\to\AA^n$ for marked generalized cusps
 with underlying space $C$. We endow $\widetilde\Modsp_n$ with the {\editD compact-open topology.}
 There is an equivalence relation on 
 $\widetilde\Modsp_n$ that is generated by restricting to a smaller cusp, {\em homotopy}, and composition with an affine isomorphism. The quotient space
 is $\Modsp_n$. 
 
 {\editC When $n\le 3$ homotopy implies isotopy for homeomorphisms of   $ T^n$. However when $n\ge 5$,   there are infinitely many
 isotopy classes homotopic to the identity, see \cite[Theorem 4.1]{hatcher}. We have used {\em homotopy} rather than {\em isotopy} in the definition of $\Modsp_n$
 in  order to obtain the following.}


{\editC \begin{theorem}\label{holishomeo}   The holonomy $\hol:\Modsp_n\to\Rep_n$ is a homeomorphism.
 \end{theorem}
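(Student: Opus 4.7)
The strategy is to verify that $\hol$ is a well-defined continuous bijection and then build a continuous inverse using the parameterization from the preceding sections. Well-definedness on equivalence classes is automatic from the three generating relations on $\widetilde\Modsp_n$: sub-cusp restriction does not change the holonomy; postcomposition by $A\in\Aff(n)$ conjugates the holonomy by $A$; and a continuous family of developing maps produces a continuous family of holonomies, so in particular homotopic $\dev$'s give equal holonomies. Continuity of $\hol$ follows from the compact-open topology: fixing generators $\gamma_1,\dots,\gamma_{n-1}$ of $\pi_1 C\cong\ZZ^{n-1}$ and a compact $K\subset\widetilde C$ meeting a basepoint $p$ and each $\gamma_i\cdot p$, the equivariance $\dev\circ\gamma_i=\hol(\dev)(\gamma_i)\circ\dev$ exhibits each $\hol(\dev)(\gamma_i)$ as a continuous function of $\dev|_K$.

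Surjectivity follows from (\ref{newrep}) and (\ref{phidiag}): every class in $\Rep_n$ is represented by some $\rho=\Phi_{\lambda,\kappa}^{\perp}\circ B$ preserving an explicit convex $\Omega\subset\AA^n$ with $\rho(V)$ acting simply transitively on $\bdy\Omega$; the prescription $\dev(v,t)=\rho(v)(b+tw)$ for a basepoint $b\in\bdy\Omega$ and an inward transverse vector $w$ defines a developing map with $\hol(\dev)=\rho$. Injectivity reduces, after using the $\Aff(n)$ action in the equivalence relation to replace one developing map by a conjugate, to the standard uniqueness statement for $(G,X)$-structures: two $\rho$-equivariant developing maps with the same translation-group holonomy must agree on the boundary horosphere orbit by simple transitivity, and differ elsewhere only by the radial motion of the second factor $[0,\infty)$ inside $\Omega$, which is the composite of sub-cusp restriction and homotopy in $\widetilde\Modsp_n$.

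The main obstacle is continuity of $\hol^{-1}$, and the plan is to lift the surjectivity construction to a continuous section. Define $\widetilde\Xi:\widetilde A_n\times\SLpm V\to\widetilde\Modsp_n$ by sending $((\lambda,\kappa),B)$ to the developing map constructed above from $\Phi_{\lambda,\kappa}^{\perp}\circ B$; continuity in the compact-open topology is immediate from the polynomial formulas (\ref{lambdakappa}) and (\ref{completestandardized}). Composing with the quotient $q:\widetilde\Modsp_n\to\Modsp_n$, the already-established injectivity of $\hol$ forces $q\circ\widetilde\Xi$ to respect the equivalence relation $\sim$ of Section 2, because two parameter values with conjugate holonomies are sent by $q\circ\widetilde\Xi$ to points with conjugate holonomies in $\Modsp_n$, hence to the same point. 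Using that $p_1\times\Id$ is a quotient map (by (\ref{fibers})(c) and local compactness of $\SLpm V$), $q\circ\widetilde\Xi$ descends to a continuous map $\Xi_*:(A_n\times\SLpm V)/{\sim}\to\Modsp_n$ with $\hol\circ\Xi_*=\Psi_*$, the homeomorphism of (\ref{quotienthomeo}). Consequently $\hol^{-1}=\Xi_*\circ\Psi_*^{-1}$ is continuous, completing the proof.
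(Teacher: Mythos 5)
Your argument follows essentially the same route as the paper's: establish continuity of $\hol$ from the compact-open topology, then build a continuous inverse by lifting the parameterization $((\lambda,\kappa),B)\mapsto\Phi^{\perp}_{\lambda,\kappa}\circ B$ to the explicit developing maps $\dev(v,t)=\Phi^{\perp}_{\lambda,\kappa}(Bv)(b+tw)$ (the paper's $f_{\lambda,\kappa,B}$ with $b=e_{n+1}$, $w=e_1$), descending through the quotient $(A_n\times\SLpm V)/\!\sim$ and composing with $\Psi_*^{-1}$ from (\ref{quotienthomeo}). The only cosmetic difference is that you isolate injectivity of $\hol$ as a separate step via a $(G,X)$-structure uniqueness argument, whereas the paper leaves this implicit in the assertion that $F\circ\Psi_*^{-1}$ is a two-sided inverse; the underlying reasoning is the same.
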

 \begin{proof} {\editD First we define $\hol$}.
 Suppose $\dev:\widetilde C\to\AA^n$ is the developing map of a generalized cusp.
Then $g\in\ZZ^{n-1}=\pi_1C$ acts on $\widetilde C=V\times[0,\infty)$ by $g\cdot(v,t)=(v+g,t)$
so
 the extended holonomy $\rho$ can be recovered from $\dev$ using
that {\editC for $x\in\Im (\dev)$ $$(\rho g)(x) = \dev((g,0)+\dev^{-1}(x))$$}
It follows that there is a map $\widetilde\hol:\widetilde\Modsp_n\to\Hom(V,\Aff(n))$. Moreover this formula shows {\editD $\rho$ is determined
by the restriction of $\dev$ to a compact set. Since $\widetilde\Modsp_n$ has the compact-open topology, it follows
that} $\widetilde\hol$ is continuous.
{\editD It is clear that $\rho$ is the holonomy, and is therefore well defined on the equivalence class $[\dev]\in\Modsp_n$.}
Thus $\widetilde\hol$ covers a continuous map $\hol:\Modsp_n\to\Rep_n$.

{\editD Next, we construct an inverse to $\hol$. By (\ref{quotienthomeo}) $\Psi_*$ is a homeomorphism
so we may replace $\Rep_n$ by $\left(A_n\times \SLpm V\right) /\sim$.
Given $\Phi_{\lambda,\kappa}^{\perp}\circ B\in \widetilde{A}_n\times \SLpm V$}, define
 $f=f_{\lambda,\kappa,B}:V\times[0,\infty)\to\RR^n\times{1}=\AA^n$ by
 $$f(v,z)=(\Phi_{\lambda,\kappa}^{\perp}(Bv))(z,0,\cdots,0,\editD 1)\in\AA^n$$
 Observe that $f(V,0)$ is the orbit  of the origin, so $\Im(f)=\Omega(\lambda,\kappa)$ defined in (\ref{phidiag}).
It follows that $f$ is the developing map for a generalized cusp with holonomy $\Phi_{\lambda,\kappa}^{\perp}\circ B$, thus
 $f\in\widetilde\Modsp_n$.
 
 Define $\widetilde F:\widetilde{A}_n\times \SLpm V\to\widetilde\Modsp_n$ by $\widetilde F((\lambda,\kappa),B)=f_{\lambda,\kappa,B}$.
 Clearly $\widetilde F$ is continuous. By properties of the quotient topology, $\widetilde F$ covers a continuous map 
 $F:\left(A_n\times \SLpm V\right) /\sim\ \ \to \Modsp_n$.
Since $\hol$ has a continuous inverse $F\circ\Psi_*^{-1}$, it follows  $\hol$ is a homeomorphism.
 \end{proof}

{\editE
\begin{proof}[Proof of (\ref{nottorus})] If $C$ is a torus then $\widetilde C=C$ and the result follows from (\ref{holishomeo}).
It only remains to prove that
the holonomy of $\widetilde C$ uniquely determines the holonomy of $C$.
Now $\rho|\widetilde C$ determines the extended holonomy $\sigma:V\to\Aff^n$.
We claim $\sigma$ determines the rotational part $R:\pi_1C\to\O(n)$ and therefore determines $\rho:\pi_1C\to\AA^n$.
This follows from the observation that $R$
is uniquely  determined by the action of $\rho\pi_1C$ on $\sigma V$ by conjugacy. This in turn
is determined by the action by conjugacy of $\pi_1C$ on $\pi_1\widetilde C$.
\end{proof}
}
In the sequel we will use $\hol$ to identify these two spaces. 
If $\dev$ is the developing map for a generalized cusp with holonomy $\rho$  then we
identify $[\dev]\in \Modsp_n$ 
with  $[\rho]=\hol[\dev]\in \Rep_n$. {\editE It follows from the above that:}

{\editE \begin{theorem}\label{modspquotient}  $\hol^{-1}\circ\Psi^*:\left(A_n\times \SLpm V\right) /\sim \ \ \longrightarrow  \Tcal_n$ is a homeomorphism.
\end{theorem}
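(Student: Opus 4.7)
The plan is that this theorem is essentially a formal consequence of the two preceding homeomorphism results, assembled by composition. The map in question is $\hol^{-1}\circ\Psi_*$, where its two constituents have already been identified as homeomorphisms: Theorem \ref{holishomeo} establishes that $\hol:\Modsp_n\to\Rep_n$ is a homeomorphism (so $\hol^{-1}:\Rep_n\to\Modsp_n=\Tcal_n$ is one as well), and Theorem \ref{quotienthomeo} establishes that $\Psi_*:(A_n\times\SLpm V)/{\sim}\to\Rep_n$ is a homeomorphism.

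First I would check that the composition makes sense: the codomain of $\Psi_*$ is $\Rep_n$, which is precisely the domain of $\hol^{-1}$, so the composite map $\hol^{-1}\circ\Psi_*$ is well-defined and lands in $\Tcal_n$. Then I would observe that the composition of two homeomorphisms between topological spaces is again a homeomorphism, since continuity and bijectivity are preserved under composition and the inverse of a composition is the composition of the inverses in reverse order. Concretely, $(\hol^{-1}\circ\Psi_*)^{-1}=\Psi_*^{-1}\circ\hol$, which is continuous as a composition of two continuous inverses.

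There is essentially no obstacle here: the work has already been done. The only place where one might worry is whether the notation matches: $\Tcal_n$ is the marked moduli space introduced at the start (equal to $\Modsp_n$ in the body of the paper), and $\Psi_*$ is the well-defined map constructed just before Theorem \ref{quotienthomeo} from $\Psi$ by passing to the quotient by the equivalence relation $\sim$. Once these identifications are acknowledged the result is immediate. Thus the proof reduces to a single sentence invoking the two prior theorems and the elementary fact that homeomorphisms compose.
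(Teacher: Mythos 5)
Your proposal is correct and is essentially the same as the paper's own (implicit) argument: the paper simply asserts ``It follows from the above that'' immediately after Theorems \ref{holishomeo} and \ref{quotienthomeo}, i.e.\ it too treats the statement as the composition of the two homeomorphisms $\hol^{-1}$ and $\Psi_*$. You also correctly flag the minor notational slippage $\Psi^*$ versus $\Psi_*$ and $\Tcal_n$ versus $\Modsp_n$, which is the only thing one needs to check.
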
}

\begin{definition} Given $0\le \type\le n$ the {\em stratum of type $\type$ of $\Modsp_n$} is the subspace  
$\Modsp_n(\type)\subset \Modsp_n$
that consists of all marked cusps with holonomy
conjugate into some $\Tr(\psi)$ with $\type(\psi)= \type$. \end{definition}

The holonomy of a generalized cusp is conjugate to $\zeta_{\ppsi}\circ A$   where $(\ppsi,A)\in A_n\times\SLpm V$. 
The coordinates of $\ppsi$ are ordered.
Below we show each stratum is a manifold by
showing it is the quotient of a smooth manifold by a compact group that acts  freely.
To do this involves enlarging the set of pairs $(\psi,A)$
 by relaxing the ordering and using $\ppsi\in A_n^u(\Psi)$.
 The equivalence relation on $A_n^u(\Psi)\times\SLpm V$ is then given by a free action of the  $\Sigma_{\type}\times O(u)$. 
 This technique can only be employed with individual strata, but not all of
 $\Modsp_n$, since the dimension of  $O(u)$ changes with type. We will see that $\Modsp_n$ is not a manifold with boundary when $n\ge 3$.  The proof
 of the next result actually determines the topology of each stratum.
 
\begin{proposition}\label{strata} For each $0\le\type\le n$ 
the stratum $\Modsp_n(\type)\subset\Modsp_{n}$ is a connected smooth manifold without boundary {\editE  and 
$\dim \Tcal_n(\type)<\dim \Tcal_n(\type+1)$}.
 Moreover $\cl(\Modsp_n(\type))=\cup_{i\le \type}\ \Modsp_n(i)$. If $n\ge 3$ then the fundamental group $\pi_1(\Modsp_n(n))$ is not trivial.
\end{proposition}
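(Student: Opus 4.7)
The plan is to realize each stratum as a quotient of a smooth manifold by a free action of a compact Lie group, then read off dimensions, frontier, and fundamental group from that description.

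First I would set up the quotient presentation. By Theorem \ref{classification}(d), every marked translation group of type $\type$ is conjugate to some $\zeta_{\psi'}\circ B'$ with $\psi'\in A_n^u(\Psi)$ of type $\type$ and $B'\in\SL V$. Combining Theorem \ref{completeinvt} with the computation of $\O(\cinvt(\zeta_{\psi}))=S(\psi)\oplus O(\ur)$ in the proof of Lemma \ref{Opsi}, two such pairs $(\psi,B),(\psi',B')$ yield conjugate translation groups iff some $\sigma\in\Sigma_{\type}$ carries $\psi$ to $\psi'$ and $B'=B\cdot(P_\sigma\oplus Q)$ for some $Q\in O(\ur)$, where $P_\sigma$ is the permutation matrix on the first $\type$ coordinates (for $\type=0$, the residual group is $\Sim(\beta)$ acting on $\SL V$ by right multiplication). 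Letting $U_{\type}=\{\psi\in A_n^u(\Psi):\type(\psi)=\type\}\cong(0,\infty)^{\type}$, this identifies
\[
\Modsp_n(\type)\ \cong\ \big(U_{\type}\times\SL V\big)\big/\big(\Sigma_{\type}\times O(\ur)\big)
\]
with $\ur=n-1-\type$, via Theorem \ref{holishomeo}.

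Next I would verify that $\Sigma_{\type}\times O(\ur)$ acts freely and properly on $U_{\type}\times\SL V$. Although $\Sigma_{\type}$ need not act freely on $U_{\type}$ (it fails at $\psi$ with repeated entries), right-multiplication by a nontrivial permutation matrix on $\SL V$ is free, so the diagonal action on the product is free. Likewise $O(\ur)$ acts freely by right multiplication on $\SL V$. Properness is immediate since $\Sigma_{\type}\times O(\ur)$ is compact. Hence the quotient is a smooth manifold without boundary, and connectedness descends from the product. A direct computation gives
\[
\dim\Modsp_n(\type)=\type+(n-1)^2-1-\tbinom{\ur}{2},
\]
and one checks that the increment from $\type$ to $\type+1$ equals $n-1-\type$ when $\type<n-1$ and equals $1$ when $\type=n-1\to n$; either way it is positive, yielding strict monotonicity of dimension. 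The formula also returns $n^2-n$ at $\type=n$, consistent with Theorem \ref{notmanifold}.

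For the frontier equality, the inclusion $\bigcup_{i\le\type}\Modsp_n(i)\subset\cl\Modsp_n(\type)$ is produced by approximation: given a type-$i$ cusp, enlarge $\psi$ by switching on $\type-i$ additional positive coordinates tending to $0$, and invoke continuity of $\Psi$ from Lemma \ref{Psicts} together with the explicit form of $\zeta_\psi$. The reverse inclusion uses that the type is the maximum over $v\in V$ of the number of nonunit eigenvalues of $\theta(v)$, which is upper semi-continuous under character (hence holonomy) convergence by Proposition \ref{iotacts} and Lemma \ref{charpoly}; therefore a limit of type-$\type$ cusps has type at most $\type$.

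Finally, for the fundamental group at $\type=n$ we have $\ur=0$, so $O(\ur)$ is trivial and the quotient $U_n\times\SL V\to\Modsp_n(n)$ is a regular covering map with deck group $\Sigma_n$. Since $U_n\cong(0,\infty)^n$ and $\SL V$ are path-connected, the covering-space exact sequence yields a surjection $\pi_1(\Modsp_n(n))\twoheadrightarrow\Sigma_n$, which is nontrivial for $n\ge 3$. The main technical obstacle is the quotient presentation of Step~1, namely pinning down the compatible lifts of $\Sigma_{\type}$ and $O(\ur)$ inside $\SL V$ so that the action on the product really is free and really does parameterize the stratum; once that bookkeeping is in place, the other conclusions follow by standard arguments.
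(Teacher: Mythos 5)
Your overall strategy matches the paper's: present each stratum as a quotient of a product by a free action of a compact (here finite, or finite-by-compact) group, then read off smoothness, dimension, frontier, and $\pi_1$. However there is a concrete error in the group-theoretic bookkeeping that you yourself flag as ``the main technical obstacle,'' and it is not merely cosmetic.

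You work throughout with $\SL V$ (determinant $+1$), but the paper works with $\SLpm V$ via Theorem~\ref{classification}(c). The group $\Sigma_{\type}\oplus O(\ur)$ contains elements of determinant $-1$ whenever $\type\ge2$ or $\ur\ge1$ --- which is always the case once $n\ge3$ --- so this group does \emph{not} preserve $\SL V$ under (left or right) multiplication. Consequently the quotient $(U_{\type}\times\SL V)/(\Sigma_{\type}\times O(\ur))$ is not well-defined as stated. If one insists on the $\SL V$ picture via Theorem~\ref{classification}(d), the correct acting group is the index-two subgroup $\ker\bigl(\det\colon\Sigma_{\type}\oplus O(\ur)\to\{\pm1\}\bigr)$; alternatively, use the paper's $W_{\type}=(0,\infty)^{\type}\times\SLpm V$ and the full $G_{\type}$. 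This replacement does not change $\dim$ or the manifold statement, so your dimension formula and monotonicity argument survive unchanged.

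The place where the error bites is the fundamental-group computation at $\type=n$. The map $U_n\times\SL V\to\Modsp_n(n)$ is a regular cover with deck group $\operatorname{Alt}_n$, not $\Sigma_n$: odd permutations carry the $\SL V$ component of $\SLpm V$ to the other component, hence they do not act on $U_n\times\SL V$. The paper's argument (two components of $W_n$ swapped by odd elements) is exactly where this is handled. Your claimed surjection $\pi_1(\Modsp_n(n))\twoheadrightarrow\Sigma_n$ should be $\pi_1(\Modsp_n(n))\twoheadrightarrow\operatorname{Alt}_n$; you still get nontriviality for $n\ge3$, but the surjection onto $\Sigma_n$ is false and would (incorrectly) predict nontrivial $\pi_1$ at $n=2$, contradicting the explicit description of $\Modsp_2$ in \cite{BCL} as a contractible planar region. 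A minor point in the frontier argument: the type (as a supremum of a lower semi-continuous family) is lower, not upper, semi-continuous under convergence; the intended conclusion (a limit of type-$\type$ holonomies has type at most $\type$) is nevertheless correct.
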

\begin{proof} Let
$W_{\type}=(0,\infty)^{\type}\times \SLpm V$.
By (\ref{classification}) there is  a surjective map
$\pi_{\type}:W_{\type}\to\Modsp_n(\type)$ given by $\pi_{\type}(\psi,A)=[\zeta_{\psi'}\circ A]$
where $\psi=(\psi_1,\cdots,\psi_{\type})$ and $\psi'=(\psi_1,\cdots,\psi_{\type},0,\cdots,0)\in A_n$.

The first case  is $\type<n$, so $\type+\ur=n-1$. Recall  $V=D\oplus U$ from (\ref{DplusU})
where $D=\RR^{\type}\oplus 0$ and $U=0\oplus\RR^{\ur}$.  
Let $\Sigma_{\type}\subset O(\type)$ be the subgroup that permutes the coordinates axes of $\RR^{\type}$,
and $G_{\type}=\Sigma_{\type}\oplus O(\ur)\subset \SLpm V$.
There is an action of $\alpha\in G_{\type}$
on $(\psi,A)\in W_{\type}$ given by
$$\alpha (\psi,A)= \left(\sigma^*\psi,\alpha A\right),\qquad {\rm where}\ \alpha=\bpmat \sigma & 0\\ 0 &  B\epmat$$
Here we regard $\psi\in D^*$ and $\sigma^*$ is the dual action. 
The marked translation groups given by $(\psi,A)$ and $\alpha(\psi,A)$ 
are conjugate {\editE because they have the same complete invariant}. 
Thus $\pi(\alpha(\psi,A))=\pi(\psi,A)$. 

We claim that $\pi^{-1}(\psi,A)=G_{\type}\cdot (\psi,A)$. Suppose $\pi(\psi',A')=\pi(\psi,A)$. There is $\sigma\in\Sigma_{\type}$
so that the coordinates of $\sigma\psi$ are non-increasing. Thus we may assume $\psi$ and $\psi'$ both have this property. By
the classification \cite[1.44 \& 0.2(v)]{BCL} it then follows that $\psi=\psi'$ and $A'\in O(\ur)\cdot A$. The claim follows. Hence $\Modsp_n(\type)$ is
homeomorphic to $W_{\type}/G_{\type}$. 
Moreover the subgroup $O(\ur)$ acts trivially on the first factor of $W_{\type}$, and by left multiplication on the second factor,
so $$\Modsp_{n}(\type)\cong \left[(0,\infty)^{\type}\times\left(O(\ur)\backslash\SLpm V\right)\right]/\Sigma_{\type}$$
Now $O(\ur)\backslash\SLpm V$ is a symmetric space. Since $\Sigma_{\type}$ is finite and acts freely, 
 it follows that $\Modsp_{n}(\type)$
is a manifold.

This leaves the case $\type=n$, in which case $\ur=0$. Let $\Mono(V,\Aff(n))\subset\Hom(V,\Aff(n))$ be the subspace
of injective maps.
Define $f:W_n\to\Mono(V,\Aff(n))$ by
$f(\psi,A)=\zeta_{\psi}\circ A$. Then  $f$ is injective and we use it to identify $W_n$ with $Z=f(W_n)$.  Let $\Sigma_n\subset \Aff(n)$
 be the subgroup that permutes the standard basis of $\RR^n$. Then $\Sigma_n$ acts freely by conjugacy
on $\Mono(V,\Aff(n))$. 

{\bf Claim}: this action preserves $Z$. 
We identify $\Sigma_n$ with the group of permutations of $\{1,\cdots,n\}$. Suppose $\sigma\in\Sigma$. If $\sigma(n)=n$
then the action of $\sigma$ on $W_n$ is as above. In particular the subgroup $\Sigma_{n-1}\subset\Sigma_n$ that fixes $n$
preserves $Z$.

Let $\sigma\in\Sigma$ be the transposition $\sigma=(n-1,n)$. Since $\Sigma_{n-1}$ and $\sigma$ generate $\Sigma_n$, it suffices to show that
$\sigma$ preserves $Z$.
{\editC Given $\psi=(\psi_1,\cdots,\psi_n)\in (0,\infty)^n$  then $\sigma\zeta_{\psi}\sigma^{-1}=\zeta_{\psi'}\circ B$
where 
$$B=\bpmat 1 & & & &\\ & 1 &  & &\\
& & \cdots \\
& & & 1 &\\
  -\psi_1/\psi_n & -\psi_2/\psi_n & \cdots & & -\psi_{n-1}/\psi_n
\epmat,\qquad \psi'=(\psi_n/\psi_{n-1})(\psi_1,\cdots,\psi_{n-2},\psi_n,\psi_{n-1})$$
Let $\delta=|\det B|^{1/(n-1)}$ then $\delta^{-1} B\in\SLpm V$, and $\zeta_{\delta\psi'}(v)=\zeta_{\psi'}(\delta v)$ 
 by (\ref{classification})a,
thus $\zeta_{\psi'}\circ B=f(\delta\psi',\delta^{-1} B)\in Z$.} This proves the claim.

If two elements of $Z$ are conjugate, then they are conjugate by an element of $\Sigma_n$.
{\editC This is because both representations are diagonal, so a conjugacy must preserve the coordinate axes. Thus
the conjugacy is by a signed permutation matrix. However a signed permutation matrix
 is the product of a permutation matrix and a 
diagonal matrix with $\pm1$ on the diagonal. But diagonal matrices centralize these representations,
so they are conjugate via a coordinate permutation.}
 
 Hence $\Modsp_n(n)\cong Z/\Sigma_n$.
Now $W_n$ has two components, and these are swapped by every odd element of $\Sigma_n$. Thus
$$\Modsp_{n}(n)\cong \left((0,\infty)^n\times\SL V\right)/\operatorname{Alt}$$ where $\operatorname{Alt}\subset\Sigma_n$ is the alternating subgroup. 
In particular $\pi_1\Modsp_{n}(n)$ surjects to $\operatorname{Alt}$, and $\operatorname{Alt}$ is non-trivial if $n\ge 3$,
the last claim in the theorem follows.
 {\editC Moreover $\cl(\Modsp_n(\type))=\cup_{i\le \type}\ \Modsp_n(i)$ follows from the corresponding fact for {\editE the Weyl chamber} $A_n$. 
Finally $\dim \Modsp_{\type}=\dim W_{\type}-\dim G_{\type}=({\type}+\dim\SL V) -\dim \O(\ur)$
and $\dim V=n-1$.
}\end{proof}
}

{\editB
\begin{proof}[Proof of (\ref{notmanifold})] There is a deformation retraction $\Modsp_n\to \Modsp_n(0)$ given by scaling $\lambda$, and $\Modsp(0)\cong\Pcal$
is homeomorphic to Euclidean space of dimension $n(n-1)/2$. Thus $\Modsp_n$ is contractible.
 In \cite[Prop 6.2]{BCL}  $\Modsp_2$ was parameterized as $\{(x,y)\in\RR^2: 0\le x\le y\}$ and is thus
a manifold with boundary. 
Suppose $\Modsp_n$ is a manifold $M$ with boundary and $n\ge 3$. Let $\mathcal{N} \subset\Modsp_n$ be the subspace of non-diagonalizable generalized cusps.

We claim that $\bdy M=\mathcal{N}$. Since $Y=\Modsp_n\setminus \mathcal{N}$ is the stratum of diagonalizable generalized cusps, it follow from (\ref{strata}) 
that $Y$ is a manifold without boundary, and $\dim Y=\dim\Modsp_n$ so $\bdy M\cap Y=\emptyset$. Thus $\bdy M\subset\mathcal N$. If $\lambda\in\bdy A_n$ and
$\type(\lambda)=n-1$ then $\lambda$ has exactly one zero coordinate. Let $Z\subset \mathcal{N}$ be the  subset of $[\rho]$ with $\rho=\Phi_{\lambda,\kappa}\circ A$
with $\type(\lambda)=n-1$ and all the coordinates of $\lambda$ are distinct. Then no element of $\Sigma_{n-1}$ fixes $[\rho]$ because if $\sigma\in\Sigma_{n-1}$ and
$\sigma\lambda=\lambda$ then $\sigma=\Id$. It follows a neighborhood of $[\rho]$ in $M$ is homeomorphic to a neighborhood $U$ 
of a point in $A_n\times\SLpm V$ that projects to $U$. But $\rho$ is in the boundary of $A_n\times\SLpm V$ so $[\rho]$ is in the boundary of the quotient.
Thus $Z\subset\bdy M$. But $Z$ is dense in $\mathcal{N}$ and $\bdy M$ is closed in $M$ so $\mathcal{N}\subset\bdy M$. This proves the claim.

Since $M$ is contractible $\pi_1M=0$. Also $\pi_1M=\pi_1 Y$ because a manifold and its interior have the same fundamental group. By (\ref{strata})
 $\pi_1Y\ne 1$ when  $n\ge 3$. This contradicts that $\Modsp_n$ is a manifold.
\end{proof}
}

  \if0
  Observe that
$\Modsp_n(0)\cong \SO(n-1)\backslash\SL(n-1,\RR)$ is the space of standard cusps, thought of as
 the  space of marked conformal classes of Euclidean tori of dimension $(n-1)$.

\begin{theorem} There is a map $\pi:\Modsp_n\to\Modsp_n(0)$ that is a trivial bundle, and 
$\Modsp_n(0)$
is the space $SO(n-1)\backslash \SL(n-1,\RR)$ of Euclidean similarity structures on $T^{n-1}$. The fiber of $\pi$ is contractible.
\end{theorem}
\begin{proof} There is a deformation retraction $\Modsp_n\to \Modsp_n(0)$ given by scaling $\lambda$. The KAN decomposition is $\SL (n-1,\RR)=\SO(n-1)\times \Bcal$
where $\Bcal$ is the Borel subgroup of upper triangular matrices with determinant $1$ and positive entries on the diagonal. Thus
$\Modsp_n(0)\cong \Bcal\cong \RR^{\dim \Bcal}$. There is an action of $B\in\Bcal$ on $\Modsp(V)$ given by $[\theta]\mapsto[\theta\circ B]$.
This action preserves fibers of $\pi$ and shows that $\pi$ is a trivial bundle.
\end{proof}
\fi

\section{The weights data $\nu$}
In this section we prove  (\ref{weightspace}).
 {\editE There is an action of $A\in\GL V$ on $\Rep_n$ given by $A\cdot[\rho]=[\rho\circ A^{-1}]$.
If $\nu(\rho)=([\xi_1,\cdots,\xi_n],[\beta])$, then 
\begin{equation}\label{nuaction}\nu(\rho\circ A^{-1})=([\xi_1\circ A^{-1},\cdots,\xi_n\circ A^{-1}],[\beta\circ A^{-1}])\end{equation}}
This restricts to an action on $\Im\nu$ that covers a transitive action on $\Pos$.

\begin{lemma}\label{rootslambda}  If $\rho=\Phi_{\lambda,\kappa}\circ A$ with $A\in \SLpm V$  then  $\nu(\rho)=([\xi_0,\cdots,\xi_{n-1}],\beta)$
where
$$\langle \xi_i,\xi_j\rangle_{_{\beta^*}}={\editC\varkappa}\lambda_i^2\delta_{ij}-\varpi={\editC\varkappa}\lambda_i\lambda_j\delta_{ij}-\varpi$$ {\editE and $\varkappa=(1+\|\kappa\|^2)^{1/(n-1)}$}  and $\varpi={\editB \lambda_0^2{\editC\varkappa}^{\editC (2-n)}}$.  \end{lemma}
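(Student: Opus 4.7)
The plan is to reduce to the case $A = \Id$, read off the weights directly from the matrix $\phi_{\lambda,\kappa}(v)$ in Definition \ref{lambdakappa}, compute $\beta^*$ from the explicit formula for $\beta_\kappa$ in Proposition \ref{phidiag} via a Sherman--Morrison inverse, and then check the three index cases.

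First I would verify that the inner product $\langle \xi_i,\xi_j\rangle_{\beta^*}$ is invariant under the right action of $A \in \SLpm V$. If $\beta$ has matrix $B$ with respect to the standard basis and $\rho' = \rho \circ A^{-1}$, then $\beta(\rho')$ has matrix $(A^{-1})^t B A^{-1}$, while the weights transform as $\xi_i' = \xi_i \circ A^{-1}$, i.e.\ as row vectors they are multiplied by $A^{-1}$ on the right. A short calculation shows $\xi_i' \left((A^{-1})^t B A^{-1}\right)^{-1}(\xi_j')^t = \xi_i B^{-1}\xi_j^t$, so it suffices to prove the lemma for $\rho = \Phi_{\lambda,\kappa}$.

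Next, by inspection of the matrix $\phi_{\lambda,\kappa}(v)$ in (\ref{lambdakappa}), its diagonal entries on the linear part are $-\lambda_0\langle v,\kappa\rangle$ in position $(0,0)$ and $\lambda_i v_i$ in position $(i,i)$ for $1\le i\le n-1$. Since $\Phi_{\lambda,\kappa} = \exp\circ\phi_{\lambda,\kappa}$ and the matrix is upper triangular, these read off the Lie algebra weights of the linear part of $\rho$:
\[
\xi_0 = -\lambda_0\sum_{j=1}^{n-1}\kappa_j e_j^*, \qquad \xi_i = \lambda_i e_i^* \quad (1\le i\le n-1).
\]

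Then I would compute $\beta^*$. By (\ref{phidiag}) the matrix of $\beta_\kappa$ in the standard basis is $\varkappa^{-1}(\Id + \kappa\kappa^t)$, where I regard $\kappa$ as a column vector. Applying the Sherman--Morrison formula gives
\[
\beta^*_\kappa = \varkappa\left(\Id - \tfrac{\kappa\kappa^t}{1+\|\kappa\|^2}\right) = \varkappa\,\Id - \varkappa^{2-n}\kappa\kappa^t,
\]
using $\varkappa^{n-1} = 1 + \|\kappa\|^2$.

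Finally I would check three cases for $\langle \xi_i,\xi_j\rangle_{\beta^*} = \xi_i^t \beta^*_\kappa \xi_j$:
\emph{(i)} For $i,j \ge 1$ one gets $\lambda_i\lambda_j(\varkappa\delta_{ij} - \varkappa^{2-n}\kappa_i\kappa_j)$, and the constraint $\lambda_i\kappa_i = \lambda_0$ collapses the second term to $\varkappa^{2-n}\lambda_0^2 = \varpi$.
\emph{(ii)} For $i = 0$, $j \ge 1$, using $\beta^*_\kappa \kappa = \varkappa^{2-n}\kappa$ (which follows from $\varkappa^{n-1} - \|\kappa\|^2 = 1$), one gets $-\lambda_0\lambda_j\varkappa^{2-n}\kappa_j = -\varpi$.
\emph{(iii)} For $i = j = 0$, the same identity gives $\lambda_0^2\varkappa^{2-n}\|\kappa\|^2 = \lambda_0^2(\varkappa - \varkappa^{2-n}) = \varkappa\lambda_0^2 - \varpi$.
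In all three cases the answer matches $\varkappa\lambda_i\lambda_j\delta_{ij} - \varpi$.

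The whole argument is a routine linear-algebra computation; the only mild obstacle is keeping straight the interplay between the constraint $\lambda_i\kappa_i = \lambda_0$ and the normalization $\varkappa^{n-1} = 1 + \|\kappa\|^2$, which is what makes the answer simultaneously symmetric in $i,j$ and independent of $\kappa$ off the diagonal.
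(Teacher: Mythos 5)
Your proof is correct and follows essentially the same route as the paper: reduce to $A=\Id$, read off the weights from the matrix in Definition~\ref{lambdakappa}, invert the rank-one perturbation $\Id+\kappa\otimes\kappa$, and verify the three index cases. The paper handles the $\xi_0$ cases by substituting the linear relation $\xi_0=-\lambda_0^2\sum\lambda_i^{-2}\xi_i$ into the already-verified case $i,j\ge1$, whereas you use the eigenvector identity $\beta^*_\kappa\kappa=\varkappa^{2-n}\kappa$; this is a mild and arguably cleaner variant of the same computation.
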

\begin{proof}
  Let $\beta=\beta(\rho)$, and $\langle\cdot,\cdot\rangle_{_\beta}$ is the inner 
 product on $V$ corresponding to $\beta\in \Sym^2V$, and $\|\cdot\|_{_\beta}$ the associated norm. 
 Let $\|\cdot\|$ be the standard norm on $V$ for which the standard basis is orthonormal and $\langle\cdot,\cdot\rangle$ the associated inner product.
 We may assume $\rho=\Phi_{\lambda,\kappa}$.
 Then the matrix of $\beta$ in the standard basis is given $\editC Q={\editC\varkappa}^{-1}(\Id+\kappa\otimes\kappa)$.
   The matrix of the dual form $\beta^*$ on $V^*$ with respect to the dual basis is then $\editC Q^{-1}$.

From (\ref{lambdakappa}) the Lie algebra weights for $\rho$ are $\xi_0,\cdots,\xi_{n-1}\in V^*$ where
 \begin{equation}\label{xi0}
 \xi_0(v)=-\lambda_0 \langle \kappa,v\rangle,\qquad \dxi_i=\lambda_ie_i^*\ \ for\ \ 1\le i\le n-1\end{equation}
 For the following, refer to the discussion after (\ref{squareroot}). Now $Q=\editC {\editC\varkappa}^{-1}(\Id+M)$ where $M=\kappa\otimes\kappa$, then $M^2=\|\kappa\|^2 M$, so $Q^{-1}=\editC{\editC\varkappa}(\Id+M)^{-1}={\editC\varkappa}(\Id-(1+\|\kappa\|^2)^{-1}M)$.

If $1\le i,j\le n-1$ then 
\begin{equation}\label{weightseq}
\langle \dxi_i,\dxi_j\rangle_{_{\beta^*}}=\langle \lambda_ie_i^*,\lambda_je_j^* \rangle_{_{\beta^*}}={\editB{\editC\varkappa}}\lambda_i\lambda_j\left( \delta_{ij}-(1+\|\kappa\|^2)^{-1}\kappa_i\kappa_j\right)={\editB{\editC\varkappa}}\lambda_i\lambda_j\delta_{ij}-\varpi
\end{equation}
Now $\lambda_i\kappa_i=\lambda_0$ so $\varpi ={\editB{\editC\varkappa}}\lambda_0^2(1+\|\kappa\|^2)^{-1}=\editC\lambda_0^2{\editC\varkappa}{\editC\varkappa}^{1-n}$. 

We claim  (\ref{weightseq}) holds in all cases: $0\le i,j\le n-1$. If $\lambda_0=0$ then $\xi_0=0$ and (\ref{weightseq}) holds in all cases.
 Otherwise $\lambda_0>0$ and using  $\lambda_i\kappa_i=\lambda_0$ then (\ref{xi0}) implies 
 \begin{equation}\label{xieqtn}
 \xi_0=-\lambda_0^2\sum_{i=1}^{n-1}\lambda_i^{-2}\dxi_i,\qquad {\rm so}\qquad  \sum_{i=0}^{n-1}\lambda_i^{-2}\dxi_i=0
 \end{equation}
To compute $\langle \xi_0,\dxi_j\rangle_{{\beta^*}}$, replace $\xi_0$ by the above and then use  (\ref{weightseq}) in the  case $i,j\ge 1$ already established. Some algebra 
 then shows  (\ref{weightseq}) holds in all cases. 
\end{proof}

{\editD The lemma implies the inner product of distinct weights is always $-\varpi$. This has a geometric interpretation.
Consider a set of $n$ vectors $\{v_1,\cdots,v_n\}$ in $V=\RR^{n-1}$ equipped with the standard inner product
such that for some $\varpi\ge0$  the vectors satisfy the equation 
\begin{equation}\label{sigmaeqtn}\forall\ i\ne j\qquad \langle v_i,v_j\rangle=-\varpi\end{equation}
If $\varpi=0$ this just says the vectors are pairwise orthogonal, and for dimension reasons at least one is zero. If $\varpi>0$
then set  $\RR^n=V\oplus\RR e_n$ with the standard inner product. The equations (\ref{sigmaeqtn}) are equivalent to
the pairwise orthogonality of the vectors $\{u_i=v_i+\sqrt{\varpi}e_n\}$ in $\RR^n$. In this case
the $\{u_i\}$ are an orthogonal basis of $\RR^n$ that represent points in the hyperplane $x_n=\sqrt{\varpi}$, and
the $\{v_i\}$ are the images of these vectors under orthogonal projection into $V$.} 

 \begin{proof}[Proof of Theorem (\ref{weightspace})] We  will abuse notation by identifying $\Modsp_n=\Rep_n$ and
  write $\rho$ instead of $[\rho]$ 
 for a point  in $\Rep_n$. 
Suppose $\nu(\rho)=([\xi_0,\cdots,\xi_{n-1}],\beta)$. Then 
 $\chi=\chi_{\rho}:V\to\RR$ is given by $\chi(v)=\sum_{i=1}^n\exp\dxi_i v$. 
 Thus the complete invariant $\eta(\rho)=(\chi,\beta)$ is  a continuous function of $\nu(\rho)$.
 {\editE 
 By  (\ref{completeinvt}) $\eta$ is injective hence $\nu$ is injective.}

Recall $\Rcal_n\subset\SP^n(V)\times\Pos$ is the subset of all $([\xi_0,\cdots,\xi_{n-1}],\beta)$ such that 
\begin{equation}\label{Weqtn}
\exists\ \varpi\ge 0\qquad \forall\ i\ne j\quad\langle \xi_i,\xi_j\rangle_{_{\beta^{\editD*}}}=-\varpi
\end{equation}
We must show that $\Im(\nu)=\Rcal_n$. By (\ref{rootslambda})  $\Im \nu\subset \Rcal_n$.   It remains show that $\Rcal_n\subset\Im\nu$. 
    In what follows we will always choose an ordering  for $x=([\xi_0,\cdots,\xi_{n-1}],\beta)\in\Rcal_n$ so that $\beta^*(\xi_i)$ is a non-decreasing function of $i$.
Define  $k$ by $\xi_i\ne0$ if and only
 if $i\ge k$, and define $\varpi=-\langle \xi_0,\xi_1\rangle_{_{\beta^*}}$.

\textbf{Case 1:} $\varpi=0$.  Then (\ref{Weqtn}) is equivalent
 to requiring the $\xi_i$ are pairwise orthogonal with respect to $\beta^{\editD *}$. Since $\dim V=n-1$ it follows that $\xi_0=0$.
 Define 
 \begin{equation}\editD\label{lambdadef}\lambda_{i}=\sqrt{\beta^*(\xi_i)},\qquad\kappa=(0,\cdots,0)
 \end{equation} then $\lambda_0=0$. {\editD Observe
 these values are consistent with (\ref{rootslambda}).} From (\ref{completelambdakappa})
 the weight data is
 \begin{equation}\label{eq37}
 \nu(\Phi_{\lambda,0})=([0,\cdots,0,\lambda_ke_{k}^*,\cdots,\lambda_{n-1}e_{n-1}^*],\beta_0),\qquad \beta_0(x)=\|x\|^2
 \end{equation}  
 
  {\editD Now $\beta_0^*(\lambda_ie_i^*)=\lambda_i^2=\beta^*(\xi_i)$. Then $\lambda_ie_i^*$ are obviously pairwise $\beta_0^*$-orthogonal, and $\xi_i$ 
 are  pairwise  
 $\beta^*$-orthogonal since $\varpi=0$. Thus there is
 an isometry $A:(V,\beta)\rightarrow (V,\beta_0)$ with $$(\lambda_ie_i^*)\circ A^{-1}=A^*(\lambda_ie_i^*)=\xi_i,\qquad \beta=\beta_0\circ A^{-1}$$ 
Then  $x\in\Im\nu$ because
 applying   (\ref{nuaction}) to  (\ref{eq37}) gives 
$$\nu(\Phi_{\lambda,0}\circ A^{-1})=([\xi_0,\cdots,\xi_{n-1}],\beta)=x$$}
\if0Let $L_{\beta}:(V,\beta)\to (V^*,\beta^*)$ be the  natural  isometry given by
 $(L_{_{\beta}}v)w=\langle v,w\rangle_{_{\beta}}$ and let $r_i=L_{\beta}^{-1}\xi_i$ {\editD be the vectors dual to the weights}. 
Recall $V=D\oplus U$ is an orthogonal decomposition with respect to $\beta$.
  Then ${\editD D}=\langle r_k,\cdots, r_{n-1}\rangle$ and $\beta|{\editD D}=\beta'|{\editD D}$, 
  because $\{ r_k,\cdots, r_{n-1}\}$ is
 an orthogonal basis of ${\editD D}$  for both inner products, and 
  $\beta'(r_i)=\|A^{-1}r_i\|^2$ and
  $\beta(r_i)={\editD \beta^*(\xi_i)}=\lambda_i^2$ by (\ref{lambdadef}). Thus  the identity on ${\editD D}$ extends to an isometry
 from $\beta$ to $\beta'$. Composing $A$ with this isometry gives a new choice for $A$  and proves the claim.
 \fi

\textbf{Case 2:} $\varpi>0$.  Identify $V$ with the subspace of $\RR^n$ where $x_n=0$, and extend 
$\beta$ to $\RR^n$  so that $\beta(e_n)=1$ and $e_n$ is orthogonal to $V$. 
Let $L_{\beta}:(V,\beta)\to (V^*,\beta^*)$ be the  natural  isometry given by
 $(L_{_{\beta}}v)w=\langle v,w\rangle_{_{\beta}}$ and let $r_i=L_{\beta}^{-1}\xi_i$ {\editD be the vectors dual to the weights}. Then (\ref{Weqtn}) is equivalent 
to the pairwise orthogonality of the vectors $$\{u_i=r_i+\sqrt\varpi e_n:0\le i\le n-1\}\subset\RR^n$$  
Since $\|u_i\|\ge \varpi>0$  this
is a basis of $\RR^n$. Moreover $\langle e_n,u_i\rangle =\sqrt{\varpi}$. {\editD Writing $e_n$ in terms of this orthogonal basis}
  $e_n=\sum_{i=0}^{n-1}\mu_iu_i$ with  $\mu_i=\sqrt\varpi/\|u_i\|^2_{_{\beta}}>0$.
Thus {\editC $\sum\mu_ir_i=0$  and $\xi_i=L_{\beta}(r_i)$ so $\sum\mu_i\dxi_i=0$}. Set $\lambda_i=\mu_i^2$ and $\kappa_i=\lambda_0/\lambda_i$ then 
$\sum\lambda_i^{-2}\dxi_i=0$. {\editD Define}
$$([\xi'_0,\cdots,\xi'_{n-1}],\beta'):=\nu(\Phi_{\lambda,\kappa})$$
Then $\sum\lambda_i^{-2}\dxi_i'=0$ {\editD by (\ref{xieqtn})}, and by (\ref{weightseq}) $\langle \dxi_i',\dxi_j'\rangle_{_{\beta{'^*}}}={\editD {\editC\varkappa}}\lambda_i^2\delta_{ij}-\varpi$. 
Now $\dxi_i'=\lambda_ie_i^*$ for $i>0$, so in particular $\{\dxi_i':1\le i\le n-1\}$ is a basis of $V^*$. There is a unique $A\in \GL V$
such that $A^*\dxi_i'=\dxi_i$ for $i\ge 1$. Since $\xi_0'=-\lambda_0^{2}\sum_{i=1}^{n-1}\lambda_i^{-2}\dxi_i'$ and
$\xi_0=-\lambda_0^{2}\sum_{i=1}^{n-1}\lambda_i^{-2}\dxi_i$ it follows that $A^*\xi_0'=\xi_0$. Now
$$\langle\dxi_i',\dxi_j'\rangle_{_{\beta'^*}}={\editD {\editC\varkappa}\lambda_i^2\delta_{ij}-\varpi}=\langle\dxi_i,\dxi_j\rangle_{_{\beta^*}}$$
and it follows that $A^*$ is an isometry between the metrics $(\beta')^*$ and $\beta^*$ on $V^*$. 
Thus $\nu$ is surjective.

{\editE We have shown that $\nu$ is a  bijection. 
Let $\Upsilon=\eta\circ\nu^{-1}:\Rcal_n\to X_n$. By  (\ref{quotienthomeo}) $\eta$ is a homeomorphism, so
 $\Upsilon$ is a bijection. Above we showed that $\eta(\rho)$ is a continuous function of $\nu(\rho)$, and it follows
 that $\Upsilon$ is continuous. 
 
 We claim $\Upsilon$ is proper. Suppose $\nu(\rho_m)=(x_m,\beta_m)$ is unbounded,
and suppose for contradiction that $\Upsilon(\nu(\rho_m))=\eta(\rho_m)=(\chi(\rho_m),\beta_m)$ is bounded. Then there is a component $\xi_{m,i}\in V$ of $x_m=(\xi_{m,0},\cdots,\xi_{m,n-1})\in\SP^n V^*$ that is unbounded.
Thus $\chi(\rho_m)=\sum_i\exp\xi_{m,i}$ is unbounded, a contradiction.
This proves the claim.

By (\ref{propercompleteinvt}) $X_n$ is locally compact,  and $\Rcal_n$  is a closed subset of Euclidean space and thus locally compact. By (\ref{prop_homeo}) $\Upsilon$ is a homeomorphism.
Since $\eta$ is a homeorphism it follows that
$\nu=\Upsilon^{-1}\circ\eta$ is a homeomorphism.}
\end{proof}

   \section{Cubic differentials}

In this section we will show that {\editC when $n\ge 3$} a generalized cusp $C\cong T^{n-1}\times[0,\infty)$
 is uniquely determined {\editC up to equivalence by the projective class $[J]$} called the {\em shape invariant} of a certain polynomial $J=q+c$ where $q,c:\RR^{n-1}\to \RR$
are homogeneous polynomials of degree $2$ and $3$ respectively. One may
regard $q$ as a similarity structure (Euclidean structure up to scaling) on $T^{n-1}$,  and $c$ as a cubic differential on $T^{n-1}$.
When $n=2$ then the shape invariant does not determine the cusp, but the moduli space is described in \cite{BCL} Section 6.

 \begin{definition} A {\em calibrated vector space} is a pair $(V,\vartheta)$ where $V$ is a vector space  and
 $\vartheta:V\to\RR$ is a  function, called the {\em calibration}. {\editA A linear isomorphism $f:V\to V'$
  is an {\em isometry} between the calibrated vector spaces $(V,\vartheta)$ and $(V',\vartheta')$
 if   $\vartheta= \vartheta'\circ f$.
 The group of self isometries of $(V,\vartheta)$ is written $\O(\vartheta)$.  Two calibrations $\vartheta,\vartheta'$ are {\em similar}
 if there is $\lambda>0$ with $\vartheta'=\lambda\vartheta$, and this is written $\vartheta\sim\vartheta'$.
}
 \end{definition}

A calibration can be viewed as an interesting generalization of  a norm. For example, 
  there is a calibrated vector space $(\RR^{248},\vartheta)$ with $\vartheta$ an octic polynomial such that the compact form of the exceptional Lie group $E_8$  is the identity component of 
$\O(\vartheta)$, see \cite{Gur}, where they use the term {\em stabilize} instead of {\em isometry}. We follow \cite{Asch} in using the term {\em isometry}. 

   \begin{definition}\label{cuspspacedef} A {\em cusp-space} is a calibrated vector space, $(V,\vartheta)$,  that
 {\editB is similar}  to some $(\RR^{n-1},\vartheta_{\lambda,\kappa})$ where
 $\vartheta_{\lambda,\kappa}:\RR^{n-1}\to\RR$  is given by 
$$\vartheta_{\lambda,\kappa}(v_1,\cdots,v_{n-1})=\left(\langle v,v\rangle +
 \langle v,\kappa\rangle^2\right)  +\frac{1}{3}\left(-\lambda_0\langle v,\kappa\rangle^3+ \sum_{i=1}^{n-1} \lambda_i v_i^3\right)$$
 and $(\lambda,\kappa)\in\widetilde{A}_n$ and $\langle\cdot,\cdot\rangle$ is the standard inner product on $\RR^{n-1}$. \end{definition}

{\editC In the non-diagonalizable case when $\kappa=0$, this simplifies to
$\vartheta_{\lambda,0}=\langle v,v\rangle   +\frac{1}{3}\sum_{i=1}^{n-1} \lambda_i v_i^3$.}

   \begin{definition}\label{cuspspacestructures}
The {\em space of cusp-space structures} on the vector space $V$ is
 $$\Jcal(V)=\{[\vartheta]\ :\ (V,\vartheta)\ \text{is a cusp-space}\}\subset \PP\left(\Sym^2V\oplus\Sym^3V\right)$$
equipped with the subspace topology, and $\Jcal_n=\Jcal(\RR^{n-1})$. 
 \end{definition}

{\editC 
  If $f:\RR^n\to\RR$ is a smooth function, the {\em $k$-Jet} is the polynomial
given by the truncated Taylor expansion of $f$ around $0$ consisting of all terms of total degree at most $k$.}

\begin{definition}\label{calibratedcusp}  {\editA Suppose $T$ is a translation group, and $W$ is a real vector
space, and  $\theta:W\to T$ is an isomorphism. The {\editE\em shape invariant} for $\theta$ is $[J]$ where
$h$ is a height function for $T$,
and
 $J=J(\theta)$ is the 3-Jet of $h$ at $0$, and $[J]\in\PP(\Sym^2W\oplus\Sym^3 W)$.}\end{definition}

{\editC The height function $h$ is unique up to multiplication by a positive real, thus the projective class $[J]$ of $J$ is well defined. 
Moreover the terms of degree $0$ and $1$ in $J$ vanish, so $J=q+c$ with $q\in\Sym^2W$ and $c\in\Sym^3W$.
{\editE When $W=V$ then $\det q$ is defined using the standard basis of $V$, and $\beta(\theta)=\gamma q$ is unimodular 
where
$\gamma=(\det q)^{{\editD-}1/\dim V}$. We use the map $F:\Jcal(V)\rightarrow\Pcal\oplus\Sym^3V$ given by
$F[q+c]=\gamma(q+c)$} to identify $\Jcal(V)$ with a subspace of $\Pcal\oplus\Sym^3V$. 
}

{\editA It is easy to check that if $B\in\Aff(n)$  then $J(B\theta B^{-1})=J(\theta)$,
and that if $A\in\GL V$ then $J(\theta\circ A)=J(\theta)\circ A$.}
  Consider the diagonal translation subgroup $G=\Tr(\ppsi)$   
   {\editA where $\ppsi=\sum_{i=1}^n\ppsi_ie_i^*$ with all $\ppsi_i>0$ as in  (\ref{psigroup}).} 
 Let $D(n)\subset\GL(n+1,\RR)$ be the subgroup of positive diagonal matrices with $1$ in the bottom right corner.
 Then $G$   is  a codimension-1 subgroup of $D(n)$. To compute the calibration for $\zeta_{\psi}$
  we avoid choosing a basis of the Lie algebra, $\mathfrak g$, of $G$, but instead work with the natural basis of $D(n)$.
  
   Let $\AA=\RR^{n}$ be the  $\RR$-algebra with addition and multiplication defined componentwise,
  so
  $$(a_1,\cdots,a_n)(b_1,\cdots,b_n)=(a_1b_1,\cdots,a_nb_n)$$ 
 This multiplication is called the {\em Hadamard product}. 
 Observe that $p=(1,\cdots,1)$ is the multiplicative identity in $\AA$, and for $n>0$ then
  $a^n\in\AA$ is the element obtained by raising each component of $a$ to power $n$.
Let $\AA_+\subset\AA$ be the subset with all coordinates strictly positive, made into a group using Hadamard multiplication. 
    The map  $\AA\to\mathfrak{gl}(n+1,\RR)$ given by $(x_1,\cdots,x_n)\mapsto\Diag(x_1,\cdots,x_n,0)$ is used to identify the Lie algebra 
    $\AA$ (with zero Lie bracket) to the Lie algebra of
    $D(n)$,  and the group homomorphism $\delta:\AA_+\to \GL(n+1,\RR)$ given by $\delta(x_1,\cdots,x_n)=\Diag(x_1,\cdots,x_n,1)$ identifies  $\AA_+$ (with Hadamard multiplication) to  $D(n)$.
  Regarding $\AA$ as the Lie algebra of $\AA_+$ then  $\exp:\AA\to\AA_+$
is coordinate-wise exponentiation.
    Define an inner product on $\AA$ by
  $$\langle x,y\rangle_{\ppsi} =\psi(xy)=\sum_{i=1}^n \ppsi_ix_iy_i$$
  Then $\langle xy,z\rangle_{\ppsi}=\langle x,yz\rangle_{\ppsi}$ so  $\langle x,y\rangle_{\ppsi}=\langle p,xy\rangle_{\ppsi}$,
   and $$\mathfrak g=\ker\ppsi=p^{\perp}:=\{x\in\AA:\ \langle p,x\rangle_{\ppsi}=0\ \}$$ may be regarded as the Lie algebra of $\Tr(\ppsi)$.
  
   \begin{lemma}\label{diagonalcalibration}     If $\type(\ppsi)=n$ then $\delta\circ\exp:\mathfrak g\to\Tr(\ppsi)$ is a marked translation group, 
 {\editC and the shape invariant is $[J(\delta\circ\exp)]=[J_{\ppsi}]$} where  $$J_{\ppsi}(x)=\ (1/2)\langle p,x^2\rangle_{\ppsi}+(1/6)\langle p,x^3\rangle_{\ppsi}$$\end{lemma}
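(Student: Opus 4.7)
The strategy is to identify $\delta\circ\exp$ explicitly with a realization of the translation group $\Tr(\ppsi)$, choose a convenient basepoint, write down a height function in closed form, and Taylor-expand it to order three.

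For the first claim, when $\type=n$ Definition \ref{psigroup} unwinds to give
$$\zeta_{\ppsi}(v)=\Diag\!\bigl(e^{\ppsi_n v_1},\dots,e^{\ppsi_n v_{n-1}},e^{-\sum_{i<n}\ppsi_iv_i},1\bigr),$$
so the image $\Tr(\ppsi)$ is exactly the codimension-one subgroup $\{\delta(a):a\in\AA_+,\ \prod a_i^{\ppsi_i}=1\}$ of $D(n)$. Since coordinatewise exponentiation is a Lie-group isomorphism $(\AA,+)\to(\AA_+,\cdot)$ that carries $\mathfrak g=\ker\ppsi$ onto this subgroup, the composition $\delta\circ\exp:\mathfrak g\to\Tr(\ppsi)$ is an isomorphism of abelian Lie groups, hence a marked translation group in the sense of Definition \ref{markedtransgrpdef}.

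For the shape invariant, take the basepoint $b=p=(1,\dots,1)\in\AA^n$. The $\Tr(\ppsi)$-orbit of $b$ is $\{y\in\AA_+^n:\prod y_i^{\ppsi_i}=1\}=\bdy\Omega$, where $\Omega$ is the (unique up to choice) properly convex domain preserved by $\Tr(\ppsi)$. The strictly concave function $F(y)=\sum\ppsi_i\log y_i$ has $\nabla F(p)=\ppsi$, so the affine functional
$$\tau(y)=\sum_{i=1}^{n}\ppsi_i(y_i-1)$$
vanishes on the tangent hyperplane $H_b$ and is positive on $\interior\Omega$ by concavity. The orbit map is $\mu(x)=\delta(\exp x)\cdot b=(e^{x_1},\dots,e^{x_n})$, so the height function is
$$h(x)=\tau(\mu(x))=\sum_{i=1}^{n}\ppsi_i(e^{x_i}-1).$$

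Taylor-expanding gives $h(x)=\ppsi(x)+\tfrac12\sum\ppsi_ix_i^2+\tfrac16\sum\ppsi_ix_i^3+O(\|x\|^4)$. The linear term is $\ppsi(x)$, which vanishes identically on $\mathfrak g=\ker\ppsi$ (reflecting the fact that $\bdy\Omega$ is tangent to $H_b$ at $b$). Rewriting the remaining terms via the Hadamard product and the inner product $\langle a,b\rangle_{\ppsi}=\ppsi(ab)$ yields $\sum\ppsi_ix_i^k=\langle p,x^k\rangle_{\ppsi}$, so the 3-Jet of $h|_{\mathfrak g}$ at $0$ equals $J_{\ppsi}$. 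Since $\tau$, and hence $h$, is only defined up to a positive scalar, the shape invariant is the projective class $[J_{\ppsi}]$.

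There is no substantive obstacle here; the argument is a direct computation. The only bookkeeping to be careful about is that the height function depends on a basepoint $b\in\bdy\Omega$ and on an affine functional $\tau$ vanishing on $H_b$, each canonical only up to positive scaling. By the remark following \eqref{heightfn} the function $h$ is independent of the choice of basepoint, and the final answer is naturally read off as a projective class, so both ambiguities are harmless.
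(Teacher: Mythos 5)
Your proof is correct and follows essentially the same route as the paper: both take the basepoint $b=p$, use the affine functional $\tau(y)=\ppsi(y)-\ppsi(p)=\sum\ppsi_i(y_i-1)$, compose with the orbit map $\mu(x)=(e^{x_1},\dots,e^{x_n})$ to get $h(x)=\ppsi(\exp x)-\ppsi(p)$, and read off the $3$-Jet from the power series $\ppsi(\exp x)=\sum_k\tfrac{1}{k!}\langle p,x^k\rangle_{\ppsi}$. Your added remark verifying the sign of $\tau$ via concavity of $F(y)=\sum\ppsi_i\log y_i$ is a small, sound expansion of a step the paper leaves implicit.
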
 
\begin{proof} Since $\mathfrak g=\ker\ppsi$
 $$\delta\circ\exp(\mathfrak g)=\{ \Diag(\exp(x_1),\cdots,\exp(x_n),1):\ \sum\ppsi_ix_i=0\ \}=\Tr(\ppsi)$$
 Let $\bdy\Omega\subset\RR^n$ be the orbit of $p$  under $\Tr(\ppsi)$ then the tangent space to $\bdy\Omega$ at $p$ is $p^{\perp}$.
{\editC We use the height function $h=h_\psi \circ\exp:\mathfrak g\to\RR^n$ where $h_\psi(y)=\psi(y)-\ppsi(p)$  then
$$h(x)= -\ppsi(p)+\ppsi(\exp(x))=-\ppsi(p)+\sum_{i=0}^{\infty} \frac{1}{n!}\langle p,x^n\rangle_{\ppsi}$$
The  terms of degree $0$ and $1$ vanish, because $\langle p,x^0\rangle_{\ppsi}=\psi(p)$, and  $\langle p,x\rangle_{\ppsi}=0$ since $x\in p^{\perp}$.}\end{proof}

{\editD The  proof of the following is in the appendix.}
\begin{proposition}\label{calibrationiscuspspace}   If $(\lambda,\kappa)\in A_n$ then
   $[J(\Phi_{\lambda,\kappa})]=[\vartheta_{\lambda,\kappa}]\in  \Jcal(V)$. 
   {\editC Moreover in the diagonalizable case $\lambda_0>0$, and $(V,J(\Phi_{\lambda,\kappa}))$ is similar
   to $(\mathfrak g,J_{\ppsi})$ where $\ppsi$ is determined in the proof.} 
\end{proposition}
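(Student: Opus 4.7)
The plan is to identify the shape invariant $[J(\Phi_{\lambda,\kappa})]$ by an explicit Taylor expansion of $\exp\circ\phi_{\lambda,\kappa}$ to third order, with the diagonalizable case handled first via \ref{diagonalcalibration} and \ref{phidiag}, and the non-diagonalizable case recovered by a continuity argument using \ref{fibers}(a).

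First I would set up the height function. Take the basepoint $b=0\in\RR^n$, so that in the affine embedding $b$ corresponds to $e_{n+1}$. Then the orbit map $\mu(v)=\Phi_{\lambda,\kappa}(v)\cdot e_{n+1}$ is the last column of $\exp(\phi_{\lambda,\kappa}(v))$. From the block structure of $\phi_{\lambda,\kappa}$ in \ref{lambdakappa}, $\mu(0)=e_{n+1}$ and the tangent hyperplane $H_b$ is cut out by $x_0=0$; hence $\tau$ is projection onto the first coordinate and $h(v)$ is the entry in position $(1,n+1)$ of $\exp(\phi_{\lambda,\kappa}(v))$. The $3$-Jet of $h$ is $J(\Phi_{\lambda,\kappa})$.

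For the diagonalizable case $\lambda_0>0$, the cleanest route is via \ref{diagonalcalibration}. By \ref{phidiag} there is $Q\in\SL(n+1,\RR)$ with $Q\Phi_{\lambda,\kappa}Q^{-1}=\zeta_{\psi}\circ\mathfrak{f}$, where $\psi_i=\lambda_i^{-2}$ for $1\le i\le n-1$, $\psi_n=\lambda_0^{-2}$, and $\mathfrak{f}(v)=\lambda_0^2(\lambda_1v_1,\dots,\lambda_{n-1}v_{n-1})$. Since $J$ is invariant under affine conjugacy and satisfies $J(\theta\circ A)=J(\theta)\circ A$ for $A\in\GL V$, we get $J(\Phi_{\lambda,\kappa})=J(\zeta_{\psi})\circ\mathfrak{f}$. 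Identifying $\mathfrak g=\ker\psi\subset\AA$ with the domain $V$ of $\zeta_\psi$ via the natural isomorphism $\iota$ and using \ref{diagonalcalibration}, $J(\zeta_\psi)=J_\psi\circ\iota$. It then remains to verify that $J_\psi\circ\iota\circ\mathfrak{f}\sim\vartheta_{\lambda,\kappa}$: the quadratic part $\tfrac12\langle p,x^2\rangle_\psi$ pulled back through $\iota\circ\mathfrak f$ yields a positive multiple of $\langle v,v\rangle+\langle v,\kappa\rangle^2$ (using $\lambda_i\kappa_i=\lambda_0$ and the defining relation for $\mathfrak g$), and the cubic part $\tfrac16\langle p,x^3\rangle_\psi$ pulls back to a positive multiple of $-\lambda_0\langle v,\kappa\rangle^3+\sum_{i=1}^{n-1}\lambda_iv_i^3$. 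This simultaneously proves the similarity statement $(V,J(\Phi_{\lambda,\kappa}))\sim(\mathfrak g,J_\psi)$.

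For the non-diagonalizable case $\lambda_0=0$, I would argue by continuity. By \ref{fibers}(a), $\widetilde A_n\subset\cl D_n$, so any $(\lambda,\kappa)\in\widetilde A_n$ is a limit of points $(\lambda^{(m)},\kappa^{(m)})\in D_n$. The map $(\lambda,\kappa)\mapsto \Phi_{\lambda,\kappa}$ is continuous into $\Hom(V,\Aff(n))$, and the height function $h$ (hence its $3$-Jet $J$) depends continuously on $(\lambda,\kappa)$ by the discussion preceding \ref{betacts} together with \ref{betacts}; similarly $(\lambda,\kappa)\mapsto\vartheta_{\lambda,\kappa}$ is continuous by inspection of the defining formula. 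Since the two projective classes agree on the dense subset $D_n$ by the diagonalizable case, they agree on all of $\widetilde A_n$. Alternatively, one may perform the direct computation: writing $\phi_{\lambda,\kappa}(v)=A_1(v)+\langle v,\kappa\rangle M$ with $M$ the rank-one matrix in \ref{lambdakappa}, expand $\exp$ through degree $3$, apply to $e_{n+1}$, and read off the $(1,n+1)$-entry. The degree-$2$ contribution comes from $\tfrac12 A_1^2 e_{n+1}$ and $\tfrac12 A_2A_1 e_{n+1}$ giving $\tfrac12(\langle v,v\rangle+\langle v,\kappa\rangle^2)$, while the degree-$3$ contribution from $\tfrac16(A_1+A_2)^3 e_{n+1}$ collapses using $\lambda_i\kappa_i=\lambda_0$ to $\tfrac16(-\lambda_0\langle v,\kappa\rangle^3+\sum\lambda_iv_i^3)$.

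The main obstacle is the cubic term in the direct computation: since $A_2$ has only its first row nonzero, the six orderings of $A_1,A_1,A_1$ and of two $A_1$'s with one $A_2$ contribute to different matrix entries, and one must carefully combine the $\langle v,\kappa\rangle$-weighted terms using the constraint $\lambda_i\kappa_i=\lambda_0$ to collapse a cubic in $\langle v,\kappa\rangle$ to the single term $-\lambda_0\langle v,\kappa\rangle^3$. This bookkeeping is routine but lengthy, which is why the continuity argument through \ref{diagonalcalibration} and \ref{phidiag} is preferable — and consistent with the paper's stated practice of deferring such computations to the appendix.
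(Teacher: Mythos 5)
Your diagonalizable case follows exactly the paper's route: conjugate via (\ref{phidiag}) to $\zeta_\psi\circ\mathfrak f$, apply (\ref{diagonalcalibration}) through the identification $\zeta_\psi=\delta\circ\exp\circ f$, and pull back $J_\psi$ along $f\circ\mathfrak f$. Where you diverge is the non-diagonalizable case ($\lambda_0=0$). The paper performs the direct Taylor expansion: it writes $\phi_{\lambda,\kappa}(v)=D+N$ with $D$ diagonal and $N$ nilpotent, observes $De_n=0$, $N^3=0$, $DN^2e_n=0$, and reads off the quadratic from $\tfrac12 N^2 e_n$ and the cubic from $\tfrac16 NDN e_n$; the cancellations are not bad with this particular splitting. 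You instead invoke a continuity argument, which is a genuinely different and cleaner route. It is valid, but there are two places where the justification needs to be tightened. First, you cite (\ref{betacts}) for the continuity of the $3$-jet, but (\ref{betacts}) as stated only concerns $\widetilde\beta$ (the $2$-jet). What carries over is the \emph{mechanism} of its proof: the orbit map $\mu$, and hence $h_\theta$ via (\ref{geqtn1}), is smooth in $((\lambda,\kappa),v)$ jointly (with fixed basepoint $b=0$, which works for every $(\lambda,\kappa)$ since $\bdy\Omega(\lambda,\kappa)$ is the orbit of $0$), so \emph{all} jets, including the $3$-jet, vary continuously. You should make this explicit rather than cite the lemma, which only states part of it. Second, the topological step — two continuous maps into the Hausdorff space $\PP(\Sym^2V\oplus\Sym^3V)$ that agree on a dense subset agree everywhere — should be stated, with (\ref{fibers})(a) giving $\widetilde A_n\subset\cl D_n$ as the density input. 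With those two points spelled out, the continuity argument is a clean alternative; arguably it is the conceptually tidier one, and it also shows that the family $\vartheta_{\lambda,\kappa}$ is exactly the $J$-image of a one-parameter limiting degeneration of the diagonal family, which aligns with the motivating picture in Section~3. The paper's direct computation has the advantage of being self-contained and of exhibiting how the formula for $\vartheta_{\lambda,\kappa}$ emerges algebraically; yours leans on the general smoothness framework already set up in Section~2.
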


{\editD The following lemma shows how $\nu$ determines the calibration.}
The cubic term $c$ in the $3$-Jet $J=q+c$ is  a weighted sum of the cubes of the weights $\xi_i$, see (\ref{ccformula}) below.
Later we will see that one can recover these weights from $[J]$. {\editC See \cite{MR870934}
 and
Theorem (1.4) in \cite{MR3105781} for a uniqueness statement 
concerning the expression of a cubic as a sum of cubes. The proof of the following is in the appendix.}

\begin{lemma}\label{cubicrootdata} {\editC If $n\ge 3$ then} there is a  map  $\Kappa:\Rcal_n\to\editC \Pcal \oplus\Sym^3V$ such that $\Kappa\circ\nu=[J]$ and $\Kappa$ is continuous and proper.
If $\rho$ is a marked translation group and $x=\nu(\rho)=([\xi_0,\cdots,\xi_{n-1}],\beta)\in \Rcal_n$, then 
$\Kappa(x)=\beta(\rho)+c(\rho)$
with 
$$c(\rho)=(1/3)\sum_{i=0}^{n-1}\dxi_i^3\left(\langle \xi_i,\xi_i\rangle_{_{\beta^*}}+\varpi\right)^{-1},\qquad \varpi={\editC-}\langle \xi_1,\xi_2\rangle_{_{\beta^*}}$$
\end{lemma}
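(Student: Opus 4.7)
The plan is to compute the shape invariant on a concrete representative in each conjugacy class, translate via the explicit weight formulas of Lemma \ref{rootslambda}, and then verify continuity and properness of the resulting expression.

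First I would use Theorems \ref{classification} and \ref{psilambda} to reduce to $\rho = \Phi_{\lambda,\kappa}$ for some $(\lambda,\kappa) \in \widetilde{A}_n$; both $[J]$ and $\nu$ transform covariantly under precomposition by $B \in \SLpm V$, so composing with $B$ afterwards preserves the desired identity. By \ref{calibrationiscuspspace}, $[J(\Phi_{\lambda,\kappa})] = [\vartheta_{\lambda,\kappa}]$, whose quadratic part $\Id + \kappa\otimes\kappa$ has determinant $\varkappa^{n-1}$. Under the identification $F[q+c] = \gamma(q+c)$ of $\Jcal(V)$ with a subspace of $\Pcal \oplus \Sym^3V$, one has $\gamma = \varkappa^{-1}$, and the quadratic part becomes precisely $\beta(\rho)$ as in \ref{phidiag}. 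Using the weights (\ref{xi0}), namely $\xi_0 = -\lambda_0 \langle \cdot,\kappa\rangle$ and $\xi_i = \lambda_i e_i^*$ for $i \geq 1$, the cubic of $\vartheta_{\lambda,\kappa}$ rewrites as $\tfrac{1}{3}\sum_{i=0}^{n-1} \lambda_i^{-2}\,\xi_i^3$, where each summand with $\lambda_i = 0$ forces $\xi_i = 0$ and is read as zero. Lemma \ref{rootslambda} then gives $\lambda_i^{-2} = \varkappa(\langle \xi_i,\xi_i\rangle_{\beta^*}+\varpi)^{-1}$, and multiplying by $\gamma = \varkappa^{-1}$ yields the displayed formula for $c(\rho)$. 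Since the result only involves $\nu(\rho)$, it extends to define $\Kappa$ on all of $\Rcal_n$, and by construction $\Kappa \circ \nu = [J]$ under the identification $F$.

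For continuity, the only delicate locus is where some $\xi_i = 0$ and $\varpi = 0$ coincide; there the estimate $|\xi_i(v)|^3/(\|\xi_i\|_{\beta^*}^2 + \varpi) \leq \|\xi_i\|_{\beta^*}\|v\|_\beta^3$ shows the offending summand extends to zero by continuity. All other denominators stay bounded away from zero on compact subsets of $\Rcal_n$.

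The main obstacle is properness. My plan is to use Theorem \ref{modspquotient} and proceed in two stages. Since $\beta(\Phi_{\lambda,\kappa}) = \varkappa^{-1}(\Id + \kappa\otimes\kappa)$ stays in a compact subset of $\Pcal$ as $(\lambda,\kappa)$ ranges over $\widetilde{A}_n$ (both $\varkappa$ and $\|\kappa\|$ are bounded there), the properness of the congruence action of $\SLpm V$ on $\Pcal$ forces the $\SLpm V$-factor of $\rho$ to remain in a compact set whenever the $\Pcal$-component of $\Kappa$ does. It then remains to bound the $\lambda \in A_n$-coordinate from the boundedness of the cubic $c$ in $\Sym^3 V$. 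The coefficient of $v_{n-1}^3$ in $\vartheta_{\lambda,\kappa}$ is $\varkappa^{-1}\lambda_{n-1}(1-\kappa_{n-1}^4)/3$, which grows linearly in $\lambda_{n-1}$ unless $\lambda_0 = \lambda_{n-1}$; the degenerate case where all $\lambda_i$ coincide is handled by inspecting a mixed-monomial coefficient such as that of $v_1 v_2 v_3$, which again depends linearly on the common value. Hence boundedness of $c$ forces $\lambda_{n-1}$, and so all of $\lambda$, to be bounded, giving the required precompactness.
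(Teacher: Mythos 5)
Your derivation of the formula for $\Kappa$ — reduce to $\Phi_{\lambda,\kappa}$ via Theorems \ref{classification} and \ref{psilambda}, read off the cubic from Proposition \ref{calibrationiscuspspace}, rewrite it as $\tfrac{1}{3}\sum\lambda_i^{-2}\xi_i^3$ using the explicit weights, and substitute the identity $\varkappa\lambda_i^2 = \langle\xi_i,\xi_i\rangle_{\beta^*}+\varpi$ from Lemma \ref{rootslambda} — follows the paper exactly, as does the covariance argument that propagates the formula to all of $\Rcal_n$. Your treatment of continuity near the locus $\xi_i=0$, $\varpi=0$ via the Cauchy--Schwarz estimate is in fact more explicit than the paper's.

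The properness argument, however, has a genuine gap. You dichotomize on $\lambda_0\ne\lambda_{n-1}$ (handled by the $v_{n-1}^3$-coefficient $\varkappa^{-1}\lambda_{n-1}(1-\kappa_{n-1}^4)/3$) versus $\lambda_0=\lambda_{n-1}$ (handled by a mixed-monomial coefficient). But this is an equality dichotomy and leaves the near-degenerate regime uncovered: a sequence can have $\lambda_{n-1}\to\infty$ with $\kappa_{n-1}\to 1$ yet $\kappa_{n-1}\ne 1$, in which case $(1-\kappa_{n-1}^4)$ can vanish fast enough that the $v_{n-1}^3$-coefficient stays bounded or even tends to zero, while the literal degenerate case never occurs. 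Concretely, for $\lambda^{(m)}=(L_m-L_m^{-1/2},L_m,\dots,L_m)$ with $L_m\to\infty$, one has $\kappa_{n-1}=1-L_m^{-3/2}$ and the $v_{n-1}^3$-coefficient is $O(L_m^{-1/2})$. The argument can be repaired — if $\kappa_{n-1}$ is bounded away from $1$ along a subsequence the $v_{n-1}^3$-coefficient blows up, and if $\kappa_{n-1}\to 1$ then $\lambda_0=\kappa_{n-1}\lambda_{n-1}\to\infty$ and hence every $\kappa_i\to 1$, so a mixed-monomial coefficient (proportional to $\lambda_0\kappa_i\kappa_j\kappa_k$, or to $\lambda_0\kappa_1^2\kappa_2$ when $n=3$) blows up — but this quantitative limiting argument is not what you wrote, and as stated the plan for properness is incomplete. (A minor issue in the same vein: the monomial $v_1v_2v_3$ you cite exists only for $n\ge 4$.) The paper sidesteps this case analysis by evaluating the cubic at an adaptive point $v_m=e_{n-1}-t_m(e_1+\cdots+e_{n-2})$ chosen so that $\xi_0(v_m)=0$, yielding a uniform lower bound $\gtrsim\lambda_{n-1}$ for $n>3$, with a separate direct coefficient estimate for $n=3$.
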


\begin{corollary}\label{Jproper} $J:\Modsp_n\to \Pcal \oplus\Sym^3V$ is {\editE continuous and} proper.
\end{corollary}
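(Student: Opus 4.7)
The plan is to prove this as an immediate consequence of the composition $J = \Kappa \circ \nu$ together with the homeomorphism property of $\nu$ and the properness of $\Kappa$. First I would recall that by Theorem \ref{weightspace}, the weight data map $\nu:\Modsp_n\to\Rcal_n$ is a homeomorphism, hence is both continuous and proper (every homeomorphism between locally compact Hausdorff spaces is proper). Next I would invoke Lemma \ref{cubicrootdata}, which asserts that the map $\Kappa:\Rcal_n\to\Pcal\oplus\Sym^3V$ is continuous and proper and satisfies $\Kappa\circ\nu=[J]$, where we use the identification of $\Jcal(V)$ with its image in $\Pcal\oplus\Sym^3V$ under the unimodular normalization $F[q+c]=\gamma(q+c)$ with $\gamma=(\det q)^{-1/\dim V}$ described just after Definition \ref{calibratedcusp}.

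With these two ingredients in hand, the proof is then a one-line composition argument: $J=\Kappa\circ\nu$ is continuous as the composition of continuous maps, and is proper since the composition of two proper maps between Hausdorff spaces is proper. The only subtle point to verify is that the equality $J = \Kappa\circ\nu$ makes sense as a map to $\Pcal\oplus\Sym^3 V$ (and not merely as a map to the projectivization), but this is exactly the content of the formula $\Kappa(x)=\beta(\rho)+c(\rho)$ in Lemma \ref{cubicrootdata}, where $\beta(\rho)\in\Pcal$ is the unimodular form. I do not anticipate any genuine obstacle, since all the real work has already been done in establishing (\ref{weightspace}) and (\ref{cubicrootdata}).
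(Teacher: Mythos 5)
Your proof is correct and follows essentially the same route as the paper: factor $J$ through $\Kappa$ and $\nu$, cite Theorem (\ref{weightspace}) for $\nu$ being a homeomorphism and Lemma (\ref{cubicrootdata}) for $\Kappa$ being continuous and proper, then compose. The paper additionally inserts the homeomorphism $\hol$ from (\ref{holishomeo}) to make the identification $\Modsp_n\cong\Rep_n$ explicit, but since Theorem (\ref{weightspace}) already states $\nu$ as a homeomorphism with domain $\Modsp_n$ (under the paper's declared abuse of notation), your shorter composition $J=\Kappa\circ\nu$ is just as valid.
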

\begin{proof} By (\ref{holishomeo}) $\hol^{-1}:\Rep_n\rightarrow\Modsp_n$ is a homeomorphism  and by
 (\ref{cubicrootdata}) $\Kappa:\Rcal_n\rightarrow\editC\Pcal \oplus\Sym^3V$ is {\editE continuous and} proper
 and $\nu:\Rep_n\rightarrow\Rcal_n$ is homeomorphism by (\ref{weightspace})
 thus $J=\Kappa\circ\nu\circ\hol^{-1}$ is {\editE continuous and}  proper.\end{proof}

\if0
We will make use of the following fact from Euclidean geometry.
If $V\subset\RR^n$ is a codimension-1 subspace that has a normal vector in the positive quadrant, then the angles between the orthogonal projection into $V$
of the positive coordinate axes determines $V$, provided $n\ge 3$.
\begin{lemma}\label{angleslemma} Suppose $V\subset\RR^n$ is a codimension-1 subspace and $n\ge 3$.
 Let $\langle\cdot,\cdot\rangle$ be the standard inner product on $\RR^n$ and suppose $V$ is orthogonal to
  $\editC \nu=(\nu_1,\cdots,\nu_n)\in\RR^n$
and $\forall i\ \nu_i>0$ and $\|\nu\|=1$. Let $\pi:\RR^n\to V$ be orthogonal projection. 
Let $\alpha_{ij}=\cos \theta_{ij}$ where $e_i,e_j\in \RR^n$
are standard basis vectors and $\theta_{ij}=\theta(\pi e_i,\pi e_j)$ is  the angle between their images under orthogonal projection in $V$, so
$$\alpha_{ij}=\langle\pi e_i,\pi e_j\rangle/(\|\pi e_i\|\cdot\|\pi e_j\|)$$ 
 Then $$\nu_i=\sqrt{\alpha_{ij}\alpha_{ik}/(\alpha_{ij}\alpha_{ik}-\alpha_{jk})}$$
provided $1\le i,j,k\le n$ and $i,j,k$ are pairwise distinct.
\end{lemma}
 \begin{proof} Throughout this proof we assume $i,j,k$ are pairwise distinct integers between $1$ and $n$.
 Orthogonal projection is given by $\pi(x)=x-\langle x,\nu\rangle\nu$. Then $\langle e_i,\nu\rangle=\nu_i$ so
 $$\pi(e_i)=e_i-\nu_i\nu$$
 Thus 
 $$\begin{array}{rcl}
 \langle \pi e_i,\pi e_j\rangle &=& \langle e_i-\nu_i\nu,e_j-\nu_j\nu\rangle\\
 & = & \langle e_i,e_j\rangle-\nu_i\langle\nu,e_j\rangle-\nu_j\langle e_i,\nu\rangle +\nu_i\nu_j\langle\nu,\nu\rangle\\
 &=&0-\nu_i\nu_j-\nu_j\nu_i+\nu_i\nu_j\\
 &=& -\nu_i\nu_j
 \end{array}
 $$
 and
 $$\begin{array}{rcl}
 \|\pi e_i\|^2& =& \|e_i-\nu_i\nu\|^2\\
 & =& \|e_i\|^2-2\nu_i\langle e_i,\nu\rangle +\nu_i^2\|\nu\|^2\\
 & = & 1-2\nu_i^2+\nu_i^2\\
 & = & 1-\nu_i^2
 \end{array}$$
 Observe that $\|\nu\|=1$ implies $\nu_i<1$ and we are given $\nu_i>0$ so $\pi e_i\ne 0$.
 Thus
 $$\alpha_{ij}=\frac{-\nu_i\nu_j}{\sqrt{1-\nu_i^2} \sqrt{1-\nu_j^2}}$$
 Writing $\beta_i=\nu_i/\sqrt{1-\nu_i^2}$ then $\alpha_{ij}=-\beta_i\beta_j$ and
  $$\beta_i^2=-\alpha_{ij}\alpha_{ik}/\alpha_{jk}$$
 Now $(1-\nu_i^2)\beta_i^2=\nu_i^2$ so
 $$\nu_i^2=\beta_i^2/(1+\beta_i^2)=\alpha_{ij}\alpha_{ik}/(\alpha_{ij}\alpha_{ik}-\alpha_{jk})$$
 \end{proof}
\fi

It remains to show that the  {\editC shape invariant} $[J]=[q+c]$ determines a unique generalized cusp. The method used is to show
that the local maxima of the cubic, $c$,
 restricted to the unit sphere of the quadratic, $q$, enable one to determine $\psi$.
This follows from Lemmas (\ref{localmaxlemmadiag}) for the diagonalizable case, and (\ref{localmaxlemmanotdiag}) in the non-diagonalizable case. 
{\editC The proofs are in the appendix}.

\begin{lemma}[Diagonalizable case]\label{localmaxlemmadiag} {\editC Assume $n\ge 3$. Given $\ppsi\in A_n$  let $(\RR^n,J_{\ppsi}=q+c)$ be the calibrated vector space 
with $J_{\ppsi}(x)=\ (1/2)\langle p,x^2\rangle_{\ppsi}+(1/6)\langle p,x^3\rangle_{\ppsi}$. Let
$\mathfrak g=\{x\in\RR^n:\langle p,x\rangle_{\psi}=0\}$, and $S=\{ v\in\mathfrak g: \langle v,v\rangle_{\ppsi}=1\}$, and
 $\mathfrak{s}=\sum\ppsi_i$. } For $1\le i\le n$ define
 $v_i=(\mathfrak{s} e_i-\ppsi_ip)/\|\mathfrak{s} e_i-\ppsi_ip\|_{\ppsi}$. Then $$K=\{x\in S: (c|S)\ \text{ has a local maximum at } x\}=\{v_i:1\le i\le n\}$$
{\editC Moreover  $ i\ne j\Rightarrow \alpha_{ij}:=\langle v_i,v_j\rangle_{\ppsi}<0$. If  $1\le i,j,k\le n$ and
 $i,j,k$ are pairwise distinct then
$$\ppsi_i/\mathfrak{s}=\alpha_{ij}\alpha_{ik}/(\alpha_{ij}\alpha_{ik}-\alpha_{jk}),\qquad\qquad 6c(v_i)=\frac{1}{\sqrt{\ppsi_i}}\frac{1-2\ppsi_i/\mathfrak{s}}{\sqrt{1-\ppsi_i/\mathfrak{s}}}$$
Also $|K^+|\ge n-1$ where $K^+=\{v\in K:\ c(v)>0\ \}$.
}
\end{lemma}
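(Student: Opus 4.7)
The plan is to attack this by Lagrange multipliers. The constrained problem is: maximize the cubic $c(x) = (1/6)\langle p, x^3\rangle_\psi = (1/6)\sum_i \psi_i x_i^3$ on $S$, which is cut out of $\mathfrak g = \ker\psi$ by the single additional constraint $\langle x,x\rangle_\psi = 1$. A Lagrange multiplier calculation with the two constraints $\langle p,x\rangle_\psi = 0$ and $\langle x,x\rangle_\psi = 1$ gives, component-by-component, $\tfrac{1}{2}\psi_i x_i^2 = \lambda\psi_i x_i + \mu\psi_i$. Dividing by $\psi_i > 0$ yields the single quadratic $\tfrac{1}{2} x_i^2 - \lambda x_i - \mu = 0$, so every coordinate $x_i$ of a critical point takes one of at most two values $a, b$. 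Thus critical points are indexed by a partition $\{I,J\}$ of $\{1,\dots,n\}$ together with a sign.

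Writing $\sigma = \sum_{i\in I}\psi_i$ and $t = \sigma/\mathfrak{s}$, the linear and quadratic constraints determine $a = \pm\sqrt{(1-t)/(t\mathfrak{s})}$, $b = \mp\sqrt{t/((1-t)\mathfrak{s})}$, and after substitution the critical value is
\[
c = \pm\frac{1}{6}\,\frac{1-2t}{\sqrt{t(1-t)\mathfrak{s}}}.
\]
The key step, and I expect the main obstacle, is to show that only the partitions with $|I|=1$ yield local maxima. For a partition with $|I|\ge 2$, I plan to exhibit an explicit tangent direction $w \in T_xS$ (obtained by infinitesimally splitting $I$ into two sub-blocks along which $a$ bifurcates into two nearby values) along which the Hessian of $c|S$ is positive; this reduces to a routine but lengthy quadratic-form computation in coordinates adapted to the partition. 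For $|I|=1$, say $I = \{i\}$, I will check that the critical point is precisely $v_i = (\mathfrak{s} e_i - \psi_i p)/\|\mathfrak{s} e_i-\psi_i p\|_\psi$ and that its Hessian on $T_{v_i} S$ is negative semidefinite.

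Once $K = \{v_i\}$ is identified, the remaining formulas are direct computations. The norm calculation $\|\mathfrak{s} e_i - \psi_i p\|_\psi^2 = \psi_i(\mathfrak{s}-\psi_i)\mathfrak{s}$ together with the formula for $c$ above (specialized to $t = \psi_i/\mathfrak{s}$) gives $6c(v_i) = (1-2\psi_i/\mathfrak{s})/(\sqrt{\psi_i}\sqrt{1-\psi_i/\mathfrak{s}})$. For the pairwise inner products, expanding bilinearly yields $\langle \mathfrak{s} e_i - \psi_i p,\ \mathfrak{s} e_j - \psi_j p\rangle_\psi = -\mathfrak{s}\psi_i\psi_j$, hence
\[
\alpha_{ij} = -\sqrt{\psi_i\psi_j}\big/\sqrt{(\mathfrak{s}-\psi_i)(\mathfrak{s}-\psi_j)} < 0.
\]
Substituting this into $\alpha_{ij}\alpha_{ik}/(\alpha_{ij}\alpha_{ik}-\alpha_{jk})$ and simplifying the resulting expression by pulling out the common factor $\sqrt{\psi_j\psi_k/((\mathfrak{s}-\psi_j)(\mathfrak{s}-\psi_k))}$ from numerator and denominator leaves $\psi_i/\mathfrak{s}$, as claimed.

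Finally, $c(v_i) > 0$ iff $\psi_i < \mathfrak{s}/2$. Since all $\psi_j$ are positive and $\sum_j\psi_j = \mathfrak{s}$, at most one index $i$ can have $\psi_i \ge \mathfrak{s}/2$; hence at least $n-1$ of the $v_i$ lie in $K^+$, establishing the last assertion.
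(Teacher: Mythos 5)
Your strategy is essentially the same as the paper's: you set up the constrained optimization on $S$, observe that the Lagrange conditions force each coordinate of a critical point to satisfy a single quadratic (so take one of two values), index the critical points by a partition $\{I,J\}$ of $\{1,\dots,n\}$ together with a sign, and then classify local maxima by the second derivative. Your explicit formulas for $a$, $b$, the critical value, $\alpha_{ij}$, and $\psi_i/\mathfrak{s}$ all check out and agree with the paper's computations. The final observation that at most one $\psi_i$ can satisfy $\psi_i\ge\mathfrak{s}/2$ is also correct.

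The gap is exactly where you flag it: the Morse index computation. You propose, for $|I|\ge2$, to exhibit a single tangent direction along which the Hessian of $c|S$ is positive. This is in principle enough to rule out a local maximum, but you have not produced the direction or computed the second derivative along it; the phrase ``routine but lengthy quadratic-form computation in coordinates adapted to the partition'' is where the actual work lies, and it is not carried out. For $|I|=1$ your plan is weaker still: showing the Hessian is \emph{negative semidefinite} does not establish a local maximum --- you need negative definiteness (or nondegeneracy plus index zero). The paper invests roughly a page here (its Claims~1--3): it computes the Hessian of $J|S$ at $v=v(A_+)$ in closed form as $(\mathfrak{s}/2)\langle e_+-e_-,w^2\rangle_\psi$ (after carefully accounting for the second fundamental form of $S$, i.e.\ the $dJ_v(\gamma'')$ correction), extends this to a nondegenerate quadratic form $Q$ on all of $\RR^n$ whose index is $|A_+|$, and then shows that its restriction to the orthogonal complement $W=\operatorname{span}\{Lp,Lv\}$ has index~$1$, so the index on $T_vS$ is $|A_+|-1$. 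That orthogonal-decomposition trick is what lets one read off the index for every partition at once, rather than having to argue ad hoc for each block size. Without that computation (or an equivalent one), the central claim $K=\{v_i\}$ is unproved; everything else in your proposal is a corollary of it.
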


For the corresponding result in the non-diagonalizable case, it is more convenient to work with $\editC\Psi_{\lambda,0}$ instead of $\zeta_{\ppsi}$,
since the calibration is $J=\|v\|^2+(1/3)\sum\lambda_iv_i^3$.
\begin{lemma}[non-diagonalizable case]\label{localmaxlemmanotdiag}  Given $\lambda=({\editC 0,}\lambda_1,\cdots,\lambda_{n-1})\editC \in A_n$, {\editC let $J(v)=\|v\|^2+c(v)$ where} $c=(1/3)\sum\lambda_iv_i^3$ and
$S=\{v\in V:\sum v_i^2=1\}$. {\editC Then $J(\Phi_{\lambda,0})=\vartheta_{\lambda,0}=\editE J(v)$ and}
$$K^+=\{v\in S: (c|S) \text{ has a local max at } v, \text{ and } c(v)>0\}=\{e_i:\lambda_i>0\}$$ 
Moreover  $c(e_i)=\lambda_i/3$ for $e_i\in K^+$, {\editC and if $\ a\ne b\in K^+$ then $\langle a,b\rangle=0$, and $|K^+|=\type-1\le n-1$.}\end{lemma}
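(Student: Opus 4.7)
My plan begins by dispatching the identity $J(\Phi_{\lambda,0}) = \vartheta_{\lambda,0}$: this is immediate from Proposition~\ref{calibrationiscuspspace} with $\kappa = 0$, since then $\vartheta_{\lambda,\kappa}$ collapses to $\|v\|^2 + (1/3)\sum_{i=1}^{n-1}\lambda_i v_i^3$, matching the stated $J(v)$. The real content is the computation of $K^+$.

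For that, I would run Lagrange multipliers on $c|S$. The critical-point condition $\nabla c = 2\mu v$ reads $\lambda_i v_i^2 = 2\mu v_i$, i.e.\ $v_i(\lambda_i v_i - 2\mu) = 0$, for each $i \in \{1,\dots,n-1\}$. If $\mu = 0$, this forces $\lambda_i v_i = 0$, so $v$ is supported on $\{i : \lambda_i = 0\}$ and $c(v) = 0$; such critical points do not contribute to $K^+$. If $\mu \ne 0$, then $v_i = 0$ whenever $\lambda_i = 0$, and on $\{i : \lambda_i > 0\}$ we have $v_i \in \{0,\, 2\mu/\lambda_i\}$. Hence the nontrivial critical points are indexed by a nonempty subset $I \subseteq \{i : \lambda_i > 0\}$ together with a choice of sign, with $\mu^2 = 1/(4\sum_{i \in I} \lambda_i^{-2})$ and critical value $2\mu/3$; the positive ones correspond to $\mu > 0$.

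The crucial step will be showing that only the singletons $I = \{i\}$ yield local maxima. For $I = \{i\}$, the critical point is $e_i$ with $c(e_i) = \lambda_i/3$; parametrizing $S$ near $e_i$ by $(v_j)_{j \ne i}$ via $v_i = \sqrt{1 - \sum_{j \ne i} v_j^2}$ and Taylor-expanding,
\[
c|S = \frac{\lambda_i}{3} - \frac{\lambda_i}{2}\sum_{j \ne i} v_j^2 + O(|w|^3),
\]
so the restricted Hessian is $-\lambda_i\,\Id$, negative definite, giving a strict local max. For $|I| \ge 2$ the Lagrangian Hessian is the diagonal matrix $H_{ii} = 2\lambda_i v_i^* - 2\mu$, equal to $2\mu > 0$ for $i \in I$ (using $\lambda_i v_i^* = 2\mu$). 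For any two distinct $i,j \in I$, the vector $w = -(\lambda_i/\lambda_j)e_i + e_j$ satisfies $\langle w, v^*\rangle = 0$ while $w^\top H w = 2\mu(w_i^2 + w_j^2) > 0$, producing an ascending direction on $S$ and ruling out a local max. This bordered-Hessian verification is the main obstacle; everything else is routine.

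The remaining assertions then follow immediately from the characterization $K^+ = \{e_i : \lambda_i > 0\}$: one reads off $c(e_i) = \lambda_i/3$; distinct elements of $K^+$ are distinct standard basis vectors and hence orthogonal; and $|K^+|$ is the number of positive $\lambda_i$ among $\lambda_1,\dots,\lambda_{n-1}$, which (with $\lambda_0 = 0$) agrees with $\type(\lambda) \le n-1$.
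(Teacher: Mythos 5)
Your proof is correct and follows essentially the same route as the paper: Lagrange multipliers on $c|S$ give the critical-point condition $\lambda_i v_i^2 = (\text{const})\,v_i$, critical points are classified by the support set $I\subseteq\{i:\lambda_i>0\}$ and a sign, and the constrained (bordered) Hessian — diagonal with entries $2\mu$ on $I$ and $-2\mu$ off $I$ — rules out $|I|\ge 2$ exactly as the paper's quadratic form $Q(w)=\alpha\bigl(\sum_{A}w_i^2-\sum_{A^c}w_i^2\bigr)$ does (with $\alpha=2\mu$). One small remark: your count $|K^+|=\type(\lambda)$ (the number of nonzero $\lambda_i$, all of which lie in $\{\lambda_1,\dots,\lambda_{n-1}\}$ since $\lambda_0=0$) is what the paper's own proof establishes and what is used downstream in Lemma~\ref{Jdeterminesppsi}; the lemma's stated $|K^+|=\type-1$ appears to be a slip.
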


{\editC The subgroup $\O(\Omega,b)\subset G(\Omega)$ that stabilizes $b\in\bdy\Omega$ is conjugate to the subgroup $\O(\eta)\subset\GL V$ that preserves $V$, by (\ref{Opsi}). The following shows that the latter is the same as the subgroup that preserves
$J$. These results are keys steps in showing  $\eta$ and $[J]$ are powerful invariants. }
    \begin{lemma}\label{Olambdalemma} If $\theta:V\to T$ is a {\editC marked translation group} then
    $\O(J(\theta))=\O(\eta(\theta))$.
    \end{lemma}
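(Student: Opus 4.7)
The plan is to verify the two inclusions $\O(\eta(\theta)) \subset \O(J(\theta))$ and $\O(J(\theta)) \subset \O(\eta(\theta))$ separately, writing $J = q + c$ for the quadratic and cubic parts of the shape invariant with $q \sim \beta = \beta(\theta)$, and distinguishing the cases $\type(\theta)=0$ (where both groups collapse to $\Sim(\beta)$) and $\type(\theta)>0$.

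For the forward inclusion $\O(\eta) \subset \O(J)$, my plan is to exploit the explicit formula $c = \tfrac{1}{3}\sum_i \xi_i^3 /(\beta^*(\xi_i) + \varpi)$ provided by Lemma \ref{cubicrootdata}. Given $A \in \O(\eta(\theta))$ with $\type>0$, Lemma \ref{Opsi} forces $A \in \O(\beta)$, so $q \circ A = q$ and $A^*$ preserves $\beta^*$; Lemma \ref{charpoly} then produces a permutation $\sigma$ with $\xi_i \circ A = \xi_{\sigma(i)}$. Under this, the cubed numerators $\xi_i^3$ and the denominators $\beta^*(\xi_i) + \varpi$ in the formula for $c$ are permuted compatibly, so after reindexing $c \circ A = c$ and therefore $J \circ A = J$, giving $A \in \O(J)$.

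For the reverse inclusion $\O(J) \subset \O(\eta)$, I would take $A \in \O(J(\theta))$, so $J \circ A = \lambda J$ for some $\lambda > 0$; extracting homogeneous components gives $q \circ A = \lambda q$ (hence $\beta \circ A \sim \beta$) and $c \circ A = \lambda c$. Only $\chi \circ A = \chi$ remains to be established, and here I would invoke the local-maxima analysis of Lemmas \ref{localmaxlemmadiag} and \ref{localmaxlemmanotdiag}. The rescaled map $B = A/\sqrt{\lambda}$ lies in $\O(q)$ and sends the positive-valued local-maximum set $M^+$ of $c|_{S_q}$ to itself, but scales $c$-values by $1/\sqrt{\lambda}$; when $\type(\theta)>0$ the set $M^+$ is finite, nonempty, and consists of points with strictly positive $c$-values, so equating $\sum_{v \in M^+} c(v)$ with its image under $B$ forces $\lambda = 1$. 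Then $A \in \O(q) = \O(\beta)$ preserves $c$, and therefore $M^+$ together with its $c$-values; by the strict monotonicity of the explicit formulas for $c(v_i)$ in the parameters $\psi_i$ (respectively $\lambda_i$) from the cited lemmas, the induced permutation of $M^+$ lifts to a permutation of the weight set $\{\xi_i\}$, whence $\chi \circ A = \chi$.

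The hard part will be the last identification in the reverse direction: promoting the fact that $A$ permutes $M^+$ into a genuine permutation of the weights (rather than only of their rescaled $\beta$-duals up to normalization constants). This rests on the injectivity of the parameter-to-$c$-value map implicit in Lemmas \ref{localmaxlemmadiag} and \ref{localmaxlemmanotdiag}, combined with the summation argument ruling out $\lambda \ne 1$; once both are in hand the two inclusions fit together cleanly.
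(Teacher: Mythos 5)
Your plan is correct and follows essentially the same route as the paper's proof: the forward inclusion via the permutation of weights and the explicit cubic formula of Lemma~\ref{cubicrootdata}, and the reverse inclusion via the local-maximum analysis of Lemmas~\ref{localmaxlemmadiag} and~\ref{localmaxlemmanotdiag} to recover the weight data from $c$. The one small tactical variation is your treatment of the possible scaling factor $\lambda$ in $J\circ A=\lambda J$: you rule it out by summing the positive $c$-values over the finite local-maximum set $M^+$, whereas the paper cites a compactness argument of the same flavour as Lemma~\ref{Opsi} (iterating a non-isometric similarity forces $c=0$); both are valid and achieve the same reduction to $\O(q)$.
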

    \begin{proof}  {\editD This is easy when $\type=0$ since the generalized cusp is standard, and the cubic term in
    $J$ is $0$. Thus we may assume $\type>0$ and then }
    {\editC by (\ref{Opsi}) $\O(\eta(\theta))\subset\GL V$ is conjugate to the stabilizer of the basepoint in 
    $\PGL\Omega$. Since $J(\theta)$ is preserved by the latter
$\O(\eta(\theta))\subset \O(J(\theta))$. To show the
    reverse inclusion,
    by (\ref{classification}), every marked translation group is given by $B(\zeta_{\psi}\circ A)B^{-1}$ for some $A\in \SLpm V$ and $B\in \Aff(n)$.
    Now $\O(J(\theta))$ and $O(\eta(\theta))$ are both unchanged under conjugation by $B$.
    Moreover $J(\theta\circ A)=J(\theta)\circ A$ and $\eta(\theta\circ A)=\eta(\theta)\circ A$.
     Thus
  is suffices to prove the result for $\theta=\zeta_{\psi}$.}
  
    Suppose $J=q+c$ where $q=\beta$ is the horosphere metric on $V$ given by
   $\zeta_{\psi}$ and $\O(q)\subset\GL(V)$ is the subgroup that
    preserves $q$. Set $\type=\type(\ppsi)$. Let $\Wcal=\{\weight_i\in V^*:1\le i\le \type\}$ be the set of {\editC non-zero}
     Lie algebra weights for $\zeta_{\psi}$.
   Then $\O(\eta(\zeta_{\psi}))$ is the subgroup of $\O(q)$ that preserves
   the character $\chi=\chi(\zeta_{\psi})$, and  
    $\O(J(\zeta_{\psi}))$ is the subgroup of $\Sim(q)$ that preserves
      the cubic $c$. {\editD Arguing as in (\ref{Opsi}) $\O(J(\zeta_{\psi}))\subset\O(q)$  since $\type>0$.}  The result will follow by showing that
      preserving $\chi$ is equivalent to preserving $\Wcal$ is equivalent to preserving $c$.

  {\editC {\editD By (\ref{charpoly}) preserving $\chi$
  is equivalent to preserving the characteristic polynomial $G=c_{\zeta_{\psi}}$. }
     Let $\Wcal^+\supset\Wcal$ be the multiset  of all Lie-algebra weights of the linear part of $\zeta_{\ppsi}.$ Then
    $|\Wcal^+|=n$ and $\Wcal^+$ contains the zero weight with multiplicity $n-\type$.
    The coefficients of $\editD G$ are the elementary symmetric functions
    of the elements of $\Wcal^+$. }Thus preserving $\editD G$ is equivalent to preserving
   $\Wcal$. {\editE By (\ref{ccformula}) $c=\editD(1/3{\editC\varkappa})\sum\lambda_i^{-2}\weight_i^3$. Thus if $\Wcal$} is preserved, then $c$ is preserved.

    For the converse, suppose $c$ is preserved.
    When $\type(\lambda)<n$ then
    by (\ref{localmaxlemmanotdiag})  $\O(J(\zeta_{\psi}))$
    preserves $K^+=\{e_i:\ppsi_i>0\}$ and since $c(e_i)=\psi_i/3$ it follows that $\O(J(\zeta_{\psi}))$ preserves $S=\{\ppsi_ie_i:1\le i\le n\}$. It follows that $\Wcal$ is preserved in this case.

     This leaves the case $\type(\lambda)=n$.
       By (\ref{localmaxlemmadiag}) $\O(J(\zeta_{\psi}))$ preserves $K$ and therefore permutes
     the coordinates of $\ppsi$. Moreover the formula for $c(v_i)$ in (\ref{localmaxlemmadiag}) shows that $c(v_i)=c(v_j)$
     if and only if $\ppsi_i=\ppsi_j$. {\editE Comparing this to (\ref{phidiag}) one sees that the weights are preserved.}
     Thus  $\O(J(\zeta_{\psi}))$ preserves $\Wcal$.
          \end{proof}

{\editC We now have the ingredients to show that $[J]$ determines $\ppsi$.
\begin{lemma}\label{Jdeterminesppsi}  If $n\ge 3$, and $A,A'\in\SLpm V$, and 
$[J(\zeta_{\ppsi}\circ A)]=[J(\zeta_{\ppsi'}\circ A')]$  then
$\ppsi=\ppsi'$.
\end{lemma}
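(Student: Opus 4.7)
The plan is to recover $\psi$ directly from the projective class $[J(\theta)]$ using the local-maximum analysis of Lemmas \ref{localmaxlemmadiag} and \ref{localmaxlemmanotdiag}. Since $J$ is covariant ($J(\theta\circ B)=J(\theta)\circ B$) and $A,A'\in\SLpm V$ preserve $\det q$, I first rescale so that the quadratic part $q$ is unimodular. Under this normalization, the projective equality $[J(\zeta_\psi\circ A)]=[J(\zeta_{\psi'}\circ A')]$ becomes an actual equality of calibrations on $V$, so the local-maximum data will be common to both sides.

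Next I will show that the cardinality $|K^+|$ determines $\type(\psi)$, where $K^+$ is the set of positive local maxima of the cubic part $c$ restricted to the unit sphere of $q$. By Lemma \ref{localmaxlemmadiag}, in the diagonalizable case $\type=n$ one has $|K^+|\ge n-1$, whereas by Lemma \ref{localmaxlemmanotdiag}, in the non-diagonalizable case $\type<n$ one has $|K^+|\le n-2$. Hence $\type(\psi)=\type(\psi')$ from the equality of calibrations.

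Within each type, the previous lemmas supply explicit inversion formulas that recover $\psi$ from the local-maximum data. In the diagonalizable case, the pairwise inner products $\alpha_{ij}=\langle v_i,v_j\rangle_q$ of the elements $v_i\in K$ determine the ratios $\psi_i/\mathfrak{s}$ through the formula $\psi_i/\mathfrak{s}=\alpha_{ij}\alpha_{ik}/(\alpha_{ij}\alpha_{ik}-\alpha_{jk})$, and the values $c(v_i)$ then pin down each $\psi_i$ through the stated formula for $6c(v_i)$. In the non-diagonalizable case, after identifying $[J(\zeta_{\psi})]$ with $[\vartheta_{\lambda,0}]$ via \ref{psilambda} and \ref{calibrationiscuspspace}, Lemma \ref{localmaxlemmanotdiag} recovers each non-zero $\lambda_i$ as $3c(e_i)$ on the orthogonal elements of $K^+$, and $\psi$ is then read off from $\lambda$ via \ref{psilambda}. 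Since the same normalized calibration underlies both sides, the same $\psi$ is extracted on each, and so $\psi=\psi'$.

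The hard part will be the bookkeeping: verifying that the local-maximum formulas, although stated in the specific coordinates of the canonical forms $J_{\psi}$ on $\mathfrak{g}$ and $\vartheta_{\lambda,0}$ on $V$, actually extract invariants of the projective class $[J]$ that are the same for $\zeta_\psi\circ A$ and $\zeta_{\psi'}\circ A'$ once the projective equality holds. This reduces to checking equivariance of the extraction procedure under the similarity maps of \ref{calibrationiscuspspace} and under the $\SLpm V$-action on $J$.
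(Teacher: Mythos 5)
Your plan is in the same spirit as the paper's proof (extract $\psi$ from the local-maximum data of Lemmas \ref{localmaxlemmadiag} and \ref{localmaxlemmanotdiag}), but there are two genuine gaps.

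\textbf{The $|K^+|$ dichotomy does not separate the two cases.} You claim that $|K^+|\le n-2$ when $\type<n$, reading off $|K^+|=\type-1$ from the statement of Lemma \ref{localmaxlemmanotdiag}. But that statement also says $K^+=\{e_i:\lambda_i>0\}$, and since $\lambda_0=0$ this set has exactly $\type(\lambda)$ elements, so $|K^+|=\type$, not $\type-1$. Thus in the non-diagonalizable case $|K^+|$ can equal $n-1$ (take $\type=n-1$, e.g.\ $n=3$, $\lambda=(0,1,1)$: one checks directly that the local maxima of $c=(1/3)(v_1^3+v_2^3)$ on the circle with $c>0$ are exactly $e_1,e_2$). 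Meanwhile in the diagonalizable case $|K^+|\in\{n-1,n\}$, and $|K^+|=n-1$ occurs whenever exactly one $\psi_i>\mathfrak{s}/2$ (e.g.\ $\psi=(1,1,3)$). So from $|K^+|=n-1$ alone you cannot tell which case you are in, and you would apply the wrong inversion formula. The paper's own proof uses a different discriminator: whether two distinct elements of $K^+$ are $q$-orthogonal ($\alpha_{ij}<0$ always in the diagonalizable case, $\alpha_{ij}=0$ always in the non-diagonalizable case), which is unambiguous; and when $|K^+|\le 1$ it concludes directly that $\type<n$ since $n\ge 3$.

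\textbf{Absolute recovery of $\psi$ in the diagonalizable case is not established.} You assert that once $q$ is unimodular, the values $c(v_i)$ together with the ratios $\psi_i/\mathfrak{s}$ pin down each $\psi_i$ via the displayed formula for $6c(v_i)$. But that formula is stated for the calibration $J_\psi$ on $\mathfrak{g}$, whose quadratic part $(1/2)\langle p,x^2\rangle_\psi$ is \emph{not} unimodular in general; $(V,J(\zeta_\psi\circ A))$ is only \emph{similar} to $(\mathfrak{g},J_\psi)$, and transporting the local maxima to the unit $q$-sphere introduces an undetermined scaling factor into the $c(v_i)$. (Contrast the non-diagonalizable case, where $\vartheta_{\lambda,0}$ already has unimodular quadratic part $\|v\|^2$, so no scaling arises and $\lambda_i=3c(e_i)$ really is absolute.) The paper therefore only claims the $\alpha_{ij}$ determine $\psi$ up to a positive scalar, and then kills the remaining freedom with a determinant argument: if $\psi'=s\psi$ then $B=((s\Id_\rank)\oplus\Id_\ur)A'A^{-1}$ must lie in $\O(J(\zeta_\psi))$, hence $\det B=\pm1$, and since $|\det A|=|\det A'|=1$ this forces $s^{\rank}=\pm1$, so $s=1$. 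You flag this bookkeeping as ``the hard part'' at the end, but it is precisely the crux, and without the $\SLpm V$ determinant step the argument does not close.
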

\begin{proof}
       In what follows we scale $J=q+c$ so that $q$ is unimodular, and talk about {\em this} calibration instead
       of its projective class.
          Let $S=\{v\in V: q(v)=1\}$  and $K \subset S$
  the set of points at which $c|S$ has a local maximum, and let $K^+\subset K$ be the subset where $c>0$.
 Observe that $|K^+|$
 is an invariant of the similarity class of a cusp space. 

  Let $\langle\cdot,\cdot\rangle_q$ be the inner product on $V$ determined by $q$.
Then the set $\{\langle a,b\rangle_q:a,b\in K^+\}$
 is also an invariant of the similarity class. By (\ref{calibrationiscuspspace}) the  calibration on a marked translation
 group is similar to some $\vartheta_{\lambda,\kappa}$, and in the diagonalizable case also to some $J_{\ppsi}$. 
   First suppose $|K^+|\ge 2$
 and choose two distinct elements $a,b\in K^+$.
 
 {\bf Case 1} $\langle a,b\rangle_q=0$.
  Then (\ref{localmaxlemmadiag}) implies that
  $\type<n$, and (\ref{localmaxlemmanotdiag}) implies the coordinates of $\lambda$ are given by $c(v)$ as $v$ ranges over $K^+$.
  Moroever $\psi_i=1/\lambda_i^2$ so $\psi$ is determined by $[J]$ in this case. 
  
{\bf Case 2}  $\langle a,b\rangle_q\ne 0$.
Then (\ref{localmaxlemmanotdiag}) implies $\type=n$, so $(V,J(\zeta_{\ppsi}\circ A))$ is similar to $(\mathfrak g, J_{\psi})$.
\if0
The  $\alpha_{ij}$ defined in (\ref{localmaxlemmadiag})  are an invariant of the similarity class of $J_{\psi}$.
There is an isometry $$f:(\RR^n, \langle\cdot,\cdot\rangle_{\ppsi})\rightarrow(\RR^{n},\langle\cdot,\cdot\rangle)$$
given by $f(x_1,\cdots,x_n)=(x_1\sqrt{\ppsi_1},\cdots,x_{n}\sqrt{\ppsi_{n}})$. Recall $p=(1,\cdots,1)\in\RR^n$ and
$\mathfrak g=p^{\perp_{q}}$. Define  $\nu=f(1,\cdots,1)=(\sqrt{\ppsi_1},\cdots,\sqrt{\ppsi_{n}})$, 
then $V:=\nu^{\perp}=f(\mathfrak g)$. 

Observe that $f$
preserves the set of coordinate axes in $\RR^n$. Hence the $q$-orthogonal projections into $\mathfrak g$
of the coordinate axes  in $\RR^n$ map, by $f$, to the standard orthogonal projections
of the coordinate axes in $\RR^n$ into $V$. Thus the constants
$\alpha_{ij}$ in (\ref{angleslemma}) are then the same as in (\ref{localmaxlemmadiag}).
\fi
 It follows from (\ref{localmaxlemmadiag}) that $J$
determines determines $\ppsi$ up to multiplication by a positive scalar.
\if0 Let $f:V\to\mathfrak g$ be the linear isomorphism given by
\begin{align*}\label{feqtn}f(v_1e_1+\cdots+v_{n-1}e_{n-1})=(\psi_nv_1,\cdots,\psi_nv_{n-1},-\psi_1v_1\cdots-\psi_{n-1}v_{n-1})\end{align*}
Then $\zeta_{\ppsi}=\delta\circ\exp\circ f:V\to\Aff(n)$. Thus $J(\zeta_{\psi})=J(\delta\circ\exp)\circ f=J_{\ppsi}\circ f$.

Then  $[J(\zeta_{\psi}\circ A)]$ determines $\psi/\|\psi\|$ by (\ref{localmaxlemmadiag}) and (\ref{angleslemma}).\fi

  Thus we may assume $\psi'=s\psi$ with $s>0$.
  By (\ref{classification}a) $\zeta_{s\psi}=\zeta_{\psi}\circ ((s \Id_{\rank})\oplus \Id_{\ur})$. If 
  $[J(\zeta_{\ppsi}\circ A)]=[J(\zeta_{\ppsi'}\circ A')]$ it follows that
  $[J(\zeta_{\ppsi})]=[J(\zeta_{\ppsi}\circ B)]$ where $B=((s \Id_{\rank})\oplus \Id_{\ur})A' A^{-1}$.
  Thus $B\in O(J(\psi))$, so $\det B=\pm1$. Since $|\det A|=|\det A'|=1$ it follows that $\det((s \Id_{\rank})\oplus \Id_{\ur})=s^{\rank}=\pm1$.
 Thus $s=1$, so $\psi'=\psi$.
  
  {\bf Case 3} $|K^+|\le 1$. If $\type=n$ then   $|K^+|\ge n-1$ by (\ref{localmaxlemmadiag}). Since $n\ge 3$
   it follows that $\type<n$ {\editD which contradicts $\type=n$}. The result now follows from 
  (\ref{localmaxlemmanotdiag}) as before.
  \end{proof}
}

{\editC
\begin{lemma}\label{Jinjective} Suppose $\rho,\rho':V\to\Aff(n)$ are marked translation groups and $n\ge 3$. If $[J(\rho)]=[J(\rho')]$ then
$\rho$ and $\rho'$ are conjugate.
\end{lemma}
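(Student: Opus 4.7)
The plan is to reduce to the normal form supplied by Theorem \ref{classification} and then combine Lemma \ref{Jdeterminesppsi} with Lemma \ref{Olambdalemma}, invoking the last step of the proof of Theorem \ref{completeinvt}.

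First I would apply Theorem \ref{classification}(c) to choose $\psi,\psi'\in A_n$ and $A,A'\in\SLpm V$ so that $\rho$ is conjugate in $\Aff(n)$ to $\zeta_\psi\circ A$ and $\rho'$ to $\zeta_{\psi'}\circ A'$. Since the shape invariant is unchanged by conjugation in $\Aff(n)$ (as observed just before Lemma \ref{diagonalcalibration}), we may assume outright that $\rho=\zeta_\psi\circ A$ and $\rho'=\zeta_{\psi'}\circ A'$. Applying Lemma \ref{Jdeterminesppsi} to the hypothesis $[J(\rho)]=[J(\rho')]$ then forces $\psi=\psi'$.

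Next, using the identity $J(\theta\circ B)=J(\theta)\circ B$, the hypothesis becomes $[J(\zeta_\psi)\circ A]=[J(\zeta_\psi)\circ A']$, equivalently $B:=A'A^{-1}\in\O(J(\zeta_\psi))$. By Lemma \ref{Olambdalemma} this says $B\in\O(\eta(\zeta_\psi))$. Since conjugation on the target of an affine representation commutes with precomposition on the source, $\rho$ and $\rho'$ are conjugate if and only if $\zeta_\psi\circ A$ and $\zeta_\psi\circ(BA)$ are, if and only if $\zeta_\psi$ and $\zeta_\psi\circ B$ are. This last conjugacy is exactly the final claim in the proof of Theorem \ref{completeinvt}: Lemma \ref{Opsi} identifies $\O(\eta(\zeta_\psi))$ with the stabilizer $\O(\Omega,b)$ via the orbit map $\mu$, so $B=\mu^{-1}P\mu$ for some $P\in\O(\Omega,b)$, and a short calculation using that $P$ fixes $b$ together with the free action of $T(\Omega)$ on $\bdy\Omega$ yields $\zeta_\psi\circ B=P\zeta_\psi P^{-1}$.

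The degenerate case $\type(\psi)=0$ should be dispatched at the outset: both representations are then standard cusps, $\O(\eta)=\Sim(\beta)$ by Lemma \ref{Opsi}, and they are determined up to conjugacy by the conformal class $[\beta]$, which is recoverable from $[J]$ as its quadratic part. The hard part of the overall argument is not in this lemma, which is essentially a reassembly step, but in Lemma \ref{Jdeterminesppsi}: the proof of that lemma rests on the local-maxima analyses of Lemmas \ref{localmaxlemmadiag} and \ref{localmaxlemmanotdiag}, which recover $\psi$ from the unordered set of constrained critical points of the cubic part of $J$ restricted to the $q$-sphere. Once $\psi$ is in hand, the passage to conjugacy is routine along the lines above.
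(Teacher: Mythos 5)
Your proposal is correct and follows the paper's proof almost line for line: reduce to the normal form of Theorem \ref{classification}, apply Lemma \ref{Jdeterminesppsi} to force $\psi=\psi'$, use $J(\theta\circ B)=J(\theta)\circ B$ together with Lemma \ref{Olambdalemma} to place $B=A'A^{-1}$ in $\O(\eta(\zeta_\psi))$, and conclude that $\rho$ and $\rho'$ are conjugate. The only departure is cosmetic: where the paper at this point simply observes $\eta(\rho)=\eta(\rho')$ and cites Theorem \ref{completeinvt}, you unwind the final claim inside that theorem's proof via Lemma \ref{Opsi} and the orbit map, which is why you then need the separate disposal of $\type(\psi)=0$ — a step the paper's cleaner citation sidesteps, since both \ref{Olambdalemma} and \ref{completeinvt} already cover it uniformly.
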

\begin{proof} We may assume  $\rho=\zeta_{\psi}\circ f$ and $\rho'=\zeta_{\psi'}\circ f'$ with $f,f'\in\SLpm V$.
        It follows from (\ref{Jdeterminesppsi}) that $\psi=\psi'$.
Then $[J(\rho)]=[J(\rho')]$ implies $f^{-1}\circ f'\in \O(J(\zeta_{\psi}))$, thus $f^{-1}\circ f'\in\O(\eta(\zeta_{\psi}))$
by (\ref{Olambdalemma}). Hence  $\rho$ and $\rho'$ have the same complete invariant, and so are conjugate by (\ref{completeinvt}).
 \end{proof}
}

     \begin{theorem}\label{Jcomplete} {\editA Suppose $n\ge 3$. Let $\Modsp_n$ be the space of marked generalized cusps homeomorphic to $T^{n-1}\times[0,\infty)$.
      The map 
     $J:\Modsp_n\longrightarrow \Jcal_n$ is a homeomorphism. Moreover $\Kappa:\Rcal_n\to\Jcal_n$   is a homeomorphism.}
   \end{theorem}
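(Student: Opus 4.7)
The plan is to assemble $J$ from ingredients already in place and then invoke the proper–map criterion (\ref{prop_homeo}). The factorisation $J=\Kappa\circ\nu$ established in (\ref{cubicrootdata}), together with the fact that $\nu$ is a homeomorphism by (\ref{weightspace}), will at once reduce the second assertion of the theorem (that $\Kappa$ is a homeomorphism) to the first. Hence the entire task is to show that $J:\Modsp_n\to \Jcal_n$ is a proper continuous bijection onto a Hausdorff, locally compact target.

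First I would verify that $J$ indeed lands in $\Jcal_n$. By (\ref{newrep}), every marked translation group is equivalent to one of the form $\Phi_{\lambda,\kappa}\circ B$ with $(\lambda,\kappa)\in\widetilde A_n$ and $B\in\SLpm V$. Using the elementary identity $J(\theta\circ B)=J(\theta)\circ B$ together with (\ref{calibrationiscuspspace}), the image of such a representation under $J$ is the projective class of $\vartheta_{\lambda,\kappa}\circ B$, which is isometric to $\vartheta_{\lambda,\kappa}$ via $B^{-1}$; thus it defines a cusp-space structure on $V$, i.e. a point of $\Jcal_n$. Conversely, every $[\vartheta]\in\Jcal_n$ is, by definition of cusp-space, similar to some $[\vartheta_{\lambda,\kappa}]=[J(\Phi_{\lambda,\kappa})]$, so $J$ is surjective. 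Injectivity is exactly (\ref{Jinjective}), which in turn rests on the local-max analyses (\ref{localmaxlemmadiag}) and (\ref{localmaxlemmanotdiag}) together with the stabiliser calculation (\ref{Olambdalemma}).

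Continuity and properness of $J$ were recorded in (\ref{Jproper}), where they were deduced from the corresponding properties of $\Kappa$ given in (\ref{cubicrootdata}) and the homeomorphisms $\hol^{-1}$ and $\nu$. To apply (\ref{prop_homeo}) I would next observe that $\Jcal_n$, identified via $F$ with a subspace of $\Pcal\oplus\Sym^3 V$, is Hausdorff, and that as the image of the proper continuous map $J$ from the locally compact space $\Modsp_n$ into this ambient Euclidean-type space, it is closed, hence locally compact. Proposition (\ref{prop_homeo}) then promotes the continuous proper bijection $J$ to a homeomorphism. Since $\nu$ is a homeomorphism by (\ref{weightspace}) and $\Kappa\circ\nu=J$, we obtain $\Kappa=J\circ\nu^{-1}$, which is a composition of homeomorphisms, so the second assertion follows.

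The step that I expect to require the most care is the verification that $J(\Modsp_n)=\Jcal_n$ rather than merely contains or is contained in it. In particular one must check that for every $B\in\SLpm V$ the calibration $\vartheta_{\lambda,\kappa}\circ B$ represents a cusp-space structure in the precise sense of (\ref{cuspspacedef})—this is where the linear-isomorphism flexibility built into the definition of cusp-space (as opposed to mere scaling) is crucial. Once the bookkeeping of isometries versus similarities is set straight, injectivity (via \ref{Jinjective}) and surjectivity fall into place, and the rest of the argument is a routine application of (\ref{prop_homeo}).
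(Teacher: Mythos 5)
Your proposal is correct and follows essentially the same route as the paper: injectivity from (\ref{Jinjective}), continuity and properness from (\ref{Jproper}), identification of the image with $\Jcal_n$ via (\ref{calibrationiscuspspace}), local compactness and Hausdorffness of the target, and then (\ref{prop_homeo}) to upgrade to a homeomorphism, with the statement about $\Kappa$ falling out of $J=\Kappa\circ\nu$ and (\ref{weightspace}). One small point in your favour: where the paper simply asserts $\Jcal_n$ is locally compact ``because it is a subspace of Euclidean space,'' you give the actual reason — namely that $\Jcal_n$ is the image of a proper map from a locally compact space into a locally compact Hausdorff space, hence closed in $\Pcal\oplus\Sym^3V$ and therefore locally compact — which is a more careful version of the same step.
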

   \begin{proof}   By (\ref{Jinjective}) $J$ is injective. {\editE By (\ref{Jproper})  $J$ is continuous and proper.}
   The image of $J$ is contained in $\Jcal_{n}$ by (\ref{calibrationiscuspspace}), and
  surjectivity follows from the proof of (\ref{calibrationiscuspspace}). 
  Moreover $\Jcal_n$ is a subspace of Euclidian space and is therefore locally compact and Hausdorff. Also
 $\Modsp_n$ is locally compact by (\ref{quotienthomeo}),  so $J$ is a homeomorphism by  (\ref{prop_homeo}).
Now $J=\Kappa\circ\nu$, and $\nu$ is a homeomorphism by (\ref{weightspace}), thus $\Kappa$ is a homeomorphism. 
   \end{proof}

  \subsection{The Affine Normal}  A {\editC reference for this is chapter 1 of  \cite{Nom}, 
  see also \cite{Lof1} and  \cite[Lemma 4.1]{Klartag}. }
  Suppose $S\subset\RR^n$ is a smooth strictly convex hypersurface and $p$ is a point in $S$.  Then the tangent hyperplane
to $S$ at $p$ 
 intersects $S$
  only at $p$ and $S$ lies on one side of $P$. 
  An {\em affine normal} to $S$ at $p$ is vector $0\ne\nu=\nu(p)\in\RR^n$ with the following property. Given $\delta>0$
  let $P(\delta)$ be the hyperplane parallel to $P$ on the side of $P$ that contains $S$, and distance $\delta$ from $P$.
{\editA  Let $x(\delta)$ be
  the center of mass of $S\cap P(\delta)$. }Then $(x(\delta)-p)/\delta$ converges to a non-zero multiple of $\nu$. We also
  require that $\nu$ points to the convex side of $S$. Then $\nu$ is
  defined up to positive scalar multiples.

It follows from this that affine normals are preserved by affine maps:   if $A$ is an affine map of  $\RR^n$ then $A(\nu(p))$ is an affine normal to $A(S)$. Since affine maps are not conformal, the affine normal is not in general orthogonal to $S$ at $p$. A  convex
hypersurface in $\RR^n$ is an {\em affine sphere} if there is a point $b\in\RPn$ such that every affine normal passes through $b$.

  There is a decomposition  $\Sym^3(\RR^n)=\harm_n\oplus\rad_n$  
into the  {\em harmonic cubics}  $\harm_n$, and the {\em radial cubics} $\rad_n$ given by
$$\harm_n=\{p\in \Sym^3(\RR^n):\ \Delta p=0\},\qquad \rad_n=\{\|x\|^2\langle v,x\rangle:\ v\in\RR^n\}$$
 The group $\O(n)$ acts on $\Sym^3(\RR^n)$ preserving this decomposition, and
  by \cite[Theorem 0.3]{That} the action on each summand is irreducible. 
  
  The material from here to (\ref{affsphharmonic}) is not used in this paper, so we have omitted the proofs.
  It is included to avert a possible misperception.
  The map $\pi:\Sym^3(\RR^n)\to \RR^n$ given by $\pi(p)=(2n+4)^{-1}\nabla(\Delta p)$ is projection onto $\rad_n$ followed by the map 
  $\|x\|^2\langle v,x\rangle\mapsto v$. 
   More generally, if $\beta$ is a positive definite quadratic form on $\RR^n$ then
  there is an isometry $L\in\GL(n,\RR)$ from $\|\cdot\|^2$ to $\beta$. Hence  $L(\harm_n)$ and $L(\rad_n)$ are preserved by $\O(\beta)$
  and
  $\pi_{_{\beta}}=L\circ\pi\circ L^{-1}:\Sym^3(\RR^n)\to\RR^n$.
 The following says that  
 the affine
  normal is the radial  part of the cubic term in a Taylor expansion.
 
 \begin{proposition}\label{affinenormalencode} Suppose $U\subset\RR^n$ is a neighborhood of $0$ and $f:U\to\RR$
     is $C^3$. Let $S\subset\RR^{\editC n+1}$ be the graph of $f$ and suppose {\editD $f(x)=\beta(x)+c(x) +o(\|x\|^3)$ and $\beta\in \Sym^2\RR^n$ 
     is positive definite, and
  $c\in\Sym^3\RR^n$.   Then  an
     affine normal to $S$ at $0$ is $e_{n+1}-(2n)^{-1}\pi_{_{\beta}}(c)$. }
     \end{proposition}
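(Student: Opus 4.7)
The plan is to compute the center of mass $x(\delta)$ of $S \cap P(\delta)$ directly from the local expansion of $f$ and read off the radial part of the cubic as the obstruction to the center of mass lying on the vertical axis.

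First I would reduce to the case $\beta(x) = \|x\|^2$: because both sides of the identity $e_{n+1} - (2n)^{-1}\pi_\beta(c)$ and the affine normal transform equivariantly under the linear isomorphism $L$ of $\RR^n$ (extended by the identity on $e_{n+1}$), it suffices to treat the standard inner product case. In this normalization the tangent hyperplane to $S$ at $0$ is $\RR^n \times \{0\}$ (since $f$ has no linear term), so $P(\delta)$ is the horizontal hyperplane at height $\delta$ and $S \cap P(\delta)$ is identified with the level set $M_\delta = \{x \in \RR^n : f(x) = \delta\}$.

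Next I would introduce the rescaling $x = \sqrt{\delta}\, y$, which turns $M_\delta$ into the perturbed unit sphere
\begin{equation*}
\|y\|^2 + \sqrt{\delta}\, c(y) + o(\sqrt{\delta}) = 1.
\end{equation*}
Parametrizing this hypersurface as $y = (1 + \epsilon(\omega))\omega$ with $\omega \in S^{n-1}$ and solving to first order in $\sqrt{\delta}$ gives $\epsilon(\omega) = -\tfrac{1}{2}\sqrt{\delta}\, c(\omega) + o(\sqrt{\delta})$. Since the induced $(n-1)$-dimensional surface measure on the perturbed sphere differs from $d\omega$ on $S^{n-1}$ by a factor $1 + O(\sqrt{\delta})$, a straightforward expansion shows that the center of mass satisfies
\begin{equation*}
\bar{y}_\delta = -\frac{\sqrt{\delta}}{2\,\mathrm{vol}(S^{n-1})} \int_{S^{n-1}} c(\omega)\,\omega\, d\omega + o(\sqrt{\delta}),
\end{equation*}
so that $\bar{x}(\delta) = \sqrt{\delta}\,\bar{y}_\delta$ satisfies $\bar{x}(\delta)/\delta \to -\frac{1}{2\,\mathrm{vol}(S^{n-1})}\int_{S^{n-1}} c(\omega)\omega\, d\omega$.

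Finally I would evaluate the integral using the decomposition $c = q + \|x\|^2\langle v, x\rangle$ with $q \in \harm_n$ and $v = \pi(c) \in \RR^n$. On the unit sphere $\|x\|^2 = 1$, so $c(\omega) = q(\omega) + \langle v, \omega\rangle$. The product $q(\omega)\omega_i$ decomposes as a sum of spherical harmonics of degrees $4$ and $2$, each of which integrates to $0$ on $S^{n-1}$, so the harmonic summand contributes nothing. The radial contribution gives
\begin{equation*}
\int_{S^{n-1}} \langle v,\omega\rangle \omega_i\, d\omega = v_i \int_{S^{n-1}} \omega_i^2\, d\omega = \frac{v_i}{n}\,\mathrm{vol}(S^{n-1}),
\end{equation*}
so $\bar{x}(\delta)/\delta \to -\pi(c)/(2n)$. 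Recalling $x(\delta) = (\bar{x}(\delta), \delta)$ gives $(x(\delta)-0)/\delta \to -(2n)^{-1}\pi(c) + e_{n+1}$, which is the claimed affine normal direction.

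The only real obstacle is the spherical-harmonics step of showing the harmonic part of $c$ drops out; once one recognizes that $q\,\omega_i$ restricts to a sum of pure harmonics of nonzero degree on $S^{n-1}$, the rest is bookkeeping. Controlling the error terms in the surface-measure expansion is routine given $f \in C^3$.
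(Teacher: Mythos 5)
The paper actually omits the proof of this proposition (it cites formula (3.4) on page~48 of Nomizu--Sasaki and Blaschke), so your direct argument is a genuine proof attempt rather than a reproduction, and the overall strategy -- reduce to $\beta=\|\cdot\|^2$, rescale by $\sqrt{\delta}$, write the rescaled level set as a perturbed sphere $y=(1+\epsilon(\omega))\omega$, and use spherical-harmonic orthogonality to kill the harmonic part of $c$ -- is sound. However, there is a real gap at the surface-measure step, and once fixed it produces a constant different from the one in the proposition.

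The induced measure on the perturbed sphere is $dA=(1+\epsilon)^{n-1}\bigl(1+O(\delta)\bigr)\,d\omega=\bigl(1+(n-1)\epsilon+O(\delta)\bigr)\,d\omega$, and since $\epsilon=O(\sqrt{\delta})$ the correction $(n-1)\epsilon$ is of exactly the same order as the displacement $(1+\epsilon)\omega-\omega$; it cannot be dropped. Carrying it through gives
\begin{equation*}
\bar y_\delta \;=\; \frac{\displaystyle\int_{S^{n-1}}(1+\epsilon)\omega\bigl(1+(n-1)\epsilon\bigr)\,d\omega}{\displaystyle\int_{S^{n-1}}\bigl(1+(n-1)\epsilon\bigr)\,d\omega}
\;=\;\frac{n\displaystyle\int_{S^{n-1}}\epsilon\,\omega\,d\omega}{\operatorname{vol}(S^{n-1})}+O(\delta),
\end{equation*}
i.e.\ an extra factor of $n$ relative to your expression. (One gets the same factor $n$ if, following the affinely natural Blaschke definition, one takes the centroid of the solid region $\{f\le\delta\}$ with Lebesgue measure rather than of the boundary hypersurface; for that version $\bar y_\delta = n/(n+1)\cdot\int R^{n+1}\omega\,d\omega\big/\int R^n\,d\omega$ and the same expansion applies. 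You should say which centroid you are using, since surface Hausdorff measure is not affinely natural and the coincidence that both give the same limit deserves a sentence.) Combined with $\int\epsilon\,\omega\,d\omega=-\tfrac{\sqrt{\delta}}{2}\cdot\frac{\operatorname{vol}(S^{n-1})}{n}\,\pi(c)+O(\delta)$, the $n$'s cancel and one gets $\bar x(\delta)/\delta\to -\tfrac12\pi(c)$, so the affine normal direction is $e_{n+1}-\tfrac12\pi_\beta(c)$, \emph{not} $e_{n+1}-\tfrac1{2n}\pi_\beta(c)$.

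Since an affine normal is defined only up to positive scalar and $e_{n+1}-\tfrac12\pi_\beta(c)$ is not a positive multiple of $e_{n+1}-\tfrac1{2n}\pi_\beta(c)$ once $\pi_\beta(c)\ne0$, this is a genuine discrepancy with the proposition as stated. A direct check in $n=2$ with $f(x,y)=x^2+y^2+ax^3$: here $\pi(c)=(2n+4)^{-1}\nabla(\Delta c)=\tfrac{3a}{4}e_1$, the rescaled level curve is $R(\theta)=1-\tfrac{a\sqrt\delta}{2}\cos^3\theta+O(\delta)$, and the region centroid computes to $\bar x/\delta\to-\tfrac{3a}{8}=-\tfrac12\pi(c)_1$, while the proposed formula predicts $-\tfrac{3a}{16}$. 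In short, your dropped Jacobian term exactly reproduces the constant $(2n)^{-1}$ appearing in the paper, but both appear to be off by a factor of $n$; the correct coefficient should be $\tfrac12$. (This does not affect anything else in the paper, since Proposition~\ref{affsphharmonic} only uses the \emph{vanishing} of $\pi_\beta(c)$.)
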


{\editC  This can  be deduced from formula (3.4) on page 48 of
  \cite{Nom}. 
   This formula  goes back at least to 1923, see Blaschke 
  \cite{Blaschke}.
  }

    {\editD    
     
            Recall that the   {\em radial flow} $\Phi:\RR\to\Aff_n$ for a generalized cusp lie group $G(\Omega)$
  centralizes it, and $\Phi_t(\Omega)\subset\Omega$ whenever $t\le 0$,  see \cite[(1.11)]{BCL}.
  If $\theta=\Phi_{\lambda,\kappa}$ the radial flow
 is $\Phi_t(x)=x-te_1$ if $\type<n$, and otherwise $\type(\lambda)=n$ and
 $\Phi_t(x)=e^{-t}(x-C)+C$ where $C\in\RR^n$ is the {\em center} of $\Phi$.
 {\editC Refer to (\ref{heightfn}) for the definition of $\tau$ and $H_b$ in the following.}  Now we may assume that
 $\Omega=\Omega(\lambda,\kappa)$ in (\ref{phidiag}) and
  $b=0$ and $H_b$ is $x_1=0$. Then $\tau(x_1,\cdots,x_n)=\alpha x_1$  for
     some $\alpha>0$.
  
    It is more convenient
 in the following to redefine the radial flow when $\type=n$ to be $\Phi:(-1,\infty)\rightarrow\Aff_n$ given by
 $\Phi_t(x)=(t+1)^{-1}\cdot (x-C)+C$. Then $\Phi_0$ is always 
 the identity and $I=\RR$ or $(-1,\infty)$ is the domain of $\Phi$ as appropriate.
 
 Then $F=\theta\times \Phi:V\times I\rightarrow\RR^n$ are coordinates on a subset of $\RR^n$ that contains $\Omega$.
   {\em In these coordinates} the height function 
 $h_{\theta}$
 {\em describes (an open subset of) $H_b$ as a graph over $\bdy\Omega$}
    rather than  {\em vice-versa}, as one might n\"aively imagine.
    \begin{lemma}\label{htlemma} Scale $\tau$ so that if $\type<n$ then $\tau(x_1,\cdots,x_n)=x_1$ and if
    $\type=n$ then $\tau(c)=-1$. Then
    $F(V\times 0)=\bdy\Omega$ and $F(\{(v,t):\ t=h_{\theta}(v)\})\subset H_b$.
    \end{lemma}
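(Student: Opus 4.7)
The proof is a direct computation from the definitions, split according to the two cases for $\Phi$. The key observations are that $\Phi$ centralizes $\theta(V)$, so $F(v,t) = \theta(v)\Phi_t(b) = \Phi_t(\mu_{\theta,b}(v))$, and that $\Phi_0 = \Id$. The first claim $F(V\times 0) = \bdy\Omega$ then follows immediately: $F(v,0) = \theta(v)(b) = \mu_{\theta,b}(v)$, and $\mu_{\theta,b}$ is a bijection from $V$ onto $\bdy\Omega$ since $\theta(V)$ acts simply transitively on $\bdy\Omega$.

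For the second claim, I would compute $\tau \circ F$ in each case and solve for when it vanishes. In the case $\type<n$, we have $\Phi_t(x) = x - te_1$ and $\tau(x_1,\ldots,x_n) = x_1$, so
\[
\tau(F(v,t)) = \tau(\mu_{\theta,b}(v) - te_1) = \tau(\mu_{\theta,b}(v)) - t = h_\theta(v) - t,
\]
which vanishes precisely when $t = h_\theta(v)$. In the case $\type = n$, the flow has a fixed center $c$; since $\theta(v)$ commutes with $\Phi_t$ for all $t$ and $c$ is the unique fixed point of $\Phi_t$ for $t \ne 0$, $\theta(v)$ fixes $c$. Writing the affine map $\theta(v)$ in the form $x \mapsto A(x-c) + c$ for a linear map $A$, one gets $F(v,t) = \theta(v)\Phi_t(b) = (t+1)^{-1}A(b-c) + c$. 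Using that $\tau$ is affine with $\tau(c) = -1$ and $\tau(\mu_{\theta,b}(v)) = h_\theta(v)$, a short computation yields
\[
\tau(F(v,t)) = (t+1)^{-1}\bigl(h_\theta(v) + 1\bigr) - 1,
\]
which vanishes exactly when $t = h_\theta(v)$.

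There is essentially no obstacle here: the content is just unwinding the definitions together with the commutation $\theta\circ\Phi = \Phi\circ\theta$ and, in the diagonalizable case, the observation that $\theta(v)$ fixes the radial center $c$. The only small care needed is tracking the normalization $\tau(c) = -1$ (resp.\ $\tau = x_1$) that is prescribed in the statement, as this is what ensures the coefficient of $(t+1)^{-1}$ simplifies to $h_\theta(v) + 1$ and gives the clean conclusion.
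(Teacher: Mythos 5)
The paper states this lemma without proof (it is regarded as a routine unwinding of the definitions), so there is no paper argument to compare against; your computation is the natural one. It is correct: using that the radial flow centralizes $\theta(V)$, that $\Phi_0=\Id$, that $C$ is fixed by $\theta(v)$ (being the unique $\Phi_t$-fixed point for $t\ne0$), and the prescribed normalizations of $\tau$, you obtain $\tau(F(v,t))=h_\theta(v)-t$ when $\type<n$ and $\tau(F(v,t))=(t+1)^{-1}(h_\theta(v)+1)-1$ when $\type=n$, both vanishing exactly at $t=h_\theta(v)$, which together with $F(v,0)=\mu_{\theta,b}(v)$ gives both claims.
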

    If $J(\theta)=\editE [\beta+c]$ then (\ref{affinenormalencode}) implies that  $\beta$ and the radial-cubic part of $c$  determines the affine normal to $F^{-1}(H_b)$.}

\begin{proposition}\label{affsphharmonic} Let $\|\cdot\|$ be the standard inner product on $V$ and let
$\theta:V\rightarrow T$ be a marked
translation group and $S$ a horosphere for $T$, and with radial flow $\Phi$ and $J(\theta)=\beta+c$ with $\beta$ unimodular.
The following are equivalent\begin{itemize}
\item[(a)] flow lines of $\Phi$ are   affine normals to $S$.
\item[(b)] $S$ is an affine sphere.
\item[(c)] {\editC $c$ is harmonic with respect to $\beta$ i.e. $\pi_{_{\beta}}(c)=0$}
\item[(d)] $T$ is conjugate to $\Tr(s,\cdots,s)$ with $s\ge 0$.
\end{itemize}
\end{proposition}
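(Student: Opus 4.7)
The plan is to prove the four conditions equivalent by establishing two natural groupings: the geometric equivalence (a) $\Leftrightarrow$ (b) on the one hand, and the algebraic equivalence (c) $\Leftrightarrow$ (d) on the other, then bridging the two via Proposition \ref{affinenormalencode}. By (\ref{calibrationiscuspspace}), after conjugation we may replace $\theta$ by $\Phi_{\lambda,\kappa}$ with $(\lambda,\kappa)\in\widetilde{A}_n$, so that $J(\theta)=\vartheta_{\lambda,\kappa}$ is given by the explicit formula in (\ref{cuspspacedef}).

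For (a) $\Leftrightarrow$ (b): if (a) holds, all affine normals to $S$ are flow lines of $\Phi$. When $\type<n$, the flow lines are parallel translates in direction $e_1$ and hence all meet at the common ideal point $[e_1]\in\RPn$; when $\type=n$, the flow lines all pass through the fixed center $C$ of $\Phi$. Either way $S$ is an affine sphere, giving (b). Conversely, if $S$ is an affine sphere with center $C\in\RPn$, then the affine-normal line field on $S$ is canonically defined in affine terms, hence $T$-equivariant; since the radial flow lines are also $T$-invariant (as $\Phi$ centralizes $G(\Omega)$), both line fields on $S$ are determined by a single vector in $T_b\RR^n$ modulo $T_b S$. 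Both necessarily point toward the $T$-fixed point $C$ (the center of $\Phi$, respectively the convergence point of affine normals), and both are transverse to $S$, so after checking using (\ref{htlemma}) that the flow direction at $b$ does point to $C$, the two line fields coincide and (a) follows.

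For (c) $\Leftrightarrow$ (a): apply (\ref{affinenormalencode}) to the local graph representation of $\bdy\Omega$ over its tangent hyperplane $H_b$; in the coordinates provided by (\ref{htlemma}) the quadratic and cubic terms of the graphing function agree, up to the canonical identification $V\cong T_b\bdy\Omega$, with $\beta$ and $c$ from $J(\theta)=\beta+c$. Then the affine normal at $b$ is $e_1-(2n)^{-1}\pi_{\beta}(c)$, which is parallel to the radial flow direction at $b$ (namely $e_1$ by (\ref{htlemma})) if and only if $\pi_{\beta}(c)=0$, i.e.\ $c$ is harmonic. Since both properties are $T$-invariant on $S$, this equivalence at $b$ propagates to all of $S$.

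For (c) $\Leftrightarrow$ (d): this reduces to a direct computation using the explicit formula for $\vartheta_{\lambda,\kappa}$. When $\lambda=(s,\dots,s)$ and $s=0$ the cubic vanishes, and when $s>0$ the constraints $\lambda_i\kappa_i=\lambda_0$ force $\kappa=(1,\dots,1)$, reducing $c$ to $(s/3)\bigl(\sum v_i^3-(\sum v_i)^3\bigr)$ with $\beta=\Id+\kappa\otimes\kappa$; a direct trace computation using $\beta^{-1}=\Id-(1/n)\kappa\otimes\kappa$ shows $\mathrm{tr}_{\beta}(\nabla^2 c)=0$. Conversely, expressing $c$ in (\ref{cubicrootdata}) as a weighted sum of cubes of the weights $\xi_i$, the harmonic condition $\pi_{\beta}(c)=0$ becomes a linear system in $(\lambda_i)$ that, combined with the weights equation (\ref{weightseqtn}) and the $\sum\lambda_i^{-2}\xi_i=0$ relation from (\ref{xieqtn}), forces all nonzero $\lambda_i$ to be equal, yielding (d). The main obstacle will be the last implication (c) $\Rightarrow$ (d): unpacking $\pi_{\beta}(c)=0$ into explicit polynomial equations and showing that the only solutions in $\widetilde{A}_n$ are the isotropic ones requires carefully exploiting both the Weyl-chamber ordering of the $\lambda_i$ and the rigid structure of the weights equation.
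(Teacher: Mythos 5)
Your proposal correctly identifies the logical architecture and the formulas involved, but two steps have genuine gaps, and the hardest direction is not actually carried out.

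\emph{The implication (c)\ $\Rightarrow$\ (d) is only sketched.}
You write that "unpacking $\pi_{\beta}(c)=0$ into explicit polynomial equations and showing that the only solutions in $\widetilde{A}_n$ are the isotropic ones requires carefully exploiting both the Weyl-chamber ordering of the $\lambda_i$ and the rigid structure of the weights equation." This describes the problem but does not solve it, and it also points to the wrong mechanism. The Weyl-chamber ordering does not enter; what makes the computation close is the Hadamard structure of the constraint $\lambda_i\kappa_i=\lambda_0$. The paper's proof first disposes of $\lambda_0=0$ (trivially, $c=\frac13\sum\lambda_iv_i^3$ is harmonic w.r.t.\ $\|\cdot\|^2$ iff $\lambda=0$), and for $\lambda_0>0$ performs the linear change of variables $Tv=v+\alpha\langle v,\kappa\rangle\kappa$ normalizing $\beta(Tv)=\|v\|^2$. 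One then computes $\nabla^2(c\circ T)=\langle 6u,v\rangle$ for an explicit vector $u$ that is a linear combination of $\kappa$ and its Hadamard inverse $\kappa^{-1}$. Vanishing of $u$ then forces $\kappa^{-1}$ to be a multiple of $\kappa$, i.e.\ $\kappa=s(1,\dots,1)$, after which a one-variable equation yields $s=1$. Without this normalization and the resulting $\kappa$--$\kappa^{-1}$ structure, the equations you get are cubic in the $\lambda_i$ and there is no obvious "linear system."

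\emph{The implication (b)\ $\Rightarrow$\ (a) has a gap.}
Your argument relies on both the affine-normal field and the radial-flow field "pointing toward the $T$-fixed point $C$." But the fixed-point set of $T$ in $\RPn$ is generally a finite set of at least $n+1$ points (e.g.\ for diagonalizable $\Tr(\psi)$ these are the $n+1$ coordinate points). Knowing that $T$ fixes the affine-sphere center $C$ and fixes the center of $\Phi$ does not give that they are the same fixed point, and (\ref{htlemma}) does not address that question at all. The paper closes this gap through a case analysis grounded in the classification of affine spheres: if affine normals are parallel then $S$ is an elliptic paraboloid (Calabi/Pogorelov), forcing $T$ conjugate to $\Tr(0,\ldots,0)$; otherwise $C$ is a finite fixed point, forcing $T$ diagonalizable with $C=0$, and one checks directly in each case that this point is the center of $\Phi$. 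The paper then uses the uniqueness of affine spheres asymptotic to a simplex (cite MR437805) to conclude (b)\ $\Rightarrow$\ (d) directly, which makes (a)\ $\Rightarrow$\ (b) and (d)\ $\Rightarrow$\ (b) carry the geometry and (c)\ $\Leftrightarrow$\ (d) carry the algebra.

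\emph{On your alternative route (c)\ $\Leftrightarrow$\ (a) via} (\ref{affinenormalencode}).
This is an attractive and genuinely different bridge, but note the paper declines to use it precisely because it keeps (\ref{affinenormalencode}) at the level of a cited classical fact. If you do pursue this route you must also be careful with the sign/normalization in (\ref{affinenormalencode}) and with the subtlety flagged in (\ref{htlemma}) that $\bdy\Omega$ is \emph{not} the graph of $h_{\theta}$; the coordinates in which (\ref{affinenormalencode}) applies to $\bdy\Omega$ as a graph over its tangent hyperplane differ from the $(v,t)$ coordinates in which $J$ is computed, and reconciling them is not immediate from what is written. Your (d)\ $\Rightarrow$\ (c) computation with $\kappa=(1,\dots,1)$, $\beta^{-1}=\Id-\tfrac1n\kappa\otimes\kappa$ is sound as far as it goes, but it only establishes one half of (c)\ $\Leftrightarrow$\ (d); the substantive content is in the converse, which remains to be done.
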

 \begin{proof} Flow lines of $\Phi$ limit on the center of the radial flow, so $(a)\Rightarrow(b)$.
 For the converse,  assume $S$ is an affine sphere with center $w\in\RPn$. Then $T$ fixes $w$.
  If the affine normals to $S$ are parallel, then $S$ is an elliptic paraboloid, \cite{calabi2}, \cite{Pog}.
 In this case $T$ is conjugate to $\Tr(0,\cdots,0)$, and $w$ is the center of $\Phi$. Otherwise $w\in\RR^n$.
 Thus $T$ is diagonalizable. We may assume $T=\Tr(\psi)$ with all the coordinates of $\psi>0$ and $w=0$.
 Again $w$ is the center of $\Phi$. Thus $(b)\Rightarrow(a)$. In this case we claim $\psi=(s,\cdots,s)$.
 This is because $S$ is an affine sphere asymptotic to the sides of a simplex, and by \cite{MR437805}
 it follows that $S$ is unique up to affine maps preserving the simplex. Thus $(b)\Rightarrow(d)$.
 For $(d)\Rightarrow(b)$ when $s=0$ then $S$ is an elliptic paraboloid and when $s>0$ then $S$ is
 defined by $\prod x_i=1$. These are well known affine spheres. 

{\editD It remains to show $(c)\Leftrightarrow(d)$. Using (\ref{cuspspacedef}) we may assume
$$J=[\beta+c],\qquad \beta(v)=\|v\|^2+\langle v,\kappa\rangle^2,\qquad 3c(v)=-\lambda_0\langle v,\kappa\rangle^3+\sum_{i=1}^{n-1}\lambda_iv_i^3$$
If $\lambda_0=0$ we may choose $\kappa=0$ then $c$ is harmonic with respect to $\beta(v)=\|v\|^2$  if and only if $\lambda=0$,
showing $(c)\Leftrightarrow(d)$ in this case. 
Otherwise $\lambda_0>0$. {\editE First we perform a linear change of coordinates on $V$ so that $\beta(v)=\|v\|^2$.}

Let $T\in \GL (V)$ be defined by $T(v)=v +\alpha\langle v,\kappa\rangle \kappa$ where 
$\alpha=\|\kappa\|^{-2}({\editE -}1+(1+\|\kappa\|^2)^{-1/2})$ then  $\beta(T v)=\|v\|^2$. Now we compute the cubic $c\circ T$ using the Hadamard product on $V$, and $\kappa_i=\lambda_i/\lambda_0$ .
\begin{align*} 3\lambda_0^{-1}(c\circ T)v&=-\langle Tv,\kappa\rangle^3 +\langle\kappa^{-1},(Tv)^3\rangle\\
&= \gamma\langle v,\kappa\rangle^3 + 3\alpha\langle v,\kappa\rangle\|v\|^2+\langle\kappa^{-1},v^3\rangle\\
{\rm where}\quad \gamma&=-\left(1+\alpha\|\kappa\|^2\right)^3 + 3\alpha^2+\alpha^3\|\kappa\|^2
=\frac{-(2+\|\kappa\|^2)+2\sqrt{1+\|\kappa\|^2}}{\|\kappa\|^{\editE 4}\sqrt{1+\|\kappa\|^2}}
\end{align*}
Set $m=\dim V$ then 
\begin{align*}3\lambda_0^{-1}\nabla^2(c\circ T)&=\left(6\gamma\|\kappa\|^2 + 
3\alpha(2m+4)\right)\langle v,\kappa\rangle+6\langle \kappa^{-1},v\rangle\\
 &=\langle 6u,v\rangle\\
\text{where}\qquad u&= -\left(\frac{m}{\|\kappa\|^2}+\frac{\|\kappa\|^2-m}{{\editE\|\kappa\|^2}\sqrt{1+\|\kappa\|^2}}\right)\kappa+\kappa^{-1}
\end{align*}
Then $c\circ T$ is harmonic with respect to $\|\cdot\|^2$  if and only if {\editC $u=0$}. Since $u$
is a linear combination of $\kappa$ and $\kappa^{-1}$ it follows that 
 $\kappa=s(1,\cdots,1)$
for some $s\in[0,1]$. Then $\|\kappa\|^2=ms^2$ and  {\editC $u=0$ implies}
$$\left(s^{-2}+{\editE m(s^2-1)/ms^2\sqrt{1+ms^2}}\right)s=s^{-1}$$
This implies $\editE s^2-1=0$. Hence {\editE $s=1$ and}  $(c)\Leftrightarrow(d)$ when $\lambda_0>0$. }
   \end{proof}

  \section{Three Dimensions}\label{3D} 
  
  {\editE In dimension $3$ every generalized cusp is
  equivalent to $\Omega_{\lambda,\kappa}/\Gamma$ for some lattice
  in $\Gamma\subset T(\lambda,\kappa)$, and $\bdy\Omega_{\lambda,\kappa}$
 is the orbit of $0$ under  $T(\lambda,\kappa)$. 
 From the proof of  (\ref{phidiag}) one sees that  in dimension $3$ that $\bdy\Omega_{\lambda,\kappa}$  is  the graph graph $y=f_{\lambda}(x_1,x_2)$ in $\RR^3$ shown below
 where for $\type<3$ we have chosen $\kappa=0$. 
 
 \begin{center}
 \begin{tabular}{|c | c |} 
 \hline
$\type$\ \   &   $f_{\lambda}(x_1,x_2)$\\
\hline
3 & $\lambda_1^{-1}x_1+\lambda_2^{-1}x_2+\lambda_0^{-2}\left(-2+(1+\lambda_1x_1)^{-(\lambda_0/\lambda_1)^2}+(1+\lambda_2x_2)^{-(\lambda_0/\lambda_2)^2}
\right)$\\
\hline
2  &  $(x_1+x_2)/2-\lambda_1^{-2}\log(1+\lambda_1x_1)+\lambda_2^{-2}\log(1+\lambda_2x_2)$\\
\hline
1  & $x_1^2/2+\lambda_2^{-2}\log(1+\lambda_2x_2)$\\
\hline
0 & $(x_1^2+x_2^2)/2$\\
\hline
\end{tabular}
\end{center}
 
\if0 \begin{align*}
 \type \qquad &   f_{\lambda}(x_1,x_2)\\
3\qquad  & \lambda_1^{-1}x_1+\lambda_2^{-1}x_2+\lambda_0^{-2}\left(-2+(1+\lambda_1x_1)^{-(\lambda_0/\lambda_1)^2}+(1+\lambda_2x_2)^{-(\lambda_0/\lambda_2)^2}
\right)\\
2\qquad  & (x_1+x_2)/2-\lambda_1^{-2}\log(1+\lambda_1x_1)+\lambda_2^{-2}\log(1+\lambda_2x_2)\\
1\qquad  &x_1^2/2+\lambda_2^{-2}\log(1+\lambda_2x_2)\\
0\qquad &(x_1^2+x_2^2)/2
 \end{align*}
 \fi
 The function $f_\lambda$ varies continuously with $\lambda$  on the subspace $\lambda_0=0$, and is also continuous
 when $\lambda_1,\lambda_2>0$ are constant as $\lambda_0\to0$, but is not continuous in general.
 This family of surfaces only varies continuously with $\lambda$ subject to these constraints.} 
 
{\editE Using the horosphere metric $\beta$ we may identify a Lie-algebra weight in $V^*$  with a vector in $V$.
Then  a generalized cusp in a 3-manifold is specified by a parallelogram of area one in $V=\RR^2$, together with three vectors
$a,b,c$ in $V$ satisfying $\langle a,b \rangle=\langle b,c\rangle=\langle c,a\rangle=\varpi\le 0$.
The Lie algebra weights
of the holonomy are given by $\xi(x)=\langle v,x\rangle_{\beta}$ where $v\in\{a,b,c\}$.

Two such collection of data define equivalent cusps if and only if there is an isometry of $\RR^2$
taking one parallelogram to the other and that permutes the set of vectors $\{a,b,c\}$.
The type of the generalized cusp is the number of these vectors that are non-zero. \begin{figure}[ht]
 \begin{center}
\includegraphics[scale=0.45]{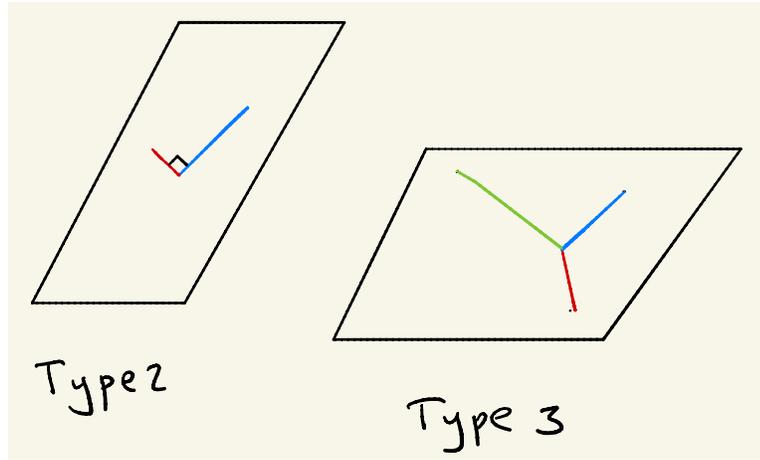}
	 \end{center}
 \caption{Generalized cusps in dimension 3} \label{weights}
\end{figure}
 }

There is a decomposition of $\Sym^3(\RR^2)=\harm_2\oplus\rad_2$ is given by
$$\harm_2=\langle x(x^2-3y^2),y(y^2-3x^2)\rangle,\qquad \rad_2=\langle x(x^2+y^2),y(x^2+y^2)\rangle$$ 
with coordinate projections $\pi_{_{\harm}}$ and $\pi_{_{\rad}}$.
 {\editC By (\ref{affsphharmonic}) the cubic is harmonic {\editD with respect to $\beta$} if and only if the holonomy is conjugate into
  $\Tr(s,s,s)$ for some $s\ge 0$.}
 
Regarding $V=\RR^2\cong\CC$ via $z=x+iy$, and recalling that the real part of a holomorphic function is harmonic, it follows that
$$\harm_2=\{\operatorname{Re}(h z^3):h\in\CC\},\qquad \rad_2=\{\operatorname{Re} (r z|z|^2) :r\in\CC\}$$
This gives an isomorphism of real vector spaces $\theta:\CC^2\to\Sym^3\RR^2$ given by $\theta(h,r)= \operatorname{Re}(h z^3+r z  |z|^2)$.
The action of $\SO(2)\cong U(1)=\{\omega\in\CC:|\omega|=1\}$ on $\Sym^3\RR^2$ is then
$\omega.\theta(h,r)=\theta(\omega^3 h,\omega r)$.
The standard Euclidean structure on $\CC^2$ gives an inner product  on $\Sym^3\RR^2$ given by $\|\theta(h,r)\|^2=|h|^2+|r|^2$,
and $\SO(2)$ acts by isometries. {\editC Let $\beta_0$ be the quadratic form $x^2+y^2$ on $\RR^2$}.

{\editC
\begin{theorem}\label{3mfdspace} The image of the embedding $J:\Modsp_3\to\Pcal(\RR^2)\times\Sym^3\RR^2$ is
$$
J(\Modsp_3)= \{(A^tA,c\circ A)\in \Pcal(\RR^2)\times\Sym^3\RR^2\ :\ |\pi_{_{\rad}}c|\le 3|\pi_{_{\harm}}c|,\ \ A\in\SL(2,\RR)\ \}$$
Moroever $|\pi_{_{\rad}}c|= 3|\pi_{_{\harm}}c|$ gives the subspace of non-diagonalizable generalized cusps.
\end{theorem}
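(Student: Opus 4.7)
The plan is to apply (\ref{Jcomplete}) so that $\Modsp_3$ is identified with $\Jcal_3\subset\PP(\Sym^2\RR^2\oplus\Sym^3\RR^2)$, and then to describe the image. Every unimodular positive-definite quadratic on $\RR^2$ has the form $A^tA$ for some $A\in\SL(2,\RR)$, and by (\ref{newrep}) every marked translation group is equivalent to $\Phi^{\perp}_{\lambda,\kappa}\circ B$ with $\Phi^{\perp}_{\lambda,\kappa}$ having standard quadratic $\|v\|^2$ by (\ref{completestandardized}); thus $J(\Phi^{\perp}_{\lambda,\kappa}\circ B)=[B^tB+c^{\circ}\circ B]$, where $c^{\circ}$ is the standardized cubic associated to $\Phi^{\perp}_{\lambda,\kappa}$. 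The problem reduces to characterizing which standardized cubics $c^{\circ}$ can arise.

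Using (\ref{cubicrootdata}), $c^{\circ}=\tfrac{1}{3}\sum_{i=0}^{2}\xi_{r_i}^3/u_i^2$, where the weight duals $r_0,r_1,r_2\in\RR^2\cong\CC$ satisfy $\langle r_i,r_j\rangle=-\varpi$ for $i\ne j$ and $u_i^2=|r_i|^2+\varpi$. A direct expansion in the complex coordinate $z=v_1+iv_2$ yields
\begin{equation*}
\xi_r^3(v)=\tfrac14\operatorname{Re}(\bar r^{\,3}z^3)+\tfrac34|r|^2\operatorname{Re}(\bar rz|z|^2),
\end{equation*}
so $c^{\circ}=\operatorname{Re}(hz^3+rz|z|^2)$ with $h=\tfrac{1}{12}\sum\bar r_i^3/u_i^2$ and, after simplifying via the relation $\sum r_i/u_i^2=0$ (which follows by lifting the $r_i$ to an orthogonal triple $R_i=(r_i,\sqrt\varpi)\in\RR^3$), $r=\tfrac14\sum\bar r_i$.

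The heart of the argument is the identity $|12h|^2-|4r|^2=8\varpi$, which gives $|r|\le 3|h|$ with equality exactly when $\varpi=0$, i.e.\ in the non-diagonalizable case. To establish it I would pass to $q_i=r_i/u_i$ and $t_i=\varpi/u_i^2$, which satisfy $\sum t_i=1$, $|q_i|^2=1-t_i$, and $\operatorname{Re}(\bar q_iq_j)=-\sqrt{t_it_j}$ for $i\ne j$, so the identity reduces to $|\sum q_i^3/\sqrt{t_i}|^2-|\sum q_i/\sqrt{t_i}|^2=8$. Expanding both sides as double sums and using the key relation $|\operatorname{Im}(\bar q_iq_j)|^2=(1-t_i)(1-t_j)-t_it_j=t_k$ (where $k$ is the remaining index, thanks to $\sum t_i=1$) to evaluate the off-diagonal cubic terms, the two quantities come to $\sum 1/t_i-1$ and $\sum 1/t_i-9$ respectively. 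This bookkeeping is the main obstacle.

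For the global description, let $\Theta:\Modsp_3\to\{(w,h,r)\in\CC^3:\Im w>0,\,|r|\le3|h|\}$ be the map assembled from $J$ and the preceding steps. It is continuous, injective (since $J$ is a homeomorphism by \ref{Jcomplete}), lands in the target by the inequality, and is proper. Restricted to the open dense diagonalizable stratum $\Modsp_3(3)$ (a smooth $6$-manifold by \ref{strata}), $\Theta$ maps into the connected open $6$-manifold $\{|r|<3|h|\}\times\HH$; invariance of domain makes the image open there, and properness makes it closed, so the image fills this target. Taking closures using (\ref{dense}) and the fact that proper continuous maps are closed, $\Theta(\Modsp_3)=\{|r|\le3|h|\}\times\HH$. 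The ``moreover'' statement follows at once, since the boundary $|r|=3|h|$ is exactly the non-diagonalizable locus by the equality case of the identity.
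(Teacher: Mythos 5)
Your proposal is correct, and the key step is genuinely different from the paper's. The paper treats the inequality $|\pi_{\rad}c|\le3|\pi_{\harm}c|$ asymmetrically: it computes $J$ explicitly on non-diagonalizable representatives $\Phi_{\lambda,0}$ (Claim 1 in the paper's proof: $\pi_{\harm}c=(\lambda_1+i\lambda_2)/12$, $\pi_{\rad}c=3\overline{(\lambda_1+i\lambda_2)/12}$), uses the $O(2)$-orbit to realize the whole boundary cone $\{|r|=3|h|\}$ (Claim 2), and only then deduces the strict inequality on the diagonalizable stratum indirectly, via connectedness and the single harmonic example $\lambda=(1,1,1)$. Your replacement is the uniform algebraic identity $|12h|^2-|4r|^2=8\varpi$, and the bookkeeping you flag as ``the main obstacle'' does close up exactly as you predict: with $w=\bar q_iq_j$ one has $\operatorname{Re}(w)=-\sqrt{t_it_j}$, $\operatorname{Im}(w)^2=t_k$, hence $\operatorname{Re}(w^3)/\sqrt{t_it_j}=3t_k-t_it_j$, and expanding the double sums gives
$$\Bigl|\sum q_i^3/\sqrt{t_i}\Bigr|^2=\sum\tfrac1{t_i}-7+2\sigma_2+2(3-\sigma_2)=\sum\tfrac1{t_i}-1,\qquad \Bigl|\sum q_i/\sqrt{t_i}\Bigr|^2=\sum\tfrac1{t_i}-9,$$
so the difference is $8$ and the $\sigma_2$ terms cancel. (For $\varpi=0$, drop the zero-weight summand or argue by continuity.) This identity simultaneously gives the inequality, identifies $\varpi=0$ as the equality locus, and replaces the paper's reliance on first proving $J(B)$ fills the entire cone boundary before concluding $J(P)\cap f^{-1}(0)=\emptyset$. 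The remaining surjectivity argument (invariance of domain, properness of $J$ from (\ref{Jproper}), connectedness of $\{|r|<3|h|\}\times\HH$, then closure) is essentially the same as the paper's. What your route buys is a direct, quantitative reason for the inequality and for the stratum boundary, at the cost of a denser computation; the paper's route is more economical in algebra but infers the diagonalizable inequality only indirectly from topology.
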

\begin{proof} 
 In this proof we identify $\Sym^3\RR^2\equiv\CC^2$ using $\theta$ and {\editE $\Tcal_3\equiv\Rep_3$ using the holonomy}. The action of $A\in\SLpm V$ on $\Modsp_3$ defined in (\ref{nuaction}) is conjugate by $J$ to the action on
$\Pcal(\RR^2)\times\Sym^3\RR$ given by $A\cdot(\beta,c)=(\beta\circ A^{-1},c\circ A^{-1})$. This action  preserves the product structure.
The stabilizer of  $\beta_0$ is $O(2)$. 

{\editE {\bf Claim 1} Suppose $(\lambda,\kappa)\in\widetilde A_n$ and $\lambda=(0,\lambda_1,\lambda_2)$ and $\kappa=0$ and
$c=c(\Phi_{\lambda,\kappa})$. 
Then $\pi_{_{\harm}}c=z$ and $\pi_{_{\rad}}c=3\overline{z}$ where $z=(\lambda_1+i\lambda_2)/12$.}

From the definition (\ref{lambdakappa}), the Lie algebra weights are $\xi_i=\lambda_ie_i^*$ for   $i\in\{1,2\}$, and $\xi_0=0$. 
Using Lemma (\ref{cubicrootdata}) then $\kappa=0$ so $\varkappa=1$ and formula (\ref{ccformula})  gives
 $$3c=\lambda_{1}^{-2}\xi_1^3+\lambda_{2}^{-2}\xi_2^3=\lambda_1(e_1^*)^3+\lambda_2(e_2^*)^3=\lambda_1x^3+\lambda_2y^3$$
 Expressing this in terms of the generators of $\harm_2$ and $\rad_2$ gives
$$12c=\lambda_1[x(x^2-3y^2)+3x(x^2+y^2)]+\lambda_2[y(y^2-3x^2)+3y(x^2+y^2)]$$
So \begin{align}
12h&=\lambda_1x(x^2-3y^2)+\lambda_2y(y^2-3x^2)\qquad  &12r &=3\lambda_1x(x^2+y^2)+3\lambda_2y(x^2+y^2) \nonumber \\
&=\operatorname{Re}\left((\lambda_1+i\lambda_2)z^3\right) \qquad  &  &=\operatorname{Re}\left( 3(\lambda_1-i\lambda_2)z |z|^2\right) \nonumber\\
\therefore\quad 12\pi_{\harm}(c)&=\lambda_1+i\lambda_2 & 12\pi_{\rad}(c)&=3(\lambda_1-i\lambda_2) \nonumber
\end{align}
This proves claim 1.

Now $B=\Modsp_3\setminus\Modsp_3(3)$ consists of all marked generalized cusps with non-diagonalizable holonomy.
Let $\pi:\Pcal(\RR^2)\times\Sym^3\RR\rightarrow\Pcal(\RR^2)$ be projection and
consider the subspace of $N=B\cap (\pi\circ J)^{-1}\beta_0$ of non-diagonalizable holonomies for the
standard quadratic form $\beta=\|\cdot\|^2$.

{\bf Claim 2} $J(N)=\{(\beta_0,h,r):|r|=3|h|\}$. 

If $[\rho]\in N$ then $[\rho]=[\Phi_{\lambda,0}\circ A]$ with $\type(\lambda)<3$ and $A\in O(2)$.
Under the identification $V=\CC$, 
the action of $\SO(2)$ on $V$ is 
given by the action of $U(1)$ on $\CC$. If $J(\Phi_{\lambda,0})=(z,3\overline{z})$, and $A$ is rotation by $\theta$, 
and $\omega=\exp(i\theta)$
  then  $J(\Phi_{\lambda,0}\circ A)=(\omega^3 z,3\overline{\omega}\overline{z})$.   
  Moreover if $A\in O(2)$ is given by $A(x,y)=(x,-y)$ then
 $J(\Phi_{\lambda,0}\circ A)=(\overline{z},3z)$. Given $h,r\in\CC$ with $|r|=3|h|$ there are $z,\omega\in\CC$ with $|\omega|=1$
 such that $(h,r)=(\omega^3 z,3\overline{\omega} \overline{z})$. This proves claim 2.

Using the action of $\SLpm V$ on $\Modsp_3$ it follows that
$$
J(B)= \{(A^tA,c\circ A)\in \Pcal(\RR^2)\times\Sym^3\RR^2\ :\ |\pi_{_{\rad}}c|=3|\pi_{_{\harm}}c|,\ \ A\in\SLpm V\}$$
Consider $f:\Pcal\times\CC^2\to\RR$ given by $f(\beta,h,r)=3|h|-|r|$, and
set $P=\Modsp_3(3)$. 
When $\lambda=(1,1,1)$ then (\ref{affsphharmonic}) implies the cubic is harmonic so $r=0$, and $h\ne0$ thus
$J(P)$ contains a point where $f>0$. Since $J$ is injective, 
$J(P)\subset\CC^2\setminus f^{-1}(0)$. By (\ref{strata})  $P$ is connected, so
 $f\circ J(P)> 0$.  
 
By (\ref{Jproper})  $J:\Modsp_3\rightarrow\Pcal \times\CC^2$ is proper, and the domain and codomain are locally compact,
  thus $J(\Modsp_3)$ is closed.  By (\ref{strata}), $P$  is a 6-manifold without boundary. Since 
$J:P\rightarrow\Pcal \times\CC^2$ is an embedding, and $\Pcal\times\CC^2$ is a 6-manifold, $J(P)$ is open by invariance of domain. Hence $J(P)=f^{-1}(0,\infty)$.
\end{proof}

\begin{proof}[Proof of (\ref{3manifold})] Let $U\subset\SL(2,\RR)$ be the subspace of upper-triangular 
matrices with positive eigenvalues. Then $g:U\rightarrow \Pcal$ given by $g(A)=A^tA$ is a homeomorphism. Let $G=g^{-1}$ and
$$\Ccal=\{c\in\Sym^3\RR^2:\ |\pi_{_{\rad}}c|\le 3|\pi_{_{\harm}}c|\ \}\equiv\{(r,h)\in\CC^2:\ |r|\le3|h|\ \}$$ Define $f:\Pcal\times\Ccal\to \Pcal\times\Sym^3\RR^2$ by
$f(Q,c)=(Q,c\circ G(Q))$. Then $f$ is an embedding, {\editD since it has inverse $f^{-1}(Q,c)=(Q,c\circ (G(Q))^{-1})$.}
If $A=G(Q)$ then $f(Q,c)=(Q,c\circ A)$ and $$A\cdot f(Q,c)=((A^t)^{-1}QA^{-1},(c\circ A)\circ A^{-1})=(I,c)$$ Thus the image of $f$ is $J(\Modsp_3)$, so $f^{-1}\circ J:\Modsp_3\rightarrow \Pcal\times\Ccal$ is a homeomorphism. There is a homeomorphism
$h:\Pcal\to\Hcal=\{z\in\CC:\ \Im z>0\ \}$ given by $h(Q)= \alpha_A(i)$ where $A=G(Q)$ and $ \alpha_A$ is the M\"obius transformation
corresponding to $A$. Then $\Theta=(h\times Id)\circ f^{-1}\circ J:\Modsp_3\rightarrow\Hcal\times\Ccal$ is a homeomorphism.
\end{proof}

Now we describe the strata of $\Modsp_3$. Let $\pi:\Modsp_3\to\Pos$ be projection.
The fiber $\pi^{-1}(\beta_0)$  is the cone
$F=\{(h,r)\in\CC^2: |r|\le 3|h| \}$ stratified
 as follows. For $k\in\{0,1,2,3\}$, let $T_k=\Modsp_3(k)\cap\pi^{-1}(\beta_0)$. Then
$T_0=(0,0)\in\CC^2$ is the cone point, and $T_1=\{ ( w^3|w|^{-2},3w):\  w\in\CC\setminus 0  \}$
 is the open cone of a $(3,1)$ curve in $S^1\times S^1$ {\editD  because $c$ is the cube of a linear polynomial},
 and  $T_2=\bdy F-(T_1\cup T_0)$, and $T_3=\interior(F)$. The stratification is preserved by
 the action of $\SL(2,\RR)$ which also preserves the fibering and acts transitively on the base space $\Pcal$. }

 \section{Appendix: routine proofs.}
 
 \begin{proof}[Proof of (\ref{completepsi})]  The character and Lie-algebra weights can be read off from Definition (\ref{psigroup}). 
To compute $\beta$ we use (\ref{betaeq}) with
 basepoint $b=(e_1+\cdots+e_{\type})+e_{n+1}$.  When $\type<n$ from (\ref{psigroup})
 $$\mu_{\theta,b}(v)-b=\sum_{i=1}^{\type}(\exp({\editC \psi_{\type}}v_i)-1)e_i+\sum_{i=\type+1}^{n-1}v_{i}e_{i+1}+\left(-\sum_{i=1}^{\type} \psi_iv_i+(1/2)\sum_{i=\type+1}^{n-1} v_i^2\right) e_{\type+1}$$
Computing $u_i=(\partial\mu_{\theta,b}/\partial v_i)_{v=0}$  gives
$$(u_1,\cdots,u_{n-1})=( \psi_{\type}e_1-\psi_1e_{\type+1},\psi_{\type}e_2-\psi_1e_{\type+1},\cdots,{\editC \psi_{\type}}e_{\type}-\psi_{\type}e_{\type+1},e_{\type+2},\cdots,e_{n})$$
By  (\ref{geqtn1}) $h_{\theta}(v)={\editD\pm}\det(u_1,\cdots,u_{n-1},\mu_{\theta,b}(v)-b)$ gives 
\begin{align*}
h_{\theta}(v)  &=\pm\det
\bpmat \psi_{\type} & & & & & & & \exp(\psi_{\type}v_1)-1\\
 &  \psi_{\type}& & & & & &\exp(\psi_{\type}v_2)-1\\
 & &\ddots & & & & &\vdots\\
 &  & & \psi_{\type}& & & &\exp(\psi_{\type}v_{\type})-1\\
-\psi_1 &-\psi_2 &\cdots &-\psi_{\type} & 0&\cdots & 0& -\sum_{i=1}^{\type}\psi_iv_i +(1/2)\sum_{\type+1}^{n-1}v_i^2\\
 &  & & & 1&  &  & v_{\type+1}\\
 &  & & & & \ddots &  & \vdots\\
 &  & & & &  &  1 & v_{n-1}\\
\epmat\\
&=\det
\bpmat \psi_{\type} & & & &  \exp(\psi_{\type}v_1)-1\\
 &  \psi_{\type}& & & \exp(\psi_{\type}v_2)-1\\
 & &\ddots & & \vdots\\
 &  & & \psi_{\type}& \exp(\psi_{\type}v_{\type})-1\\
-\psi_1 &-\psi_2 &\cdots &-\psi_{\type} &  -\sum_{i=1}^{\type}\psi_iv_i +(1/2)\sum_{\type+1}^{n-1}v_i^2\\
\epmat\\
&={\editC \psi_{\type}}^{\type-1} \sum_{i=1}^{\type}\psi_i(\exp({\editC \psi_{\type}}v_i)-1-{\editC \psi_{\type}}v_i)+(1/2){\editC \psi_{\type}}^{\type}\sum_{i=\type+1}^{n-1}v_i^2\\
\end{align*}
Taking the second derivative at $v=0$ yields
$$\widetilde\beta={\editC \psi_{\type}^{\type+1}}\sum_{i=1}^{\type}\psi_idv_i^2+{\editC \psi_{\type}}^{\type}\sum_{i=\type+1}^{n-1}dv_i^2$$
{\editA  Observe that the matrix of $\beta'={\editC \psi_{\type}}^{-(\type+1)}\widetilde\beta$ is diagonal  in the standard basis and $\det\beta'$ is as claimed.
It is clear that the Lie algebra weights $\xi_i$ are pairwise $\beta$-orthogonal. Thus so are their duals.}

{\editD By definition (\ref{psigroup}), the non-zero Lie algebra weights are $\xi_i=\psi_{\type}e_i^*$ with $1\le i\le\type$.
Now $\langle x,e_i\rangle_{\beta'}=\psi_ie_i^*(x)$. Let $\gamma=\det(\beta')^{-1/n-1}$, then
 $\beta=\gamma\cdot\beta'$ so $\langle x,e_i\rangle_{\beta}=\gamma\psi_ie_i^*(x)$.
 Thus the dual of $\xi_i$ with respect to $\beta$ is $(\gamma\psi_i)^{-1}\psi_{\type}e_i$ and
\begin{align}\label{dualwteqtn}
\beta^*(\xi_i)&=\beta((\gamma\psi_i)^{-1}\psi_{\type}e_i)\nonumber\\
&=((\gamma\psi_i)^{-1}\psi_{\type})^{2}\beta(e_i)\nonumber\\
&= (\gamma^{-2}\psi_i^{-2}\psi_{\type}^{2})\gamma\beta'(e_i)\nonumber\\
&=(\gamma^{-2}\psi_i^{-2}\psi_{\type}^{2})\gamma\psi_i\nonumber\\
&=\gamma^{-1}\psi_{\type}^2\psi_i^{-1}\end{align}}

If $\type=n$ choose basepoint $b=e_1+\cdots+e_{n+1}$ 
then $$\mu_{\theta,b}(v)-b=\sum_{i=1}^{n-1}(\exp({\editC \psi_n}v_{i})-1)e_{i}+\left(\exp\left(-\sum_{i=1}^{n-1} \psi_iv_i\right) -1\right)e_{n}$$ thus
$u_i=(\partial\mu_{\theta,b}/\partial v_i)_{v=0}={\editC\psi_n}e_i-\psi_ie_n$. Then  (\ref{geqtn1}) gives 
\begin{align*}
h_{\theta}(v)& =\det
\bpmat 
\psi_n      & & & & \exp(\psi_n v_1)-1\\
 & \ddots&  &  &\vdots\\
 & & & \psi_n&\exp(\psi_nv_{n-1})-1\\
 -\psi_1 &-\psi_2&\cdots&-\psi_{n-1} & \exp(-\sum_{i=1}^{n-1}\psi_iv_i)-1
\epmat
\\
&={\editC \psi_n^{n-2}}\sum_{i=1}^{n-1}{\editC \psi_i}\left(\exp({\editC \psi_n}v_i)-1\right)+{\editC \psi_n^{n-1}}\left(\exp\left(-\sum_{i=1}^{n-1}\psi_iv_i\right) -1\right)
\end{align*}
Taking the second derivative at $v=0$ gives
$$\widetilde\beta={\editC \psi_n^{n}}\left(\sum_{i=1}^{n-1}\psi_i dv_i^2+{\editC \psi_n^{-1}}\left(-\sum_{i=1}^{n-1}\psi_idv_i\right)^2\right)$$
Then $\beta'=\psi_n^{\editC-n}\widetilde\beta$ gives the form shown in the proposition.
$$\psi_n\beta'=\bpmat
\psi_1(\psi_n+\psi_1) & \psi_1\psi_2 &\cdots & & \psi_1 \psi_{n-1}\\
\psi_2\psi_1 & \psi_2(\psi_n+\psi_2) & \psi_2\psi_3 &\cdots & \psi_2\psi_{n-1}\\
\vdots\\
\psi_{n-1}\psi_1 &\cdots & & \psi_{n-1}\psi_{n-2} & \psi_{n-1}(\psi_n+\psi_{n-1})
\epmat$$
The determinant of this matrix is a polynomial of degree $2(n-1)$. Row $i$ has a factor of $\psi_i$. The sum of the rows
is a multiple of $\psi_1+\psi_2\cdots+\psi_n$. Setting $\psi_n=0$ gives a matrix of rank $1$ so $\psi_n^{n-2}$ is a factor.
Hence 
$$\det(\psi_n\beta')=\alpha \psi_1\cdots \psi_{n-1} \psi_n^{n-2}(1+\psi_1+\cdots \psi_{n-1})$$
for some constant $\alpha$. Equating coefficients of  $\psi_n^{n-1}$ gives $\alpha=1$. Thus $$\det\beta'= \psi_1\cdots \psi_{n-1} \psi_n^{-1}(1+\psi_1+\cdots \psi_{n-1})$$
  \end{proof}

 \begin{proof}[Proof of (\ref{phidiag})] (a) Given $v=(v_1,\cdots,v_{n-1})\in V$ define $v_0\in V$ by $\lambda_0^{-1}v_0+\cdots\lambda_{n-1}^{-1}v_{n-1}=0$.
 Let
 $$P=\begin{pmatrix} 1 & -\lambda_1^{-1} & \cdots & -\lambda_{n-1}^{-1} & \lambda_0^{-2}\\
0 & 1 & 0 & 0 & \lambda_1^{-1}\\\
0 & 0 &\ddots & 0 & \vdots\\
0 & 0 & 0 & 1 & \lambda_{n-1}^{-1}\\
0 & 0 & 0 & 0 & 1\\ 
\end{pmatrix},
\qquad
r=\begin{pmatrix} \lambda_0 v_0 &  0 & \cdots & &0\\
0 & \lambda_1v_1 &0  &\cdots  & 0\\
0 & 0 &\ddots & 0 & \vdots\\
0 & \cdots & 0 & \lambda_{n-1}v_{n-1} & 0\\
0 & \cdots &  &  & 0\\
\end{pmatrix}$$
 then
$$P^{-1}rP=
\begin{pmatrix} 0 & v_1  & \cdots & v_{n-1} &0\\
0 & \lambda_1v_1 &0  &\cdots  & v_1\\
0 & 0 &\ddots & 0 & \vdots\\
0 & 0 & 0  & \lambda_{n-1}v_{n-1}& v_{n-1}\\
0 & 0 & 0 & \ldots & 0\\ 
\end{pmatrix}
+\lambda_0v_0
\begin{pmatrix} 1 & -\lambda_1^{-1}  & \cdots & -\lambda^{-1}_{n-1} &0\\
0 & \cdots &  &  & 0\\
\vdots &   &  &   & \vdots\\
 &  &   &  &  \\
0 & \cdots &  &  & 0\\ 
\end{pmatrix}
$$
Now $\lambda_0^{-1}v_0=-\lambda_1^{-1}v_1\cdots-\lambda_{n-1}^{-1}v_{n-1}$ so $v_0=-(\kappa_1v_1+\cdots+\kappa_{n-1}v_{n-1})=-\langle v,\kappa\rangle$ where $\kappa_i=\lambda_0/\lambda_i$. Then
\begin{align*}P^{-1}rP&=
\begin{pmatrix} 0 & v_1  & \cdots & v_{n-1} &0\\
0 & \lambda_1v_1 &0  &\cdots  & v_1\\
0 & 0 &\ddots & 0 & \vdots\\
0 & 0 & 0  & \lambda_{n-1}v_{n-1}& v_{n-1}\\
0 & 0 & 0 & \ldots & 0\\ 
\end{pmatrix}
+\langle v,\kappa\rangle
\begin{pmatrix} -\lambda_0 & \kappa_1  & \cdots & \kappa_{n-1} &0\\
0 & \cdots &  &  & 0\\
\vdots &   &  &   & \vdots\\
 &  &   &  &  \\
0 & \cdots &  &  & 0\\ 
\end{pmatrix}\\
&=\phi_{\lambda,\kappa}(v)
\end{align*}
Set $R=\exp r$ then $P\Phi_{\lambda,\kappa}P^{-1}=R$. From (\ref{psigroup})
\begin{align*}
\zeta_{\psi}({\editC{\mathfrak f}} v)
= \exp\begin{pmatrix} \psi_n\lambda_0^2\lambda_1v_1  &  0 & \cdots & & & 0\\
0 & \psi_n\lambda_0^2\lambda_2v_2 &0  &\cdots  & & 0\\
0 & 0 &\ddots & 0 & \vdots\\
\vdots & \vdots & 0 & \psi_{n}\lambda_0^2\lambda_{n-1}v_{n-1} & & 0\\
 &  &  &  & -\sum_{i=1}^{n-1}\psi_i\lambda_0^2\lambda_iv_i& 0\\
0 & \cdots &  &  & & 0\\
\end{pmatrix}\end{align*}
Using $\psi_n\lambda_0^2=1$ and $\psi_i\lambda_i=\lambda_i^{-1}$ gives
\begin{align*}
\zeta_{\psi}({\editC{\mathfrak f}} v)
=\exp\begin{pmatrix} \lambda_1 v_1 &  0 & \cdots & & &0\\
0 & \lambda_2v_2 &0  &\cdots  & & 0\\
0 & 0 &\ddots &  & & \vdots\\
\vdots &\vdots & 0 & \lambda_{n-1}v_{n-1} & & 0\\
 &  &  &  & -\lambda_0^{2}\sum_{i=1}^{n-1}\lambda_i^{-1}v_i& 0\\
0 & \cdots &  &  & & 0\\
\end{pmatrix}
\end{align*}
Now $-\lambda_0^{2}\sum_{i=1}^{n-1}\lambda_i^{-1}v_i=\lambda_0v_0$. Let $M\in \GL(n+1,\RR)$ be 
defined by $$M(x_1,\cdots,x_{n+1})=\editE (x_n,x_1,\cdots,x_{n-1},x_{n+1})$$ Then
$$ M^{-1}\zeta_{\psi}({\editC{\mathfrak f}} v)M=\\
\exp \begin{pmatrix} \lambda_0 v_0 &  0 & \cdots & & &0\\
0 & \lambda_1v_1 &0  &\cdots  & & 0\\
0 & 0 &\ddots &  & & \vdots\\
\vdots &\vdots & 0 & \lambda_{n-1}v_{n-1} & & 0\\
0 & \cdots &  &  & & 0\\
\end{pmatrix}=R$$
so 
 \begin{equation}\label{centereq}M^{-1}(\zeta_{\psi}\circ{\editC{\mathfrak f}})M=R=P\Phi_{\lambda,\kappa}P^{-1}\end{equation}
Set $Q=MP$ then $Q\Phi_{\lambda,\kappa}Q^{-1}=\zeta_{\psi}\circ {\editC{\mathfrak f}}$ as asserted.

To prove (b) we exploit the fact that every $\Phi_{\lambda,\kappa}$ is a limit of the diagonalizable ones above.
Given an integer $k\ge 0$ define $f_k:\RR^2\to\RR$ by
 $$f_k(s,t)=\sum_{j=k}^{\infty}s^{j-k}t^j/j!$$
 This is analytic and $f_0(s,t)=\exp(st)$, and for $s\ne0$ 
 $$f_1(s,t)=s^{-1}(e^{st}-1),\qquad f_2(s,t)=s^{-2}(e^{st}-1-st)$$
 Also $f_1(0,t)=t$ and $f_2(0,t)=t^2/2$.
For $s\ge 0$ the map  $f_1(s,-):\RR\to(-s^{-1},\infty)$  is a diffeomorphism when we interpret $-0^{-1}=-\infty$, and $f_2(s,-):\RR\to\RR$ is convex and proper. Then
$$P^{-1}RP=
\begin{pmatrix} e^{\lambda_0v_0} &*  & \cdots & * & \sum_{i=0}^{n-1}\lambda_i^{-1}f_1(\lambda_i,v_i)\\
0 & e^{\lambda_1v_1} &0\cdots  & 0  & f_1(\lambda_1,v_1)\\
0 & 0 &\ddots & 0 & \vdots\\
\vdots & \vdots & 0  & e^{\lambda_{n-1}v_{n-1}}& f_1(\lambda_{n-1},v_{n-1})\\
0 & \cdots &  0& 0 & 1\\ 
\end{pmatrix}
$$
Set $x_i=f_1(\lambda_i,v_i)$ and $y=\sum_{i=0}^{n-1} {\editE\lambda_i^{-1}}f_2(\lambda_i,v_i)$. {\editD Write} the last column of $P^{-1}RP$ as 
$(y,x_1,\cdots,x_{n-1},1)^T$. Now 
\begin{equation}\label{v0eqtn}\lambda_0^{-1}v_0=-(\lambda_1^{-1}v_1+\cdots+\lambda_{n-1}^{\editE-1}v_{n-1})\end{equation}
Observe that $s^{-1}f_1(s,t)=f_2(s,t)+s^{-1}t$. Thus 
\begin{align}\label{f1eqtn}
\lambda_0^{-1}f_1(\lambda_0,v_0)
= f_2(\lambda_0,v_0)+\lambda_0^{-1}v_0
=f_2(\lambda_0,v_0)-\sum_{i=1}^{n-1}\lambda_i^{-1}v_i
\end{align}Then
\begin{align}
y=&\sum_{i=0}^{n-1}\lambda_i^{-1}f_1(\lambda_i,v_i)\nonumber\\
=&\lambda_0^{-1}f_1(\lambda_0,v_0)+\sum_{i=1}^{n-1}\lambda_i^{-1}f_1(\lambda_i,v_i)\nonumber\\
=&\left(f_2(\lambda_0,v_0)-\sum_{i=1}^{n-1}\lambda_i^{-1}v_i\right)+\sum_{i=1}^{n-1}\left(f_2(\lambda_i)+\lambda_i^{-1}v_i\right) & using\  (\ref{f1eqtn})\nonumber\\
=&\sum_{i=0}^{n-1} f_2(\lambda_i,v_i)\label{f2eqtn}
\end{align}
The orbit of the origin under 
$T(\lambda,\kappa)$ is a  hypersurface $S=S(\lambda,\kappa)$ in $\RR^n$ that is the locus of the points
$(y,x_1,\cdots,x_{n-1})$ as $v$ varies in $V$. Solving $x=f_1(\ell,v)$ for $v$ gives
\begin{equation}\label{heqtn}v=h(\ell,x):=\ell^{-1}\log(1+\ell x)\end{equation}
This defines $h(\ell,x)$ whenever $1+\ell x>0$ and $\ell\ne 0$. Observe that $h(\ell,x)=x+\ell\cdot \O(x^2)$, so
if we define $h(0,x)=x$ then $h$
is analytic on the subset of $\RR^2$ where $1+\ell x>0$.
{\editC Define $g(\ell,x)=\ell^{-2}(\ell x-\log(1+\ell x))$ for $1+\ell x>0$ and $\ell\ne 0$. Observe that $g(\ell,x)=x^2/2+ \O(x^3)$,
thus if we define  $g(0,x)=x^2/2$,
then $g$ is analytic for $1+\ell x>0$.}
Then
\begin{align}
f_2(\ell,v)&=f_2(\ell,\ell^{-1}\log(1+\ell x)) \nonumber\\ 
&=\ell^{-2}(e^{\ell\ell^{-1}\log(1+\ell x)}-1-\log(1+\ell x))\nonumber\\
&=\ell^{-2}(\ell x-\log(1+\ell x)\nonumber)\\
&=g(\ell,x) \label{geqtn}
\end{align}
The hypersurface $S=S(\lambda,\kappa)$ is given by
\begin{align*}y&=\sum_{i=0}^{n-1} f_2(\lambda_i,v_i) & by\ (\ref{f2eqtn})\\
&=f_2\left(\lambda_0,-\lambda_0\sum_{i=1}^{n-1}\lambda_i^{-1}v_i\right)+\sum_{i=1}^{n-1}g(\lambda_i,x_i)&
\because\ v_0=-\lambda_0\sum_{i=1}^{n-1}\lambda_i^{-1}v_i\ \&\ f_2(\lambda_i,v_i)=g(\lambda_i,x_i)\\
&=f_2\left(\lambda_0,-\sum_{i=1}^{n-1}\kappa_ih(\lambda_i,x_i)\right)+\sum_{i=1}^{n-1}g(\lambda_i,x_i)&
\because\ v_i=h(\lambda_i,x_i)\ \&\ \kappa_i=\lambda_0/\lambda_i\\
&=:F(\lambda,\kappa,x) & {\rm definition}
\end{align*}
Here $x=(x_1,\cdots,x_{n-1})$. Up to this point we have assumed $(\lambda,\kappa)\in D_n$
so every $\lambda_i>0$. However the function $F$ is defined and analytic whenever $(\lambda,\kappa)\in\widetilde{A}_n\cup D_n$
and $1+\lambda_ix_i>0$ for all $i$. It follows that $y=F(\lambda,\kappa,x)$ defines a hypersurface $S(\lambda,\kappa)$
for each $(\lambda,\kappa)\in\widetilde{A}_n$. 

Also $S(\lambda,\kappa)$ is the orbit of $0$ under $T(\lambda,\kappa)$
whenever $(\lambda,\kappa)\in D_n$. Since $\widetilde{A}_n\subset\cl D_n$ and $\Phi_{\lambda,\kappa}$
is a continuous function of $(\lambda,\kappa)$ it follows that $S(\lambda,\kappa)$ is the orbit of $0$ under
 $T(\lambda,\kappa)$
whenever $(\lambda,\kappa)\in\widetilde{A}_n$. For fixed $(\lambda,\kappa)$ 
\begin{align*}
h(\lambda_i,x_i)&=x_i+\O(x^2)& by\ (\ref{heqtn})\\
\therefore\qquad\sum_{i=1}^{n-1}\kappa_ih(\lambda_i,x_i)&=\sum_{i=1}^{n-1}\kappa_i(x_i+\O(x_i^2))\\
&=\langle\kappa,x\rangle +\O(\|x\|^2)
\end{align*}
Using this and $f_2(\lambda,x)=x^2/2 + \O(x^3)$ gives
\begin{align*}
f_2\left(\lambda_0,-\sum_{i=1}^{n-1}\kappa_ih(\lambda_i,x_i)\right)&=(1/2)\langle\kappa,x\rangle^2+\O(\|x\|^3)
\end{align*}
Also
\begin{align*}
g(\lambda_i,x_i)&=x_i^2/2+\O(x_i^3) & by\ (\ref{geqtn})\\
\therefore\qquad F(\lambda,\kappa,x)&=f_2\left(\lambda_0,-\sum_{i=1}^{n-1}\kappa_ih(\lambda_i,x_i)\right)+\sum_{i=1}^{n-1}g(\lambda_i,x_i)\\
&=(1/2)\left(\langle\kappa,x\rangle^2+\|x\|^2\right)+ \O(\|x\|^3)
\end{align*}
It follows that $S(\lambda,\kappa)$ is strictly convex at $0$. Since $T(\lambda,\kappa)$ acts transitively by affine maps
$S(\lambda,\kappa)$ is strictly convex everywhere. {\editE One checks} that  $F(\lambda,\kappa,x)$ is a proper
function of $x\in\{(x_1,\cdots,x_{n-1})\ :\ 1+\lambda_ix_i>0\ \}$ for fixed $\lambda,\kappa$. Hence $S(\lambda,\kappa)$
is properly embedded, and therefore bounds a convex domain $\Omega(\lambda,\kappa)\subset\RR^n$ that
is preserved by $T(\lambda,\kappa)$.

Now $v_i=x_i+\O(x_i^2)$ thus $\Phi_{\lambda,\kappa}(v)=(y,x_1,\cdots,x_{n-1})$ where
$$y=(1/2)\left(\langle\kappa,v\rangle^2+\|v\|^2\right)+ \O(\|v\|^3)$$
which gives the formula for $\beta'$. The formula for $\chi_{_{\lambda,\kappa}}$ follows immediately from the definition
(\ref{lambdakappa} )
as the sum of the exponentials of the diagonal terms. It only remains to compute $\det\beta'$. Now
$$\beta'(v)=\langle\kappa,v\rangle^2+\|v\|^2$$
Choose an orthonormal basis with respect to $\|\cdot\|^2$ of $V$ that contains $\kappa/\|\kappa\|$.
In this basis $\beta'$ is diagonal,
and the only diagonal entry that is not $1$ is $1+\langle\kappa,\kappa/\|\kappa\|\rangle^2=1+\|\kappa\|^2$. Hence
$\det\beta'=1+\|\kappa\|^2$.
 \end{proof}
 
\begin{proof}[Proof of (\ref{calibrationiscuspspace})]   Suppose $(\lambda,\kappa)\in A_n$.
  First consider the diagonalizable case. {\editC By (\ref{phidiag}),  $\Phi_{\lambda,\kappa}$ is conjugate to   
$\zeta_{\ppsi}\circ{\editC{\mathfrak f}}$,  where
$\psi_n=\lambda_0^{-2}$ and
$\ppsi_i=\lambda_i^{-2}$ for $1\le i\le n-1$. This defines a linear map $\psi:\AA\to\RR$ {\editE and we have ${\mathfrak g}=\ker\psi$.}
Since $J$ is an invariant of conjugacy classes, we may replace $\Phi_{\lambda,\kappa}$
by  $\zeta_{\ppsi}\circ{\editC{\mathfrak f}}$. }
In this proof summation is over the integers from $1$ to $n-1$. Consider the linear map $f:\RR^{n-1}\to\AA$  given by 
 $$x:=f(v_1,\cdots,v_{n-1})=\left(\psi_n v_1,\cdots,\psi_nv_{n-1},-\sum\psi_iv_i\right)$$
{\editC Then define $\mathfrak g:=\Image f=\ker \ppsi$}. 
{\editC Recall ${\editC{\mathfrak f}}(v_1,\cdots,v_{n-1})=\lambda_0^2(\lambda_1v_1,\cdots,\lambda_{n-1}v_{n-1})$. Thus
\begin{align}\label{ff}f\circ{\editC{\mathfrak f}}(v)&=\lambda_0^2(\psi_n\lambda_1v_1,\cdots,\psi_n\lambda_{n-1}v_{n-1},-\sum\psi_i\lambda_iv_i)\nonumber\\
&=(\lambda_1v_1,\cdots,\lambda_{n-1}v_{n-1},-\lambda_0^2\sum\lambda_i^{-1}v_i)& \because\lambda_0^2\psi_n=1\nonumber\\
&=(\lambda_1v_1,\cdots,\lambda_{n-1}v_{n-1},-\lambda_0\sum\kappa_iv_i) & \because\lambda_0\lambda_i^{-1}=\kappa_i
\end{align}
{\editA It follows from  Definition (\ref{psigroup}) that $\editC \zeta_{\ppsi}=\delta\circ\exp\circ f$}.  
The calibration, $J_{\psi}$, on $\mathfrak g$ is given by (\ref{diagonalcalibration})
\begin{equation}\label{Jpsieqtn}J_{\psi}(x)=\ (1/2)\langle p,x^2\rangle_{\ppsi}+(1/6)\langle p,x^3\rangle_{\ppsi}\end{equation}
The calibration 
$J=J(\zeta_{\ppsi}\circ\mathfrak f)=J(\delta\circ\exp\circ f\circ\mathfrak f)$. {\editE By (\ref{diagonalcalibration}) $J_{\psi}=J(\delta\circ\exp)$, so $J=J_{\ppsi}\circ f\circ{\editC{\mathfrak f}}$}. This calibration  on $V$ is obtained from this by using (\ref{ff}) to substitute $x=f({\editC{\mathfrak f}} v)$ into (\ref{Jpsieqtn}).
\begin{align*}
\langle p,x^2\rangle_{\ppsi} &=\sum\ppsi_i(\lambda_iv_i)^2+\psi_n\left(-\lambda_0\sum\kappa_iv_i\right)^2\\
&=\sum v_i^2 + \left(\sum\kappa_iv_i\right)^2&\because\psi_i\lambda_i^2=1
\end{align*}
Let $\langle\cdot,\cdot\rangle$ denote the standard inner product on $\RR^{n-1}$ then
\begin{equation}\label{eqa1}
\begin{array}{rcl}\langle p,x^2\rangle_{\ppsi} &=&\langle v,v\rangle + \langle v,\kappa\rangle^2\hspace{1.05in}\;
\end{array}
\end{equation}
and 
\begin{align}\label{eqa2}
\langle p,x^3\rangle_{\ppsi} &=\sum\ppsi_i(\lambda_iv_i)^3+\psi_n\left(-\lambda_0\sum\kappa_iv_i\right)^3  \nonumber\\
& = \sum \lambda_iv_i^3 -\lambda_0\langle \kappa,v\rangle^3 & \because\ \ \psi_i\lambda_i^2=1\ \&\ \psi_n\lambda_0^{2}=1
\end{align}
Then (\ref{eqa1}) and (\ref{eqa2}) give
\begin{align*}
J(v) &=  (1/2)\langle p,x^2\rangle_{\ppsi}+(1/6)\langle p,x^3\rangle_{\ppsi}\\
& = (1/2)\left(\langle v,v\rangle + \langle v,\kappa\rangle^2\right)+(1/6)\left( -\lambda_0\langle \kappa,v\rangle^3+\sum\lambda_iv_i^3 \right)
\end{align*}
 This gives the result in the diagonalizable case. 
 
 By (\ref{Tconj})} in the non-diagonalizable case we may assume $\theta=\Phi_{\lambda,\kappa}=\exp\circ\phi_{\lambda,\kappa}$
 with $\lambda_0=0$ and $\kappa=0$. {\editE For $1\le i\le n-1$ define $v'_i=v_i+\langle v,\kappa\rangle\kappa_i$.} Then
$\phi_{\lambda,\kappa}(v_1,\cdots,v_{n-1})=D+N$ where
$$D = 
\begin{pmatrix}
0 & 0 &0 &  \cdots &  &0 \\
0 & \lambda_1 v_1&0  & \cdots &  & \\
\vdots && \ddots &&& \vdots\\
&& && \lambda_{n-1} v_{n-1} & 0 \\
0 &&&& \cdots &0
\end{pmatrix}\qquad
N = 
\begin{pmatrix}
0 &  v_1^{\editE '} & v_2{\editE '} &  \cdots &v_{n-1}{\editE '}&0 \\
0 & &0  & \cdots & 0 & v_1\\
\vdots && \ddots &&& \vdots\\
&& &&  & v_{n-1} \\
0 &&&& \cdots &0
\end{pmatrix}
$$
Relabel the standard basis of $\RR^{n+1}$ as $e_0,\cdots,e_n$.
Then $\bdy\Omega$ is the orbit in affine space $\RR^n\oplus e_n\subset\RR^{n+1}$ of 
 $0\oplus e_{n}$ under this group. We compute the series expansion for $\exp(D+N)e_{n}$ to degree $3$.
$$\exp(D+N)=I +(D+N) +(1/2)(D+N)^2+(1/6)(D+N)^3 + O(\|v\|^4)$$
Using  that $De_{n}=0$ and $N^3=0$ and $DN^2e_{n}=0$ gives
\begin{equation}\label{DNeeqtn}
\exp(D+N)e_{n}= \left(I +N +(1/2)(DN+N^2)+(1/6)(D^2N +NDN)\right)e_{n} + O(\|v\|^4)
\end{equation}In the following summation is over integers from $1$ to $n-1$ 
\begin{align*}
\begin{aligned}
Ne_n&=\sum v_ie_i,  & N^2e_{n}&={\editE \left(\|v\|^2+\langle v,\kappa\rangle^2\right)}e_0\\
DNe_n& =  \sum\lambda_iv_i^2e_i, & NDNe_{n}&=\left(\sum\lambda_iv_i^3\right)e_0, & D^2Ne_n&=\sum\lambda_i^2v_i^3e_i
\end{aligned}
\end{align*}
The only term linear in $v_i$ is $Ne_n$, so
the supporting hyperplane to $\bdy\Omega$ at $0$ is the coordinate hyperplane $v_0=0$ in $\RR^n$.
Thus {\editC in the definition of $J$} we may take the height function $\tau$ to be the $v_0$-coordinate, and
it follows that $J$ is the coefficient of $e_0$ {\editD in (\ref{DNeeqtn})}
$$J=(1/2)N^2e_n+(1/6)NDNe_n=(1/2){\editE \left(\|v\|^2+\langle v,\kappa\rangle^2\right)} +(1/6)\sum\lambda_iv_i^3$$
This is the calibration {\editE $\vartheta_{\lambda,\kappa}$  as claimed.}
\end{proof}

\begin{proof}[Proof of (\ref{cubicrootdata})] {\editB We claim the formula holds  when $\rho=\Phi_{\lambda,\kappa}$.
 By (\ref{calibrationiscuspspace}) $${\editB{\editC\varkappa}}J(\Phi_{\lambda,\kappa})(v)=\left(\langle v,v\rangle +
 \langle v,\kappa\rangle^2\right)  +\frac{1}{3}\left(-\lambda_0\langle v,\kappa\rangle^3+ \sum_{i=1}^{n-1} \lambda_i v_i^3\right)$$
 {\editC Then $J=q+c$ and $q$ is unimodular.}
 Thus
\begin{align}\label{cformula1}c:=c(\rho)=\frac{1}{3\editB{\editC\varkappa}}\left(-\lambda_0\langle v,\kappa\rangle^3+ \sum_{i=1}^{n-1} \lambda_i v_i^3\right)\end{align}
  By (\ref{lambdakappa}) the weights of $\Phi_{\lambda,\kappa}$ are $\xi_i(v)=\lambda_ie_i^*$ for $1\le i\le n-1$
and $\xi_0(v)=-\lambda_0\langle v,\kappa\rangle$. Then (\ref{cformula1}) becomes:
\begin{align}\label{ccformula}{\editD 3{\editC\varkappa}}c=\sum_{i=0}^{n-1}\lambda_i^{-2}\dxi_i^3\end{align}
 By (\ref{rootslambda}) ${\editB {\editC\varkappa}}\lambda_i^2=\langle \xi_i,\xi_i\rangle_{_{\beta^*}}-\langle \xi_1,\xi_2\rangle_{_{\beta^*}}$.
 This proves the claim.
 
If $A\in\SLpm V$ then $c(\rho\circ A)=c(\rho)\circ A$ and $\xi_i(\rho\circ A)=\xi_i(\rho)\circ A$.
It follows that the formula holds for $\rho=\Phi_{\lambda,\kappa}\circ A$. Every marked translation group is conjugate to such $\rho$,
and both sides are conjugacy invariants, so the formula holds in general.}

{\editE The formula shows}  $\Kappa$ is continuous. It only remains to show $\Kappa$ is proper. Suppose $J(\rho_m)$ is a bounded sequence,
 {\editD then we must show $\kappa(\rho_m)=([\xi_1(\rho_m),\cdots,\xi_n(\rho_m)],\beta(\rho_m))$ is bounded.}
Now $\beta(\rho_m)$ {\editC and $c(\rho_m)$} are both bounded, 
so it only remains to prove the weights $\xi_i(\rho_m)$ are bounded.  Suppose
for a contradiction that  some $\xi_i(\rho_m)$ is unbounded. {\editC We show that this implies $c(\rho_m)$ is
not bounded, which gives a contradiction.}

We may assume $\rho_m=\Phi_{\lambda(m),\kappa(m)}^{\perp}\circ B_m$ with $B_m\in\SLpm V$. The matrix
of $\beta(\rho_m)$ in the standard basis of $V$ is $B_m^tB_m$. Since this is bounded, $B_m$ is bounded, and we
may subsequence so $B_m$ converges to $B_{\infty}\in\SLpm V$. It follows that the  weights
of $\Phi_{\lambda(m),\kappa(m)}^{\perp}$ are unbounded. Hence $\lambda(m)=(\lambda_{m,0},\cdots,\lambda_{m,n-1})$ is unbounded
and therefore $\lambda_{m,n-1}\to\infty$. 

{\editE The matrix of $\beta(\Phi_{\lambda(m),\kappa(m)})$ is $\Id+\kappa\otimes\kappa$}
and $\kappa\in[0,1]^{n-1}$ is bounded. Thus the  weights
of $\Phi_{\lambda(m),\kappa(m)}$ are unbounded. 
{\editD The weights determine the cubic via (\ref{ccformula})} 
and $\xi_i=\lambda_ie_i^*$ for $1\le i\le n-1$, and $\xi_0(v)=-\lambda_0\langle v,\kappa\rangle=-\lambda_0\sum_{i=1}^{n-1}\lambda_0\lambda_i^{-1}v_i$.
We now  evaluate this cubic at the point 
 $v_m=e_{n-1}-t_m(e_1+\cdots+e_{n-2})\in V$   where $t_m$ is chosen so  that  $\langle\kappa,v\rangle=0$. This simplifies
 the first summand  {\editD in (\ref{ccformula})} to  $\xi_0(v_m)=0$. {\editE If $\type<n$ then $\kappa=0$ and we choose $t_m=0$.

  If $\type=n$}
 since $\lambda_{m,n-1}\ge\lambda_{m,i}$ for all $i$
and $\kappa_i=\lambda_0\lambda_i^{-1}$ it follows that $\kappa_{n-1}\le\kappa_i$ for all $i$. {\editC Now $t_m$ is determined by
$$0=\langle\kappa,v\rangle  =\kappa_{n-1}-t_m\sum_{i=1}^{n-2}\kappa_i\qquad \&\ \editD\kappa_i\ge 0$$
implies $0<t_m\le1/(n-2)$.} In what follows we omit the subscript $m$
from $\lambda_{m,i}$. {\editC Setting $v=v_m$ in (\ref{ccformula}) and recalling that $\xi_i=\lambda_ie_i^*$ gives}
\begin{equation}
\label{eq423}
3{\editC {\editC\varkappa}}c(\Phi_{\lambda,\kappa})(v_m)=\left(\sum_{i=1}^{n-2}\lambda_i^{-2}(-\lambda_it_m)^3\right)+\lambda_{n-1}^{-2}(\lambda_{n-1})^3=\lambda_{n-1}-t_m^3\sum_{i=1}^{n-2}\lambda_i
\end{equation}
Since $\lambda_{i}\le\lambda_{n-1}$, and $t_m\le 1/(n-2)$, 
\begin{equation}\label{wtscubic}3{\editC {\editC\varkappa}}c(\Phi_{\lambda,\kappa})(v_m)\ge \lambda_{n-1}(1-(n-2)/(n-2)^3)
\end{equation}
{\editC Since $0\le\kappa_i\le 1$ it follows that $\|\kappa\|\le n-1$ thus ${\editC\varkappa}=(1+\|\kappa\|^2)^{1/(n+1)}\le 1+n^2$.}
If $n>3$ then {\editC$(1-(n-2)/(n-2)^3)>0$} . Using $\lambda_{n-1}\to\infty$ as $m\to\infty$ {\editC and ${\editC\varkappa}$ is bounded}, it follows that $c(\Phi_{\lambda,\kappa})$ is unbounded, a contradiction.

{\editC This leaves the case that $n=3$} then (\ref{cformula1}) gives
$$3{\editC {\editC\varkappa}}c(\Phi_{\lambda,\kappa})(x,y)=
\lambda_1 x^3+\lambda_2 y^3 + \lambda_0^{-2}(-\lambda_0((\lambda_0/\lambda_1)x+(\lambda_0/\lambda_2)y))^3$$
The coefficients of this cubic are
$$(\lambda_2^4-\lambda_0^4)\lambda_2^{-3},\quad -3\lambda_0^4/(\lambda_1\lambda_2^2),\quad
-3\lambda_0^4/(\lambda_1^2\lambda_2),\quad (\lambda_1^4-\lambda_0^4)\lambda_1^{-3}$$
Assuming each coefficient has absolute value at most $b$, the second term gives $\lambda_0^4\le b\cdot \lambda_1\lambda_2^2$.
Then $\lambda_2^4-\lambda_0^4\ge \lambda_2^4-b\cdot \lambda_1\lambda_2^2$. But $\lambda_2\ge\lambda_1$
so $$\lambda_2^4-\lambda_0^4\ge\lambda_2^4-b\cdot \lambda_1\lambda_2^2\ge \lambda_2^4-b\cdot\lambda_2^3=\lambda_2^3(\lambda_2-b)$$ 
{\editD Hence $\lambda_2-b\le  (\lambda_2^4-\lambda_0^4)\lambda_2^{-3}\le b$,
so} $\lambda_2\le 2b$. Since $\lambda_i\le\lambda_2$ for $i=0,1$ it follows that all the $\lambda_i\le 2b$.
This is a contradiction. {\editE Hence $\Kappa$ is proper.}
\end{proof}

 \begin{proof}[Proof of (\ref{localmaxlemmadiag})]  
 {\editC Let $L=\{s\cdot e_i:1\le i\le n,\ \ s>0\}$ be the set of positive coordinate axes in $\RR^n$
 and $\pi:\RR^n\to\mathfrak g$ orthogonal projection with respect to $\langle\cdot,\cdot\rangle_{\psi}$.
 We will show that
 the local maxima of $(c|S)$ are  the points  $ {\editC (\pi L)}\cap S$ on 
 $S$ that meet the images under orthogonal projection $L$. }

 Write $J=J_{\ppsi}$. Since $q|S=1$ it follows that $J|S=1+c|S$ .  First we find the critical points of $J|S$. The derivative of $J$ at $v\in\RR^n$ is
\begin{equation}\label{dJ}dJ_v(w)=\langle v,w\rangle_{\psi}+(1/2)\langle v^2,w\rangle_{\psi}\end{equation}
If $v\in S$ then  $w\in T_vS$ if and only if $\langle v,w\rangle_{\psi}=0$ and $\langle p,w\rangle_{\psi}=0$. Thus $v$ is a critical point of $J|S$ if and only if 
$$\forall w\in\RR^n\qquad\left(\langle v,w\rangle_{\psi}=0\quad\text{and}\quad \langle p,w\rangle_{\psi}=0\right)\quad\Longrightarrow\quad \langle v^2,w\rangle_{\psi}=0$$
This is equivalent to $$\exists\ \alpha,\beta\in\RR\qquad v^2=\alpha v+\beta p$$
Writing  $v=(v_1,\cdots,v_n)$ then each  $v_i$ is a solution of $t^2=\alpha t+\beta$. Let $\mathfrak{s}_{_{\pm}}$ be the solutions and set
$$A_{\pm}=\{i:\ 1\le i\le n\quad v_i=\mathfrak{s}_{_{\pm}}\}$$
Thus $\{A_+,A_-\}$ is a partition of $\{1,\cdots,n\}$ and $i\in A_+$ if and only if $v_i=\mathfrak{s}_{_+}$. Let $e_1,\cdots,e_n$ be the
standard basis of $\RR^n$ and define
\begin{equation}\label{eequation}e_{_{\pm}}=\sum_{i\in A_{\pm}} e_i\qquad \text{so}\qquad  p=e_{_+}+e_{_-}\end{equation}
then 
\begin{equation}\label{vequation} v=v(A_+)=\mathfrak{s}_{_+} e_{_+}+\mathfrak{s}_{_-} e_{_-}\end{equation}
The standard basis is orthogonal so $\langle e_{_+},e_{_-}\rangle_{\psi}=0$. Now $v\in p^{\perp}$ implies $$0=\langle p,v\rangle_{\psi}=\langle e_{_+}+e_{_-},\mathfrak{s}_{_+}e_{_+}+\mathfrak{s}_{_-}e_{_-}\rangle_{\psi}=\mathfrak{s}_{_+}\langle e_{_+},e_{_+}\rangle_{\psi}+\mathfrak{s}_{_-}\langle e_{_-},e_{_-}\rangle_{\psi}$$
Since $\langle e_{_+},e_{_+}\rangle_{\psi},\langle e_{_-},e_{_-}\rangle_{\psi} >0$ it follows that $\mathfrak{s}_{_+}\mathfrak{s}_{_-}<0$. We choose the labelling so that 
\begin{equation}\label{sigmasign} \mathfrak{s}_{_+}>0\qquad\text{and}\qquad\mathfrak{s}_{_-}<0\end{equation}
Then there is $t>0$ so that 
$$
\mathfrak{s}_{_+}=t\cdot \langle e_{_-},e_{_-}\rangle_{\psi} \qquad \mathfrak{s}_{_-}=-t\cdot \langle e_{_+},e_{_+}\rangle_{\psi}
$$
Hence
$$t^{-1}\cdot v=\langle e_{_-},e_{_-}\rangle_{\psi} e_{_+} - \langle e_{_+},e_{_+}\rangle_{\psi} e_{_-}$$
We will ignore the $t$ factor in what follows. This is justified by observing that the critical points of $J$ restricted to $t\cdot S$ are the critical
points of $J|S$ multiplied by $t$. 
Then
\begin{equation}\label{sigmaequation}
\mathfrak{s}_{_+}= \langle e_{_-},e_{_-}\rangle_{\psi} \qquad \mathfrak{s}_{_-}=- \langle e_{_+},e_{_+}\rangle_{\psi}
\end{equation}

$$\mathfrak{s}_{_+}=\sum_{i\in A_-}\ppsi_i,\qquad \mathfrak{s}_{_-}=-\sum_{i\in A_+} \ppsi_i, \qquad \text{and} \qquad\mathfrak{s}=\mathfrak{s}_{_+}-\mathfrak{s}_{_-}=\sum_{i=1}^n\ppsi_i$$
We have show the critical points of $J|S$ are in one to one correspondence with the non-empty subsets
$A_+\subset\{1,\cdots,n\}$ with non-empty complement. Given a quadratic form $Q$ define $\mu(Q)$ to be the dimension of the positive eigenspace. 
This is the {\em Morse index} of $-Q$. Thus a non-degenerate critical point is a local maximum if and only if the Hessian has $\mu=0$.

{\bf Claim 1} The critical point of $f=J|S$ at $v=v(A_+)$ is non-degenerate, and $\mu(d^2f_v)=|A_+|-1$. 
 \halfgap
 
 Assuming this we prove the lemma. The claim implies the local maxima
 occur when $|A_+|=1$ so $A_+=\{i\}$ for some $1\le i\le n$. When $A_+=\{i\}$ by (\ref{eequation})  $$e_{_+}=e_i\qquad e_{_-}=p-e_i$$ By (\ref{sigmaequation})
 $$ \mathfrak{s}_{_+}=\mathfrak{s}-\langle e_i,e_i\rangle_{\psi}\qquad \mathfrak{s}_{_-}=-\langle e_i,e_i\rangle_{\psi}$$
Using $\langle e_i,e_i\rangle_{\ppsi}=\ppsi_i$    and (\ref{vequation}) 
$$v(A_+)=(\mathfrak{s}-\ppsi_i) e_i-\ppsi_i(p-e_i)\editE =\mathfrak{s} e_i-\ppsi_i p$$
Using $\langle p,e_i\rangle_{\ppsi}=\ppsi_i$ and $\langle p,p-e_i\rangle_{\ppsi}=\mathfrak{s}-\ppsi_i$ gives
\begin{align}\label{cvA+}
6c(v(A_+)) &= \langle p,v_i^3\rangle_{\ppsi}\nonumber\\
& =  (\mathfrak{s}-\ppsi_i)^3\langle p,e_i\rangle_{\ppsi} -\ppsi_i^3\langle p,p-e_i\rangle_{\ppsi}\nonumber\\
&=\left(\psi_i(\mathfrak{s}-\psi_i)^3-\psi_i^3(\mathfrak{s}-\ppsi_i)\right)\nonumber\\
& =  \ppsi_i(\mathfrak{s}-\ppsi_i) \left((\mathfrak{s}-\psi_i)^2-\psi_i^2\right)\nonumber\\
& =  \ppsi_i(\mathfrak{s}-\ppsi_i)\mathfrak{s}(\mathfrak{s}-2\ppsi_i)
\end{align}
Now \begin{equation}\label{eq888}\|v(A_+)\|_{\ppsi}^2=(\mathfrak{s}-\ppsi_i)^2\ppsi_i +\ppsi_i^2(\mathfrak{s}-\ppsi_i)=\mathfrak{s}\ppsi_i(\mathfrak{s}-\ppsi_i)\end{equation}
It follows that the critical point on $S$ is 
$$v_i=\frac{v(A_+)}{\|v(A_+)\|_{\ppsi}}= 
\frac{(\mathfrak{s}-\ppsi_i) e_i-\ppsi_i(p-e_i)}{\sqrt{\mathfrak{s}\ppsi_i(\mathfrak{s}-\ppsi_i)}}
\editC =\frac{\mathfrak{s} e_i-\ppsi_ip}{\|\mathfrak{s} e_i-\ppsi_ip\|_{\ppsi}}$$
{\editD Thus
\begin{align*}
6c(v_i) & = 6c(v(A_+))/\|v(A_+)\|_{\ppsi}^{3}\\
& =  \ppsi_i(\mathfrak{s}-\ppsi_i)\mathfrak{s}(\mathfrak{s}-2\ppsi_i)/(\mathfrak{s}\ppsi_i(\mathfrak{s}-\ppsi_i))^{3/2} & {\rm using\ \ } (\ref{cvA+}), (\ref{eq888})\\
& =  (\mathfrak{s}-2\ppsi_i)/\sqrt{\mathfrak{s}\ppsi_i(\mathfrak{s}-\ppsi_i)}\\
& =  \frac{1}{\sqrt{\ppsi_i}}(1-2\ppsi_i/\mathfrak{s})/\sqrt{1-\ppsi_i/\mathfrak{s}}
\end{align*}}
{\editE If $c(v_i)<0$ then} $\psi_i>\mathfrak{s}/2$. Since $\mathfrak{s}=\sum\psi_i$, and
 all $\psi_i>0$, it follows
that $c(v_i)<0$ for at most one value of $i$. Thus $|K^+|\ge n-1$. We compute
\begin{align*}\langle \mathfrak{s} e_i-\ppsi_ip,\mathfrak{s} e_j-\ppsi_jp\rangle_{\psi}
&=\mathfrak{s}^2\langle e_i,e_j\rangle_{\psi}+\psi_i\psi_j\langle p,p\rangle_{\psi}-
\mathfrak{s}\left(\psi_j\langle e_i,p\rangle_{\psi}+\psi_i\langle p,e_j\rangle_{\psi}\right)\\
&=\delta_{ij}\mathfrak{s}^2\psi_i+\psi_i\psi_j\mathfrak{s}-2\mathfrak{s}\psi_i\psi_j\\
&=\mathfrak{s}\psi_i(\delta_{ij}\mathfrak{s}-\psi_j)\end{align*}
Using this gives
\begin{align*}
\alpha_{ij} =\langle v_i,v_j\rangle_{\psi} 
&=\langle \mathfrak{s} e_i-\ppsi_ip,\mathfrak{s} e_j-\ppsi_jp\rangle_{\psi}/(\|\mathfrak{s} e_i-\ppsi_ip\|_{\ppsi}\|\mathfrak{s} e_j-\ppsi_jp\|_{\ppsi})\\
&=(-\mathfrak{s}\ppsi_i\ppsi_j)/\sqrt{\mathfrak{s}\psi_i(\mathfrak{s}-\psi_i)\mathfrak{s}\psi_j(\mathfrak{s}-\psi_j)}\\
&=-\sqrt{\ppsi_i\ppsi_j/(\mathfrak{s}-\psi_i)(\mathfrak{s}-\psi_j)}\\
&<0\end{align*}
Then
\begin{align*}1-\alpha_{jk}/\alpha_{ij}\alpha_{ik}&=1+\sqrt{\frac{\psi_j\psi_k(\mathfrak{s}-\psi_i)(\mathfrak{s}-\psi_j)(\mathfrak{s}-\psi_i)(\mathfrak{s}-\psi_k)}
{(\mathfrak{s}-\psi_j)(\mathfrak{s}-\psi_k)\psi_i\psi_j\psi_i\psi_k}}\\
&=1+(\mathfrak{s}-\psi_i)/\psi_i\\
&=\mathfrak{s}/\psi_i
\end{align*}
Hence $\alpha_{ij}\alpha_{ik}/(\alpha_{ij}\alpha_{ik}-\alpha_{jk})=\psi_i/\mathfrak{s}$.

Let $\pi:\RR^n\to\mathfrak g$ be orthogonal projection. Since $\mathfrak g=p^{\perp}$ it follows that
 $$\pi(x)=x-\frac{\langle p,x\rangle_{\psi}}{\langle p,p\rangle_{\psi}}p$$
Using that $p=e_1+\cdots e_n$, and that the standard basis  $\{e_1,\cdots,e_n\}$ is orthogonal, gives $$\langle p,e_i\rangle_{\psi} =\langle e_i,e_i\rangle_{\psi}$$ and $\langle p,p\rangle_{\psi}=\mathfrak{s}$ so
$$\pi(e_i)=e_i-(\langle e_i,e_i\rangle_{\psi}/\mathfrak{s})p$$
Thus {\editE the local maxima are on the projections of the coordinate axes:} $$v(A_+)=\mathfrak{s}\cdot \pi(e_i)$$
This proves the lemma, {\editE modulo claim 1}.

{\bf Claim 2} At $v=v(A_+)$ then
 $\label{secondderiv} d^2(J|S)_v(w,w)=(\mathfrak{s}/2)\langle e_{_+}-e_{_-},w^2\rangle_{\psi}$  for $w\in T_vS$
 \gap
 
\noindent  Assuming this, the quadratic form $$Q(w,w)=\langle e_{_+}-e_{_-},w^2\rangle_{\psi}$$ is defined and non-singular on $\RR^n$
 and  $\mu(Q|T_vS)=\mu( d^2(J|S)_v)$.
 Let $L:\RR^n\to\RR^n$ be the linear map defined by
 $$L|A_{\pm}=\pm \Id$$
 Then
 \begin{equation}\label{Qeqtn}Q(x,y)=\langle Lx,y\rangle_{\psi}\end{equation}
 Now $p=e_{_+}+e_{_-}$ so $Lp=e_{_+}-e_{_-}$ and 
 $$Lv=L(\mathfrak{s}_{_+}e_{_+}+\mathfrak{s}_{_-}e_{_-})=\mathfrak{s}_{_+}e_{_+}-\mathfrak{s}_{_-}e_{_-}$$
  Now $T_vS$ is the orthogonal complement with respect
 to the inner product {\editA $\langle\cdot,\cdot\rangle_{\psi}$ of
 the subspace spanned by $\{ p,v \}$, because $S$ is a sphere in the orthogonal complement of $p$.
 Using (\ref{Qeqtn}) $T_vS$ is also the orthogonal complement with respect to $Q$ of 
 the subspace $W$ spanned by $\{  Lp,Lv \}$.}

{\bf Claim 3}  $Q|W$ is non-singular and  $\mu(Q|W)=1$. 
\gap

\noindent {\editE Assuming this,} since $W$ and $T_pV$ are orthogonal with respect to $Q$, it follows that $$\mu(Q)=\mu(Q|W)+\mu(Q|T_sV)=1+\mu(Q|T_vS)$$
 But $\mu(Q)=|A_+|$ so $\mu(Q|T_vS)=|A_+|-1$ which proves claim 1.
 
 \gap
  To prove claim 3 we first evaluate $Q(Lp,Lp)$, and $Q(Lp,Lv)$, and $Q(Lv,Lv)$ to obtain the matrix of $Q$
 in the basis $\{Lp,Lv\}$.
\begin{align*} Q(Lp,Lp) & = \langle L^2p,Lp\rangle_{\psi}\\
 &=\langle p,Lp\rangle_{\psi}\\
 & =  \langle e_{_+}+e_{_-},e_{_+}-e_{_-}\rangle_{\psi}\\
 & = -\mathfrak{s}_{_-}-\mathfrak{s}_{_+}
\end{align*}
\begin{align*}
 Q(Lv,Lv) & =  \langle v,Lv\rangle_{\psi}\\
 & =  \langle\mathfrak{s}_{_+}e_{_+}+\mathfrak{s}_{_-}e_{_-},\mathfrak{s}_{_+}e_{_+}-\mathfrak{s}_{_-}e_{_-}\rangle_{\psi}\\
 & =  \mathfrak{s}_{_+}^2\langle e_{_+},e_{_+}\rangle_{\psi} -\mathfrak{s}_-^2\langle e_{_-},e_{_-}\rangle_{\psi}\\
 & =  \mathfrak{s}_{_+}^2(-\mathfrak{s}_{_-})-\mathfrak{s}_{_-}^2(\mathfrak{s}_{_+})\\
 & =  -\mathfrak{s}_{_-}\mathfrak{s}_{_+}(\mathfrak{s}_{_+}+\mathfrak{s}_{_-})
\end{align*}
\begin{align*}
Q(Lp,Lv)&= \langle p,Lv\rangle_{\psi}\\
& =  \langle e_{_+}+e_{_-},\mathfrak{s}_{_+}e_{_+}-\mathfrak{s}_{_-}e_{_-}\rangle_{\psi}\\
& =  \mathfrak{s}_{_+}\langle e_{_+},e_{_+}\rangle_{\psi} -\mathfrak{s}_{_-}\langle e_{_-},e_{_-}\rangle_{\psi}\\
& =  \mathfrak{s}_{_+}(-\mathfrak{s}_{_-})-\mathfrak{s}_{_-}\mathfrak{s}_{_+}\\
& =  -2\mathfrak{s}_{_-}\mathfrak{s}_{_+}
\end{align*}
\begin{align*}
\therefore\ \det (Q|W)& = \det\left[\begin{array}{cc}
Q(Lp,Lp) & Q(Lp,Lv)\\
Q(Lp,Lv) & {Q(Lv,Lv)}
\end{array}\right] \\
& = \det\left[
\begin{array}{cc}
-(\mathfrak{s}_{_+}+\mathfrak{s}_{_-}) & -2\mathfrak{s}_{_+}\mathfrak{s}_{_-} \\
{-2\mathfrak{s}_{_+}\mathfrak{s}_{_-}} & -\mathfrak{s}_{_-}\mathfrak{s}_{_+}(\mathfrak{s}_{_+}+\mathfrak{s}_{_-})
\end{array}\right]\\
& =  \mathfrak{s}_{_+}\mathfrak{s}_{_-}\left[(\mathfrak{s}_{_+}+\mathfrak{s}_{_-})^2-4\mathfrak{s}_{_+}\mathfrak{s}_{_-}\right]\\
& =  \mathfrak{s}_{_+}\mathfrak{s}_{_-}(\mathfrak{s}_{_+}-\mathfrak{s}_{_-})^2\\
& =  \mathfrak{s}_{_+}\mathfrak{s}_{_-}\mathfrak{s}^2\\
& <  0
\end{align*}
Thus $Q|W$ has one eigenvalue of each sign, which proves claim 3. 

\gap

It only remain to prove claim 2. 
Give a critical point $v=v(A_+)$  of $f:=J|S$
 we compute $d^2(J|S)_v$. Let $\gamma:(-\epsilon,\epsilon)\to S$ be a smooth curve with $\gamma(0)=v$ and $\gamma'(0)=w\in T_vS$.
Then 
\begin{align*}
(f\circ\gamma)'(t)
&= \sum_{i=1}^n\left.\frac{\partial f}{\partial x_i}\right|_{x=\gamma(t)}\gamma'_i(t)\\
\therefore\ (f\circ\gamma)''(0)&=\sum_{i,j=1}^n\left.\frac{\partial^2f}{\partial x_i^2}\right|_{x=v}\gamma_i'(0)\gamma_j'(0)+\sum_{i=1}^n\left.\frac{\partial f}{\partial x_i}\right|_{x=v}\gamma''(0)\\
&=d^2J_v(w,w)+d J_v(\gamma''(0))
\end{align*}
In the following everything is evaluated at $t=0$
\begin{align*}
&\langle\gamma,\gamma\rangle_{\psi}=1\\
&\Rightarrow\langle\gamma',\gamma\rangle_{\psi}=0\\
&\Rightarrow\langle\gamma'',\gamma\rangle_{\psi} +\langle\gamma',\gamma'\rangle_{\psi}=0\\
&\Rightarrow\gamma''(0)\in \left(-\langle\gamma'(0),\gamma'(0)\rangle_{\psi}/\langle\gamma(0),\gamma(0)\rangle_{\psi}\right)\gamma(0) + T_vS
\end{align*}
Using $\gamma(0)=v$ and $\gamma'(0)=w$
$$\gamma''(0)=-\left(\langle w,w\rangle_{\psi}/\langle v,v\rangle_{\psi}\right)v+t$$
for some $t\in T_vS$.
Since $dJ_v$ vanishes on $T_vS$ we get
\begin{equation}\label{eq1}d^2f_v(w,w)=(f\circ\gamma)''(0)=d^2J_v(w,w)-\left(\langle w,w\rangle_{\psi}/\langle v,v\rangle_{\psi}\right)dJ_v(v)\end{equation}
Now we compute these two terms
\begin{align}\label{eq2}
d^2J_v(w,w)
& = \left.\frac{d^2}{dt^2}\right|_{t=0}\left(\frac{1}{2}\langle(v+tw)^2,p\rangle_{\psi}+\frac{1}{6}\langle(v+tw)^3,p\rangle_{\psi}\right)\nonumber \\
& =\langle w^2,p\rangle_{\psi} + \langle vw^2,p\rangle_{\psi}\nonumber \\
& =\langle w,w\rangle_{\psi} + \langle v,w^2\rangle_{\psi}
\end{align}
By (\ref{dJ})
$dJ_v(v)=\langle v,v\rangle_{\psi}+(1/2)\langle v^2,v\rangle_{\psi}$,
so
\begin{align}\label{eq33}\left(\langle w,w\rangle_{\psi}/\langle v,v\rangle_{\psi}\right)dJ_v(v)=\langle w,w\rangle_{\psi} +(1/2)\left(\langle v,v^2\rangle_{\psi}/\langle v,v\rangle_{\psi}\right)\langle w,w\rangle_{\psi}\end{align}
At the critical point $v=v(A_+)=\mathfrak{s}_{_+}e_{_+}+\mathfrak{s}_{_-}e_{_-}$ so
\begin{align*}
\langle v,v\rangle_{\psi} 
& =\langle \mathfrak{s}_{_+}e_{_+}+\mathfrak{s}_{_-}e_{_-},\mathfrak{s}_{_+}e_{_+}+\mathfrak{s}_{_-}e_{_-}\rangle_{\psi}\\
& =\mathfrak{s}_{_+}^2\langle e_{_+},e_{_+}\rangle_{\psi} +\mathfrak{s}_{_-}^2\langle e_{_-},e_{_-}\rangle_{\psi}\\
& =\mathfrak{s}_{_+}^2(-\mathfrak{s}_{_-}) + \mathfrak{s}_{_-}^2\mathfrak{s}_{_+}\\
& =\mathfrak{s}_{_+}\mathfrak{s}_{_-}(\mathfrak{s}_{_-}-\mathfrak{s}_{_+})\\
& =-\mathfrak{s}_{_+}\mathfrak{s}_{_-}\mathfrak{s}\\
\langle v,v^2\rangle_{\psi}
& =\langle \mathfrak{s}_{_+}e_{_+}+\mathfrak{s}_{_-}e_{_-},\mathfrak{s}_{_+}^2e_{_+}+\mathfrak{s}_{_-}^2e_{_-}\rangle_{\psi}\\
& = \mathfrak{s}_{_+}^3\langle e_{_+},e_{_+}\rangle_{\psi} +\mathfrak{s}_{_-}^3\langle e_{_-},e_{_-}\rangle_{\psi}\\
& = \mathfrak{s}_{_+}^3\mathfrak{s}_{_-}+\mathfrak{s}_{_-}^3\mathfrak{s}_{_+}\\
& =\mathfrak{s}_{_+}\mathfrak{s}_{_-}(\mathfrak{s}_{_-}-\mathfrak{s}_{_+})(\mathfrak{s}_{_-}+\mathfrak{s}_{_+})\\
& =-\mathfrak{s}_{_+}\mathfrak{s}_{_-}\mathfrak{s}(\mathfrak{s}_{_+}+\mathfrak{s}_{_-})
\end{align*}
Thus $\langle v,v^2\rangle_{\psi}/\langle v,v\rangle_{\psi}  = \mathfrak{s}_{_+}+\mathfrak{s}_{_-}$.
Using this with  (\ref{eq33}) gives
\begin{align*}\left(\langle w,w\rangle_{\psi}/\langle v,v\rangle_{\psi}\right)dJ_v(v)
=\langle w,w\rangle_{\psi} +(1/2)\left( \mathfrak{s}_{_+}+\mathfrak{s}_{_-}\right)\langle w,w\rangle_{\psi}\end{align*}
Using this and (\ref{eq2}) to substitute in to (\ref{eq1}) gives
\begin{align*}
d^2f_v(w,w) &= \langle w,w\rangle_{\psi} + \langle v,w^2\rangle_{\psi} -
( 1 +(1/2)(\mathfrak{s}_{_+}+\mathfrak{s}_{_-}))\langle w,w\rangle_{\psi}\\
& =  \langle v,w^2\rangle_{\psi} -
(1/2)\langle (\mathfrak{s}_{_+}+\mathfrak{s}_{_-})w,w\rangle_{\psi}\\
& =  \langle v,w^2\rangle_{\psi} -
(1/2)\langle (\mathfrak{s}_{_+}+\mathfrak{s}_{_-})p,w^2\rangle_{\psi}
\end{align*}
Recall $p=e_++e_-$ and $v=\mathfrak{s}_{_+} e_{_+}+\mathfrak{s}_{_-} e_{_-}$ from (\ref{vequation}). Then
\begin{align*}
d^2f_v(w,w)& =  \langle \mathfrak{s}_{_+}e_{_+}+\mathfrak{s}_{_-}e_{_-},w^2\rangle_{\psi} -
(1/2)\langle (\mathfrak{s}_{_+}+\mathfrak{s}_{_-})(e_++e_-),w^2\rangle_{\psi}\\
& =  \langle \mathfrak{s}_{_+}e_{_+}+\mathfrak{s}_{_-}e_{_-}-(1/2)(\mathfrak{s}_{_+}+\mathfrak{s}_{_-})(e_{_+}+e_{_-}),w^2\rangle_{\psi}\\
& =  (1/2)\mathfrak{s}\langle e_{_+}-e_{_-},w^2\rangle_{\psi}
\end{align*}
where we used $\mathfrak{s}=\mathfrak{s}_+-\mathfrak{s}_-$. This proves claim 2.\end{proof}

\begin{proof}[Proof of (\ref{localmaxlemmanotdiag})] In this lemma summation is over the set of integers in $[1,n-1]$ unless otherwise indicated, and $\langle\cdot,\cdot\rangle$ is
the standard inner product on $V=\RR^{n-1}$. A point $v=\sum v_ie_i\in S$ is a critical point of $c|S$ if and only if there is some $\alpha\in V$ such that for all
$w\in V$ we have $dc_v(w)=\alpha\cdot\langle v,w\rangle$, so
$$dc_v(w)=\sum\lambda_i v_i^2 w_i=\alpha \sum v_iw_i$$
This equation is satisfied if and only if $\forall i\ \lambda_iv_i^2=\alpha v_i$.   Since $\lambda_i\ge0$ the requirement that $c(v)=(1/3)\psi v^3>0$ implies $\psi v^2>0$ thus $\alpha\ne 0$.
Thus the set of positive critical points of $c|S$ is
 $$W=\{v\in S:\  \exists\ \alpha\ne 0\ \ \psi v^2=\alpha v\}$$ Given $v\in W$, let $A=\{i:v_i\ne 0\}$, then $A$ is not empty and $i\in A\Rightarrow \lambda_i\ne 0$ 
 and 
$$v=v(A)=\alpha\sum_{i\in A}\lambda_i^{-1}e_i$$
{\bf Claim}  $d^2(c|S)$ at $v=v(A)$ is the restriction to $T_vS$ of the quadratic form on $V$
$$Q(w)=\alpha\left(\sum_A w_i^2-\sum_{A^c}w_i^2\right)$$
where $A^c=\{1,\cdots,n-1\}\setminus A$. 

Observe that $Q$ is non-degenerate and
 $v=v(A)\in\langle e_i:i\in A\rangle$ and $T_vS=v^{\perp}$ so $Q|T_vS$ is also non-degenerate. 
 If follows  that $v(A)$ is a local maximum if $\alpha>0$ and $|A|=1$ or $\alpha<0$ and $A^c=\emptyset$. In the first
 case $A=\{i\}$ and $v(A)=e_i$ and $c(e_i)=\lambda_i>0.$ In the second case $A=\{1,\cdots,n-1\}$ and $v(A)=-(n-1)^{-1/2}\sum e_i$ so
 $c(v(A))<0$. This proves the lemma modulo the claim.
 
 To prove the claim, adapting the derivation of  (\ref{eq1}) gives
 \begin{equation}\label{eq5} d^2(c|S)_v(w,w)=d^2c_v(w,w)-(\langle w,w\rangle_{\psi}/\langle v,v\rangle_{\psi})dc_v(v)
 \end{equation}
Using $\lambda_iv_i=\alpha$ for $i\in A$ and $\lambda_i v_i=0$ for $i\notin A$ gives
 \begin{equation} d^2c_v(w,w)=2\sum\lambda_iv_iw_i^2 =2\sum_{i\in A} \alpha w_i^2\end{equation}
Using $\psi v^2=\alpha v$ and $v=v(A)=\alpha\sum_{i\in A} e_i$ gives
\begin{equation}
d_c(v)=\sum\lambda_iv_i^2v_i=\sum\alpha v_iv_i=\alpha\langle v,v\rangle_{\psi}
\end{equation}
 Hence
 \begin{equation}
 (\langle w,w\rangle_{\psi}/\langle v,v\rangle_{\psi})dc_v(v)=\alpha\sum w_i^2
 \end{equation}
 Substituting into (\ref{eq5})
  \begin{equation} d^2(c|S)_v(w,w)=2\sum_{i\in A} \alpha w_i^2-\alpha\sum w_i^2=  \alpha\left(\sum_{i\in A} w_i^2-\sum_{i\in A^c} w_i^2\right)
 \end{equation}
Which proves the claim.\end{proof}

\if0
\begin{proof}[Alternate proof of Lemma 4.5]
	Finding the minima of $J\vert S$ amounts to solving a constrained optimization problem where the objective function is $J$ and the constraints are $g(v)=\langle v,v\rangle=1$ and $h(v)=\langle v,p\rangle$. Using the method of Lagrange multipliers we form the function $L(v,\alpha_1,\alpha_2)=J(v)-\alpha_1 h(v)-\alpha_2(g(v)-1)$. Let $v'=(v,\alpha_1,\alpha_2)$ then 
	\begin{align*}
	d_{v'}L(w,c_1,c_2)=dJ_v(w)-\alpha_1dh_v(w)-\alpha_2dg_v(w)-c_1h(v)-c_2(g(v)-1)\\=\left< \frac{1}{2} v^2+v-\alpha_1 p-2\alpha_2 v,w\right>-c_1h(v)-c_2(g(v)-1)	
	\end{align*}

	Critical points of $L$ are then given by $v'$ so that $g(v)=0$, $h(v)=0$ and
	\begin{align}
p(v):=v^2+(2-4\alpha_2)v-2\alpha_1p=0\label{constaint3}
	\end{align} 
	Since $g(v)=1$ and $h(v)=0$ by taking inner products of $p(v)$ with $v$ and $p$ that 
	
	\begin{align}
	0=\langle p(v),p\rangle=1-2\alpha_1\Abs{p}^2	\label{pDotProduct}\\
	\label{vDotProduct}
		0=\langle p(v),v\rangle=\langle v^2,v\rangle +(2-4\alpha_2)
	\end{align}

	It follows that the real valued polynomial $p(t)$ has two real roots $\sigma_\pm=\frac{\langle v^2,v\rangle}{2}\left(1\pm \delta\right)$, where $\delta=\sqrt{1+\frac{4}{(\Abs{p}\langle v^2,v\rangle)^2}}$. 
	
	Since $p(v)=0$ it follows that  $v=(v_1,\cdots,v_n)$ where each  $v_i=\sigma_\pm$. We can define 
$$A_{\pm}=\{i:\ 1\le i\le n\quad v_i=\sigma_{\pm}\}$$
Thus $\{A_+,A_-\}$ is a partition of $\{1,\cdots,n\}$ and $i\in A_+$ if and only if $v_i=\sigma_+$. Let $e_1,\cdots,e_n$ be the
standard basis of $\RR^n$ and define
\begin{equation}\label{pequation}e_{\pm}=\sum_{i\in A_{\pm}} e_i\qquad \text{so}\qquad  p=e_++e_-\end{equation}
and 
\begin{equation}\label{vvequation} v=v(A_+)=\sigma_+ e_++\sigma_- e_-\end{equation}
Notice that if $A_\pm=\{1,\ldots,n\}$, then $v$ is a multiple of $p$ and hence $h(v)\neq 0$. All other cases are admissible and so this shows that critical points of $L$ are in correspondence with non-empty subsets of $\{1,\ldots n\}$ with non-empty complement.  

Next, we compute the signature of the Hessian at each of these points. Let $v$ be a critical point and let $f(y)$ be a parameterization of $S$ near $v$ such that $f(0)=v$ and let $M=J\circ f$. It follows that 
$$d^2M_0=d^2J_v\circ df_0+dJ_v\circ d^2f_0$$
Since both $g$ and $h$ are constant along $S$ and $h$ is linear in $v$, it follows from applying the chain rule that 
\begin{align}
	d^2g_v\circ df_0+dg_v\circ d^2f_0=0\\
	dh_v\circ d^2f_0=0
\end{align}
Combining these with the fact that $dJ_v=\alpha_1 dh_v+\alpha_2 dg_v$ gives
\begin{equation}
	d^2M_0=(d^2J_v-\alpha_2d^2g_v)\circ df_0
\end{equation}
If we let $w'\in T_vS$ and let $w$ be $df_0(w')$ then we find that 
$$(d^2J_v\circ df_0)(w') = \langle w,w\rangle +\langle v,w^2\rangle$$
and using \eqref{vDotProduct} we find that 
$$\alpha_2(d^2g_v\circ df_0)(w')=\frac{(\langle v^2,v\rangle+2)}{2}\langle w,w\rangle $$
It follows that 
$$d^2M_0(w')=\langle v- \frac{1}{2}\langle v^2,v\rangle p,w^2 \rangle$$

Let $u=v- \frac{1}{2}\langle v^2,v\rangle p$ then $u$ can be rewritten as 
\begin{align*}
u&=\sigma_+e_++\sigma_-e_--\frac{1}{2}\langle v^2,v\rangle (e_++e_-)\\
&=\frac{\langle v^2,v\rangle}{2}	(1+\delta)e_++\frac{\langle v^2,v\rangle}{2}	(1-\delta)e_--\frac{\langle v^2,v\rangle}{2}	e_+-\frac{\langle v^2,v\rangle}{2}	e_-\\
&=\frac{\langle v^2,v\rangle}{2}\delta(e_+-e_-)
\end{align*}
It follows that 
$$d^2M_0(w')=\frac{\langle v^2,v\rangle}{2}\delta \left <(e_+-e_-),w^2\right> $$

Let $Q$ be the quadratic form obtained by extending the formula above to all of $\R^n$. $Q$ is easily seen to be diagonal with respect to the standard basis and it follows that $Q$ has signature $(\abs{A_+},\abs{A_-})$. The image of $df_0$ is the orthogonal complement to the space spanned by $v$ and $p$. Let $c\in \R$ and let $w_c=cp+v=(c+\sigma_+)e_++(c+\sigma_-)e_-$. If $c=-\varpi_-$ then $w_c=\beta e_+$, and so 
$$Q(w_c)=\langle e_+-e_-,\beta^2e_+\rangle=\beta^2\langle e_+,e_+\rangle>0$$ 
Using a similar argument, it is possible to choose $c$ so that $w_c=\gamma e_-$ and so 
$$Q(w_c)=\langle e_+-e_-,\gamma^2e_-\rangle =-\gamma^2\langle e_-,e_-\rangle <0$$
It follows that. $Q$ is indefinite when restricted to the span of $v$ and $p$. Hence the signature of $d^2M_0$ is $(\abs{A_+}-1,\abs{A_-}-1)$, and so $v$ is a local minimum if and only if $\abs{A_+}=1$. 

Finally, we show that these local minima are precisely the intersection of the orthogonal projection of the coordinate axes onto $p^\perp$ with $S$. Observe that if $\abs{A_+}=1$ then $e_+=e_i$ and $e_-=p-e_i$ for some $1\leq i\leq n$. Orthogonal projection onto $p^\perp$ is given by 
$$\pi(x)=x-\frac{\langle x,p\rangle}{\Abs{p}^2}p$$
Since $\langle e_i,p\rangle =\langle e_i,e_i\rangle=\ppsi_i$ it follows that 
$$\pi(e_i)=e_i-\frac{\langle e_i,p\rangle}{\Abs{p}^2}p=\left(1-\frac{\ppsi_i}{\Abs{p}^2}\right)e_+-\frac{\ppsi_i}{\Abs{p}^2}e_-
$$
On the other hand $v$ and $p$ are orthogonal and so 
\begin{align*}
0&=\langle v,p\rangle =\langle \sigma_+e_++\sigma_-e_-,e_++e_-\rangle\\
&=\sigma_+\langle e_+,e_+\rangle+\sigma_-\langle e_-,e_-\rangle=\sigma_+\ppsi_i+\sigma_-\left(\Abs{p}^2-\ppsi_i\right)	
\end{align*}

It follows that $\sigma_-=-\varpi_+\left(\frac{\ppsi_i}{\Abs{p}^2-\ppsi_i}\right)$. This implies that $\pi(e_i)$ is parallel to $v$, and so $v$ is the intersection of the orthogonal projection of the $i$th coordinate axis to $p^\perp$ with $S$. 
\end{proof}

\fi

\if0  For the second statement let $T:\RR^n\to\RR^n$ be the isomorphism
$$T(x_1,\cdots,x_n)=(\sqrt{\lambda_1}x_1,\cdots,\sqrt{\lambda_n}x_n)$$
Let $\langle x,y\rangle'=\sum x_iy_i$ denote the standard
inner product. Then $T$ is an isometry between these two inner product spaces.
Thus is suffices to show that $L$ determined
\fi

\if0
\begin{proof}[Prop 6.17 $c\Leftrightarrow d$]
	Suppose that $\lambda_0\neq 0$, In what follows all sums are taken from 1 to $n$. The cubic part of the calibration is given by 
	$$c=-\lambda_0(\sum_i\kappa_ix_i)^3+\sum_i\lambda_ix_i^3$$
	and the quadratic portion of the calibration is given by 
	$$q=\sum_{i,j}(\delta_i^j+\kappa_i\kappa_j)x_ix_j$$
	The coefficients of the polynomial $q$ give rise to a symmetric matrix $Q$. Let $A=Q^{-1}$, then the Laplacian of a function $f$ with respect to the quadratic form $q$ is given by
	 \begin{align}\label{lapformula}\Delta_q(f)=\sum_{i,j}A_{ij}\frac{\partial^2 f}{\partial x_i\partial x_j}.	
	 \end{align}
 The entries of $A$ are 
	\begin{align}\label{aentries}
		A_{ij}=\left\{\begin{matrix}
\tau (1+\sum_{l\neq i}\kappa_l^2) & i=j\\
	-\tau\kappa_i\kappa_j & i\neq j	
\end{matrix}
\right.,
	\end{align}
	where $\tau=\det(Q)^{-1}$. Since we are only concerned with whether or not $c$ is harmonic, there is no loss of generality in assuming that $\tau=1$. 
	
	Computing second partial derivatives we find that 
\begin{align}\label{secondderivs}
		\frac{\partial^2 c}{\partial x_i\partial x_j}=\left\{
\begin{matrix}
			6\left[(\lambda_i+\lambda_0\kappa_i^3)x_i-\lambda_0\sum_{l\neq i}\kappa_i^2\kappa_lx_l\right]	& i=j\\
			-6\lambda_0\kappa_i\kappa_j\sum_l\kappa_lx_l & i\neq j
\end{matrix}
\right.
\end{align}

Since $c$ is cubic $\Delta_q(c)=\sum_l c_l x_l$, where $c_l\in \R$. Using the formula \eqref{lapformula} we see that 
\begin{align*}
	c_l=6\left(1+\sum_{i\neq l}\kappa_i^2\right)\left(\lambda_l-\lambda_0\kappa_l^3\right)-6\lambda_0\sum_{i\neq l}\left(1+\sum_{j\neq i}\kappa_j^2\right)\kappa_i^2\kappa_l+6\lambda_0\sum_{i\neq j}\kappa_i^2\kappa_j^2\kappa_l,
\end{align*}
where the first term comes from $\frac{\partial^2}{\partial x_l^2}$, the second term comes from $\frac{\partial^2}{\partial x_j^2}$ with $j\neq l$, and the third term comes from the mixed partial derivatives. Using the fact that $\kappa_i=\frac{\lambda_0}{\lambda_i}$ we can write 
\begin{align*}
	c_l=6\lambda_l\left[\left(1+\sum_{i\neq l}\kappa_i^2\right)\left(1-\kappa_l^4\right)-\sum_{i\neq l}\left(1+\sum_{j\neq i}\kappa_j^2\right)\kappa_i\kappa_l^2+\sum_{i\neq j}\kappa_i^2\kappa_j^2\kappa_l^2\right]
\end{align*}
Expanding and regrouping terms we find 
\begin{align*}
	c_l=6\lambda_l\left[1-\kappa_l^4+\sum_{i\neq l}\kappa_i^2-\sum_{i\neq l}\kappa_i^2\kappa_l^2-\sum_{i\neq l}\kappa_i^2\kappa_l^4-\sum_{i\neq k}\sum_{j\neq i}\kappa_j^2\kappa_i^2\kappa_l^2+\sum_{i\neq j}\kappa_i^2\kappa_j^2\kappa_l^2\right]
\end{align*}
The last three sums cancel one another and so we find that 
$$c_l=-\left(\kappa_l^4+\sum_{i\neq l}\kappa_i^2\kappa_l^2-\left(1+\sum_{i\neq l}\kappa_i^2\right)\right)$$
Performing the change of variables $t_j=\kappa_j^2$ and $s_j=\sum_{i\neq j}\kappa_i^2$ we find that 
$$c_l=-\left(t_l^2+s_lt_l-(1+s_l)\right).$$

Finally $c$ is harmonic iff $c_l=0$ for $1\leq l\leq n$. The quantity $c_l$ is quadratic in $t_l$ and so it is zero iff $t_l=1,-(s_l+1)$. Suppose that for some $1\leq j\leq n$ that $t_j=-(s_j+1)$ It follows that $\sum_i\kappa_i^2=-1$, however this a contradiction, and so $t_l=1$ for $1\leq l\leq n$. This implies that $\kappa_l=\pm 1$ for $1\leq l\leq n$, but the $\kappa_l\geq 0$ and so $\kappa_l=1$. 
\end{proof}
\fi

  \small 
  \bibliography{refs.bib} 

\begin{thebibliography}{10}

\bibitem{Asch}
M.~Aschbacher.
\newblock Chevalley groups of type {$G_2$} as the group of a trilinear form.
\newblock {\em J. Algebra}, 109(1):193--259, 1987.

\bibitem{BCL}
S.~Ballas, D.~Cooper, and A.~Leitner.
\newblock A classification of generalized cusps in projective manifolds.
\newblock {\em Jour Topol}, pages 1455--1496, Dec. 2020.

\bibitem{Bieb}
L.~Bieberbach.
\newblock \"{U}ber die {B}ewegungsgruppen der {E}uklidischen {R}\"{a}ume.
\newblock {\em Math. Ann.}, 70(3):297--336, 1911.

\bibitem{Blaschke}
W.~Blaschke.
\newblock {\em Vorlesungen \"{u}ber {D}ifferentialgeometrie und geometrische
  {G}rundlagen von {E}insteins {R}elativit\"{a}tstheorie. {B}and {I}.
  {E}lementare {D}ifferentialgeometrie}.
\newblock Dover Publications, New York, N. Y., 1945.
\newblock 3d ed.

\bibitem{calabi2}
E.~Calabi.
\newblock Improper affine hyperspheres of convex type and a generalization of a
  theorem by {K}. {J}\"{o}rgens.
\newblock {\em Michigan Math. J.}, 5:105--126, 1958.

\bibitem{Cha}
L.~S. Charlap.
\newblock {\em Bieberbach groups and flat manifolds}.
\newblock Universitext. Springer-Verlag, New York, 1986.

\bibitem{MR437805}
S.~Y. Cheng and S.~T. Yau.
\newblock On the regularity of the {M}onge-{A}mp\`ere equation {${\rm
  det}(\partial ^{2}u/\partial x_{i}\partial sx_{j})=F(x,u)$}.
\newblock {\em Comm. Pure Appl. Math.}, 30(1):41--68, 1977.

\bibitem{choi1}
S.~{Choi}.
\newblock {The convex real projective manifolds and orbifolds with radial ends:
  the openness of deformations}.
\newblock {\em ArXiv e-prints}, Nov. 2010.

\bibitem{CLT2}
D.~Cooper, D.~Long, and S.~Tillmann.
\newblock Deforming convex projective manifolds.
\newblock {\em Geom. Topol.}, 22(3):1349--1404, 2018.

\bibitem{CLT1}
D.~Cooper, D.~D. Long, and S.~Tillmann.
\newblock On convex projective manifolds and cusps.
\newblock {\em Adv. Math.}, 277:181--251, 2015.

\bibitem{Dold}
A.~Dold.
\newblock Homology of symmetric products and other functors of complexes.
\newblock {\em Ann. of Math. (2)}, 68:54--80, 1958.

\bibitem{Gur}
S.~Garibaldi and R.~M. Guralnick.
\newblock Simple groups stabilizing polynomials.
\newblock {\em Forum Math. Pi}, 3:e3, 41, 2015.

\bibitem{Geoghegan}
R.~Geoghegan.
\newblock {\em Topological methods in group theory}, volume 243 of {\em
  Graduate Texts in Mathematics}.
\newblock Springer, New York, 2008.

\bibitem{hatcher}
A.~E. Hatcher.
\newblock Concordance spaces, higher simple-homotopy theory, and applications.
\newblock In {\em Algebraic and geometric topology ({P}roc. {S}ympos. {P}ure
  {M}ath., {S}tanford {U}niv., {S}tanford, {C}alif., 1976), {P}art 1}, Proc.
  Sympos. Pure Math., XXXII, pages 3--21. Amer. Math. Soc., Providence, R.I.,
  1978.

\bibitem{Hit}
N.~J. Hitchin.
\newblock Lie groups and {T}eichm\"{u}ller space.
\newblock {\em Topology}, 31(3):449--473, 1992.

\bibitem{Klartag}
B.~Klartag.
\newblock Affine hyperspheres of elliptic type.
\newblock {\em https://arxiv.org/abs/1508.00474}.

\bibitem{Lab1}
F.~Labourie.
\newblock Flat projective structures on surfaces and cubic holomorphic
  differentials.
\newblock {\em Pure Appl. Math. Q.}, 3(4, Special Issue: In honor of Grigory
  Margulis. Part 1):1057--1099, 2007.

\bibitem{Lang}
S.~Lang.
\newblock {\em Algebra}, volume 211 of {\em Graduate Texts in Mathematics}.
\newblock Springer-Verlag, New York, third edition, 2002.

\bibitem{Lof1}
J.~C. Loftin.
\newblock Affine spheres and convex {$\Bbb{RP}^n$}-manifolds.
\newblock {\em Amer. J. Math.}, 123(2):255--274, 2001.

\bibitem{Nom}
K.~Nomizu and T.~Sasaki.
\newblock {\em Affine differential geometry}, volume 111 of {\em Cambridge
  Tracts in Mathematics}.
\newblock Cambridge University Press, Cambridge, 1994.
\newblock Geometry of affine immersions.

\bibitem{Pog}
A.~V. Pogorelov.
\newblock On the improper convex affine hyperspheres.
\newblock {\em Geometriae Dedicata}, 1(1):33--46, 1972.

\bibitem{MR870934}
B.~Reichstein.
\newblock On expressing a cubic form as a sum of cubes of linear forms.
\newblock {\em Linear Algebra Appl.}, 86:91--122, 1987.

\bibitem{MR3105781}
B.~Reznick.
\newblock Some new canonical forms for polynomials.
\newblock {\em Pacific J. Math.}, 266(1):185--220, 2013.

\bibitem{That}
T.~T. That.
\newblock Lie group representations and harmonic polynomials of a matrix
  variable.
\newblock {\em Trans. Amer. Math. Soc.}, 216:1--46, 1976.

\end{thebibliography}


\begin{thebibliography}{99} 

\small



\bibitem{Asch} Aschbacher, Michael \textit{Chevalley Groups of Type $G_2$ as the Group of a Trilinear Form} J. Alg 109 (1987) 193--259

\bibitem{BCL} Ballas, S., Cooper, D., and Leitner, A. \textit{Generalized Cusps in Convex Projective Manifolds: Classification} https://arxiv.org/pdf/1710.03132.pdf

\bibitem{Bieb} Bieberbach, Ludwig; \textit{Uber die Bewegungsgruppen der Euklidischen Raume.} (German) Math. Ann. 70 (1911), no. 3, 297--336.

\bibitem{Cha} Charlap, Leonard S.
\textit{Bieberbach groups and flat manifolds. }
Universitext. Springer-Verlag, New York, 1986.

\bibitem{CLT} Cooper, D., Long, D., and Tillmann, S. \textit{Deforming Convex Projective Manifolds} Geom. Topol. 22 (2018), no. 3, 1349--1404

\bibitem{Geoghegan} Geoghegan, R. \textit{Topological Methods in Group Theory} Graduate Texts in Mathematics Vol 243. Springer, 2007. 

\bibitem{Lang} Lang, S. \textit{Algebra} Graduate Texts in Mathematics Vol 211. Springer, 2002. 

\bibitem{Gur}   Garibaldi, Skip, Guralnick, Robert M.,
   \textit{Simple groups stabilizing polynomials}
  Forum Math. Pi,
   3,
   (2015),
  3--41
  
  \bibitem{hatcher}  Hatcher,  A.E.,
    \textit {Concordance spaces, higher simple-homotopy theory, and
              applications},
  Proc. Sympos. Pure Math., XXXII},
     ,3--21, 1978.
 	
  
  \bibitem{Hit} N. J. Hitchin. \textit{Lie groups and Teichm\"uller space. Topology,} 31(3):449–473, 1992.
  
  \bibitem{Lab1} F. Labourie. \textit{ Flat projective structures on surfaces and cubic holomorphic dfferentials.} Pure and Applied Mathematics Quarterly, 3(4):1057--1099, 2007. 
  
  \bibitem{Lof1} J. C. Loftin. \textit{ Affine spheres and convex $\RPn$  manifolds.} American Journal of
Mathematics, 123(2):255--274, 2001.

\bibitem{Nom} Nomizu, K.  and Sasaki, T. \textit{Affine Differential Geometry} CUP 1994

 
 \end{thebibliography}
\bibliographystyle{abbrv}

\if0

\fi

\end{document}